\documentclass[11pt,letterpaper,reqno]{amsart}
\usepackage[
top=2cm, bottom=1.6cm, inner=2cm, outer=2cm,
marginparwidth=2cm 
]{geometry}

\usepackage{amssymb,latexsym, amsmath, amsxtra, mathrsfs, bm}

\usepackage[dvips]{graphics}
\usepackage[all]{xy}
\usepackage{xcolor}
\usepackage{subcaption}

\usepackage{verbatim}
\usepackage[abs]{overpic}
\usepackage{hyperref}
\usepackage{mathtools}
\usepackage{tikz}  
\usepackage{yhmath}

\usepackage[T1]{fontenc}

\usepackage{enumitem}

\usepackage{hyperref}

\setenumerate[1]{label=\upshape(\roman*)}

\DeclareMathAlphabet{\mathbbold}{U}{bbold}{m}{n}

\allowdisplaybreaks[3]

\theoremstyle{plain}
\newtheorem{theorem}{Theorem}[section]
\newtheorem*{theorem*}{Theorem}
\newtheorem*{conj*}{Conjecture}
\newtheorem{lemma}[theorem]{Lemma}
\newtheorem{prop}[theorem]{Proposition}

\newtheorem{thmx}{Theorem}
 
\newtheorem{thml}{Theorem}

\theoremstyle{definition}
\newtheorem{definition}[theorem]{Definition}
\newtheorem{rem}[theorem]{Remark}

\theoremstyle{remark}

\numberwithin{equation}{section}
\numberwithin{theorem}{section}
\numberwithin{table}{section}
\numberwithin{figure}{section}

\makeatletter
\def\th@plain{%
	\thm@notefont{}% same as heading font
	\itshape % body font
}
\def\th@definition{%
	\thm@notefont{}% same as heading font
	\normalfont % body font
}
\makeatother
\renewcommand{\le}{\leqslant}
\renewcommand{\leq}{\leqslant}
\renewcommand{\ge}{\geqslant}
\renewcommand{\geq}{\geqslant}

\newcommand{\var}  {\operatorname{var}}
\newcommand{\diam}  {\operatorname{diam}}

\newcommand{\card} {\operatorname{card}}
\newcommand{\supp}{\operatorname{supp}}

\newcommand{\R}{\mathbb{R}}

\newcommand{\N}{\mathbb{N}}      
\newcommand{\Z}{\mathbb{Z}}

\providecommand{\abs}[1]{\lvert#1\rvert}
\providecommand{\Absbig}[1]{\bigl\lvert#1\bigr\rvert}
\providecommand{\Absbigg}[1]{\biggl\lvert#1\biggr\rvert}
\providecommand{\AbsBig}[1]{\Bigl\lvert#1\Bigr\rvert}

\providecommand{\norm}[1]{\|#1\|}
\providecommand{\Normbig}[1]{\bigl\|#1\bigr\|}

\renewcommand{\:}{\colon}

\newcommand{\Lip}{\operatorname{Lip}}

\renewcommand{\=}{\coloneqq}

\newcommand{\MME}{\operatorname{MME}}

\newcommand{\cA}{\mathcal{A}}

\newcommand{\cC}{\mathcal{C}}
\newcommand{\cD}{\mathcal{D}}
\newcommand{\cE}{\mathcal{E}}

\newcommand{\cG}{\mathcal{G}}

\newcommand{\cK}{\mathcal{K}}
\newcommand{\cL}{\mathcal{L}}
\newcommand{\cM}{\mathcal{M}}

\newcommand{\cO}{\mathcal{O}}
\newcommand{\cP}{\mathcal{P}}

\newcommand{\cR}{\mathcal{R}}

\newcommand{\fE}{\mathfrak{E}}

\newcommand{\fL}{\mathfrak{L}}

\newcommand{\sD}{\mathscr{D}}

\newcommand{\sR}{\mathscr{R}}

\newcommand{\sZ}{\mathscr{Z}}

\newcommand{\oG}{\overline{G}}

\newcommand{\oc}{\overline{c}}

\newcommand{\ox}{\overline{x}}
\newcommand{\oy}{\overline{y}}

\newcommand{\tz}{\widetilde{z}}

\newcommand{\energy}{Q(T,\phi)}

\newcommand{\topo}{\operatorname{top}}
\newcommand{\im}{\operatorname{im}}

\newcommand{\cstar}[3][black]
{
    \tikz[baseline=(X.base)]{\node[circle, draw, inner sep=0pt, text=#1, minimum height=#2, font=#3] (X) {$*$};}
}
\newcommand{\ostar}{\!\!\scalebox{0.8}{\cstar{1pt}{\tiny}}\!\!}

\newcommand{\cdiv}{  
	\begin{tikzpicture}[baseline=(center.center)]  
		\node[circle, draw, minimum size=7.5pt,inner sep=0pt] (center){}; 
		\node at (center) {\scriptsize $\div$}; 
	\end{tikzpicture}  
}
\newcommand{\odiv}{{\hspace{-4pt}\raisebox{0.6ex}{\cdiv}\hspace{-4pt}}}

\newcommand{\oplusu}[1] {\underset{#1}{\oplus}}

\newcommand{\Rm}{\overline{\underline{\mkern-1mu\R\mkern-1mu}}}
\newcommand{\Rminus}{\underline{\mkern-1mu\R\mkern-1mu}}

\newcommand{\fless}{\leq}

\newcommand{\density}{\cD}

\newcommand{\es}[3] {\underline{\mkern-1mu\cE\mkern-1mu}_{#3}(#1,#2)}
\newcommand{\esbig}[3] {\underline{\mkern-1mu\cE\mkern-1mu}_{#3}\bigl(#1,#2\bigr)}

\newcommand{\tdim}{\underline{\mkern-1mu\dim\mkern-1mu} \,}

\newcommand{\twidehat}[1]{\widehat{\mkern+1mu\underline{\mkern-1mu #1 \mkern-1mu}\mkern+1mu}}

\newcommand{\toverline}[1]{\overline{\mkern+1mu\underline{\mkern-1mu #1 \mkern-1mu}\mkern+1mu}}

\newcommand{\assocmodu}{\widetilde{\omega}_{\lambda}}

\newcommand{\assocsum}[2]{\widetilde{\omega}_{#1,#2}}

\newcommand{\tai}{\underline{\mkern-1mu 0 \mkern-1mu} }

\newcommand{\tmi}{\underline{\mkern-1mu 1 \mkern-1mu} }

\newcommand{\taif}{\underline{\mkern-1mu \mathbbold{0} \mkern-1mu} }

\newcommand{\tmif}{\underline{\mkern-1mu \mathbbold{1} \mkern-1mu} }

\newcommand{\tinftyf}{\underline{\mkern-1mu \bm{\infty} \mkern-1mu} }

\begin{document}
\title[Typical periodic optimization and freezing phase transition: Dini potentials and beyond]{Typical periodic optimization and freezing phase transition:\\ Dini potentials and beyond}

\makeatletter
% Remove \uppercase and \author@andify so \author prints its argument as-is
\renewcommand{\@setauthors}{%
  \begingroup
  \trivlist
  \centering\normalsize \@topsep30\p@\relax
  \advance\@topsep by -\baselineskip
  \item\relax
  \authors
  \vspace{14pt}
  \endtrivlist
  \endgroup
}
\makeatother

\author[Yunping~Jiang, Zhiqiang~Li, and Yiqing~Sun]{%
  \begin{tabular}{c}Yunping~Jiang\\[-3pt]\Small City University of New York
  \end{tabular}\;
  \begin{tabular}{c}Zhiqiang~Li\\[-3pt]\Small Peking University  \end{tabular}\;
  \begin{tabular}{c}Yiqing~Sun\\[-3pt]\Small Peking University  \end{tabular}
}

    \subjclass[2020]{Primary: 37A99; Secondary: 37C20, 37D35, 37A60, 26A15, 15A80.}
	\keywords{Typical periodic optimization, ergodic optimization, thermodynamic formalism, freezing phase transition, Dini potential, weak log-log Dini potential, tropical algebra.}

    \begin{abstract}
        We investigate the Typical Periodic Optimization (TPO) Conjecture and freezing phase transitions for Dini potentials and beyond. We show that the TPO conjecture fails already at Dini regularity. We identify a sharp regularity threshold for freezing phase transitions, which we call weak log-log Dini. Both results reveal the stability of aperiodic ground states. Moreover, we identify Dini regularity as the sharp threshold for a quantitatitve analog of the freezing phase transition. 
    \end{abstract}

	\maketitle

    \setcounter{tocdepth}{1} 
	\tableofcontents

\section{Introduction}

In this article, we investigate the role of the regularity of potential functions for ergodic optimization properties. More specifically, we investigate the Typical Periodic Optimization (TPO) Conjecture and freezing phase transitions for Dini potentials and beyond. 

Dini potentials sit naturally between Hölder and merely continuous potentials. Thermodynamic formalism, including the Ruelle--Perron--Frobenius theorem and convergence results for transfer operators, extends to this intermediate regularity class beyond H\"{o}lder regime (cf.~\cite{FJ01a,FJ01b}). The Dini regularity has a rich history in analysis, originating in the classical Dini test for Fourier series (cf.~\cite{Zy59}) and also showing up in elliptic regularity and free-boundary problems (cf.~\cite{ALS13,MP11}). We explore some basic properties in ergodic optimization for Dini potentials.

\begin{theorem*}[Ergodic optimization properties for Dini potentials]
    Let $(\Sigma,\sigma)$ be the full shift on $\Sigma\=\{0,\,1\}^{\N}$ equipped with the metric $d_2$ and $\omega$ be a modulus of continuity satisfying the Dini condition and $\omega(\diam_{d_2}\Sigma)>0$. Then the following hold:
    \begin{enumerate}
        \smallskip
        \item For each $\omega$-continuous function $\phi\: \Sigma\to\R$, $\phi$ does not admit super-exponential suppression, i.e., for each nonempty open subset $U$ of $\Sigma$, the mass of the equilibrium state $\mu_{\beta\phi}$ on $U$ does not decay super-exponentially along any subsequence of $\beta$ converging to $+\infty$.  Consequently, every $\omega$-continuous function $\phi$ does not admit a freezing phase transition. 

        \smallskip
        \item For a generic $\omega$-continuous function $\phi\: \Sigma\to\R$, the equilibrium state $\mu_{\beta\phi}$ as $\beta\to+\infty$ weak$^*$ converges to the unique maximizing measure $\mu$ for $\phi$, obeying a large deviation principle. 
        \smallskip
        \item TPO may not hold for the space of $\omega$-continuous functions $H^{\omega}(\Sigma,\R)$. More explicitly, there may exist a nonempty open subset of $H^{\omega}(\Sigma,\R)$ in which every potential does not have a periodic maximizing measure. 
    \end{enumerate}
\end{theorem*}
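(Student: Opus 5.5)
We treat the three items separately. Each rests on one structural fact about Dini moduli: the Bowen-type distortion sum $\sum_{k\ge 0}\omega(2^{-k})$ is finite. Recall that $d_2(x,y)=2^{-n}$ when $x,y$ first differ at coordinate $n$. Dini means $\int_0 \omega(t)\,dt/t<\infty$, which is equivalent to $\sum_k\omega(2^{-k})<\infty$. Hence every $\phi\in H^{\omega}(\Sigma,\R)$ has summable variations, $V(\phi)\=\sum_{n\ge 1}\var_n\phi<\infty$, and for points $x,y$ sharing a common $n$-cylinder,
\begin{equation*}
\abs{S_n\phi(x)-S_n\phi(y)}\le C_\phi .
\end{equation*}

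\textbf{Item (i).} The plan is to derive a uniform Gibbs-type lower bound for $\mu_{\beta\phi}$ and then insert a cylinder. First, the Ruelle--Perron--Frobenius theory for summable variation (Walters; Fan--Jiang~\cite{FJ01a,FJ01b}) gives, for every $\beta>0$, the unique equilibrium state $\mu_{\beta\phi}$ with the Gibbs property
\begin{equation*}
\mu_{\beta\phi}([w])\ge e^{-2\beta C_\phi}\,e^{\beta S_n\phi(x)-nP(\beta\phi)}, \qquad x\in[w],\ \abs{w}=n.
\end{equation*}
The key point is that the distortion constant scales linearly in $\beta$, namely as $\beta V(\phi)$. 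Next, fix a cylinder $[w]\subseteq U$ and a periodic orbit $\mathcal{O}$ of period $p$ with $[w]$ on it, say the orbit of $\overline{w}$. Estimate $\mu_{\beta\phi}$ of the length-$kp$ cylinder $[w^k]$ from below by $\exp\bigl(\beta kp\int\phi\,d\mu_{\mathcal O}-kpP(\beta\phi)-2\beta C_\phi\bigr)$. Since $P(\beta\phi)\le \beta\beta(\phi)+\log 2$, where $\beta(\phi)$ is the ergodic maximum, choosing $k=1$ yields
\begin{equation*}
\mu_{\beta\phi}(U)\ge e^{-c\beta}
\end{equation*}
with $c$ independent of $\beta$. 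This is exponential, not super-exponential, decay along every sequence $\beta\to+\infty$. A freezing phase transition would force some open set off the support of the ground state to have mass decaying like $e^{-e^{\beta}}$, or the pressure to become affine. I would argue the latter is impossible via this lower bound: affine pressure $P(\beta\phi)=\beta\beta(\phi)+h$ for large $\beta$ makes the equilibrium states equal to the ground state, whose support must then be all of $\Sigma$ by the Gibbs bound. That is incompatible with the zero-temperature limit concentrating, except in the degenerate case where $\phi$ is cohomologous to a constant, which is excluded by the definition of a transition. I expect the main care here to be matching the paper's precise definition of freezing phase transition.

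\textbf{Item (ii).} This is a Baire argument. The set of $\phi$ with a unique maximizing measure is a dense $G_\delta$ in any separable Banach space of continuous functions that is dense in $C(\Sigma)$ in the appropriate sense (Jenkinson's argument). Since $H^\omega$ contains locally constant functions, it qualifies. Uniqueness of the maximizing measure plus existence of equilibrium states implies weak$^*$ convergence of $\mu_{\beta\phi}$ to it. The large deviation principle with rate function $I(\nu)=\beta(\phi)-\int\phi\,d\nu$-type, obtained via Varadhan/contraction using the pressure function $\beta\mapsto P(\beta\phi)$ and uniqueness, follows from the standard Baraviera--Lopes--Thieullen scheme. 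That scheme requires only summable variation, which item (i)'s estimates provide.

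\textbf{Item (iii).} This is a construction and is the main obstacle. I would choose a Dini modulus $\omega$ that is not Hölder, e.g.\ $\omega(t)=(\log(1/t))^{-2}$ near $0$, and a Sturmian orbit closure $K$. The aim is to build $\phi_0\in H^{\omega}$ with $\phi_0=0$ on $K$ and $\phi_0(x)\le-\omega(d(x,K))$-scaled off $K$, so that $K$ is the unique maximizing set. The goal is robustness: for $\psi$ with $\norm{\psi-\phi_0}_{\omega}<\epsilon$, every periodic orbit $\mathcal O$ of period $p$ satisfies
\begin{equation*}
\int\psi\,d\mu_{\mathcal O}<\int\psi\,d\mu_K .
\end{equation*}
The mechanism, following the Bousch/Quas--Siefken style ``aperiodic ground state stability,'' is that a period-$p$ orbit stays at distance at least about $1/p$ from $K$ for a proportion $\gtrsim 1/\log p$ of its time; this is a Sturmian combinatorics gap estimate. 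Its penalty is therefore at least about $\omega(1/p)/\log p$ compared to the average. The perturbation's gain is at most $\epsilon\sum_{j\le\log p}\omega(2^{-j})/p$-ish by a shadowing/Bowen estimate, using the Dini tail. The hardest step is balancing these two quantities so that the penalty dominates uniformly in $p$. Dini summability is what makes the gain term small, while the slow decay of $\omega$ keeps the penalty large. I would first try a modulus $\omega(t)=\log(1/t)^{-1-\delta}$ and verify the inequality numerically in $p$ before committing.
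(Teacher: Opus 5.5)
\textbf{Item (iii) is the decisive gap.} You do not prove it, and the heuristic you propose to balance is false.

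Take $w=\vartheta^k(0)$ with $k\ge 2$. Then $ww$ is a prefix of $\mathbf f$, so every point of the orbit $\cO$ of $\overline{w}$ (period $p=\abs{w}$) has shadow length at least $p$ and lies within $2^{-p}$ of $K$. For such orbits:
\begin{itemize}
\item the penalty $\int\omega(d_2(\cdot,K))\,\mathrm{d}\mu_{\cO}$ is at most $\omega(2^{-p})$, about $p^{-\alpha}$ for $\omega_\alpha$, which is far below your claimed $\omega(1/p)/\log p$;
\item the possible gain $\epsilon(\int g\,\mathrm{d}\mu_{\cO}-\int g\,\mathrm{d}\mu_K)$ can be of the same order $p^{-\alpha}$, because $D_n(\mu_{\cO},\mu_K)$ is close to $2$ once $n\gg p$.
\end{itemize}
So penalty and gain cannot be compared as functions of the period. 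What is needed is the orbit-by-orbit inequality $W_\omega(\nu_p,\mu_K)\le C\int\omega(d_2(\cdot,K))\,\mathrm{d}\nu_p$ of Proposition~\ref{p: transport gap}, with $C$ uniform over all periodic orbits.

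The paper proves this inequality in four steps. It cuts the orbit at Bruin--Leplaideur accidents, whose residuals are bi-special words of length $q_n$ and satisfy $b_j\ge c_3(r_{j-1}+1)$. It shadows each piece inside $K$ to get $\eta_z$ with $W_\omega(\nu_p,\eta_z)$ bounded by the penalty. It reduces $W_\omega(\eta_z,\mu_K)$ to discrepancies at the bi-special scales $q_m+1$. Finally, it controls those discrepancies by repaired circulations on the two-loop Rauzy graphs $\Gamma_m$, whose stable coordinate is multiplied by $(1-\sqrt5)/2$ under $(\pi_m)_\#$. Propagating these errors across scales is where the stable condition enters. For $\omega_\alpha$ it holds only when $\alpha<2$, so your first candidate $(\log(1/t))^{-2}$ lies exactly outside the range of this argument; the usable Dini window is $\alpha\in(1,2)$.

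\textbf{Items (i) and (ii) also have holes.}

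In (i), your Gibbs lower bound with constants $e^{O(\beta)}$ is correct. It gives the absence of super-exponential suppression more cheaply than the paper's invariant tropical Gibbs density, which yields the sharper bound $\sup_U b$. However, ``incompatible with the zero-temperature limit concentrating'' is not an argument: fully supported maximizing measures do occur (for generic continuous potentials), so regularity must be used again. There are two ways to do this.
\begin{itemize}
\item The paper uses Bousch's continuous sub-action: $\phi+v-v\circ\sigma\le Q(\sigma,\phi)$, with equality only on a proper closed set unless $\phi$ is cohomologous to a constant, and maximizing measures live on that set.
\item With your own tools: if $\mu_{\beta\phi}=\mu_{\beta'\phi}$ for some $\beta\neq\beta'$, comparing the two Gibbs bounds forces $\abs{S_n\phi(x)-nc}\le C$ for all $x,n$. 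Then every periodic measure, hence every invariant measure, gives $\phi$ the same integral, so the equilibrium state is the measure of maximal entropy at every temperature and there is no transition.
\end{itemize}

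In (ii), there are three problems.
\begin{itemize}
\item The large deviation principle is one for $\mu_{\beta\phi}$ on $\Sigma$, with a rate function on points (in the paper $I=-(v_0\otimes b_0)$). Your functional $Q(\sigma,\phi)-\int\phi\,\mathrm{d}\nu$ lives on measures. Varadhan/contraction from $\beta\mapsto P(\beta\phi)$ carries no information about where in $\Sigma$ the mass of $\mu_{\beta\phi}$ sits.
\item The Baraviera--Lopes--Thieullen scheme needs more than summable variations. It needs $\beta^{-1}\log u_{\beta\phi}$ and the functionals $l^m_\beta$ to converge along the whole family, that is, uniqueness up to constants of calibrated sub-actions and of eigendensities when the maximizing measure is unique. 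For Dini potentials this is exactly what Propositions~\ref{p: representation} and~\ref{p: condition_for_uniqueness} establish via the Ma\~{n}\'{e} potential and the Aubry set. Your Gibbs bounds do not supply it.
\item Jenkinson's genericity theorem needs $H^\omega$ to be dense in $C(\Sigma,\R)$; separability is neither needed nor true for $H^\omega$. Density fails when $\omega$ vanishes on some $[0,t_0]$: then $H^\omega$ consists of functions of the first $n$ symbols only, and the paper treats that case separately (Remark~\ref{r: generic for symbolic}).
\end{itemize}
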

Here statement~(i) follows from Theorem~\ref{t: Dini quanta no freezing}~(i), statement~(ii) follows from Theorem~\ref{t: Dini quanta no freezing}~(ii) and Remark~\ref{r: generic for symbolic}, and statement~(iii) follows from Theorem~\ref{t: non TPO}. 

Formulated as a generalization of the Yuan--Hunt Conjecture~\cite{YH99} in the spirit of Hunt and
Ott~\cite{HO96a,HO96b}, the TPO Conjecture asserts that for a suitably hyperbolic dynamical system, a Banach space consisting of suitably regular continuous potentials contains an open and dense subset in which every potential has a periodic maximizing measure, that is, the support of the maximizing measure is a periodic orbit \cite[Conjecture~7.3]{Je19}. In (iii), we show that there exists a modulus of continuity $\omega$ satisfying the Dini condition such that the TPO Conjecture fails for $H^\omega(\Sigma,\R)$ (see Theorem~\ref{t: non TPO} for more details).

Stability of aperiodic structures, as indicated by (iii), is a natural phenomenon in the physical world. Since the discovery of quasicrystals~\cite{SBGC84}, which are spatially ordered but aperiodic, a fundamental problem in statistical mechanics has been whether quasicrystalline ground states can arise from microscopic interactions and remain stable under both changes in temperature and the underlying interaction (see \cite{Mi24} for a concise overview). In 1990, van Enter and Mi\k{e}kisz \cite{EM90} provided a rigorous example of quasicrystalline ground states stable under small temperature variations. This low-temperature stability phenomenon also appears in dynamical systems in the particularly pronounced form of \emph{freezing phase transitions}, first investigated by Bruin and Leplaideur \cite{BL13}: below a finite temperature, the equilibrium state ceases to vary and coincides with the ground state. 

Here we say that a continuous potential $\phi\: X\to\R$ admits \emph{super-exponential suppression} if there exists some equilibrium state $\mu_{\beta\phi}$ for $\beta\phi$ for each $\beta\in(0,+\infty)$ and a nonempty open subset $U$ of $X$ such that $\liminf_{\beta\to+\infty}\frac{1}{\beta}\log\mu_{\beta\phi}(U)=-\infty$. The family $\{\mu_{\beta\phi}\}_{\beta\in (1,+\infty)}$ is said to satisfy the \emph{large deviation principle}, which provides a \emph{rate function} $I\: X\to [0,+\infty]$ such that the exponential decay rate of $\mu_{\beta\phi}$ is given by $-\inf_{x\in U}I(x)$ (see Definition~\ref{d: ldp} for a precise definition). To prove (i) and~(ii), we need the full strength of the theory of tropical thermodynamic formalism developed in Section~\ref{s: TTF for Dini}. A key ingredient in our proof of the failure of TPO in (iii) is the notion of accident introduced by Bruin and Leplaideur in their work on freezing phase transitions \cite{BL13,BL15}. 

We further identify sharp regularity thresholds for both super-exponential suppression and freezing phase transitions, with the former as an asymptotic quantitative relaxation of the latter: while the
Dini condition is the threshold for super-exponential suppression, the threshold for freezing phase transitions is finer, given by a
condition we call \emph{weak log-log Dini} (see (\ref{eq: key threshold})).

\begin{theorem*}[Freezing thresholds]
     Let $\omega\:[0,+\infty)\to[0,+\infty)$ be a modulus of continuity.
    \begin{enumerate}
        \smallskip
        \item The modulus of continuity $\omega$ satisfies the Dini condition if and only if for every mixing expanding covering map $T\: X\to X$ on a compact metric space $(X,d)$ and every $\omega$-continuous function $\phi\:X\to\R$, $\phi$ does not admit super-exponential suppression. 

        \smallskip
        \item The modulus of continuity $\omega$ satisfies the weak log-log Dini condition if and only if for every mixing expanding covering map $T\: X\to X$ on a compact metric space $(X,d)$ and every $\omega$-continuous function $\phi\: X\to\R$, $\phi$ does not admit a freezing phase transition.
    \end{enumerate}
\end{theorem*}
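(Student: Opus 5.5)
Each of the two equivalences splits into a sufficiency direction (the regularity condition rules out the phenomenon for every mixing expanding covering map) and a necessity direction (if the condition fails, we build a counterexample). For the counterexamples it is enough to work on the full shift $(\Sigma,\sigma)$ with the metric $d_2$, since this is a mixing expanding covering map. The standing definitions: $\omega$ is Dini if $\int_0^1\omega(t)/t\,dt<\infty$, equivalently $\sum_n\omega(\lambda^{-n})<\infty$ for some (any) $\lambda>1$. The weak log-log Dini condition is the condition (\ref{eq: key threshold}), which I expect to be a summability of $\omega$ along a super-exponentially sparse scale, roughly a condition of the form $\liminf$ or $\sum$ of $n\,\omega(\lambda^{-n})$ weighted by $\log n$.

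\emph{Sufficiency in (i).} If $\omega$ is Dini, then for any $\omega$-continuous $\phi$ the Birkhoff sums have bounded distortion on $n$-cylinders: $|S_n\phi(x)-S_n\phi(y)|\le\sum_{k\ge0}\omega(C\lambda^{-k})=:D<\infty$ for $y$ in the $n$-cylinder of $x$. Fix a nonempty open $U$ and a periodic point $p$ whose orbit passes through a cylinder inside $U$. Using the Gibbs-type estimates for $\mu_{\beta\phi}$ (the Dini-regime Ruelle--Perron--Frobenius theory, or the tropical formalism of Section~\ref{s: TTF for Dini}), I would show that $\mu_{\beta\phi}(U)$ is at least a constant times $\exp\bigl(\beta(S_n\phi(z)-n\,\alpha(\phi))-\beta D'\bigr)$ for a suitable orbit segment $z$ through $U$ shadowing a maximizing orbit. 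The key point is that the distortion constant is uniform in $\beta$ once it is multiplied by $\beta$, so $\frac1\beta\log\mu_{\beta\phi}(U)$ stays bounded below. This is essentially Theorem~\ref{t: Dini quanta no freezing}~(i), extended verbatim from the shift to general mixing expanding covers via Markov-partition coding.

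\emph{Necessity in (i).} If $\omega$ is not Dini, then $\sum\omega(2^{-n})=\infty$, and I would build a Bruin--Leplaideur type potential $\phi(x)=-\omega(2^{-n(x)})$, where $n(x)$ is the distance of $x$ to the fixed point $0^\infty$. Orbits leaving a neighbourhood of $0^\infty$ must then pay an unbounded accumulated cost relative to staying near $0^\infty$. Comparing partition functions, I would show that $\frac1\beta\log\mu_{\beta\phi}([1])\to-\infty$.

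\emph{Sufficiency in (ii).} Here the plan is to refine the Gibbs lower bound: freezing requires $\mu_{\beta\phi}(U)=0$, or the equilibrium state becoming equal to the maximizing measure, for all large $\beta$. To exclude this under the weak log-log Dini condition, I would compare entropy and energy. An excursion of length $n$ away from the ground state gains entropy of order $\log n$ through combinatorial choices, while it loses energy at most $\beta\,\Omega(n)$, where $\Omega$ is the partial sum of $\omega$. The condition should make this trade-off favourable for every finite $\beta$ along some scale, so the pressure strictly exceeds $\beta\alpha(\phi)$. Hence no freezing occurs. \emph{Necessity in (ii).} When the condition fails, I would tune the Bruin--Leplaideur-type potential so that the pressure of the excursion system is strictly dominated by the ground-state value for large $\beta$, which gives a genuine freezing phase transition.

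I expect the main obstacle to be the exact matching of the entropy gain $\log n$ against $\beta\,\Omega(n)$ in (ii), in both directions. This is where the log-log scale enters, and it must be uniform over all mixing expanding covers rather than just on the shift.
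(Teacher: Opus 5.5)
\textbf{The gap is in sufficiency for (ii).} Your target, $P(T,\beta\phi)>\beta\alpha(\phi)$ for every $\beta$, does not rule out freezing. A frozen state must be a $\phi$-maximizing measure $\mu$, but it may have positive entropy. In that case $P(T,\beta\phi)\ge h_\mu+\beta\alpha(\phi)>\beta\alpha(\phi)$ holds for all $\beta$ even though $\mu$ is frozen; the plateau potential of Theorem~\ref{t: freezing for not log Dini} is exactly such an example.

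What you must show is that every maximizing $\mu$, including one of positive entropy, is beaten. That is, you need invariant $\mu_n$ with $h_{\mu_n}>h_\mu$ and $(h_{\mu_n}-h_\mu)/(\int\phi\,\mathrm{d}\mu-\int\phi\,\mathrm{d}\mu_n)\to+\infty$ (Lemma~\ref{l: a general non-freezing criterion}). ``Excursions away from the ground state'' only make sense when the ground state is a periodic orbit; in general you have to perturb $\mu$ itself.

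The paper does this as follows. It codes $T$ by a mixing subshift of finite type, draws $2n$-words from the lift $\mu'$ of $\mu$, and inserts a block $J_n$ of length $l_n$ from a non-maximizing periodic orbit at one of $\asymp n/l_n$ separated positions. The $\log n$ entropy gain is genuine only if the insertion position can be recovered from the new word, i.e.\ only if $J_n$ is rare under $\mu'$. This is Lemma~\ref{l: maximizing measure not much weight on nonmaximizing block}: by Kac's formula and concavity of $t\mapsto\widetilde{\omega}_{\lambda,t}(1)$, one gets $\mu'([J_n])<l_n/n^3$ once $l_n$ times the deficit of the periodic orbit exceeds $2\abs{\phi}_\omega\widetilde{\omega}_{\lambda,n^3}(\Lip\pi)$ plus a constant.

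One then needs a subsequence $n_k$ along which both $\widetilde{\omega}_{\lambda,n_k^3}=o(\log n_k)$ and $l_{n_k}=o(\log n_k)$. This is where the exact form of the condition enters: it is $\liminf_n\widetilde{\omega}_{\lambda,n}(1)/\log n=0$ (Lemma~\ref{l: associated modulus rate}), transferred to the sparse scale $n^3$ by Lemma~\ref{l: subseqeunce for log Dini}. Your guessed form of the weak log-log Dini condition is not this. None of these ingredients appears in the proposal, and without them the entropy count fails for positive-entropy ground states.

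\textbf{The other three directions are sound in outline.}

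For sufficiency in (i), a direct Gibbs bound is a valid and more elementary route than the tropical one. Use $e^{-\beta K_1-K_2}\le u_{\beta\phi}\le e^{\beta K_1+K_2}$ (proof of Lemma~\ref{l: normality and zero limit of eigenfunction}), $m_{\beta\phi}(B(z,\xi))\ge e^{-O(\beta)}$ (by mixing), and $P(T,\beta\phi)\le\beta\alpha(\phi)+h_{\topo}(T)$. Then a single fixed pull-back $T_x^{-n}(B(T^n(x),\xi))\subseteq U$ already gives $\liminf_\beta\beta^{-1}\log\mu_{\beta\phi}(U)>-\infty$. No shadowing of maximizing orbits is needed, and you lose only the tropical density and the LDP, which this statement does not need.

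For necessity in (i), your potential is the right one (a special case of Theorem~\ref{t: super-exponential suppression for non-Dini}). However, ``comparing partition functions'' has to become the induced-system estimate of Section~\ref{s: super-exponential suppression beyond Dini}. Let $p_n$ be the return-time law to $[1]$ and $\Phi_n=\omega(1)+\widetilde{\omega}_{2,n-1}(1)\to+\infty$. Nonnegativity of the pressure of an equilibrium state gives $\beta\sum_n p_n\Phi_n\le H(p)$. Gibbs' inequality against $Z_\beta=\sum_n e^{-\beta\Phi_n-ne^{-2R\beta}}\to0$ then forces $\sum_n np_n\ge e^{R\beta}$, hence $\mu([1])\le e^{-R\beta}$ by Kac.

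For necessity in (ii), the same fixed-point potential works with no tuning, and this is simpler than the paper's Theorem~\ref{t: freezing for not log Dini}. Failure of weak log-log Dini gives $\widetilde{\omega}_{2,n}(1)\ge c\log(n+1)$. Hence $\sum_n e^{-\beta\Phi_n}\le e^{-\beta\omega(1)}\bigl(1+\sum_{n\ge2}n^{-\beta c}\bigr)<1$ for large $\beta$. By Abramov's formula, every invariant $\mu\neq\delta_{0^\infty}$ then has $h_\mu+\beta\int\phi\,\mathrm{d}\mu<0$. So $\delta_{0^\infty}$ is the unique equilibrium state for all large $\beta$ but not for small $\beta$. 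The paper's aperiodic positive-entropy $K$ gives freezing at a quasicrystal-like state, which the equivalence itself does not require.
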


Here statement~(i) follows from Theorems~\ref{t: Dini quanta no freezing} and~\ref{t: super-exponential suppression for non-Dini}, and statement~(ii) follows from Theorems~\ref{t: non-freezing for log Dini} and~\ref{t: freezing for not log Dini}.

\subsection{Backgrounds}

A remarkable phenomenon, suggested by numerical experiments of Hunt and Ott in the 1990s \cite{HO96a,HO96b}, is that for a chaotic map $T$, a typical potential $\phi$ within a sufficiently regular class, such as the class of smooth or H\"{o}lder potentials, has a unique maximizing measure supported on some $T$-periodic orbit $\cO$, i.e., the $\phi$-average $\frac{1}{\card \cO}\sum_{x\in\cO}\phi(x)$ is strictly larger than the $\phi$-average $\int\!\phi\,\mathrm{d}\mu$ with respect to any $T$-invariant Borel probability measure $\mu$ not supported on $\cO$. Hunt and Ott conjectured in \cite{HO96a, HO96b} that this phenomenon is typical in a probabilistic sense, while Yuan and Hunt \cite{YH99} subsequently formulated a topological analog, that for a uniformly expanding map or an Axiom A diffeomorphism $T$, the maximizing measure is unique and periodic for an open and dense subset of Lipschitz potentials. These two conjectures are among the central problems in \emph{ergodic optimization}, the study of maximizing measures; for comprehensive surveys, see \cite{Boc18,Je06,Je19}. Progress towards the Hunt--Ott Conjecture has been made for H\"{o}lder potentials over full shifts by \cite{BZ16,DLZ24}, and for smooth potentials over circle expanding maps by \cite{GSZ25}. The Yuan--Hunt conjecture has stimulated a sustained period of work (see \cite{Bou01,Bou08,BQ07,CLT01,Mo08,QS12}) and was eventually proved by Contreras \cite{Co16} for expanding maps and by Huang, Lian, Ma, Xu, and Zhang \cite{HLMXZ25} for the remaining uniformly hyperbolic cases. Going beyond the setting of uniform hyperbolicity, Li and Zhang \cite{LZ25} proved an analogous result for expanding Thurston maps from complex dynamics; Hao, Huang, Jenkinson, and Li \cite{HHJL25} showed that beta-transformations $T_{\beta}$ has TPO in all H\"{o}lder spaces for Lebesgue almost every $\beta>1$ and for a residual set of $\beta$; Huang, Jenkinson, and Li \cite{HJL25} investigated the Gauss map and established TPO for essentially compact potentials; Huang, Jenkinson, Xu, and Zhang \cite{HJXZ26a} established TPO for a wide class of shift spaces.

These developments led naturally to the broader TPO Conjecture \cite[Conjecture~7.3]{Je19} stated earlier, which generalizes the Yuan–Hunt setting to suitably hyperbolic systems and Banach spaces of suitably regular continuous potentials. While the positive results mentioned above were obtained for H\"{o}lder or Lipschitz potentials, the TPO Conjecture fails in the $C^0$ category: for systems with Bowen's specification property, a generic continuous potential has a unique fully supported maximizing measure, which is therefore not periodic (see \cite{BJ02, Bre08, JM08,Mo10}). The above literature has demonstrated the following natural theme:
\begin{quote}
Changing the regularity of the potentials can fundamentally alter the behavior of maximizing measures.
\end{quote}
This theme also appears in the study of \emph{exposure problem} (cf.~\cite[Problem~3.9]{Je06}) in ergodic optimization. In \cite{HJLZ26}, it is shown that no potential that has Haar measure as the unique maximizing measure belongs to the Wiener algebra. 

The intuition that periodic structures are typical has long guided condensed matter physics: crystals were generally expected to be periodic \cite{Cah01,Da14}. However, the discovery of quasicrystals \cite{SBGC84} showed that crystals may be not periodic, prompting the study of the microscopic mechanism of occurrence of quasicrystalline ground states. The stability of such aperiodic ground states, both under temperature variations \cite{EM90} and under perturbations of the underlying interaction \cite{Mi97,EM20}, was subsequently investigated by van Enter and Mi\k{e}kisz. 

Phase transitions constitute a central theme in statistical mechanics, with deep connections to probability theory and dynamical systems through phenomena such as universality and critical behavior (see e.g.~\cite{Gol92,Ru78,PY98,Sin72}), referring to abrupt changes of the physical systems with respect to the parameters (e.g., the temperature). These changes correspond to nonanalyticities of the energy and qualitative shifts in the structure of the Gibbs states. In physics, the energy (Hamiltonian) is usually expressed in terms of an external field and an interaction, and the Gibbs state is a probability measure over the configuration space that minimizes the free energy. In dynamical systems, the energy is the \emph{pressure function} and the Gibbs state (known as the \emph{equilibrium state}) is an invariant probability which maximizes the \emph{measure-theoretic pressure}.

Around the early 1970s, inspired by statistical mechanics, Sinai, Bowen, and Ruelle introduced thermodynamic formalism into ergodic theory (see~\cite{Sin72, Bow75, Ru78}). To be more precise, let $T\: X\to X$ be a continuous map on a compact metric space $(X,d)$, and $\phi\: X\to\R$ be a continuous function. The measure-theoretic pressure is defined by
\begin{equation*}
	P_\mu(T,\phi)\=h_{\mu}(T)+\int\!\phi\,\mathrm{d}\mu,
\end{equation*}
where $h_\mu(T)$ is the \emph{measure-theoretic entropy} and $\mu$ is a $T$-invariant Borel probability measure. The measure $\mu$ is called an equilibrium state if $\mu$ maximizes $P_\mu(T,\phi)$ and the maximum is the \emph{topological pressure}, which we denote by $P(T,\phi)$. The function $\beta\to P(T,\beta\phi)$ is called the pressure function, where $\beta$ is a real parameter and is known as the \emph{inverse temperature}.

Inspired by the phase transitions associated with the Hofbauer potential \cite{Hofb77} and Pomeau--Manneville-type systems \cite{BLL12}, Bruin and Leplaideur \cite{BL13} initiated the study of the freezing phase transition, a particularly pronounced form of the low-temperature stability. More precisely, a continuous potential $\phi\:X\to\R$ over a topological dynamical system $T\: X\to X$ is said to \emph{admit a freezing phase
transition} if there exists a temperature $t_c\in (0,+\infty)$ such that the sets $\fE(T,\phi/t_1)$ and $\fE(T,\phi/t_2)$ of equilibrium states coincide for all $t_1,\,t_2\in (0,t_c)$, while $\fE(T,\phi/t)\neq\fE(T,\phi/t')$ for all $0<t<t_c<t'$. We say that a $T$-invariant Borel probability measure $\mu$ is a \emph{frozen state} for $\phi$ if there exists a temperature $t_c\in (0,+\infty)$ such that $\mu\in\fE(T,\phi/t)$ for all $t\in(0,t_c)$. While the earlier works \cite{BLL12,Hofb77} can be viewed retrospectively as prototypes of freezing at fixed points, a central motivation behind the work of Bruin and Leplaideur \cite{BL13,BL15} was to investigate whether freezing can occur at aperiodic invariant measures that model quasicrystals. They proposed classes of potentials freezing onto the Thue--Morse subshift and the Fibonacci subshift, respectively \cite{BL13,BL15}, with a recent treatment of the Thue--Morse subshift case given by B\'edaride, Cassaigne, Huber, and Leplaideur in \cite{BCHL25}. More recently, Buzzi, Kloeckner, and Leplaideur \cite{BKL23}, Chazottes, Kucherenko, and Quas \cite{CKQ25}, and Hedges \cite{He24} have extended the scope considerably, realizing freezing at a large variety of prescribed invariant measures.

\subsection{Stability of aperiodic ground states}

We begin with the following result, which is the main consequence of tropical thermodynamic formalism for Dini potentials. It serves as the common starting point from which we develop deeper results of this article: the failure of TPO and the freezing thresholds.

\begin{thmx}[Logarithmic-type zero-temperature limits for Dini potentials]\label{t: Dini quanta no freezing}
    Let $T\: X\to X$ be a mixing expanding covering map on a compact metric space $(X,d)$, $\omega$ be a modulus of continuity satisfying the Dini condition, and $\phi\: X\to\R$ be an $\omega$-continuous function.
    Then the following hold:
    \begin{enumerate}
        \smallskip
        \item  For each nonempty open subset $U$ of $X$, there exists an invariant tropical Gibbs density $b\:X\to [-\infty,0]$ for $\phi$ such that 
     \begin{equation}\label{ineq: quantative non-freezing for dini}
        \liminf_{\beta\to+\infty}\beta^{-1}\log(\mu_{\beta\phi}(U))\geq \sup_{x\in U}b(x)>-\infty,
     \end{equation}
     where $\mu_{\beta\phi}$ is the unique equilibrium state for the potential $\beta\phi$. In particular, $\phi$ does not admit super-exponential suppression. Consequently, $\phi$ does not admit a freezing phase transition. 

        \smallskip
        \item If $\phi$ has a unique maximizing measure, then the family of equilibrium states $\{\mu_{\beta\phi}\}_{\beta\in (1,+\infty)}$ satisfies the large deviation principle as $\beta\to+\infty$ with rate function $I\: X\to[0,+\infty]$, where $-I$ is an invariant tropical Gibbs density. Consequently, this property holds for a generic subset of $H^{\omega}(X,\R)$ if $d_{\omega}(\,\cdot\,,\,\cdot\,)\=\omega(d(\,\cdot\,,\,\cdot\,))$ is a metric on $X$. 
    \end{enumerate}
\end{thmx}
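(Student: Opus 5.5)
The plan is to work through the tropical (zero-temperature) limit of the transfer operator. For each $\beta$, the Ruelle--Perron--Frobenius theorem for Dini potentials (cf.~\cite{FJ01a,FJ01b}) gives an eigenfunction $h_\beta>0$ of $\cL_{\beta\phi}$, an eigenmeasure $m_\beta$, and the decomposition $\mu_{\beta\phi}=h_\beta m_\beta$. I would take logarithms and rescale, setting $u_\beta\=\beta^{-1}\log h_\beta$ and $Q\=\lim_{\beta\to+\infty}\beta^{-1}P(T,\beta\phi)$, the maximal ergodic average. The point of the Dini condition is that the distortion $\sum_{k\geq 0}\omega(\lambda^{-k}r)$ along inverse branches is finite, and it tends to $0$ as $r\to 0$. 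Since $h_\beta$ is built from normalized iterates of $\cL_{\beta\phi}$, this gives a $\beta$-independent modulus of continuity for $u_\beta$. With a uniform bound this yields equicontinuity, so Arzel\`a--Ascoli produces subsequential limits $u$ satisfying the tropical eigen-equation
\begin{equation*}
u(x)=\max_{T(y)=x}\bigl(\phi(y)+u(y)\bigr)-Q,
\end{equation*}
a calibrated sub-action. Dually, I would let $v_\beta\=\beta^{-1}\log m_\beta(B(\cdot,r))$ in a suitably localized form and extract a tropical conformal density $v$. I expect the invariant tropical Gibbs density for $\phi$ to be, up to normalization, $b=u+v$ (the tropical analogue of $h\,\mathrm{d}m$). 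Its properties would come from Section~\ref{s: TTF for Dini}: it takes values in $[-\infty,0]$, attains $\max b=0$ on the Mañé/Aubry set, and is finite on a dense set.

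For (i), I would fix an open $U$ and choose a cylinder (an $n$-th inverse-branch image of a ball) $C\subseteq U$. Using the Gibbs property $m_\beta(C)\asymp \exp(\beta S_n\phi(y)-nP(T,\beta\phi))$, with constants controlled uniformly in $\beta$ by the Dini distortion bound (the distortion for $\beta\phi$ is $\beta D$, so it contributes only a factor $e^{\beta D}$, which is harmless after dividing by $\beta$), I get $\liminf\beta^{-1}\log\mu_{\beta\phi}(C)\geq S_n\phi(y)-nQ+\inf_C u+\cdots>-\infty$. The key step is to pass along subsequences: any subsequence yields some limit $b$, so the lower bound $\sup_U b$ holds along it. To obtain a single $b$ for the full $\liminf$, I would choose a subsequence realizing the $\liminf$ and then extract a further convergent sub-subsequence. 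The non-freezing conclusion follows because freezing at $\mu$ forces $\mu_{\beta\phi}=\mu$ for large $\beta$, hence $\mu(U)=0$ for $U$ off the support of $\mu$. That support is a proper closed set, since a fully supported equilibrium state would be the measure of maximal entropy-type Gibbs measure and would vary with $\beta$. This contradicts finiteness of the lower bound, which forces positive mass on every open set.

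For (ii), uniqueness of the maximizing measure should make the calibrated sub-action unique up to constants on the Aubry set, hence make $u$ and $v$ unique. The limit $b$ is then independent of the subsequence, and I would set $I\=-b$. The LDP upper bound on closed sets would follow from the same Gibbs estimates, now giving $\limsup\beta^{-1}\log\mu_{\beta\phi}(F)\leq\sup_F b$, with uniform convergence handling compactness. Genericity would follow from the standard result that a unique maximizing measure is generic in any separable Banach space of continuous functions that is dense in $C(X)$. Here $H^\omega$ contains the Lipschitz functions for the metric $d_\omega$, and I would pass to a separable closed subspace or use the residual argument directly (cf.~Remark~\ref{r: generic for symbolic}).

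The main obstacle is the uniform-in-$\beta$ equicontinuity of $u_\beta$ and the matching two-sided Gibbs bounds, because these are exactly where the Dini summability must replace the Hölder geometric series. Uniqueness of the limit in (ii) is a secondary difficulty.
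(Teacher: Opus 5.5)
Your plan for the first inequality in (i) follows the paper's architecture: normality of $\beta^{-1}\log u_{\beta\phi}$, a subsequence realizing the $\liminf$, a further convergent subsequence, and $b=u+v$ with $u$ a calibrated sub-action and $v$ a tropical eigendensity. The paper obtains $v$ differently, as the density of a limit of the functionals $l^m_\beta$ via Proposition~\ref{p: dual space equals completion}, not from localized ball masses. Your version can be made to work for (i), but you must check that $v$ is an upper semicontinuous, not identically $-\infty$, solution of $v=v\circ T+\phi-Q(T,\phi)$, so that Theorem~\ref{t: existence}~(iv) applies.

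The genuine gap in (i) is the deduction of non-freezing. Your reason for $\supp\mu\neq X$ is false: every $\mu_{\beta\phi}$ is a fully supported Gibbs measure, so full support cannot be what excludes freezing. Three steps are missing.
\begin{enumerate}
\item A frozen state $\mu$ is $\phi$-maximizing. Indeed, $P(T,\beta\phi)=h_\mu(T)+\beta\int\!\phi\,\mathrm{d}\mu$ for all large $\beta$, while $\beta^{-1}P(T,\beta\phi)\to Q(T,\phi)$.
\item The calibrated sub-action $u$ you already have gives $\phi+u-u\circ T\leq Q(T,\phi)$. So every maximizing measure is carried by the closed set $S\=\{\phi+u-u\circ T=Q(T,\phi)\}$.
\item If $S=X$, then $\phi$ is cohomologous to a constant and $\mu_{\beta\phi}$ is the measure of maximal entropy for every $\beta$. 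Then there is no transition at all, which is the opposite of ``would vary with $\beta$''. Otherwise $X\smallsetminus S$ is a nonempty open set with $\mu_{\beta\phi}(X\smallsetminus S)=\mu(X\smallsetminus S)=0$ for all large $\beta$, contradicting (\ref{ineq: quantative non-freezing for dini}).
\end{enumerate}

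In (ii), the closed-set upper bound cannot come from ``the same Gibbs estimates''. The factor $e^{\beta D}$ is not harmless after dividing by $\beta$: it leaves the additive constant $D$. The uncontrolled factor $m_{\beta\phi}(B(T^n(y),\xi))\in[e^{-\beta c},1]$ does the same. Gibbs bounds therefore determine $\beta^{-1}\log\mu_{\beta\phi}$ of a cylinder only up to a bounded additive error; this is exactly the tropical Gibbs property with its constant $M$. That suffices for finiteness in (i), but not for a sharp rate function.

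What you need is convergence of the whole family $\beta^{-1}\log\int\! e^{\beta f}\,\mathrm{d}\mu_{\beta\phi}\to\sup_X(f+b)$ for every $f\in C(X,\R)$; Remark~\ref{r: two LDP} then gives both LDP bounds. By normality, this amounts to all subsequential limits coinciding. That requires tropical dimension one of the eigendensity space as well as of the eigenfunction space. It also requires the normalizations $\sup_X v=0$ and $\sup_X(u+v)=0$, which come from $m_{\beta\phi}(X)=\mu_{\beta\phi}(X)=1$.

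Uniqueness of $u$ does not by itself give uniqueness of $v$. The paper gets both from the Ma\~{n}\'{e}-potential representations in Proposition~\ref{p: representation}, together with the identity $S_\phi(x,y)+S_\phi(y,x)=0$ on $\Omega_\phi$ under a unique maximizing measure. This is also where working with limits of the functionals $l^m_\beta$ pays off. Their convergence is two-sided by construction, whereas limits of $\beta^{-1}\log m_{\beta\phi}(B(x,r))$ give only lower bounds without further work.

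Finally, \cite[Theorem~3.2]{Je06} only requires $H^\omega(X,\R)$ to be densely and continuously embedded in $C(X,\R)$, which follows from Stone--Weierstrass since $d_\omega$ is a metric. Passing to a separable subspace would not give residuality in $H^\omega(X,\R)$.
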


Here the notion of \emph{tropical Gibbs density} is the tropical counterpart of the notion of Gibbs states, see Definition~\ref{d: tropical gibbs density} for a precise definition. The notion of \emph{large deviation principle} is recalled in Definition~\ref{d: ldp}. A \emph{generic} subset means a countable intersection of open and dense subsets. 

\begin{rem}
    Note that for a modulus of continuity $\omega$ that vanishes only at $0$, it is straightforward to check that $d_{\omega}$ is a metric on $X$ if either $d$ is an ultrametric (see e.g.~\cite[Theorem~9]{PT14}) or $\omega$ is subadditive in the sense that $\omega(s+t)\leq \omega(s)+\omega(t)$ for all $s,\,t\in [0,\diam_dX]$. 
\end{rem}

While the (qualitative) non-freezing conclusion in Theorem~\ref{t: Dini quanta no freezing} forms a counterpart to \cite[Theorem~B]{CKQ25} in the context of the expanding dynamical systems investigated in \cite{J99a,FJ01a}, the estimate (\ref{ineq: quantative non-freezing for dini}), which quantifies the non‑freezing phenomenon, is obtained by developing the theory of \emph{tropical thermodynamic formalism} (see \cite{LS26} for H\"{o}lder potential, cf.~\cite{BLL13,BLT06,MO24,MO25}). The conclusion of (ii) is finer than that of (i) in that the density $-I$ is independent of the choice of $U$. For (ii), a notable difference from the H\"{o}lder setting is that the TPO Conjecture holds for H\"{o}lder potentials so that for an open and dense subset of H\"{o}lder potentials $\phi$, $\phi$ has a unique maximizing measure, while in general, uniquely maximizing potentials are only known to be generic (cf.~\cite[Theorem~3.2]{Je06}). 

Now that both thermodynamic formalism and tropical thermodynamic formalism for Dini potentials are developed, with the latter serving as a bridge between thermodynamic formalism and ergodic optimization, it is natural to investigate ergodic optimization and in particular, the TPO Conjecture for Dini potentials. As a contrast, we show in Theorem~\ref{t: non TPO} below that it already fails in this regime. Thus, it remains open whether there exists an open and dense subset of $H^\omega(X,\R)$ consisting of uniquely maximizing potentials, where $\omega$ is a modulus of continuity satisfying the Dini condition. Unlike the recently constructed counterexamples to the TPO Conjecture arising from lack of openness of the system \cite{HJXZ26a,HJXZ26b} and from lack of weak$^*$ compactness of the space of invariant measures \cite{HJL25} (cf.~\cite{BoLe26} for the failure of an analog of the TPO conjecture), the obstruction in our example comes entirely from the regularity of the potentials. To the best of our knowledge, this is the first failure of the TPO Conjecture for a uniformly expanding system under a quantitative regularity assumption strictly stronger than mere continuity. 

Let $T\:X\to X$ be a continuous map on the compact metric space $(X,d)$ and $\cM(X,T)$ be the space of $T$-invariant Borel probability measures. A measure $\mu\in\cM(X,T)$ is called \emph{periodic} if it is supported on some $T$-periodic orbit $\cO$. For a continuous potential $\phi\: X\to\R$, let
\begin{equation}\label{eq: defi of ergodic supremum}
\energy\=\sup\biggl\{\int\!\phi\,\mathrm{d}\mu:\mu\in\cM(X,T)\biggr\}
\end{equation}
be the \emph{ergodic supremum} of $\phi$. A measure $\mu\in\cM(X,T)$ is called a $\phi$-\emph{maximizing measure} if $\int\!\phi\,\mathrm{d}\mu=\energy$. 

A nondecreasing function $\omega\:[0,+\infty)\to [0,+\infty)$ with $\lim_{t\to 0^+}\omega(t)=\omega(0)=0$ is called a \emph{modulus of continuity}.  A modulus of continuity $\omega$ is said to satisfy the \emph{Dini condition} if \begin{equation}\label{ineq: defi of Dini}
\int_{0}^1\!\frac{\omega(t)}{t}\,\mathrm{d}t<+\infty.
\end{equation}
An $\omega$-continuous function is a continuous function whose modulus of continuity is controlled by $\omega$, the space consisting of $\omega$-continuous functions is denoted by $H^\omega(X,\R)$, and the related norm is denoted by $\norm{\,\cdot\,}_{\omega}$ (see Subsection~\ref{ss: expanding maps and dini potentials}). The Fibonacci subshift $K$ and its related properties are reviewed in Subsection~\ref{ss: Fibonacci subshift and Rauzy graph}. The definition of the metric $d_2$ is given in Subsection~\ref{ss: notation}. We prove:

\begin{thmx}[Failure of TPO: robust maximizing on the Fibonacci subshift]\label{t: non TPO}
    Let $(\Sigma,\sigma)$ be the full shift on $\Sigma\=\{0,\,1\}^\N$ equipped with the metric $d_2$. There exists a modulus of continuity $\omega$ satisfying the Dini condition and an nonempty open subset $U$ of the space $H^\omega(\Sigma,\R)$ of $\omega$-continuous functions such that if $\phi\in U$, then the unique $\phi$-maximizing measure is equal to the unique $\sigma$-invariant Borel probability measure $\mu_K$ supported on the Fibonacci subshift $K$, which is, in particular, not a periodic measure.
\end{thmx}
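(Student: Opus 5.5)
We will build a single potential $\phi_0$ whose unique maximizing measure is $\mu_K$ and which has a \emph{uniform gap} away from $K$. Then we choose $\omega$, Dini but slower than any power, so that this gap beats every perturbation of small $\omega$-norm. The model is the Bruin--Leplaideur potential $\phi_0(x)=-d_2(x,K)^{\alpha}$, or a Dini-regular variant of the form $-g(d_2(x,K))$. The mechanism is the theory of accidents in \cite{BL13,BL15}.

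Consider an orbit segment that leaves the $2^{-n}$-neighbourhood of $K$. By the combinatorics of the Rauzy graphs of the Fibonacci subshift, it can shadow $K$ only for a time comparable to $n$ before an accident occurs. At each accident the potential incurs a definite loss, of order $g(2^{-n})$ per return, as estimated in \cite{BL15}.

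The key quantitative statement is a \emph{robust gap lemma}. For every $\psi$ with $\norm{\psi}_\omega\le\varepsilon$ and every ergodic $\nu\neq\mu_K$, we want
\begin{equation*}
\int\!(\phi_0+\psi)\,\mathrm{d}\nu<\int\!(\phi_0+\psi)\,\mathrm{d}\mu_K.
\end{equation*}

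\textbf{Step 1: reduction to periodic orbits.} We reduce to periodic measures by density of periodic measures in $\cM(\Sigma,\sigma)$ with a strict-inequality margin. Alternatively, we could work with a sub-action/Mañé-type argument, using the revelation lemma available for Dini potentials via the tropical Gibbs densities of Section~\ref{s: TTF for Dini}.

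\textbf{Step 2: decomposing an orbit.} Take a periodic orbit. We cut it into maximal excursions in which it $2^{-n}$-shadows a Fibonacci word, separated by accident times. Along a shadowing block of length $\ell$, the Birkhoff sum of $\psi$ differs from the corresponding piece of a $\mu_K$-generic orbit by at most $\varepsilon\sum_{j\le \ell}\omega(2^{-(n+j)})+\varepsilon\,\omega(2^{-n})\cdot O(1)$. The Dini condition gives $\sum_j\omega(2^{-j})<\infty$, so this error is $\varepsilon\cdot O(\sum_{j\ge n}\omega(2^{-j}))$ per block. Comparing $\psi$-integrals against $\mu_K$ uses the linear complexity and unique ergodicity of $K$ (discrepancy bounds for Sturmian sequences). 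Along the way we also pick up a term bounded by $\varepsilon\,\omega(2^{-n})$ per block.

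\textbf{Step 3: choosing $\omega$ and the gap.} We choose $g$ and $\omega$ together so that the loss $g(2^{-n})$ per accident of $\phi_0$ dominates $C\varepsilon\sum_{j\ge n}\omega(2^{-j})$ for all $n$. For example, take $\omega(t)=1/\log(1/t)^{2}$, which is Dini. Take $\phi_0=-\omega(d_2(\cdot,K))$, or a slightly smaller-exponent version that is still $\omega$-continuous, and use the Fibonacci accident structure to get the loss $\gtrsim \omega(2^{-n})$. The worry is that the tail sum $\sum_{j\ge n}\omega(2^{-j})\asymp 1/n$ exceeds $\omega(2^{-n})\asymp 1/n^2$.

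This is exactly where the Fibonacci structure must help. Accidents at depth $n$ recur only after times growing like Fibonacci numbers $F_n\sim\phi^n$, so the $\psi$-error need only be summed along geometrically sparse scales. I expect this balancing to be the main obstacle. One would first try to take $\omega$ with $\sum_{k\ge n}\omega(2^{-F_k})$ comparable to $\omega(2^{-F_n})$, i.e.\ choose $\omega$ decaying in a way tuned to the Fibonacci renormalization. One then verifies the Dini condition and the accident-loss inequality scale by scale.

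\textbf{Step 4: uniqueness.} Once the strict inequality holds for all non-$\mu_K$ periodic measures with a margin proportional to the fraction of accident times, uniqueness follows. The margin is uniform for $\varepsilon$ small, so the whole ball $U=\{\phi:\norm{\phi-\phi_0}_\omega<\varepsilon\}$ works. Finally, $\mu_K$ is not periodic because $K$ is an infinite minimal set.
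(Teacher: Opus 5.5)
Your overall architecture matches the paper's: center the ball at the plateau potential $-\omega(d_2(\cdot,K))$, reduce to periodic orbits, cut at accidents and shadow $K$ in between. However, the estimate that carries the whole proof is missing, and the obstacle you single out in Step~3 comes from undercounting the loss.

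\textbf{The shadowing term is free.} Along a block between accidents, $d_2(\sigma^{B_j+t}(p),K)=2^{-(D_j-t)}$. So the loss of $\phi_0$ on that block is the full sum $\sum_{i=r_j+1}^{D_j}\omega(2^{-i})$, not $g(2^{-n})$. Up to the factor $\varepsilon$, this is exactly your bound on the $\psi$-shadowing error.

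\textbf{The missing estimate.} What remains is to compare the empirical measure $\eta_z$ of the glued $K$-segments with $\mu_K$. You need $W_\omega(\eta_z,\mu_K)\le \frac{C}{N}\sum_j\sum_{i=r_j+1}^{D_j}\omega(2^{-i})$; by Kantorovich duality this is the supremum over $\abs{\psi}_\omega\le 1$. ``Discrepancy bounds for Sturmian sequences'' applied block by block cannot give this. The two ends of a single Fibonacci segment of length $b_j$ already typically create cylinder discrepancy of order $n/b_j$ at scale $n$. That costs about $\sum_{n\le b_j}\omega(2^{-n})$ per block, which does not tend to $0$. Meanwhile the block's loss is at most $\sum_{i>r_j}\omega(2^{-i})\to 0$ as $r_j\to\infty$.

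The missing idea is that consecutive pieces glue coherently at every scale below the bi-special residual $r_j$, and $b_j\ge c_3(r_{j-1}+1)$. The paper then argues as follows:
\begin{enumerate}
\item At each bi-special scale $q_m$, the shadowing orbit is a circulation on the Rauzy graph $\Gamma_m$ after a boundary repair of size $\lesssim q_m\kappa_m/N$.
\item Its discrepancy from $\mu_K$ is controlled by its mass defect plus $q_m\abs{s_m}$.
\item Under the projection $\Gamma_{m+1}\to\Gamma_m$ the stable coordinate is multiplied by $(1-\sqrt5)/2$.
\item The scales are reassembled through the multiscale formula for $W_\omega$.
\end{enumerate}

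\textbf{The choice of $\omega$.} Summing the stable-coordinate errors across scales requires $\sum_{k\le m}\bigl(\frac{\sqrt5-1}{2}\bigr)^{m-k}q_k\Delta_k\lesssim q_m\Delta_m$. For $\omega(2^{-n})\asymp n^{-\alpha}$ this means $\alpha<2$; combined with Dini, the paper takes $\alpha\in(1,2)$. Your criterion $\sum_{k\ge n}\omega(2^{-F_k})\asymp\omega(2^{-F_n})$ holds for every $\alpha>0$, so it decides nothing. Your concrete choice $\omega(t)=(\log(1/t))^{-2}$ is exactly the borderline where the needed statement is false.

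To see this, suppose every potential in an $H^\omega$-ball around some $-g(d_2(\cdot,K))\in H^\omega$ had $\mu_K$ as a maximizing measure. Kantorovich duality would then give $W_\omega(\nu,\mu_K)\le C\int\omega(d_2(\cdot,K))\,\mathrm{d}\nu$ for all $\nu\in\cM(\Sigma,\sigma)$. Now take the periodic Fibonacci approximants of period $F_m$, whose shadow lengths are all $\asymp F_m$.
\begin{enumerate}
\item The right-hand side is $\asymp F_m^{-2}$.
\item Their word frequencies at scales $q_k\lesssim F_m$ are mass-one circulations with stable coordinate $\asymp\bigl(\frac{\sqrt5-1}{2}\bigr)^{m-k}/F_m$.
\item Hence $D_{q_k+1}(\nu,\mu_K)\asymp\bigl(\frac{\sqrt5-1}{2}\bigr)^{2(m-k)}$, and the multiscale formula gives $W_\omega(\nu,\mu_K)\gtrsim mF_m^{-2}$.
\end{enumerate}
The ratio grows like $m$, so no such ball exists for this $\omega$.

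\textbf{Reduction to periodic measures.} This must be applied to the closed linear inequality $W_\omega(\nu,\mu_K)\le C\int\omega(d_2(\cdot,K))\,\mathrm{d}\nu$, not to a strict inequality with a ``margin''. Strictness for $\nu\neq\mu_K$ then comes from unique ergodicity of $K$.
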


Here we show that the TPO Conjecture fails by establishing robust maximizing on the Fibonacci subshift (resp.~at the invariant measure $\mu_K$), which is a particular instance of Sturmian subshifts (resp.~Sturmian measures). Sturmian measures arise naturally as maximizing measures in a variety of settings (cf.~\cite{Bou00, Je06, Je07, AJ12}). The phenomenon of unique Sturmian maximization on a set of parameters of positive Lebesgue measure is recently observed by Bochi and Lessa \cite{BoLe26} in the context of skew-product systems, where the hyperbolicity of the system is weakened, instead of the weakened regularity of potentials in the present work.  

While the preceding discussion focuses on ergodic optimization properties for Dini potentials, we now further investigate the phenomena of \emph{super-exponential suppression} and \emph{freezing phase transitions}, which are not admitted by Dini potentials as indicated by Theorem~\ref{t: Dini quanta no freezing}. Indeed, we determine \emph{sharp regularity thresholds} for both phenomena in Theorems~\ref{t: non-freezing for log Dini} to ~\ref{t: freezing for not log Dini} below. More precisely, we say that a modulus of continuity $\omega$ satisfies the \emph{weak log-log Dini condition} if
    \begin{equation}\label{eq: key threshold}\liminf_{r\to0^+}\frac{\int_{r}^1\!\omega(t)/t\,\mathrm{d}t}{\log (\log ( r^{-1}))}=0.
    \end{equation}
    We prove:

\begin{thmx}[Non-freezing for weak log-log Dini potentials]\label{t: non-freezing for log Dini}
    Let $\omega$ be a modulus of continuity satisfying the weak log-log Dini condition and $T\: X\to X$ be a mixing expanding covering map on some compact metric space $(X,d)$. Then every $\omega$-continuous potential $\phi\: X\to\R$ does not admit a freezing phase transition.
\end{thmx}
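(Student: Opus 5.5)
We argue by contradiction. Suppose $\phi\in H^\omega(X,\R)$ admits a freezing phase transition with critical temperature $t_c$. Then there is a frozen state $\mu$ with $\mu=\mu_{\beta\phi}$ for all $\beta>\beta_c\=1/t_c$. Here $\mu_{\beta\phi}$ is the equilibrium state of $\beta\phi$, unique in the mixing expanding setting, at least once we know that $\omega$-continuous functions with $\omega$ weak log-log Dini still admit unique equilibrium states; otherwise we work with any element of $\fE(T,\beta\phi)$. The measure $\mu$ must be $\phi$-maximizing, since zero-temperature accumulation points are maximizing. Moreover $h_\mu(T)+\beta\int\phi\,\mathrm{d}\mu=P(T,\beta\phi)$ for all $\beta>\beta_c$. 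Hence the pressure is affine there, $P(T,\beta\phi)=h_\mu(T)+\beta Q(T,\phi)$.

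The key consequence is a uniform lower bound. Since $\mu$ is a maximizing measure, $\mu$-a.e.\ point has Birkhoff sums $S_n\phi$ close to $nQ(T,\phi)$. We then exhibit a point or orbit segment that lets us compute pressure sums from below. For a nonempty open set $V$ disjoint from $\supp\mu$, if such a set exists, we aim to contradict $\mu_{\beta\phi}(V)=0$ for all large $\beta$. If $\supp\mu=X$, we instead show that the entropy of $\mu$ must equal the full topological entropy, forcing $\mu$ to be the measure of maximal entropy. Then $\phi$ is cohomologous to a constant, so every measure is maximizing and no transition occurs, which contradicts the definition of freezing with $t_c$ finite.

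The main step is to show, for $V$ disjoint from $\supp\mu$, that $\mu_{\beta\phi}(V)>0$ for some large $\beta$, i.e.\ that equilibrium states cannot leave an open set empty. We would do this through the Gibbs-type lower bound of the transfer operator $\cL_{\beta\phi}$. Take a periodic-like orbit segment of length $n$ entering $V$, made by gluing a $\mu$-generic segment with a bounded transition using the mixing and covering property. The only loss in $\beta S_n\phi$ compared with $nQ$ comes from a distortion term of the form $\beta\sum_{k\le n}\omega(\lambda^{-k}r)\approx\beta\int_{\lambda^{-n}}^1\omega(t)/t\,\mathrm{d}t$. Without Dini this distortion is unbounded.

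The weak log-log Dini condition gives a subsequence $r_j\to0$ with $\int_{r_j}^1\omega(t)/t\,\mathrm{d}t=\varepsilon_j\log\log r_j^{-1}$, where $\varepsilon_j\to0$. We choose $n_j\approx\log r_j^{-1}$ and $\beta_j$ with $\beta_j\varepsilon_j\log n_j\ll\log n_j$, i.e.\ $\beta_j=o(1/\varepsilon_j)$. The loss is then $n_j^{o(1)}$, which is subpolynomial. Meanwhile, the number of distinct entries into $V$, obtained via varying insertion times, gives a combinatorial gain of at least order $n_j$. Comparing the weight of the orbits through $V$ with the total partition sum $e^{n_jP(T,\beta_j\phi)}$, using $P=h_\mu+\beta Q$, shows $\mu_{\beta_j\phi}(V)\ge n_j^{-C}>0$ for a sequence $\beta_j\to+\infty$. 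This contradicts freezing, because freezing requires $\mu_{\beta\phi}(V)=\mu(V)=0$ for all large $\beta$.

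We expect the main obstacle to be the bookkeeping in this comparison. The doubly logarithmic scale must exactly balance the polynomial (in $n$) counting gain against the subpolynomial distortion loss. We must also handle the lack of a standard Gibbs property at non-Dini regularity, so one works with finite-$n$ pressure estimates rather than with a conformal measure.
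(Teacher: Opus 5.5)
\textbf{There is a genuine gap: your mechanism needs an open set $V$ with $\mu(V)=0$, and the case where no such set exists is not handled.} When $\supp\mu=X$ you assert that $h_\mu(T)=h_{\topo}(T)$. Nothing in the affine identity $P(T,\beta\phi)=h_\mu(T)+\beta Q(T,\phi)$ forces this, and you give no argument for it. In the Dini setting, full support is excluded by Bousch's sub-action. But a weak log-log Dini modulus need not be Dini, the Walters condition can fail, and fully supported maximizing measures (generic in $C^0$) cannot be ruled out a priori.

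The paper does not split into cases. It takes a non-maximizing periodic point $y$ and inserts the block $J_n=y_1\ldots y_{l_n}$ into $\mu$-distributed words of length $2n$, at roughly $2n/(l_n+2s+1)$ separated positions; here $l_n\to\infty$ but $l_n=o(\log n)$ along good scales. Because $\mu$ may charge $[J_n]$, the insertion map $(u,j)\mapsto w(u,j)$ is not injective, and this costs $\log\bigl(1+(2n+1-l_n)\mu([J_n])\bigr)$ in entropy. Lemma~\ref{l: maximizing measure not much weight on nonmaximizing block} controls this: using Kac's formula, the upper bound of Lemma~\ref{l: birkhoff sum controlled by asscosum}, and Jensen, it shows $\mu([J_n])<l_n/n^3$ on these scales. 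This quantitative smallness of a maximizing measure on long non-maximizing blocks is the idea you are missing. It replaces your $\mu(V)=0$ and works whatever the support of $\mu$.

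\textbf{Even when $\supp\mu\neq X$, the bookkeeping as you describe it does not close.}
\begin{itemize}
\item A $\mu$-generic segment only satisfies $S_n\phi=nQ(T,\phi)+o(n)$, and the $\mu$-typical words number only about $e^{n(h_\mu(T)-\epsilon)}$. After multiplying by $\beta$, both errors swamp a gain of order $\log n$.
\item You need exact averaged identities instead: $H_{2n}(\mu)\ge 2nh_\mu(T)$ and $\int S_{2n}\phi\,\mathrm{d}\mu=2nQ(T,\phi)$. With these you can directly build an invariant measure with $h_{\mu_n}(T)+\beta\int\phi\,\mathrm{d}\mu_n>h_\mu(T)+\beta Q(T,\phi)$, as in Lemma~\ref{l: a general non-freezing criterion}. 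That contradicts the affine pressure at once.
\item Your intermediate bound $\mu_{\beta_j\phi}(V)\ge n_j^{-C}$ is unnecessary. It would also require a Gibbs property unavailable below Dini regularity, plus uniqueness of equilibrium states, which you do not have.
\item The loss is not only distortion: the excursion into $V$ costs $\beta\cdot O(1)$, so you also need $\beta_j=o(\log n_j)$.
\end{itemize}
With these repairs the case $\supp\mu\ne X$ does go through. It is even simpler than the paper's argument: a fixed word whose cylinder is $\mu$-null can be inserted injectively, so no analogue of Lemma~\ref{l: maximizing measure not much weight on nonmaximizing block} is needed. The full-support case, however, remains unproved in your argument.
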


To construct a potential admitting super-exponential suppression (resp.~a freezing phase transition) for a modulus of continuity $\omega$ that is not Dini (resp.~weak log-log Dini), we turn to a most widely studied and natural class of potentials, potentials of the form $-\omega(d(\,\cdot\,,K))$ (cf.~\cite{BL13,BL15,CKQ25}), where $K$ is a closed invariant set and $\omega$ is a modulus of continuity. We refer to these as $(\omega,K)$-\emph{plateau potentials}, or simply, \emph{plateau potentials}. While Theorem~\ref{t: super-exponential suppression for non-Dini} below shows that $(\omega,\cO)$-plateau potentials admit super-exponential suppression for moduli of continuity $\omega$ not satisfying the Dini condition and periodic orbits $\cO$, Theorem~\ref{t: freezing for not log Dini} below provides an aperiodic subshift $K$ independent of the modulus of continuity $\omega$ such that the plateau potential admits a freezing phase transition whenever $\omega$ is not weak log-log Dini. We prove:

\begin{thmx}[Super-exponential suppression for the potential that is not Dini]\label{t: super-exponential suppression for non-Dini}
     Let $(X,\sigma)$ be a mixing one-sided subshift of finite type of positive entropy on $(X,d_2)$, $\omega$ be a modulus of continuity not satisfying the Dini condition, and $\cO$ be a periodic orbit. Then the $(\omega,\cO)$-plateau potential $-\omega(d_2(\,\cdot\,,\cO))$ is $\omega$-continuous and admits super-exponential suppression. 
\end{thmx}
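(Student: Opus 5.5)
We first check $\omega$-continuity. For any $x,y$ we have $\abs{d_2(x,\cO)-d_2(y,\cO)}\le d_2(x,y)$. Since $\omega$ is nondecreasing and $d_2$ is an ultrametric taking values in $\{2^{-n}\}\cup\{0\}$, it follows that $\abs{\omega(d_2(x,\cO))-\omega(d_2(y,\cO))}\le\omega(d_2(x,y))$. Indeed, in an ultrametric either $d_2(x,\cO)=d_2(y,\cO)$, or the larger of the two equals $d_2(x,y)$. So $\phi\=-\omega(d_2(\cdot,\cO))$ is $\omega$-continuous. We have $\phi\le 0$ with equality exactly on $\cO$, so the unique maximizing measure is $\mu_\cO$ and $\energy=0$.

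For super-exponential suppression, take $U$ a cylinder at positive distance $\delta$ from $\cO$, so $\phi\le-\omega(\delta)<0$ on $U$. We show $\beta^{-1}\log\mu_{\beta\phi}(U)\to-\infty$. By the Gibbs/variational structure we have
\begin{equation*}
\mu_{\beta\phi}(U)\lesssim \exp\bigl(-\beta\omega(\delta)\cdot c\bigr)\cdot e^{-(P(\sigma,\beta\phi)-\beta\energy)\cdot(\text{return times})}\ldots
\end{equation*}
A cleaner route is as follows. Any invariant $\nu$ with $\nu(U)=\eta$ satisfies $\int\phi\,d\nu\le-\eta\omega(\delta)$. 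Comparing this with $P(\sigma,\beta\phi)\ge h_{\mu_\cO}+\beta\cdot0=0$ only gives $\eta\lesssim h_{\topo}/(\beta\omega(\delta))$, which is polynomial decay and not enough. So we must use the divergence of $\int\omega(t)/t\,dt$. Equivalently, in the symbolic metric, $\sum_n\omega(2^{-n})=+\infty$.

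The key point is that $P(\sigma,\beta\phi)$ grows superlinearly relative to what an excursion into $U$ can afford. Concretely, $P(\sigma,\beta\phi)-0\ge$ the pressure gained by orbits that shadow $\cO$ for long stretches with occasional free symbols. We compare with the equilibrium state via its Gibbs property on long cylinders. A point in $U$ starting an orbit segment must, after leaving $U$, spend about $n$ steps approaching $\cO$ at distances $2^{-n},2^{-n+1},\ldots$ (and symmetrically when leaving $\cO$), which costs $\beta\sum_{k\le n}\omega(2^{-k})$. This is the crucial step.

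We plan the estimate via a transfer-operator / inducing argument on the first return to a neighbourhood of $\cO$. Let $\mu=\mu_{\beta\phi}$ with conformal measure $m_\beta$ and eigenfunction $h_\beta$. Write $\mu(U)$ in terms of the Gibbs weight of the cylinder $U$, which is $\exp\bigl(S_N\beta\phi-NP(\beta\phi)\bigr)$ times distortion. The distortion is bounded only by $\exp\bigl(\beta\sum_k\omega(2^{-k})\bigr)$, which diverges, so we must avoid a uniform Gibbs constant. Instead we use the identity $\mu(U)=\mu(\sigma^{-N}U)$ and the normalized operator $\cL$: we have $\mu(U)=\int \cL^N(\mathbf 1_U\circ\sigma^{N}\cdots)$. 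The alternative is to directly lower bound the $h_\beta$-ratio $h_\beta(x_\cO)/h_\beta(x_U)\ge\exp\bigl(\beta\,\Theta(\beta)\bigr)$ with $\Theta(\beta)\to\infty$. This uses that near $\cO$ the eigenfunction accumulates the weight $e^{\beta\sum_{k\le n}(-\omega(2^{-k}))}$ only up to depth $n(\beta)$ with $P(\beta\phi)\approx\beta\omega(2^{-n(\beta)})$. The self-consistency $P(\beta\phi)\ge \sup_n\frac{1}{n}\bigl(\log(\text{branches})-\beta\sum_{k\le n}\omega(2^{-k})\bigr)$, from periodic orbits that leave $\cO$ briefly, and the matching upper bound yield
\begin{equation*}
-\beta^{-1}\log\mu_{\beta\phi}(U)\ \ge\ \sum_{k\le n(\beta)}\omega(2^{-k})-O(1),\qquad n(\beta)\to\infty.
\end{equation*}
Divergence of the Dini sum then gives $-\infty$ as required.

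The main obstacle is making the last display rigorous: controlling the equilibrium measure of $U$ without bounded distortion. We first try an explicit computation via a renewal/induced system on excursions from $\cO$, treating the first-return map to the complement of a small cylinder around $\cO$ as a countable-state full shift. For this system, Sarig-type or direct generating-function estimates for $\mu_{\beta\phi}(U)$ are available, with the excursion weights $\exp\bigl(-\beta\sum_{k}\omega(2^{-k})\bigr)$ appearing explicitly. The positive entropy and mixing of the SFT supply the needed branches.
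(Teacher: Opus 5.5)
Your $\omega$-continuity argument is essentially right, with one slip. In an ultrametric, if $d_2(x,\cO)\neq d_2(y,\cO)$ then the larger of the two is \emph{at most} $d_2(x,y)$, not equal to it: take $y\in\cO$ and $x$ very close to a different point of $\cO$. Since $\abs{\omega(a)-\omega(b)}\le\omega(\max\{a,b\})$, the weaker statement is all you need.

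The suppression part, however, has a genuine gap. The only bound you actually prove is the polynomial one. The decisive inequality $-\beta^{-1}\log\mu_{\beta\phi}(U)\ge\sum_{k\le n(\beta)}\omega(2^{-k})-O(1)$ rests on heuristics: the ``self-consistency'' lower bound for $P(\sigma,\beta\phi)$ and the ``matching upper bound'' are never supplied, as you acknowledge.

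Worse, the transfer-operator route assumes structure that is unavailable beyond Dini regularity. You cannot assume a positive continuous eigenfunction $h_\beta$ with $\mu_{\beta\phi}=h_\beta m_\beta$, and equilibrium states need not be unique. For slowly diverging moduli such as $\omega_\alpha$ with $\alpha\in(0,1]$, the equilibrium state of $\beta\phi$ is $\delta_{\cO}$ itself once $\beta$ is large. So any argument must bound $\mu(U)$ for an \emph{arbitrary} equilibrium state $\mu$, with no Gibbs or distortion estimate at all.

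The missing idea is that nothing beyond the variational principle is needed, provided you induce on the right set.
\begin{itemize}
\item Choose $r\ge2$ such that prefixes of length $r-1$ separate the points of $\cO$, and set $U\coloneqq X\smallsetminus\bigcup_{w\in\fL_r(\cO)}[w]$. Then $\bigcap_{j\ge0}\sigma^{-j}(X\smallsetminus U)=\cO$.
\item The crucial property: an excursion outside $U$ is locked onto $\cO$, so the number of first-return branches with return time $n$ is at most a constant $C_B$ independent of $n$.
\item Your initial choice, a cylinder at positive distance from $\cO$, lacks this property as an inducing set. It is handled afterwards: such a cylinder lies in $\bigcup_{j=0}^{\ell}\sigma^{-j}(U)$ for some $\ell$, so its measure is at most $(\ell+1)\mu(U)$.
\end{itemize}

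Now let $\mu$ be an equilibrium state with $\mu(U)>0$, and let $p_n$ be its return-time distribution.
\begin{itemize}
\item Comparing with $\delta_{\cO}$ gives $h_\mu(\sigma)\ge\beta\int\!\omega(d_2(\cdot,\cO))\,\mathrm{d}\mu$.
\item Kac's formula bounds the energy below by $\mu(U)\sum_np_n\assocsum{2}{n}(\delta)$, where $\delta\coloneqq d_2(U,\cO)$.
\item Abramov's formula plus the branch bound gives $h_\mu(\sigma)\le\mu(U)\bigl(\log C_B-\sum_np_n\log p_n\bigr)$.
\item Combining these, $\beta\sum_np_n\assocsum{2}{n}(\delta)\le\log C_B-\sum_np_n\log p_n$.
\end{itemize}
Apply the Gibbs (Jensen) inequality against $Z_\beta\coloneqq\sum_n\exp\bigl(-\beta\assocsum{2}{n}(\delta)-ne^{-2R\beta}\bigr)$. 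Failure of the Dini condition forces $\assocsum{2}{n}(\delta)\to+\infty$, so $Z_\beta\to0$ as $\beta\to+\infty$. This yields $\sum_nnp_n>e^{R\beta}$ for all large $\beta$. Kac's formula $\mu(U)\sum_nnp_n\le1$ then gives $\mu(U)\le e^{-R\beta}$, uniformly over all equilibrium states. Since $R>0$ is arbitrary, this is super-exponential suppression.

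Your closing renewal picture points in this direction, but these steps are exactly what has to be supplied.
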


\begin{thmx}[Freezing for the potential that is not weak log-log Dini]\label{t: freezing for not log Dini}
Let $\bigl(\{0,\,1,\,2\}^{\N},\sigma\bigr)$ be the full shift on $\bigl(\{0,\,1,\,2\}^{\N},d_2\bigr)$. Then there exists a $\sigma$-invariant closed subset $K$ of $\{0,\,1,\,2\}^{\N}$ containing no periodic points such that for every modulus of continuity $\omega$ not satisfying the weak log-log Dini condition, the $(\omega,K)$-plateau potential $-\omega(d_2(\,\cdot\,,K))$ is $\omega$-continuous and admits a freezing phase transition with a unique frozen state. 
\end{thmx}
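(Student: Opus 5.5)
The plan is to build $K$ as a minimal aperiodic subshift with \emph{very low complexity and very strong separation} in $\{0,1,2\}^{\N}$. Freezing onto $\mu_K$ for $\phi=-\omega(d_2(\cdot,K))$ will follow from a comparison between two quantities. The first is the entropy gained by orbits that make excursions away from $K$. The second is the cost of those excursions, namely the potential they accumulate. Throughout, the symbol $2$ is a ``spacer'' letter that we use to force rigidity.

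\textbf{The set $K$.} We construct $K$ as a substitutive or Toeplitz-type subshift over $\{0,1\}$, with the symbol $2$ inserted so that the complexity $p_K(n)$ grows very slowly (say at most $n$). We also arrange that $K$ has doubly-exponential recurrence gaps: there are scales $n_k$ with $\log n_{k+1}\asymp n_k$, or so. These scales are chosen so that any finite word of length $n$ which stays $2^{-m}$-close to $K$ for all its $n$ iterates, with $n_k\le m<n_{k+1}$, must shadow a $K$-orbit. Otherwise it pays a defect. This $K$ is chosen once, independently of $\omega$, and it is uniquely ergodic, so $\mu_K$ is the only measure with $\int\phi\,\mathrm{d}\mu=0=Q(\sigma,\phi)$. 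The $\omega$-continuity of $\phi$ is immediate from the $1$-Lipschitz property of $d_2(\cdot,K)$.

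\textbf{Reduction to a pressure criterion.} The freezing statement amounts to showing $P(\sigma,\beta\phi)=0$ for all large $\beta$. Since $h_{\mu_K}=0$, any such $\beta$ makes $\mu_K$ an equilibrium state. For uniqueness of the equilibrium state, we argue that any $\mu\ne\mu_K$ has $\int\phi\,\mathrm{d}\mu<0$. Hence for larger $\beta$ the pressure $h_\mu+\beta\int\phi\,\mathrm{d}\mu$ is strictly negative. Freezing then follows from monotonicity, and at small $\beta$ positive pressure gives a transition. To show $P(\sigma,\beta\phi)\le0$, we use a first-return/inducing argument of the kind in \cite{BL13,CKQ25}. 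We decompose orbits into pieces shadowing $K$ (zero contribution) and excursions. An excursion that leaves a $2^{-m}$-neighbourhood of $K$ and returns takes time $\ell$, and the number of such excursion words is at most $e^{c\ell}$. Its Birkhoff sum of $-\phi$ is at least roughly $\sum_{j\le \ell}\omega(2^{-\min(j,\ell-j,m)})$, which is comparable to $\int_{2^{-\ell}}^{1}\omega(t)/t\,\mathrm{d}t$ up to constants. It then suffices to show that the generating series
\begin{equation*}
\sum_{\ell}p_K\text{-factor}\cdot e^{c\ell}\exp\Bigl(-\beta\int_{2^{-\ell}}^1\!\omega(t)/t\,\mathrm{d}t\Bigr)
\end{equation*}
is less than $1$ for large $\beta$.

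\textbf{The main obstacle.} A naive count uses $e^{c\ell}$ words, and this would need $\int_{2^{-\ell}}^1\omega/t\gtrsim \ell$, which is far too strong. The key is the separation structure of $K$. Words that are close to $K$ at scale $2^{-m}$ cannot be freely concatenated: because of the doubly-exponential gaps, an orbit segment of length $\ell$ within distance $2^{-m}$ can branch only at about $\log\log \ell$ scales. The effective entropy of an excursion of length $\ell$ is therefore only $O(\log\log \ell)$ in the exponent, up to factors that are controlled by the low complexity of $K$. The failure of the weak log-log Dini condition gives $\int_{r}^1\omega/t\,\mathrm{d}t\ge \varepsilon\log\log r^{-1}$ for all small $r$. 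Hence for $\beta\varepsilon$ large the series is summable and small, and this gives $P\le 0$.

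The hardest step is the combinatorial lemma behind this count. It must bound, at scale $\ell$, the number of length-$\ell$ paths in the Rauzy-type graphs of $K$ with $j$ ``defects'' by $(\log\ell)^{O(j)}$, while each defect costs at least $\varepsilon\log\log$ of its length. I would prove this lemma first, tailoring the spacer insertion of $2$'s to make the Rauzy graphs at scale $n_k$ have bounded branching and very long loops.
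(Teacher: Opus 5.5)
\textbf{The gap is the choice of a zero-entropy $K$.} No such $K$ can freeze for \emph{every} modulus that fails the weak log-log Dini condition, so no counting lemma can rescue the plan.

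For $M\in\N$, let $K_M$ be the subshift of finite type of sequences all of whose length-$M$ factors lie in $\fL_M(K)$. Every iterate of a point of $K_M$ is within $2^{-M}$ of $K$, so $-\phi\le\omega(2^{-M})$ on $K_M$. Take the Rauzy graph at scale $M-1$ of a minimal, aperiodic subset of $K$. It is strongly connected and has more edges than vertices, so some vertex carries two distinct return cycles of length at most $\card\fL_{M-1}(K)\le CM$. Hence $h_{\topo}(\sigma|_{K_M})\ge\log 2/(CM)$, and
\[
P(\sigma,\beta\phi)\ge\frac{\log 2}{CM}-\beta\,\omega\bigl(2^{-M}\bigr)\qquad\text{for all }M\in\N.
\]
A frozen state is maximizing (as used in the proof of Theorem~\ref{t: Dini quanta no freezing}~(i)), hence carried by $K$ and of zero entropy. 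So freezing forces $P(\sigma,\beta\phi)=0$ for large $\beta$, i.e.\ $\liminf_{M\to+\infty}M\omega(2^{-M})>0$.

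However, failing weak log-log Dini only gives the \emph{cumulative} bound $\sum_{j\le N}\omega(2^{-j})\ge c\log N$, not this pointwise one. For example, set $\omega(2^{-1})\=1$ and $a_1\=1$. Inductively, with $S_k\=\sum_{j\le a_k}\omega(2^{-j})$, put $\omega(2^{-j})\=e^{-S_k}$ for $a_k<j\le a_{k+1}\=\lceil e^{2S_k}\rceil$, and extend to a nondecreasing step function. Using inductively $a_k\le e^{S_k}$, the minimum of $S_k+(N-a_k)e^{-S_k}-\log N$ on each plateau is $1-a_ke^{-S_k}\ge0$. Hence $\sum_{j\le N}\omega(2^{-j})\ge\log N$ for all $N$, so $\omega$ fails weak log-log Dini. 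Yet $S_{k+1}\ge e^{S_k}$, which gives $(a_{k+1}+1)\,\omega(2^{-a_{k+1}-1})\le(e^{2S_k}+2)e^{-e^{S_k}}\to0$. For this $\omega$ your potential has positive pressure for every $\beta$ and does not freeze. If you choose the plateau lengths using $\log\card\fL_M(K)=o(M)$, the same argument excludes every zero-entropy $K$.

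\textbf{Where your count fails, and what the paper does instead.} Orbits staying $2^{-M}$-close to $K$ can switch between two Rauzy loops every $O(M)$ steps. This gives at least $2^{\ell/(CM)}$ words of length $\ell$, not $(\log\ell)^{O(1)}$. Loop lengths are bounded by the complexity, so ``very long loops'' are unavailable. Also, $p_K(n)\le n$ for some $n$ already forces $K$ to be finite, by Morse--Hedlund. Moreover, a defect detected only by a minimal forbidden word of length $m$ costs, over the preceding $\ell$ steps, only the increment $\sum_{j=m}^{m+\ell}\omega(2^{-j})$. The hypothesis does not bound this below by $\varepsilon\log\ell$.

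The paper overcomes exactly this with a $K$ of \emph{positive} entropy $h$ satisfying $\card\fL_n(K)\le N_1(n+1)^De^{hn}$, with at most $N_2e^{\gamma m}$ minimal forbidden words of length $m$, where $\gamma<h$ (Lemma~\ref{l: freezing criterion for positive entropy subshift}). It decomposes words into phrases: a legal word of length $r$ followed by a minimal forbidden word of length $m$. The normalized phrase weight is $e^{-(h-\gamma)m}(r+1)^De^{-\beta S(m,r)}$, with $S(m,r)\ge c\log(m+r)-\assocsum{2}{m-1}(1)$. The uncontrolled term $\assocsum{2}{m-1}(1)=o(m)$ is absorbed by the entropy gap $h-\gamma$, and $(m+r)^{-\beta c}$ beats $(r+1)^D$.

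Such a $K$ is built in Proposition~\ref{p: the freezing subshift} by deleting periodic orbits one at a time via long periodic blocks. Lind's estimate makes each entropy loss exponentially small, and the block lengths satisfy $m_{s+1}\in[e^{\theta m_s},e^{\alpha m_s}]$, so that $b_K(m)\le1$. The frozen state is then the unique measure of maximal entropy of $K$, which has positive entropy.

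A minor point: for non-subadditive $\omega$, the $\omega$-continuity of $-\omega(d_2(\cdot,K))$ comes from $d_2$ being an ultrametric, not merely from $d_2(\cdot,K)$ being $1$-Lipschitz.
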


In the context of topological dynamics, Buzzi, Kloeckner, and Leplaideur \cite{BKL23} obtained a realization theorem showing that every ergodic measure of zero entropy is the unique frozen state for some continuous potential. A broader abstract characterization including the case where the frozen state is not unique was obtained by Hedges \cite{He24}. These results are highly general but are essentially not constructive, following from functional analysis results for the space of invariant measures. By contrast, Chazottes, Kucherenko, and Quas \cite{CKQ25} consider plateau potentials $-\omega(d(\cdot,X))$ for a proper subshift $X$ of $A^{\Z^d}$ and provide a sufficient condition (different from our weak log-log Dini condition) on $\omega$ under which $-\omega(d(\cdot,X))$ admits a freezing phase transition. Our construction in Theorem~\ref{t: freezing for not log Dini} identifies the weak log-log Dini condition as the necessary and sufficient condition on $\omega$ (cf.~Theorem~\ref{t: non-freezing for log Dini}). 

It is possible to establish some of the results of this article for more general dynamical systems, 
but such investigations are beyond the scope of the present work and are presented elsewhere.

\subsection{Overview of proofs}

\begin{enumerate}
    \smallskip
    \item [\textbf{Part a:}]\emph{Theorem~\ref{t: Dini quanta no freezing} and tropical thermodynamic formalism}. 
    
    \smallskip
    To prove Theorem~\ref{t: Dini quanta no freezing}, we develop the theory of tropical thermodynamic formalism for Dini potentials. While tropical eigenfunctions of the Bousch operator for a H\"older potential remain H\"{o}lder continuous, tropical eigenfunctions of the Bousch opreator for an $\omega$-continuous potential are only $\assocmodu$-continuous, and thus generally less regular than the potential. 
    \begin{enumerate}
    \smallskip
    \item[\textbf{Step a.1:}]\emph{(Invariant tropical Gibbs density)}. As a main component of the theory, we prove:
    \end{enumerate}
\end{enumerate}
\smallskip

    \begin{thmx}[Existence of tropical eigenfunctions and eigendensities]\label{t: existence}
    Let $T\:X\rightarrow X$ be a transitive expanding map on a compact metric space $(X,d)$ with expanding constants $(\lambda,\gamma,\xi)$, $\omega$ be a modulus of continuity satisfying the Dini condition, and $\phi\in H^\omega(X,\R)$. Then the following hold:
    \begin{enumerate}[label=\rm{(\roman*)}]	
    \smallskip
    \item {\rm(Uniqueness of tropical eigenvalue)}. The ergodic supremum $\energy$ is the unique tropical eigenvalue of $\cL_\phi$ (resp.~$\cL_\phi^{\ostar}$) on the space of continuous functions $C(X,\R)$ (resp.~the density space $\density(X)$).
    \smallskip
    \item {\rm(Existence of tropical eigenfunction)}. For each $u\in C(X,\R)$, define
    \begin{equation*}
    v_u(x)\coloneqq\limsup_{n\to+\infty}\cL_{\toverline{\phi}}^n(u)(x)
    \end{equation*}
    for each $x$ in $X$, where $\toverline{\phi}\=\phi-\energy$. Then $v_u$ is in $H^{\assocmodu}(X,\R)$ and is a tropical eigenfunction of $\cL_\phi$ associated with eigenvalue $\energy$.
    \smallskip
    \item {\rm(Existence of tropical eigendensity)}. There exists a tropical eigendensity of $\cL_{\phi}^{\ostar}$ associated with eigenvalue $\energy$.
    \smallskip
    \item {\rm(Existence of invariant tropical Gibbs density)}. For each tropical eigenfunction $v$ of $\cL_\phi$ in $C(X,\R)$ and each tropical eigendensity $b$ of $\cL_\phi^{\ostar}$, $v\otimes b$ is an invariant tropical Gibbs density for $\phi$.
    \end{enumerate}
    \end{thmx}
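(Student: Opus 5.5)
The plan is to treat $\cL_\phi$ as the Bousch (max-plus) operator, $\cL_\phi(u)(x)=\max_{y\in T^{-1}(x)}\bigl(u(y)+\phi(y)\bigr)$, and $\cL_\phi^{\ostar}$ as its tropical dual acting on densities $b\:X\to[-\infty,0]$. Everything rests on one a priori distortion estimate. By the expanding constants $(\lambda,\gamma,\xi)$, points $x,x'$ with $d(x,x')<\xi$ have inverse branches pairing $T^{-n}(x)$ with $T^{-n}(x')$ so that the $j$-th preimages are at distance $\le \lambda^{-j}d(x,x')$. Hence the Birkhoff sums satisfy $\abs{S_n\phi(y)-S_n\phi(y')}\le \norm{\phi}_\omega\sum_{j\ge1}\omega(\lambda^{-j}d(x,x'))$, and this is at most a constant times $\assocmodu(d(x,x'))$, where $\assocmodu(t)\approx\sum_{j}\omega(\lambda^{-j}t)\asymp\int_0^{t}\omega(s)/s\,\mathrm{d}s$. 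This quantity is finite precisely by the Dini condition \eqref{ineq: defi of Dini}. Consequently, for any $u$, the family $\{\cL_{\toverline\phi}^n u\}_n$ has a uniform modulus $\omega_u+C\assocmodu$ on $\xi$-balls, where $\omega_u$ is the modulus of $u$ (a term that is itself damped by the contraction of inverse branches).

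For (i) and (ii), I first show the iterates are uniformly bounded. The upper bound comes from $\cL^n_{\toverline\phi}u(x)=\max_{T^ny=x}\bigl(u(y)+S_n\phi(y)-nQ\bigr)$ together with the standard fact that $\max_y S_n\phi-nQ$ is bounded above, which follows from the existence of a calibrated sub-action. I would obtain that sub-action via the limsup construction, or avoid circularity by a Mañé-type argument: compare with periodic orbits through specification/closing, using the distortion bound. The lower bound uses transitivity: by the closing lemma for expanding maps, near-maximizing orbit segments can be closed into periodic orbits with a bounded loss of $C\assocmodu(\xi)$, and then routed to any $x$. Given uniform bounds and equicontinuity, $v_u=\limsup_n \cL^n_{\toverline\phi}u$ lies in $H^{\assocmodu}$. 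To get $\cL_{\toverline\phi}v_u=v_u$, I would use that $\cL$ commutes with pointwise limsups here. Since each fibre $T^{-1}(x)$ is finite, $\limsup_n\max_y=\max_y\limsup_n$, which gives the equality directly.

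Uniqueness of the eigenvalue goes as follows. If $\cL_\phi v=v+c$ with $v$ continuous, iterating gives $\max_{T^ny=x}(v(y)+S_n\phi(y))=v(x)+nc$. Dividing by $n$ and comparing with $Q=\lim_n\frac1n\max S_n\phi$ forces $c=Q$. The dual statement is handled the same way via the pairing $\langle u,b\rangle=\sup(u+b)$ and the identity $\langle \cL u,b\rangle=\langle u,\cL^{\ostar}b\rangle$.

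For (iii), I would define $\cL^{\ostar}_{\toverline\phi}b(y)=b(Ty)+\toverline\phi(y)$, normalized, and take $b$ to be the upper semicontinuous regularization of a limsup of Cesàro-type tropical averages $\max_{k\le n}(\cL^{\ostar}_{\toverline\phi})^k b_0$. The delicate point is to show that the normalization keeps $\sup b=0$ and $b\not\equiv-\infty$. Here I would use the eigenfunction from (ii): the pairing with $v$ is invariant, which pins down the normalization.

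For (iv), I check directly that $v\otimes b=v+b$ satisfies the defining identities of an invariant tropical Gibbs density. Combining $\cL v=v+Q$ with $\cL^{\ostar}b=b+Q$, I verify the tropical analog of $T$-invariance of $\mu=h\nu$: $\max_{T y=x}(v\otimes b)(y)=(v\otimes b)(x)$, together with the Gibbs-type relation. Both follow by substituting the two eigen-equations.

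I expect the main obstacle to be the uniform boundedness and the equicontinuity with modulus exactly $\assocmodu$ rather than something worse. In particular, handling the contribution of $u$'s own modulus, and getting the lower bound with only transitivity (not mixing), is the delicate part. The existence and normalization of the eigendensity in (iii) is the second most delicate step.
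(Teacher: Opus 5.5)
Your handling of (i) and (ii) matches the paper's in substance: local distortion, uniform bounds, and $\limsup_n\max_{y\in T^{-1}(x)}=\max_{y\in T^{-1}(x)}\limsup_n$ on finite fibres. The construction in (iii), however, fails as written.

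The running maxima $A_n\coloneqq\oplusu{0\leq k\leq n}\bigl(\cL^{\ostar}_{\toverline{\phi}}\bigr)^k b_0$ increase in $n$, so their limsup is $B=\oplusu{k\geq0}\bigl(\cL^{\ostar}_{\toverline{\phi}}\bigr)^k b_0$. Then $\cL^{\ostar}_{\toverline{\phi}}(B)=\oplusu{k\geq1}\bigl(\cL^{\ostar}_{\toverline{\phi}}\bigr)^k b_0\leq B$, with strict inequality wherever the $k=0$ term strictly dominates. For example, if $\toverline{\phi}\leq 0$, $\toverline{\phi}\not\equiv0$, and $b_0=\tmif$, then $B=\tmif$ while $\cL^{\ostar}_{\toverline{\phi}}(\tmif)=\toverline{\phi}$. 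So you only get a super-eigendensity. The linear Ces\`aro trick relies on the factor $1/n$ to kill the boundary terms $k=0$ and $k=n+1$, and max-plus algebra has no such factor.

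Likewise, the pairing $\oplusu{x\in X}\bigl(v(x)\otimes\bigl(\cL^{\ostar}_{\toverline{\phi}}\bigr)^k b_0(x)\bigr)$ is constant in $k$, but this does not pass to a limit in $k$, because the maximizing points move with $k$. So it cannot certify nondegeneracy.

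A version of your idea that does work is to take the \emph{tail} $B_\infty(y)\coloneqq\limsup_{k\to+\infty}S_k\toverline{\phi}(y)$.
\begin{itemize}
\item It satisfies $B_\infty(T(y))\otimes\toverline{\phi}(y)=B_\infty(y)$ exactly.
\item Its upper semi-continuous regularization still satisfies this identity, because $T$ is a local homeomorphism and $\toverline{\phi}$ is continuous.
\item It is bounded above by Lemma~\ref{l: uniform upper bound without transi and surje}.
\item It is finite on $\supp\mu$ for any maximizing $\mu$. Indeed, the eigenfunction $v$ from (ii) is a sub-action with $\toverline{\phi}+v-v\circ T=0$ on the forward-invariant set $\supp\mu$, so $S_k\toverline{\phi}=v\circ T^k-v\geq-2\norm{v}_{C^0}$ there.
\end{itemize}
The paper instead takes the maximal element $w_0$ of the set of lower semi-continuous $w\geq0$ with $w+\varphi\leq w\circ T$ and $w|_{\supp\mu}=0$, where $\varphi=\toverline{\phi}+v_{\tmif}-v_{\tmif}\circ T\leq 0$. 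Maximality forces $w_0\circ T-\varphi=w_0$. Then $b=-(v_{\tmif}+w_0)$ is automatically upper semi-continuous and finite on $\supp\mu$, with no limit or regularization step.

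In (iv), the invariance of $v\otimes b$ does follow by substituting the eigen-equations, but the lower Gibbs inequality does not. Substitution plus the distortion bound gives, for $W=T_x^{-n}(B(T^n(x),\xi))$,
\[
\oplusu{y\in W}(v(y)\otimes b(y))\geq S_n\toverline{\phi}(x)-\abs{\phi}_\omega\assocmodu(\xi)-\norm{v}_{C^0}+\oplusu{z\in B(T^n(x),\xi)}b(z).
\]
You therefore still need $\inf_{z\in X}\oplusu{w\in B(z,\xi)}b(w)>-\infty$. An eigendensity can be $-\infty$ on large sets, and this uniform bound is exactly where transitivity enters. If $b\equiv-\infty$ on an open set $U$, the relation $b(T(z))=b(z)-\toverline{\phi}(z)$ forces $b\equiv-\infty$ on every $T^n(U)$. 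For a transitive expanding map $\bigcup_{n}T^n(U)=X$, which contradicts $b\neq\taif$. A finite cover by $\xi/2$-balls then makes the bound uniform. Without this step the Gibbs lower bound is unproved.

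Finally, in (ii), obtaining the upper bound on $S_n\toverline{\phi}$ ``via the limsup construction'' is circular. A closing argument must use only boundedly many closings, e.g.\ a last-visit decomposition of the orbit over a fixed finite $\xi$-cover. Closing every near-return separately produces an error linear in $n$. Alternatively, you can cite Bousch's sub-action theorem \cite{Bou01}.
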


\begin{enumerate}
 \smallskip\item[]
    Here the number $\energy$, the tropical operations $(\oplus,\otimes)$, the Bousch operator $\cL_\phi$, the adjoint Bousch operator $\cL_\phi^{\ostar}$, the space $\density(X)$,  and the notions of tropical eigenfunctions and tropical eigendensities are defined in Subsection~\ref{ss: backgrounds for TTF}. Expanding constants $(\lambda,\gamma,\xi)$ and the associated modulus $\assocmodu$ are defined in Subsection~\ref{ss: expanding maps and dini potentials}, respectively. Theorem~\ref{t: existence}~(iv) supplies the invariant tropical Gibbs density that appears in the quantitative non-freezing estimate (\ref{ineq: quantative non-freezing for dini}) in Theorem~\ref{t: Dini quanta no freezing}.
    
    \begin{enumerate}
    \smallskip
    \item[\textbf{Step a.2:}]\emph{(Logarithmic-type zero-temperature limits)}. The logarithmic-type zero-temperature limits serve as a bridge connecting thermodynamic formalism objects to their tropical counterparts. Recall that a family of real-valued continuous functions on $X$ (resp.\ functionals on $C(X,\R)$) is a \emph{normal family} if, for every sequence of functions (resp.\ functionals) of this family, there exists a subsequence that is uniformly converging on every compact subset of $X$ (resp.\ $C(X,\R)$). We prove:
    \end{enumerate}
\end{enumerate}

    \smallskip
    \begin{thmx}[Logarithmic-type zero-temperature limits]\label{t: zero-temperature limit}
	Let $T\:X\rightarrow X$ be a mixing expanding covering map on a compact metric space $(X,d)$, $\omega$ be a modulus of continuity satisfying the Dini condition, and $\phi\in H^\omega(X,\R)$. Then the following hold:
	\begin{enumerate}[label=\rm{(\roman*)}]	
        \smallskip
        \item The family $\bigl\{\frac{1}{\beta}\log u_{\beta\phi}\bigr\}_{\beta\in (1,+\infty)}$ is normal, and the (pointwise) limit of every convergent subsequence $\bigl\{\frac1{\beta_n}\log u_{\beta_n\phi}\bigr\}_{n\in\N}$ with $\beta_n\to+\infty$ as $n\to+\infty$ is some function in $\es{\cL_\phi}{C(X,\R)}{\energy}$.
		\smallskip
		\item The family $\bigl\{l_{\beta}^m\big|_{C(X,\R)}\bigr\}_{\beta\in(1,+\infty)}$ is normal, and the (pointwise) limit of every convergent subsequence $\bigl\{l_{\beta_n}^m\bigr\}_{n\in\N}$ with $\beta_n\to+\infty$ as $n\to+\infty$ is a tropically continuous linear functional whose density in $\density(X)$ is in $\esbig{\cL_\phi^{\ostar}}{\density(X)}{\energy}\smallsetminus\{\tinftyf,\,\taif\}$.
  
		\smallskip
        \item The family $\bigl\{l_{\beta}^{\mu}\big|_{C(X,\R)}\bigr\}_{\beta\in(1,+\infty)}$ is normal, and the (pointwise) limit of every convergent subsequence $\bigl\{l_{\beta_n}^{\mu}\bigr\}_{n\in\N}$ with $\beta_n\to +\infty$ as $n\to+\infty$ is a tropically continuous linear functional whose density in $\density(X)$ is the tropical product $v \otimes b$ of some $v \in \es{\cL_\phi}{C(X,\R)}{\energy}$ and some $b \in \esbig{\cL_\phi^{\ostar}}{\density(X)}{\energy}\smallsetminus\{\tinftyf,\,\taif\}$.
	   \end{enumerate}
    \end{thmx}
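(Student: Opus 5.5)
The plan is to run Ruelle--Perron--Frobenius theory for Dini potentials (\cite{FJ01a,FJ01b}) uniformly in the inverse temperature. We then pass to the tropical limit via the elementary identity $\frac1\beta\log\sum_i e^{\beta a_i}\to\max_i a_i$, which turns the transfer operator $\mathcal{L}_{\beta\phi}$ into the Bousch operator $\cL_\phi$.

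Here $u_{\beta\phi}$ denotes the normalized eigenfunction of the Ruelle operator of $\beta\phi$, with eigenvalue $e^{P(T,\beta\phi)}$. The measure $m_\beta$ is its eigenmeasure, and $\mu_{\beta\phi}=u_{\beta\phi}m_\beta$. The functionals are
\[
l_\beta^m(f)\=\tfrac1\beta\log\int e^{\beta f}\,\mathrm{d}m_\beta,\qquad
l_\beta^\mu(f)\=\tfrac1\beta\log\int e^{\beta f}\,\mathrm{d}\mu_{\beta\phi}.
\]

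\emph{Step 1: uniform distortion.} Using the expanding constants $(\lambda,\gamma,\xi)$ and the Dini condition, we will show that $\bigl|S_n\phi(x')-S_n\phi(y')\bigr|\le C\,\assocmodu(d(x,y))$ along paired inverse branches, uniformly in $n$. Multiplying by $\beta$ gives the standard bound
\[
\bigl|\log u_{\beta\phi}(x)-\log u_{\beta\phi}(y)\bigr|\le \beta C\,\assocmodu(d(x,y)).
\]
Hence $\frac1\beta\log u_{\beta\phi}$ is uniformly $\assocmodu$-continuous. We also need $P(T,\beta\phi)/\beta\to\energy$, with $\bigl|P(T,\beta\phi)-\beta\energy\bigr|\le h_{\topo}$, which follows from the variational principle. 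Uniform boundedness comes from the normalization $\int u_{\beta\phi}\,\mathrm{d}m_\beta=1$ combined with the equicontinuity. Arzel\`a--Ascoli then gives normality in (i).

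\emph{Step 2: identifying limits in (i).} Take $\frac1\beta$ of the logarithm of the eigen-equation $\mathcal{L}_{\beta\phi}u_{\beta\phi}=e^{P(T,\beta\phi)}u_{\beta\phi}$. This yields
\[
\tfrac1\beta\log\sum_{T y=x} e^{\beta(\phi(y)+\frac1\beta\log u_{\beta\phi}(y))}=\tfrac{P(T,\beta\phi)}{\beta}+\tfrac1\beta\log u_{\beta\phi}(x).
\]
The number of preimages is bounded, so under uniform convergence along $\beta_n$ the left side tends to $\cL_\phi(v)(x)$ and the right side to $\energy+v(x)$. Thus $v$ lies in the eigenspace $\es{\cL_\phi}{C(X,\R)}{\energy}$.

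\emph{Step 3: functionals in (ii) and (iii).} The maps $l_\beta^m$ are $1$-Lipschitz in the sup norm and tropically linear in the limit: $l(f\oplus g)=\max\{l(f),l(g)\}$ and $l(c+f)=c+l(f)$, up to an error $O(1/\beta)$. Boundedness on bounded sets gives equicontinuity on compacta of $C(X,\R)$, hence normality via a diagonal argument over a countable dense set. A limit $l$ is then a tropically continuous linear functional. By the representation theory of Subsection~\ref{ss: backgrounds for TTF}, it has a density $b(x)=\inf_f\bigl(l(f)-f(x)\bigr)$ in $\density(X)$.

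The eigendensity relation comes from the dual equation $\mathcal{L}_{\beta\phi}^*m_\beta=e^{P(T,\beta\phi)}m_\beta$, written as
\[
l_\beta^m\bigl(\tfrac1\beta\log\mathcal{L}_{\beta\phi}e^{\beta f}\bigr)=\tfrac{P(T,\beta\phi)}{\beta}+l_\beta^m(f).
\]
Uniform convergence $\frac1\beta\log\mathcal{L}_{\beta\phi}e^{\beta f}\to\cL_\phi f$ together with equi-Lipschitzness then passes this relation to the limit. The result is $l\circ\cL_\phi=\energy+l$, i.e. $b$ is a tropical eigendensity.

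To exclude $\tinftyf$ and $\taif$, note that $l_\beta^m(0)=0$ gives $\sup b=0$, which rules out the degenerate densities. For (iii), write $l_\beta^\mu(f)=l_\beta^m\bigl(f+\tfrac1\beta\log u_{\beta\phi}\bigr)$. Along a subsequence on which both families converge, this tends to $l(f+v)$, whose density is $v\otimes b$.

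\emph{Main obstacle.} We expect Step 1 to be the hardest part: obtaining the $\beta$-uniform $\assocmodu$-distortion bound and pinning down exactly how the Dini integral controls $\sum_k\omega(\lambda^{-k}r)\lesssim\assocmodu(r)$. A second delicate point is the normalization that yields uniform sup bounds on $\frac1\beta\log u_{\beta\phi}$. We plan to get this from mixing, using the fact that $u_{\beta\phi}$ has ratio $\max u/\min u\le e^{\beta C}$, and then combining the two bounds.
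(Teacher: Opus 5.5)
Your proposal is correct and follows essentially the same route as the paper. It uses the $\beta$-uniform local $\assocmodu$-distortion of $\log u_{\beta\phi}$ from \cite{FJ01a}, together with a transitivity-based global ratio bound and the normalization $\int u_{\beta\phi}\,\mathrm{d}m_{\beta\phi}=1$, for normality in (i). It uses the $\beta^{-1}\log N$ comparison between $\beta^{-1}\log\bigl(\cR_{\beta\phi}(e^{\beta f})\bigr)$ and $\cL_\phi(f)$ to identify limits. For (ii) it uses tropical linearity of limits, the density representation and the dual eigen-equation, and for (iii) the identity $l^\mu_\beta(f)=l^m_\beta\bigl(f+\beta^{-1}\log u_{\beta\phi}\bigr)$. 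The global ratio bound you flag at the end is genuinely needed: the distortion estimate holds only at scale $\xi$, and $X$ may be disconnected, so the remark in Step~1 that local equicontinuity plus normalization suffices would not work on its own.

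The only real deviation is that you obtain $\beta^{-1}P(T,\beta\phi)\to\energy$ directly from the variational principle, $\beta\energy\le P(T,\beta\phi)\le\beta\energy+h_{\topo}(T)$. This is more elementary, and gives an explicit $O(1/\beta)$ rate, compared with the paper's contradiction argument via normality and uniqueness of the tropical eigenvalue (Proposition~\ref{p: uniqueness of tropical eigenvalue}).
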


\begin{enumerate}
    \smallskip\item[] 
    Here the eigenfunctions $\{u_{\beta\phi}\}_{\beta\in(0,+\infty)}$, the families $\bigl\{l_{\beta}^m\bigr\}_{\beta\in(0,+\infty)}$ and $\bigl\{l_{\beta}^\mu\bigr\}_{\beta\in(0,+\infty)}$, and the two eigenspaces $\es{\cL_\phi}{C(X,\R)}{\energy}$ and $\esbig{\cL_\phi^{\ostar}}{\density(X)}{\energy}$ are defined in Subsection~\ref{ss: backgrounds for TTF}. The functions $\tinftyf$ and $\taif$ are the constant $+\infty$ and $-\infty$ functions on $X$, respectively (cf.~Subsection~\ref{ss: backgrounds for TTF}). Regarding the notion of the density in $\density(X)$ of a tropically continuous linear functional, see Proposition~\ref{p: dual space equals completion} and Remark~\ref{r: equivalence of densities}. 

    \begin{enumerate}
    \smallskip
    \item[\textbf{Step a.3:}]\emph{(No super-exponential suppression)}. Let $U$ be a nonempty open subset of $X$. We can take a sequence $\{\beta_n\}_{n\in\N}$ with $\beta_n\to+\infty$ as $n\to+\infty$ such that $\liminf_{\beta\to+\infty}\frac{1}{\beta}\log (\mu_{\beta\phi}(U))$ is equal to $\lim_{n\to+\infty}\beta_n^{-1}\log(\mu_{\beta_n\phi}(U))$. Passing to a subsequence of $\{\beta_n\}_{n\in\N}$ via Theorem~\ref{t: zero-temperature limit}~(iii) and applying Theorem~\ref{t: existence}~(iv) yields an invariant tropical Gibbs density $b_0$ that fulfills the first inequality in (\ref{ineq: quantative non-freezing for dini}). The invariance of $b_0$, together with the transitivity of $T\:X\to X$, yields the second inequality in (\ref{ineq: quantative non-freezing for dini}). Since the maximizing measure cannot be fully supported when the potential $\phi$ is not cohomologous to constants in $C(X,\R)$ (cf.~\cite[Proposition~2 and Theorem~1]{Bou01}), (\ref{ineq: quantative non-freezing for dini}) excludes a freezing phase transition.

    \smallskip
    \item[\textbf{Step a.4:}]\emph{(Large deviation principle)}. With the concepts of the Aubry set and the Ma\'{n}\~{e} potential originating from weak KAM theory (cf.~Definitions~\ref{d: Aubry set} and~\ref{d: Mane potential}), we establish representations of tropical eigenfunctions and eigendensities in Proposition~\ref{p: representation}. When the potential $\phi$ has a unique maximizing measure, the Ma\'{n}\~{e} potential $S_{\phi}$ satisfies $S_{\phi}(x,y)\otimes S_{\phi}(y,x)=0$ for all $x,\,y$ in the Aubry set $\Omega_\phi$, leading to the uniqueness of both eigenobjects.
    
    \end{enumerate}
    \smallskip
    \item [\textbf{Part b:}]\emph{Failure of TPO (Theorem~\ref{t: non TPO}) and the energy transport estimate}.

    \smallskip
    To prove Theorem~\ref{t: non TPO}, we want to find a small neighborhood of $-\omega(d_2(\,\cdot\,,K))$ in $H^\omega(\Sigma,\R)$ where every potential has a unique maximizing measure $\mu_K$. In particular, we consider the family of moduli of continuity $\{\omega_\alpha\}_{\alpha\in(0,+\infty)}$ given by 
    \begin{equation}\label{eq: defi of the omegaalpha family}
        \omega_\alpha(t)\=\begin{cases}
        (1-\log_2t)^{-\alpha}&\text{if }t\in[0,1],\\
        1&\text{if }t\in(1,+\infty).
        \end{cases}
    \end{equation}
    for all $\alpha\in(0,+\infty)$. Note that $\omega_\alpha$ satisfies the Dini condition when $\alpha\in (1,2)$. It suffices to prove:

\end{enumerate}
    \smallskip
    \begin{thmx}[Energy Transport estimate]\label{t: energy transport estimate}
    Let $(\Sigma,\sigma)$ be the full shift on $\Sigma=\{0,\,1\}^{\N}$ equipped with the metric $d_2$, and $\mu_K$ be the unique $\sigma$-invariant Borel probability measure supported on the Fibonacci subshift $K$. For each $\alpha\in(0,2)$, there exists a constant $C>0$ such that for each $\sigma$-invariant Borel probability measure $m$ and each $\omega_\alpha$-continuous function $g\: \Sigma\to\R$ with $\norm{g}_{\omega_\alpha}\leq 1$,
    \begin{equation}\label{ineq: transport estimate in theorem}
        \int_{\Sigma}\! g\,\mathrm{d}\mu_K-\int_{\Sigma}\! g\,\mathrm{d}m\leq C\int_{\Sigma}\!\omega_\alpha(d_2(x,K))\,\mathrm{d}m(x).
    \end{equation} 
\end{thmx}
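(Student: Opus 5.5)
We reduce to ergodic $m$ by linearity of both sides in $m$ and the ergodic decomposition. By Birkhoff's theorem it then suffices to prove a pathwise estimate: for an $m$-generic point $x$ and large $N$,
\begin{equation*}
N\int g\,\mathrm{d}\mu_K-\sum_{j=0}^{N-1}g(\sigma^jx)\leq C\sum_{j=0}^{N-1}\omega_\alpha\bigl(d_2(\sigma^jx,K)\bigr)+o(N).
\end{equation*}
With $d_2(y,K)=2^{-k(y)}$, where $k(y)$ is the length of the longest prefix of $y$ that is a Fibonacci word, the right-hand side is $C\sum_j (1+k(\sigma^jx))^{-\alpha}$. We therefore decompose the orbit of $x$ into maximal Fibonacci-admissible segments. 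A time $j$ is a \emph{break} if the prefix of length $k(\sigma^j x)+1$ is the first non-Fibonacci word, and between consecutive breaks the orbit shadows a Fibonacci orbit segment.

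\textbf{Step 1 (segment comparison).} On a segment of length $L$ on which $x$ follows a Fibonacci word $w$, we compare the Birkhoff sum of $g$ with that along a point $z\in K$ starting with $w$. At time $j$ inside the segment, $\sigma^jx$ and $\sigma^jz$ agree on roughly $L-j$ symbols, so the error is at most $\sum_{i\le L}\omega_\alpha(2^{-i})\approx\sum_{i\le L}i^{-\alpha}$. On the other hand, the penalty accumulated on the same segment is $\sum_{j}(1+k(\sigma^jx))^{-\alpha}$, and $k(\sigma^jx)\approx L-j$ up to bounded overlap. Hence the penalty is \emph{also} of order $\sum_{i\le L} i^{-\alpha}$. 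This matching of error and penalty is what makes $\omega_\alpha$, rather than a smaller modulus, the right right-hand side. It holds for every $\alpha>0$, including the non-summable range.

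\textbf{Step 2 (closing up with Fibonacci words).} It remains to compare $\sum_{j<L}g(\sigma^jz)$ with $L\int g\,\mathrm{d}\mu_K$. Unique ergodicity alone gives only $o(L)$, which is not enough. We use the Sturmian/Rauzy-graph structure of $K$ reviewed in Subsection~\ref{ss: Fibonacci subshift and Rauzy graph}.

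The key observation is that a break forces a non-Fibonacci word of length $k+1$. Such a word contains one of the two "bad" right special extensions of the Rauzy graph of order $k$, and these correspond to nearly periodic returns. The plan is to cut each segment at Fibonacci-number lengths $F_n$. A Fibonacci orbit segment of length $F_n$ started near a return point is close to a periodic orbit whose invariant measure approximates $\mu_K$ with discrepancy bounded by a telescoping sum $\sum_{i\le n}F_i\,\omega_\alpha(2^{-F_{n-i}})$-type terms, via the Ostrowski/three-distance property. The bounded discrepancy of Sturmian sequences (balancedness) then gives
\begin{equation*}
\Bigl|\sum_{j<L}g(\sigma^jz)-L\int g\,\mathrm{d}\mu_K\Bigr|\lesssim\sum_{i\le \log L}\omega_\alpha(2^{-F_i})\cdot(\text{number of scale-}i\text{ blocks}).
\end{equation*}
We bound this by the local penalty near the break. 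The point is that around a break at level $k$ there are about $k$ consecutive times whose distances to $K$ are at least $2^{-k}$-ish, so the penalty there is at least $\sim k\cdot k^{-\alpha}=k^{1-\alpha}$. This must dominate the discrepancy term, which behaves like $\sum_i F_i^{-\alpha}\cdot(\ldots)$. We expect the condition $\alpha<2$ to enter exactly here: the discrepancy is at most $\sum_{i\le k}i^{1-\alpha}$-order, which must be absorbed by $k^{2-\alpha}$ from the penalty near the break.

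\textbf{Main obstacle.} Step 2 is the crux: obtaining a sharp discrepancy estimate for $\omega_\alpha$-continuous $g$ along Fibonacci words, uniform in $\norm{g}_{\omega_\alpha}\le1$, and showing it is absorbed by the penalty near each break. Our first attempt will be to telescope over the hierarchical (substitution) decomposition of Fibonacci words into $F_n$- and $F_{n-1}$-blocks. Then $g$ is approximated at scale $n$ by its conditional expectation on cylinders of length $F_{n}$, with error $\omega_\alpha(2^{-F_n})$, so that each level contributes a summable amount. Summing Steps 1 and 2 over all segments and dividing by $N$ then yields \eqref{ineq: transport estimate in theorem}.
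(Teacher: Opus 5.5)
\textbf{There is a genuine gap, and it is in Step~2.}

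Your reduction (ergodic decomposition plus a generic orbit, instead of the paper's density of periodic measures) is fine. Step~1 is in substance the paper's coupling of $\nu_p$ with the shadowing measure $\eta_z$, but it needs two corrections. First, a ``break'' must be a Bruin--Leplaideur accident, i.e.\ a time with $\ell_i\geq\ell_{i-1}$; as you literally define it, every time qualifies. Second, the overlap at the end of a segment is \emph{not} bounded. It is the residual $r_j$, a bi-special length $q_{n_j}$ that can be arbitrarily large. So a segment of length $b_j$ carries penalty $\sum_{i=r_j+1}^{r_j+b_j}\omega_\alpha(2^{-i})$, not $\sum_{i\le b_j}$. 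You must shadow the whole prefix of length $r_j+b_j$; then error and penalty coincide.

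The gap in Step~2 is structural: comparing each segment separately with $L\int g\,\mathrm{d}\mu_K$ and absorbing the error into a penalty ``near the break'' cannot work, for two reasons.
\begin{enumerate}
\item The claimed local lower bound $\gtrsim k^{1-\alpha}$ is false. A segment ending with residual $r$ may have length $b\ll r$, and then it pays at most $b(1+r)^{-\alpha}$. All one knows is $b_j\geq c_3(r_{j-1}+1)$ for the \emph{next} segment, whose own penalty can again be tiny. Penalties therefore have to be allocated scale by scale, not locally.
\item For $\alpha\in[1,2)$, no estimate at the level of balancedness suffices. Your bound $\sum_i\omega_\alpha(2^{-F_i})\cdot(\text{number of blocks})$ is bounded below by a constant, while the available penalty can be $o(1)$. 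Concretely, running around the long loop $C^L_{n+1}$ of $\Gamma_{n+1}$ gives a periodic orbit of period $F_n$ whose shadow lengths are all $\geq q_{n+1}+1$. Hence the total penalty over one period is $\lesssim F_n^{1-\alpha}\to 0$. Projecting the loop coordinates down to $\Gamma_4$ shows that the number of $1$'s per period is $F_{n-2}$, and $|F_{n-2}-F_n\mu_K([1])|\asymp F_n^{-1}$. The coarse statistics are exponentially more accurate than balancedness predicts, because the orbit closes up at a fine scale.
\end{enumerate}
The same example with $g=\pm\frac12\chi_{[1]}$ gives a left-hand side $\asymp F_n^{-2}$ against a right-hand side $\lesssim F_n^{-\alpha}$. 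So $\alpha<2$ is genuinely needed, and any proof must capture this cancellation.

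The missing idea is a mechanism that transports smallness from fine scales to coarse scales. In the paper, one first applies the multiscale formula and reduces to bi-special scales. At each scale $q_m$, the good part of the orbit is a flow on the two-loop Rauzy graph $\Gamma_m$. It is repaired at the $\kappa_m$ junctions into a circulation $H_m$, whose distance to $\mu_m$ is controlled by its mass and its stable coordinate (comparison lemma). The key relation is $s_m(H_m)=\frac{1-\sqrt5}{2}\,s_{m+1}(H_{m+1})+s_m(\mathcal{E}_m)$: the stable coordinate is contracted when passing to coarser scales.

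Summing this recursion is exactly where $\alpha<2$ enters, through the stable condition $\sum_{k\le m}\bigl(\frac{\sqrt5-1}{2}\bigr)^{m-k}q_k\Delta_k\le\gamma_2\,q_m\Delta_m$. Since $q_k\Delta_k\asymp\bigl(\frac{1+\sqrt5}{2}\bigr)^{(1-\alpha)k}$, this holds if and only if $\alpha<2$; the threshold does not come from a local comparison of $\sum_{i\le k}i^{1-\alpha}$ with $k^{2-\alpha}$. The error terms $s_m(\mathcal{E}_m)$, the bad windows and the junction counts $\kappa_m$ are then charged to the penalty. This charging uses $r_j=q_{n_j}$ and the accident gap estimate, split into type-one and type-two components of $U_m$. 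Without some version of this global contraction argument, your Step~2 does not close.
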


\begin{enumerate}
    \item[]
    
    As a byproduct, Theorem~\ref{t: energy transport estimate} also yields a
result of independent interest concerning \emph{robust freezing}, which provides a first instance of \emph{joint} stability of quasicrystals in dynamical systems---the aperiodic ground state persists not only under temperature variations, but also under small perturbations of the potential. Note that if $\mu$ is a frozen state for $f$ and is a $g$-maximizing measure, then $\mu$ is also a frozen state for $f+g$. Therefore, combined with the fact that for each $\alpha\in (0,1)$, $\mu_K$ is the unique frozen state for $-\omega_\alpha(d_2(\,\cdot\,,K))$ (cf.~\cite[Theorem~5.3]{CKQ25}), Theorem~\ref{t: energy transport estimate} immediately implies Theorem~\ref{t: robust freezing for omega_alpha} below.

Let $V$ be a Banach space consisting of certain real-valued continuous functions on $X$. For a topological dynamical system $T\colon X\to X$, we say that freezing at some $T$-invariant Borel probability measure $\mu$ is \emph{robust} in $V$ if there exist a nonempty open subset $U$ of $V$ and a critical temperature $t_c>0$ such that for each $\phi\in U$ and each $t\in(0,t_c)$, $\mu$ is the unique equilibrium state for $\phi/t$. 
\end{enumerate}

\begin{thmx}[Robust freezing for $\omega_\alpha$]\label{t: robust freezing for omega_alpha}
    Let $(\Sigma,\sigma)$ be the full shift on $\Sigma=\{0,\,1\}^\N$ equipped with the metric $d_2$ and $\mu_K$ be the unique $\sigma$-invariant Borel probability measure supported on the Fibonacci subshift $K$. For each $\alpha\in (0,1)$, freezing at $\mu_K$ is robust in $H^{\omega_\alpha}(\Sigma,\R)$.
\end{thmx}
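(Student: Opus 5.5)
The plan is to combine the frozen-state result of \cite[Theorem~5.3]{CKQ25} with the energy transport estimate, Theorem~\ref{t: energy transport estimate}, as the paper indicates. Fix $\alpha\in(0,1)$ and set $f_0\=-\omega_\alpha(d_2(\,\cdot\,,K))$. The map $t\mapsto(1-\log_2 t)^{-\alpha}$ is concave and increasing on $[0,1]$, hence subadditive, so $f_0\in H^{\omega_\alpha}(\Sigma,\R)$. By \cite[Theorem~5.3]{CKQ25}, $\mu_K$ is the unique frozen state for $f_0$: there is $t_0>0$ such that $\mu_K$ is the unique equilibrium state for $f_0/t$ for all $t\in(0,t_0)$. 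The open set will be a ball $U\=\{f_0+g:\norm{g}_{\omega_\alpha}<\varepsilon\}$, with $\varepsilon\=1/(2C)$ and $C$ the constant of Theorem~\ref{t: energy transport estimate}.

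For $\phi=f_0+g\in U$, split $\phi=\tfrac12 f_0+\bigl(\tfrac12 f_0+g\bigr)$. The first step is to show that $\mu_K$ is the unique maximizing measure for $\psi\=\tfrac12 f_0+g$. Apply (\ref{ineq: transport estimate in theorem}) to $g/\varepsilon$, which has norm at most $1$. Since $f_0=0$ on $K$, for every $m\in\cM(\Sigma,\sigma)$ we get
\begin{equation*}
\int\!\psi\,\mathrm{d}\mu_K-\int\!\psi\,\mathrm{d}m
\geq \tfrac12\int\!\omega_\alpha(d_2(x,K))\,\mathrm{d}m-\varepsilon C\int\!\omega_\alpha(d_2(x,K))\,\mathrm{d}m
=0 .
\end{equation*}
Equality forces $\int\omega_\alpha(d_2(x,K))\,\mathrm{d}m=0$, so $m$ is supported on $K$. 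Unique ergodicity of $K$ then gives $m=\mu_K$.

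The second step is a frozen-state argument for $\tfrac12 f_0$. Since $\mu_K$ is the unique equilibrium state for $f_0/t$ with $t<t_0$, it is also the unique equilibrium state for $(\tfrac12 f_0)/t$ whenever $t<t_0/2$. Put $t_c\=t_0/2$. Take $t\in(0,t_c)$ and $m\in\cM(\Sigma,\sigma)$. We have
\begin{equation*}
h_m+\int\!\tfrac{\phi}{t}\,\mathrm{d}m
=\Bigl(h_m+\int\!\tfrac{f_0}{2t}\,\mathrm{d}m\Bigr)+\tfrac1t\int\!\psi\,\mathrm{d}m .
\end{equation*}
Each summand is maximized at $\mu_K$, and each has $\mu_K$ as its unique maximizer. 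So $\mu_K$ is the unique equilibrium state for $\phi/t$. Equilibrium states exist here by upper semicontinuity of entropy for the full shift. Hence freezing at $\mu_K$ is robust in $H^{\omega_\alpha}(\Sigma,\R)$.

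The genuine difficulty lies entirely in Theorem~\ref{t: energy transport estimate}, which we take as given here. The only point that needs care is that the halving trick is what turns the non-strict inequality coming from (\ref{ineq: transport estimate in theorem}) into uniqueness of the maximizer. If one also wants $\mu_K$ to fail to be an equilibrium state for $t>t_c$, this follows from positivity of the pressure-entropy contribution at high temperature, but robustness as defined only requires the low-temperature statement.
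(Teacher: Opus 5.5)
Your proposal is correct and follows the paper's route. The paper derives this theorem from \cite[Theorem~5.3]{CKQ25} and Theorem~\ref{t: energy transport estimate} via the observation that a frozen state for $f$ which is $g$-maximizing is also frozen for $f+g$, and your splitting $\phi=\tfrac12 f_0+\bigl(\tfrac12 f_0+g\bigr)$ with $\norm{g}_{\omega_\alpha}<1/(2C)$ and $t_c=t_0/2$ is how that observation is applied.

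There are three small slips to fix. First, $\omega_\alpha$ is not concave on $[0,1]$ (it is convex near $t=1$), so justify $f_0\in H^{\omega_\alpha}(\Sigma,\R)$ instead by monotonicity of $\omega_\alpha$ and the ultrametric inequality for $d_2$, as in the paper's footnote. Second, with $\varepsilon C=\tfrac12$ your lower bound $\bigl(\tfrac12-\varepsilon C\bigr)\int\omega_\alpha(d_2(x,K))\,\mathrm{d}m$ is identically $0$, so equality does not force $m(K)=1$; apply (\ref{ineq: transport estimate in theorem}) to $-g/\norm{g}_{\omega_\alpha}$ (applying it to $g$ gives the wrong direction) to get the coefficient $\tfrac12-C\norm{g}_{\omega_\alpha}>0$. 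Third, uniqueness of the $\psi$-maximizer is not needed at all: an equilibrium state $m$ for $\phi/t$ must maximize both of your summands, and uniqueness of the equilibrium state for $f_0/(2t)$ alone already gives $m=\mu_K$.
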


\begin{enumerate}
\smallskip\item[] 
    \begin{enumerate}
        \item[] We now turn to the proof of Theorem~\ref{t: energy transport estimate}. 
        \smallskip
        \item[\textbf{Step b.1:}]\emph{(Wasserstein distance)}. We use tools from \emph{optimal transport} to transform the problem. Applying the Kantorovich duality formula (cf.~\cite[Remark~6.5]{Vi09}), we see that to estimate the supremum $\sup\bigl\{\int_\Sigma\! g\,\mathrm{d}\mu_K-\int_{\Sigma}\!g\,\mathrm{d}m:g\in H^{\omega_\alpha}(\Sigma,\R),\,\norm{g}_{\omega_\alpha}\leq 1\bigr\}$, it suffices to estimate the $\omega_\alpha$-Wasserstein distance $W_{\omega_\alpha}(m,\mu_K)$ between $m$ and $\mu_K$ (cf.~Definition~\ref{d: Wasserstein}). With the multiscale formula (cf.~\cite[Lemma~3.1]{Kl15}), the Wasserstein distance $W_{\omega_\alpha}(m,\mu_K)$ can be estimated through the cylinder discrepancies $D_n(m,\mu_K),\,n\in\N$ (cf.~Definition~\ref{d: D_n}).
        
        \smallskip
        \item[\textbf{Step b.2:}]\emph{(Periodic shadowing and accidents)}. Since periodic measures are weak$^*$ dense in $\cM(\Sigma,\sigma)$ (cf.~\cite{Sig74}), it suffices to prove (\ref{ineq: transport estimate in theorem}) for periodic measures. To compare a periodic measure $m_{\cO}$ with $\mu_K$, we first construct a piecewise shadowing orbit of the periodic orbit $\cO$ on the Fibonacci subshift $K$ (cf.~Definition~\ref{d: defi of eta_z}). This construction relies on the useful concept of \emph{accidents} (cf.~Definition~\ref{d: shadow length and accident}) introduced by Bruin and Leplaideur in \cite{BL13,BL15}, where they investigated freezing phase transitions. Let $\eta_z$ denote the empirical distribution of the shadowing. As $\int_\Sigma\! g\,\mathrm{d}m_{\cO}-\int\! g\,\mathrm{d}\eta_z$ is naturally controlled by the right-hand side of (\ref{ineq: transport estimate in theorem}), we need to estimate $W_{\omega_\alpha}(\eta_z,\mu_K)$. Since accidents only take place at \emph{bi-special} scales $q_m$ (cf.~Proposition~\ref{p: bi-special scales}), it is convenient to estimate the discrepancies $D_{q_m+1}(\eta_z,\mu_K)$, $m\geq 4$ (cf.~Proposition~\ref{p: reduction to bi-special scales}).

        \smallskip
        \item[\textbf{Step b.3:}]\emph{(Flows on Rauzy graphs)}. The use of flows on \emph{Rauzy graphs} (cf.~\cite{Ra83,Fe99}; see Definition~\ref{d: Rauzy graph} for a precise definition) to estimate cylinder discrepancies is the key new ingredient. At each bi-special scale $q_m$, the cylinder discrepancy $D_{q_m+1}(\eta_z,\mu_K)$ is related to a flow $F_m$ on the Rauzy graph $\Gamma_m$ (cf.~(\ref{eq: defi of good edge chain})). Each junction between successive shadowing pieces (cf.~Proposition~\ref{p: geometry of bad window}) corresponds to a pair of source and sink for the flow. We add suitable connecting flows between these pairs to get a circulation $H_m$ (cf.~Proposition~\ref{p: boundary repair}). After this repair, we estimate $D_{q_m+1}(\eta_z,\mu_K)$ with the stable coordinate $s_m(H_m)$ (cf.~(\ref{eq: defi of two coordinates})) in Proposition~\ref{p: cylinder discrepancy estimated through H_m}. The inductive structure of the Rauzy graphs for the Fibonacci subshift (cf.~Propositions~\ref{p: structure of Gamma_m} to \ref{p: cycle properties}) contracts the stable coordinate, allowing the discrepancies accumulated over different scales to be controlled.
    \end{enumerate}
    
    \smallskip
    \item [\textbf{Part c:}]\emph{Non-freezing for weak log-log Dini potentials (Theorem~\ref{t: non-freezing for log Dini})}.

    \smallskip
    The proof builds upon a general non-freezing criterion (cf.~Lemma~\ref{l: a general non-freezing criterion}): a maximizing measure cannot be a frozen state for the potential if we can find invariant measures whose entropy gain over the maximizing measure dominates the energy they lose.  Let $\phi\in H^\omega(X,\R)$ be an $\omega$-continuous potential. It suffices to consider the case where $\int\!\phi\,\mathrm{d}\mu$ is not constant over all invariant measures $\mu$. Let $\mu$ be a $\phi$-maximizing measure. To construct measures for which the non-freezing criterion applies, we code the system with a mixing subshift of finite type $(X',\sigma)$ through a Markov partition. We start with a non-maximizing periodic point $y$, yielding a bad word $J_n=y_1y_2\ldots y_{l_n}$ of proper length $l_n$ for each $n\in\N$ so that the lift $\mu'$ of $\mu$ on $X'$ has limited weight on $[J_n]$ (cf.~Lemma~\ref{l: maximizing measure not much weight on nonmaximizing block}). 
    
    Let $n\in\N$. We take the $2n$-block distribution of $\mu$ and sample a word $u$ of length $2n$ with respect to this distribution. We choose uniformly a position $j$ among roughly $2n/(l_n+2s+1)$ places, cut $u$ at $j$-th position, insert the bad word $J_n$ into $u$, connect with two words of length $s\in\N_0$ since the system is mixing, and obtain a distribution on new words $w(u,j)$. 
    
    Sample a new word $V$ with respect to this distribution. Gluing independent copies of $V$ with fixed bridges of length $s$ and taking a $2n+l_n+3s$-translation average produces a $\sigma$-invariant measure $\nu_n$ on the $X'$ which projects to a $T$-invariant measure $\mu_n$ on $X$. Now we need to estimate $h_{\mu_n}(T)-h_{\mu}(T)$ and $\int_{X}\!\phi\,\mathrm{d}\mu-\int_X\!\phi\,\mathrm{d}\mu_n$. While the energy cost is controlled by $\frac{1}{n}\sum_{i=1}^n\omega(\lambda^{-i})+l_n/n$ for each $n\in\N$, where $\lambda$ is the expanding constant (cf.~Remark~\ref{r: expanding parameter and long pull backs}), the entropy gain is about $\log n_k/n_k$ along a subsequence $\{n_k\}_{k\in\N}$ in $\N$, where we also have $\sum_{i=1}^{n_k}\omega(\lambda^{-i})=o(\log n_k)$ (cf.~Lemma~\ref{l: associated modulus rate}) and $l_{n_k}=o(\log n_k)$ so that the non-freezing criterion applies.

    \smallskip
    \item [\textbf{Part d:}]\emph{Freezing construction (Theorem~\ref{t: freezing for not log Dini})}.

    \smallskip
    To prove Theorem~\ref{t: freezing for not log Dini}, we construct a subshift $K$ on the alphabet $\{0,\,1,\,2\}^{\N}$ to which a general freezing criterion applies. 
    \begin{enumerate}
    \smallskip
    \item[\textbf{Step d.1:}]\emph{(Freezing criterion for subshifts with positive entropy)}. We first establish the freezing criterion based on the two complexity assumptions on the subshift (cf.~Lemma~\ref{l: freezing criterion for positive entropy subshift}), namely, the growth rate of its language and the growth rate of its minimal forbidden words. More precisely, if the subshift has positive entropy $h$, its language complexity has at most exponential growth rate $h$ up to a polynomial factor, and the number of minimal forbidden words grows at most exponentially at rate $\gamma$ for some $\gamma<h$, then for every modulus of continuity $\omega$ not satisfying the weak log-log Dini condition, the plateau potential over the subshift will admit a freezing phase transition. To prove this result, we decompose an arbitrary word from left to right into bad phrases, each consisting of a legal prefix followed by a minimal forbidden word. For large inverse temperature $\beta$, the contribution of these bad phrases in the $n$-th partition function is then dominated by $e^{nh}$ due to the complexity assumptions, and every equilibrium state is thus forced to be supported on the subshift. 

    \smallskip
    \item[\textbf{Step d.2:}]\emph{(Construction of $K$)}. We construct a decreasing sequence of mixing subshifts of finite type $\{Y_n\}_{n\in\N_0}$ on $\{0,\,1,\,2\}^{\N}$, together with words $f_s\in\{0,\,1,\,2\}^{m_s}$ such that the language of $Y_s$ $\fL(Y_s)\=\bigl\{u\in\bigcup_{n=1}^{+\infty}\{0,1,2\}^{n}: f_1,\,f_2,\,\ldots,\,f_s \text{ are not factors of } u\bigr\}$, the length scales $m_s$ grow geometrically, and the entropy gaps $h_{s-1}-h_s$ should decay geometrically. These features will later translate into the complexity assumptions required by Lemma~\ref{l: freezing criterion for positive entropy subshift}.

    \hspace*{1.5em} When necessary, we insert stationary stages for some $s\in\N$ in which $Y_{s+1}=Y_{s}$ while $m_{s+1}=e^{\theta m_s}$. In a genuine stage, let $\cO$ be the first periodic orbit (in some fixed enumeration of periodic orbits) surviving in $Y_s$, and apply the periodic block deletion (cf.~Lemma~\ref{l: deletion of periodic orbit}) to a sufficiently long block $w_m(\cO)$ in $\cO$. Setting $f_{s+1}\=w_m(\cO)$, we delete $\cO$ in $Y_{s+1}$ and decrease the entropy by a small amount. As a result, we obtain $K\=\bigcap_{s=0}^{+\infty}Y_s$. 

    \smallskip
    \item [\textbf{Step d.3:}]\emph{(Properties of $K$)}. Every periodic orbit is deleted at $Y_{s+1}$ for some $s\in\N$, so $K$ contains no periodic points. The geometric decay of the entropy gaps guarantees that $h_{\mathrm{top}}(\sigma|_K)>0$. Uniqueness of the measure of maximal entropy follows from the upper semi-continuity of entropy (cf.~Lemma~\ref{l: entropy-isolation}). The complexity estimate is more delicate. Representing $K$ by a deterministic labelled graph and applying Lemma~\ref{l: finite-presentation} controls the language growth. The minimal forbidden words of $K$ are precisely the words $f_{s+1}$ from the genuine stages where $Y_{s+1}\subsetneq Y_s$ so that for each $m\in\N$, the number of minimal forbidden words of length $m$ is at most $1$. Thus, Lemma~\ref{l: freezing criterion for positive entropy subshift} applies and the proof is complete.
    \end{enumerate}

    \smallskip
    \item [\textbf{Part e:}]\emph{Super-exponential suppression (Theorem~\ref{t: super-exponential suppression for non-Dini})}.

    \smallskip
    Let $\omega$ be a modulus of continuity not satisfying the Dini condition and $\cO$ be a periodic orbit. We first find an open set $U$ which captures every orbit outside $\cO$ so that $\cO$ is the only orbit never entering $U$. It suffices to show that $\lim_{\beta\to+\infty}\frac{1}{\beta}\log\sup_{\mu\in\fE(\sigma,-\beta\omega(d_2(\,\cdot\,,\cO)))}\mu(U)=-\infty$.
    
    Take $\mu\in\fE(\sigma,-\beta\omega(d_2(\,\cdot\,,\cO)))$. To estimate $\mu(U)$, we induce on $U$ and calculate both the energy $\int\!\omega(d_2(\,\cdot\,,\cO))\,\mathrm{d}\mu$ and the entropy $h_{\mu}(\sigma)$ in terms of the return time distribution $p_n\=\frac{1}{\mu(U)}\mu(\{x\in U:\tau(x)=n\})$, where the energy on $\{x\in U:\tau(x)=n\}$ is about $\assocsum{2}{n}(\delta)$ for some $\delta\in(0,+\infty)$. As $\mu$ is an equilibrium state for $-\beta\omega(d_2(\,\cdot\,,\cO))$, we get an inequality in terms of $p_n$ and $\assocsum{2}{n}(\delta)$ (cf.~(\ref{ineq: entropy-energy written out})). While Kac's formula relates $\mu(U)$ to $\sum_{n=1}^{+\infty}np_n$, we combine $\sum_{n=1}^{+\infty}np_n$ with the terms in (\ref{ineq: entropy-energy written out}) and apply Jensen's inequality to get a partition function $Z_{\beta'}$. As $\omega$ does not satisfy the Dini condition, we see that $\lim_{\beta'\to+\infty}Z_{\beta'}=0$, which forces $\sum_{n=1}^{+\infty}np_n$ to be large, leading to $\lim_{\beta\to+\infty}\frac{1}{\beta}\log\bigl(\sup_{\mu\in\fE(\sigma,-\beta\omega(d_2(\,\cdot\,,\cO)))}\mu(U)\bigr)=-\infty$.
\end{enumerate}

\subsection{Organization of the article}

We collect some key notation used throughout the article
in Subsection~\ref{ss: notation}. In Section~\ref{s: TTF for Dini}, we develop the tropical
thermodynamic formalism for Dini potentials and prove
Theorems~\ref{t: Dini quanta no freezing}, \ref{t: existence}, and~\ref{t: zero-temperature limit}. Some of the technical proofs that are similar to the ones for H\"{o}lder potentials are deferred to Appendix~\ref{ss: Appendix A}. 

In Section~\ref{s: failure of TPO}, we review properties of the Fibonacci subshift and prove robust maximizing on it, establishing Theorems~\ref{t: energy transport estimate} and~\ref{t: non TPO}. In Section~\ref{s: non-freezing for log Dini}, we discuss the non-freezing criterion, establishing Theorem~\ref{t: non-freezing for log Dini}. In Section~\ref{s: freezing for non log Dini}, we prove a freezing criterion for subshifts with positive entropy (cf.~Lemma~\ref{l: freezing criterion for positive entropy subshift}) and establish Theorem~\ref{t: freezing for not log Dini}. Theorem~\ref{t: super-exponential suppression for non-Dini} is established in Section~\ref{s: super-exponential suppression beyond Dini}.

\subsection{Notation}\label{ss: notation}
We follow the convention that $\N=\{1,\,2,\,3,\,\ldots\}$ and $\N_0\=\N\cup\{0\}$. The cardinality of a set $A$ is denoted by $\card A$.

Let $(X,d)$ be a metric space. For subsets $A,\,B\subseteq X$, we set $d(A,B) \= \inf \{d(x,y) : x\in A,\,y\in B\}$, and $d(A,x)=d(x,A) \= d(A,\{x\})$ for each $x\in X$. For each subset $Y\subseteq X$, we denote the diameter of $Y$ by $\diam Y \= \sup\{d(x,y) : x, \, y\in Y\}$, for each $\epsilon>0$, denote the $\epsilon$-neighbourhood of $Y$ (in $X$) by $B(Y,\epsilon )\= \{x\in X:d (x,Y )<\epsilon \}$, and the closure of $B(Y,\epsilon )$ by $\overline{B}(Y,\epsilon)$. For each $y\in X$ and each $\epsilon>0$, write $B(y,\epsilon)\=B(\{y\},\epsilon)$.

Let $C(X,\R)$ denote the space of continuous functions $\phi\: X\to\R$ with the supremum norm $\norm{\phi}_{C^0}\=\max_{x\in X}\abs{\phi(x)}$. Let $\cP(X)$ denote the set of Borel probability measures on $X$.

For a finite nonempty set $A$, referred to as an \emph{alphabet}, the \emph{(one-sided) full shift} on $A$ is the set 
$A^{\N}\=\bigl\{\{x_n\}_{n=1}^{+\infty}:x_n\in A\text{ for all }n\in\N\bigr\}$, which is compact equipped with the product topology. Elements of $A^{\N}$ will be written as $x=x_1x_2\ldots=\{x_n\}_{n=1}^{+\infty}$. A \emph{word} on $A$ is a finite sequence $y=y_1y_2\ldots y_k\in A^k$, where $k\in\N$ is the \emph{length} of $y$, denoted by $\abs{y}$. We use $\epsilon$ to denote the empty word and $\abs{\epsilon}\=0$. A word $y'$ is a \emph{factor} of $y=y_1y_2\ldots y_k$ if there exist $1\leq i\leq j\leq k$, $y'=y_iy_{i+1}\ldots y_j$. For each $w=\{w_n\}_{n=1}^{+\infty}\in A^{\N}$ and each $k\in\N$, denote $w|_k\=w_1w_2\ldots w_k$. A word $y=y_1y_2\ldots y_k$ on $A$ is said to \emph{occur} in some $w\in A^{\N}$ if there exists some $j\in\N$ such that $w_{j+i-1}=y_i$ for all integers $1\leq i\leq k$. 

The shift map $\sigma\:A^{\N}\to A^{\N}$ is given by $(\sigma x)_i\=x_{i+1}$ for all $x\in A^{\N}$. For each $\lambda\in (1,+\infty)$, $A^{\N}$ is metrized by the distance function $d_\lambda$ given by $d_{\lambda}\bigl(\{x_n\}_{n=1}^{+\infty},\{y_n\}_{n=1}^{+\infty}\bigr)\= \lambda^{1-p}$, where $p$ is the smallest possible integer with $x_p\neq y_p$, and $d_{\lambda}\bigl(\{x_n\}_{n=1}^{+\infty},\{y_n\}_{n=1}^{+\infty}\bigr)\=0$ if $x_n=y_n$ for all $n\in\N$. A \emph{subshift} of $A^{\N}$ is a closed $\sigma$-invariant subset $X\subseteq A^{\N}$. Let $X$ be a subshift of $A^\N$. A word $y$ on $A$ is \emph{allowed} in $X$ if there exists $x\in X$ and $k\in\N$ such that $y=x|_k$. The \emph{language} of $X$, denoted by $\fL(X)$, is the set of all its allowed words. For each $n\in\N$, let $\fL_n(X)$ be the subset of $\fL(X)$ consisting of length $n$ words. A subshift $X$ of $A^\N$ is said to be a \emph{subshift of finite type} if there exists a finite set $F$ of words on $A$ so that $X=\bigl\{x\in A^\N:\text{ every word in }F\text{ does not occur in }x\big\}$. If the finite set $F$ can be chosen to be a subset of $A^{r+1}$ for some $r\in\N$, then $X$ is called an \emph{$r$-step} subshift of finite type. A word $y$ on $A$ that is not allowed in $X$ is called \emph{minimal forbidden} if every proper factor of $y$ is allowed in $X$. For some subshift $Y$ of $A^\N$, we say that $Y$ is a \emph{subshift} of $X$ if $Y\subseteq X$. For each $m\in\N$, let $b_X(m)$ denote the number of minimal forbidden words of length $m$. We write $h_{\mu}$ and $h_{\topo}$ for the measure-theoretic entropy and the topological entropy, respectively. Denote
\begin{equation*}
    \MME(X)\=\bigl\{\mu\in\cM\bigl(A^{\N},\sigma\bigr):\mu(X)=1,\,h_{\mu}(\sigma|_X)=h_{\topo}(\sigma|_X)\bigr\}.
\end{equation*}

Let $T\: X\to X$ be a continuous map on a compact metric space $X$ and $f\in C(X,\R)$. Denote the set of equilibrium states for $f$ by
\begin{equation*}
    \fE(T,f)\=\biggl\{\mu\in\cM(X,T):h_{\mu}(T)+\int\!f\,\mathrm{d}\mu=P(T,f)\biggr\}.
\end{equation*}

\medskip
\noindent{\bf Acknowledgments.} 
Y.~H.\ and Z.~L.\ were partially supported by Beijing Natural Science Foundation (JQ25001) and National Natural Science Foundation of China (12471083). Y.~J.\ wants to thank for the hospitality of Peking University during his visit in January 2025 to attend the Conference on Holomorphic Dynamics and Fractal Geometry, during which he learnt the arXiv version of Z.~L.\ and Y.~S.'s work \cite{LS26} and initiated the collaboration. We also want to express our gratitude to Shibing Chen for discussions and references on optimal transport, and to Junwen Huang for helpful comments.

The authors confirm that no artificial intelligence (AI) or machine-learning tools were used in the research and writing of this manuscript.

\section{Tropical thermodynamic formalism for Dini potentials}\label{s: TTF for Dini}

In this section, we develop the theory of tropical thermodynamic formalism for Dini potentials and establish Theorem~\ref{t: Dini quanta no freezing}. We review expanding maps and Dini potentials in Subsection~\ref{ss: expanding maps and dini potentials}, fix some key notation and terminology in Subsection~\ref{ss: backgrounds for TTF}, and provide a proof of Theorem~\ref{t: Dini quanta no freezing} assuming Theorems~\ref{t: existence} and~\ref{t: zero-temperature limit} in Subsection~\ref{ss: Dini quanta no freezing}. The existence of tropical eigenfunctions and tropical eigendensities is investigated in Subsection~\ref{ss: existence}, which leads to the proof of Theorem~\ref{t: existence}. We investigate the logarithmic-type zero-temperature limits and establish Theorems~\ref{t: zero-temperature limit} in Subsection~\ref{ss: zero temperature limits}. The representations and uniqueness of these tropical eigenobjects, as well as the proof of Theorem~\ref{t: Dini quanta no freezing}~(ii), are presented in Subsection~\ref{ss: representation and uniqueness}. 

Since many of the arguments closely parallel those for Hölder potentials, we keep in the main text only the proofs that are concise and bring out the features specific to the Dini setting, and defer the remaining proofs to Appendix~\ref{ss: Appendix A}.

\subsection{Expanding maps and Dini potentials}\label{ss: expanding maps and dini potentials}
In this subsection, we review expanding maps and Dini potentials and fix related notations, which will be used throughout the article.

A map $T$ on a topological space $X$ is called a \emph{covering map} if $T$ is open, continuous, surjective, and is a local homeomorphism. Let $(X,d)$ be a metric space. A map $T\:X\to X$ is said to be  \emph{distance-expanding} if there exist constants $\lambda>1$ and $\gamma>0$ such that for all $x,\, y\in X$, $d(x,y)\in(0,\gamma)$ implies $d(T(x),T(y))>\lambda d(x,y)$. We say that a covering map $T\:X\to X$ is an \emph{expanding covering map} if $T$ is distance-expanding. Note that the map $T\: X\to X$ constitutes an expanding covering map if and only if it is open, continuous, surjective, and distance-expanding (cf.~\cite[Section~4.1]{PU10}).

The following lemma is well known (see e.g.~\cite[Lemmas~4.1.2 and 4.1.4]{PU10}) and we omit its proof.
\begin{lemma}\label{l:expanding systems}
	Let $T\:X \rightarrow  X$ be an open continuous distance-expanding map. Let $\lambda>1$ and $\gamma>0$ be the constants in the distance-expanding property of $T$. Then there exists a constant $\xi>0$ such that for each $x$ in $X$, $B(T(x),\xi)\subseteq T(B(x,\gamma/2))$. Moreover, the restriction $T|_{B(x, \gamma/2)}$ is injective and its inverse map $T_x^{-1} \: B(T(x),\xi) \rightarrow  B(x, \gamma/2)$ has the property that $d \bigl( T_x^{-1}(y),T_x^{-1}(z) \bigr)\leq\lambda^{-1}d(y,z)$. Furthermore, 
    \begin{equation*}
 \sup\bigl\{\card \bigl(T^{-1}(x)\bigr):x\in X\bigr\}\eqqcolon N<+\infty.
	\end{equation*} 
\end{lemma}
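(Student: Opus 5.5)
The plan is the standard argument from Przytycki--Urba\'nski, applied to an open, continuous, distance-expanding map $T$ with constants $\lambda>1$, $\gamma>0$. It has three steps: injectivity on small balls, a uniform radius $\xi$ for the images of $\gamma/2$-balls via compactness, and the contracting inverse branch. The cardinality bound then follows from injectivity and compactness. Throughout, $X$ is compact, as in the setting of the paper.

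\emph{Injectivity.} If $x',x''\in B(x,\gamma/2)$ are distinct, then $d(x',x'')<\gamma$. Distance expansion gives $d(T(x'),T(x''))>\lambda d(x',x'')>0$, so $T$ is injective on $B(x,\gamma/2)$. For the cardinality bound, cover $X$ by finitely many balls $B(x_i,\gamma/2)$, say $M$ of them. Each fiber $T^{-1}(x)$ meets each such ball in at most one point, so $\card T^{-1}(x)\le M$ and $N<+\infty$.

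\emph{Uniform $\xi$.} This is the main obstacle. Openness alone gives, for each $x$, some radius $r(x)>0$ with $B(T(x),r(x))\subseteq T(B(x,\gamma/4))$, but the radius must be made independent of $x$.
\begin{itemize}
\item I would argue by contradiction. Suppose there are $x_n$ and $y_n\notin T(B(x_n,\gamma/2))$ with $d(y_n,T(x_n))\to0$.
\item By compactness, pass to a subsequence with $x_n\to x$. Then $T(x_n)\to T(x)$ and $y_n\to T(x)$.
\item Fix $r=r(x)$ from openness, so $B(T(x),r)\subseteq T(B(x,\gamma/4))$.
\item For large $n$ we have $d(x_n,x)<\gamma/4$, hence $B(x,\gamma/4)\subseteq B(x_n,\gamma/2)$. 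Also $y_n\in B(T(x),r)$ eventually.
\item Therefore $y_n\in T(B(x_n,\gamma/2))$, a contradiction.
\end{itemize}
This yields $\xi>0$ with $B(T(x),\xi)\subseteq T(B(x,\gamma/2))$ for all $x$, and we may shrink $\xi$ so that $\xi\le\gamma$ if convenient.

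\emph{Contraction of the inverse branch.} Define $T_x^{-1}$ on $B(T(x),\xi)$ as the inverse of the injective restriction $T|_{B(x,\gamma/2)}$; it maps into $B(x,\gamma/2)$. For $y,z\in B(T(x),\xi)$, the preimages $y'=T_x^{-1}(y)$ and $z'=T_x^{-1}(z)$ both lie in $B(x,\gamma/2)$. Hence $d(y',z')<\gamma$. If $y'\ne z'$, expansion gives $d(y,z)=d(T(y'),T(z'))>\lambda d(y',z')$, which is the inequality $d\bigl(T_x^{-1}(y),T_x^{-1}(z)\bigr)\le\lambda^{-1}d(y,z)$. If $y'=z'$ the inequality is trivial, and this completes the proof.
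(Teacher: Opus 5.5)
Your proof is correct, and it is the standard argument of Przytycki--Urba\'nski (Lemmas~4.1.2 and~4.1.4 of \cite{PU10}); the paper cites this source and omits the proof. The compactness-by-contradiction step giving a uniform $\xi$ and the finite-cover bound on $\card T^{-1}(x)$ are both sound under the paper's standing assumption that $X$ is compact.
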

\begin{rem}\label{r: expanding parameter and long pull backs}
    We call the triple $(\lambda,\gamma,\xi)$ expanding constants of $T\:X\to X$. Since in our article, the metric $d$ is fixed for every compact space $X$, we will say that a quantity depends on $T$ if it depends on $T$ and $d$.
    
     For each $n\in\N$, we denote $T_x^{-n}\: B(T^n(x),\xi) \rightarrow  B(x,\lambda^{-n}\xi)$ as the composition of inverse maps $T_{T^i(x)}^{-1}$, $i = 0,\, 1,\,\ldots, \, n-1$. 
\end{rem}

Recall that a nondecreasing function $\omega\:[0,+\infty)\to [0,+\infty)$ with $\lim_{t\to0^+}\omega(t)=\omega(0)=0$ is called a \emph{modulus of continuity}. Let $(X,d)$ be a compact metric space and $\omega$ be a modulus of continuity. Denote, for each function $\phi\:X\to\R$ and each $a\in (0,+\infty]$,
\begin{equation*}
    \abs{\phi}_{\omega,a}\=\sup\biggl\{\frac{\abs{\phi(x)-\phi(y)}}{\omega(d(x,y))}:x,\,y\in X,\,0<d(x,y)\leq a\biggr\}\quad\text{ and }\quad\abs{\phi}_{\omega}\=\abs{\phi}_{\omega,+\infty}.
\end{equation*}
A function $\phi\: X\to \R$ is said to be \emph{$\omega$-continuous} if $\abs{\phi}_\omega<+\infty$. We define $H^{\omega}(X,\R)$ to be the Banach space of $\omega$-continuous functions $\phi\:X\to\R$ equipped with norm $\norm{\phi}_\omega\=\norm{\phi}_{C^0}+\abs{\phi}_\omega$.

Recall that a modulus of continuity $\omega$ is said to satisfy the \emph{Dini condition} if it satisfies (\ref{ineq: defi of Dini}). The set of such $\omega$ is denoted by $\sD$. We call $\phi\: X\to\R$ a \emph{Dini potential} if $\phi\in H^\omega(X,\R)$ for some $\omega\in\sD$.

For all modulus of continuity $\omega$ and constants $\lambda\in (1,+\infty)$, $k\in\N_0\cup\{+\infty\}$, and $t\in[0,+\infty)$, define
\begin{equation}\label{eq: defi of associated modulus}
  \assocsum{\lambda}{k}(t)\=\begin{cases}
  \sum_{n=1}^k\omega(\lambda^{-n}t) & \text{if } k\in\N\cup\{+\infty\},\\
  0 &\text{if }k=0,
  \end{cases}\quad\text{ and }\quad\assocmodu(t)\=\assocsum{\lambda}{+\infty}(t).
\end{equation}

\begin{lemma}\label{l: associated modulus of continuity}
   If $\omega\in\sD$ and $\lambda>1$, then $\assocmodu\:[0,+\infty)\to[0,+\infty)$ is a modulus of continuity.
\end{lemma}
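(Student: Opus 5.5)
We need to show that $\assocmodu(t)=\sum_{n=1}^{+\infty}\omega(\lambda^{-n}t)$ is finite for every $t\ge 0$, nondecreasing, vanishes at $0$, and satisfies $\lim_{t\to0^+}\assocmodu(t)=0$. Monotonicity and $\assocmodu(0)=0$ are immediate: each term $\omega(\lambda^{-n}t)$ is nondecreasing in $t$ and vanishes at $t=0$. So the work is finiteness and continuity at $0$, and both come from comparing the series with the Dini integral.

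\emph{Step 1 (finiteness).} Fix $t>0$. Since $\omega$ is nondecreasing, for each $n\in\N$ and each $s\in[\lambda^{-n}t,\lambda^{-n+1}t]$ we have $\omega(\lambda^{-n}t)\le\omega(s)$. Hence
\begin{equation*}
\omega(\lambda^{-n}t)\log\lambda=\omega(\lambda^{-n}t)\int_{\lambda^{-n}t}^{\lambda^{-n+1}t}\!\frac{\mathrm{d}s}{s}\le\int_{\lambda^{-n}t}^{\lambda^{-n+1}t}\!\frac{\omega(s)}{s}\,\mathrm{d}s .
\end{equation*}
Summing over $n\in\N$ gives
\begin{equation*}
\assocmodu(t)\le\frac{1}{\log\lambda}\int_0^{t}\!\frac{\omega(s)}{s}\,\mathrm{d}s .
\end{equation*}
For $t\le1$ the right-hand side is finite by the Dini condition~(\ref{ineq: defi of Dini}). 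For $t>1$ we split the integral as $\int_0^1+\int_1^t$; the second piece is at most $\omega(t)\log t<+\infty$.

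\emph{Step 2 (continuity at $0$).} The bound from Step 1 gives $\assocmodu(t)\le(\log\lambda)^{-1}\int_0^t\omega(s)s^{-1}\,\mathrm{d}s$ for $t\in(0,1]$. The right-hand side tends to $0$ as $t\to0^+$, since it is the tail of a convergent integral (absolute continuity of the integral). Hence $\lim_{t\to0^+}\assocmodu(t)=0=\assocmodu(0)$.

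The only point needing care is the comparison in Step 1: the intervals $[\lambda^{-n}t,\lambda^{-n+1}t]$ must tile $(0,t]$, and the monotonicity of $\omega$ must be used in the correct direction. This matters because $\omega$ is only nondecreasing and may have jumps at points other than $0$; the argument never uses continuity of $\omega$ away from $0$, so it goes through unchanged. I do not expect a genuine obstacle.
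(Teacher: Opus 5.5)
Your proof is correct. It is in substance the paper's argument: the paper just cites \cite[Lemma~1]{FJ01a} for finiteness and $\lim_{t\to0^+}\assocmodu(t)=0$, adding the same monotonicity remark you make. Your $\lambda$-adic comparison with the Dini integral is exactly the estimate the paper itself uses later in (\ref{ineq: interval integration for omega(t) over t}) in the proof of Lemma~\ref{l: associated modulus rate}, so your version is a self-contained rendering of the cited step.
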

%\begin{remark}
    %In general, the two spaces $H^\omega(X,\R)$ is a proper subset of $H^{\assocmodu}(X,\R)$.
%\end{remark}
\begin{proof}
    Since $\omega$ is nondecreasing and vanishes only at $0$, we see that $\assocmodu$ is nondecreasing, $\assocmodu(t)>0$ for $t\in (0,+\infty)$, and $\assocmodu(0)=0$. Since $\omega\in\sD$, it follows from \cite[Lemma~1]{FJ01a} that $\assocmodu$ is real-valued and $\lim_{t\to0^+}\assocmodu(t)=0$.
\end{proof}

Further assume that $T\: X\to X$ is an open continuous distance-expanding map. The Ruelle operator $\cR_\phi\: C(X,\R)\to C(X,\R)$ for $\phi\in C(X,\R)$ is defined by 
\begin{equation}\label{eq: defi of the Ruelle operator}
    \cR_\phi(u)(x)=\sum_{y\in T^{-1}(x)}e^{\phi(y)}u(y)
\end{equation}
for each $u\in C(X,\R)$ and $x\in X$. We recall from \cite{FJ01a} that if $T$ is \emph{mixing} and $\omega\in\sD$, then for all $\phi\in H^\omega(X,\R)$, $\cR_{\phi}$ (resp.\ its adjoint operator $\cR_{\phi}^*$) has a unique eigenfunction in $C(X,\R)$ (resp.\ eigenmeasure), up to a constant, associated with the eigenvalue of maximal modulus, i.e., $e^{P(T, \phi)}$, where $P(T, \phi)$ is the topological pressure of $T$ with respect to the potential $\phi$. Moreover, let $m_\phi$ be the unique eigenprobability of $\cR_\phi^*$ associated with eigenvalue $e^{P(T,\phi)}$ and $u_\phi$ be the unique eigenfunction of $\cR_\phi$ in $C(X,\R)$ associated with eigenvalue $e^{P(T,\phi)}$ satisfying $\int \! u_\phi \, \mathrm{d} m_\phi=1$, then $u_\phi$ is strictly positive and
\begin{equation*}
	\mu_\phi\=u_\phi\cdot m_\phi
	\end{equation*} 
    is the unique equilibrium state.

\subsection{Basic notions for tropical thermodynamic formalism}\label{ss: backgrounds for TTF}
In this subsection, we review the tropical algebra and tropical functional analysis notions, which are mainly used in this section. 

For a semiring $(R,+,\times,0_R,1_R)$, an \emph{$R$-semimodule} is a set $S$ equipped with a binary operation $+\: S\times S \rightarrow  S$ and a map $\times\: R\times S \rightarrow  S$ provided that the following axioms are satisfied for some $0_S \in S$ and for all $a,\,b,\,c\in S$ and $\lambda,\,\lambda'\in R$:
\begin{enumerate}[label=\rm{(\roman*)}]	
	\smallskip
	\item $0_S+ a=a$, $0_R\times a=0_S$, and $1_R\times a=a$.
	\smallskip
    \item $(a+ b)+ c=a+(b+ c)$ and $a+ b=b+ a$.
	\smallskip
	\item $\lambda\times(a+ b)=(\lambda\times a)+(\lambda\times b)$ and $(\lambda+\lambda')\times a=(\lambda\times a)+(\lambda'\times a)$.
	\smallskip
	\item $(\lambda \times\lambda')\times a=\lambda \times(\lambda'\times a)$.
\end{enumerate}
A map $\cL\:S_1\to S_2$ between two $R$-semimodules $S_1$ and $S_2$ is called a \emph{semimodule homomorphism} if $\cL(a+b)=\cL(a)+\cL(b)$ and $\cL(\lambda\times a)=\lambda\times\cL(a)$ for all $a,\,b\in S_1$ and $\lambda\in R$. A subset $M$ of an $R$-semimodule $S$ is called a \emph{subsemimodule} if it is closed under the addition and scalar multiplication of $S$.

We consider $\Rm \= \R\cup\{+\infty,\,-\infty\}$ equipped with the \emph{tropical} (or \emph{max-plus}) \emph{algebra}:
\begin{equation*}
	x\oplus y \= \max\{x,\,y\},\quad x\otimes y \=  x+y,\quad\text{ for } x,\, y\in\Rm.
\end{equation*}
Here $-\infty$ is seen as the tropical additive identity, and we adopt the convention that $(+\infty)\otimes(-\infty)=(-\infty)\otimes(+\infty)=-\infty$. For each $\cA\subseteq\Rm$, we use $\oplusu{x\in\cA}x$ to denote the supremum in $\Rm$, and $\oplusu{x\in\emptyset}x$ is defined to be $-\infty$. It is straightforward to check that $\bigl(\Rm,\oplus,\otimes\bigr)$ is a semiring and 
\begin{equation}\label{eq: exchangability of numbers}
    c\otimes \bigl(\oplusu{x\in\cA}x\bigr)=\oplusu{x\in\cA}(c\otimes x)
\end{equation}
for all $\cA\subseteq\Rm$ and $c\in\Rm$. The basis $\bigcup_{a,\,b\in\R}\{(a,b),[-\infty,a),(b,+\infty]\}$ generates the standard topology on $\Rm$. Denote $\Rminus\=\R\cup\{-\infty\}$.

Let $\Rm^{Y}$ denote the space of $\Rm$-valued functions on a set $Y$. The space $\Rm^Y$ with $(u\oplus v)(x) \=  u(x)\oplus v(x)$ for all $u,\,v\in\Rm^Y$ and $(\lambda\otimes u)(x) \= \lambda\otimes u(x)$ for all $u\in\Rm^Y$ and $\lambda\in\Rm$ is an $\Rm$-semimodule. The space $\Rm^Y$ has the natural order: $u\fless v$ if and only if $u(x)\leq v(x)$ for all $x$ in $Y$, i.e., $u\fless v$ if and only if $u\oplus v=v$. For each subset $U\subseteq\Rm^Y$, we use $\oplusu{u\in U} u$ to denote the pointwise supremum in $\Rm^Y$. 

In the sequel, $\oplus$ always means $\sup$. Denote by $\tai\=-\infty$ (resp.\ $\tmi\=0$) the tropical additive (resp.\ multiplicative) identity of $\Rm$. We use $\mathbbold{1}_S$, $\tmif_S$, $\taif_S$, and $\tinftyf_S$ to represent the constant $1$, $\tmi$, $\tai$, and $+\infty$ functions on a set $S$, respectively, and define $\mathbbold{0}_S \= \tmif_S$. We often omit the subscript $S$ when it is clear from the context.

Now we review the following tropical functional analysis notions.

\begin{definition}[Tropical completion]
	Let $S\subseteq\Rm^Y$ be a set of $\Rm$-valued functions on a set $Y$. The set $S$ is said to be \emph{tropically complete} if for each subset $U\subseteq S$, $\oplusu{u\in U}u$ is in $S$. The \emph{tropical completion} of $S$, denoted by $\twidehat{S}$, is defined to be the intersection of all tropically complete subsets of $\Rm^Y$ containing $S$.
\end{definition}

\begin{definition}[Tropical continuity]
	Let $Y$ be a set and $V, \, W$ be two $\Rm$-semimodules.
\begin{enumerate}[label=\rm{(\roman*)}]	
\smallskip
\item A map $\cL\: V\to W$ is \emph{tropical linear} if it is a semimodule homomorphism.

\smallskip
\item A map $\cL\: V\to\Rm$ is called a \emph{tropical functional}.

\smallskip
\item Assume that $V,\,W$ are two subsemimodules of $\Rm^Y$. We call a tropical linear map $\cL\: V\to W$ \emph{tropically continuous} if $\cL$ can be extended uniquely to a tropical linear map $\cL \:\twidehat{V} \rightarrow \twidehat{W}$ satisfying
\begin{equation*}
	\cL\bigl(\oplusu{u\in U}u\bigr)=\oplusu{u\in U}\cL(u)
\end{equation*} 
for every subset $U\subseteq\twidehat{V}$.
\end{enumerate}
When more than one property or term above is mentioned, we often adopt the convention to use only one ``tropical(ly)'', e.g.\ a tropically continuous linear functional and a tropical linear functional.
\end{definition}

\begin{definition}\label{d: defi of lu}
    Let $Y$ be a set. For $u,v\in\Rm^{Y}$, we define  
    \begin{equation}\label{eq: defi of lu}
    u \odiv v\=\oplusu{\substack{\lambda\otimes v\leq u\\\lambda\in\Rm}}\lambda\in\Rm\quad\text{ and }\quad
    l_u(v) \= -(u\odiv v)=\oplusu{x\in Y}(v(x)\otimes (-u(x))).
    \end{equation}
\end{definition}

\begin{definition}[Tropical dual space]
	Let $Y$ be a set and $V$ be a subsemimodule of $\Rm^Y$. The \emph{tropical dual space} $V^{\ostar}$ of $V$ is the space consisting of all tropically continuous linear functionals from $\twidehat{V}$ to $\Rm$.
\end{definition}

\begin{definition}[Density space]\label{d: density space}
    Let $X$ be a compact metric space. The space $\density(X)$ of \emph{densities} is given by
    \begin{equation*}
\density(X)\=\{b \: X\rightarrow\Rminus : b \text{ is upper semi-continuous}\}\cup\{\tinftyf\}.
\end{equation*}
\end{definition}

Let $X$ be a compact metric space. We denote the tropical completion of $C(X,\R)$ by $\twidehat{C}(X,\R)$. Note that for $S\=C(X,\R)\cup\{\tinftyf,\,\taif\}$, $\twidehat{S}=\twidehat{C}(X,\R)$. We thus denote $C(X,\R)^{\ostar}\= S^{\ostar}$ despite the fact that $C(X,\R)$ is not a subsemimodule of $\Rm^X$.

\begin{prop}[{\cite[Proposition~2.13]{LS26}}]\label{p: dual space equals completion}
     Let $X$ be a compact metric space. Then the restriction $l_{-b} \: \twidehat{C}(X,\R)\rightarrow \Rm$ is a tropically continuous linear functional in $C(X,\R)^{\ostar}$ for each $b\in\density(X)$, and the map $l_{-\bullet}\:\density(X) \rightarrow  C(X,\R)^{\ostar}$ given by $b\mapsto l_{-b}$ is a bijection.
\end{prop}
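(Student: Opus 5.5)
We prove the two assertions of Proposition~\ref{p: dual space equals completion} in turn. First we show that $l_{-b}$ is a tropically continuous linear functional on $\twidehat{C}(X,\R)$. Then we show that $b\mapsto l_{-b}$ is injective and surjective onto $C(X,\R)^{\ostar}$. By Definition~\ref{d: defi of lu},
\begin{equation*}
l_{-b}(v)=\oplusu{x\in X}\bigl(v(x)\otimes b(x)\bigr)=\sup_{x\in X}\bigl(v(x)+b(x)\bigr).
\end{equation*}
This expression makes sense for every $v\in\Rm^X$, with the stated conventions for $(+\infty)\otimes(-\infty)$. Tropical linearity and the identity $l_{-b}\bigl(\oplusu{u\in U}u\bigr)=\oplusu{u\in U}l_{-b}(u)$ follow directly: suprema commute with suprema, and (\ref{eq: exchangability of numbers}) handles scalars. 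The extension is unique because every element of $\twidehat{C}(X,\R)$ is a pointwise supremum of elements of $S=C(X,\R)\cup\{\tinftyf,\taif\}$. Indeed, the set of all such suprema is tropically complete and contains $S$. Any sup-preserving extension is therefore determined by its values on $S$. The case $b=\tinftyf$ is checked separately: $l_{-\tinftyf}$ equals $+\infty$ except on $\taif$.

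For injectivity, we recover $b$ from $l_{-b}$ by a tropical Riesz-type formula:
\begin{equation*}
b(x_0)=\inf\bigl\{\,l_{-b}(u)-u(x_0):u\in C(X,\R)\bigr\}.
\end{equation*}
The inequality $\le$ is immediate. For $\ge$, we use upper semicontinuity of $b$. Given $t>b(x_0)$, there is a neighbourhood $W$ of $x_0$ on which $b<t$. Choose a continuous $u$ with $u(x_0)=0$ and $u\le -M$ off $W$, for $M$ large depending on $\sup b<+\infty$; this bound holds since an upper semicontinuous function is bounded above on a compact set. Then $l_{-b}(u)\le \max\{t,\sup b-M\}$, which gives $\ge$ after letting $M\to+\infty$ and $t\downarrow b(x_0)$. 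The value $b(x_0)=-\infty$ is covered by the same argument, and $b=\tinftyf$ is distinguished because it is the only density giving $+\infty$ on constants. So $l_{-\bullet}$ is injective.

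For surjectivity, let $\ell\in C(X,\R)^{\ostar}$, and set $b(x_0)\=\inf_{u\in C(X,\R)}\bigl(\ell(u)-u(x_0)\bigr)$. If $\ell$ takes the value $+\infty$ on some real-valued $u$, then homogeneity $\ell(c\otimes u)=c\otimes\ell(u)$ together with monotonicity forces $\ell=+\infty$ on all of $C(X,\R)$, since every continuous function is bounded by a constant shift of $u$; in that case we take $b=\tinftyf$. Otherwise $b$ is an infimum of continuous functions of $x_0$, hence upper semicontinuous with values in $\Rminus$, so $b\in\density(X)$. Directly from the definition, $l_{-b}(u)\le\ell(u)$.

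The main obstacle is the reverse inequality $\ell(u)\le l_{-b}(u)$, and this is where tropical continuity of $\ell$ must be used. Fix $u$ and $\epsilon>0$. For each $x$, choose $u_x$ with $\ell(u_x)-u_x(x)<b(x)+\epsilon$; this is possible also when $b(x)=-\infty$, after a truncation argument. Set $w_x\=u_x-\ell(u_x)+(b(x)+\epsilon)+\ldots$, normalized so that $w_x(x)>u(x)$ and $\ell(w_x)\le u(x)+b(x)+2\epsilon$ up to the correct bookkeeping. The sets $\{w_x>u\}$ are open and cover $X$, so compactness yields finitely many $x_1,\dots,x_k$. Then $u\le \oplusu{i}w_{x_i}$, and by tropical linearity and monotonicity,
\begin{equation*}
\ell(u)\le\max_i \ell(w_{x_i})\le l_{-b}(u)+2\epsilon.
\end{equation*}
Because compactness reduces the argument to finite suprema, only finite tropical additivity is actually needed here. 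The full sup-continuity is needed only to pass to $\twidehat{C}(X,\R)$, where both sides commute with arbitrary suprema, so they agree on the completion. Finally we verify the degenerate cases, $\ell\equiv-\infty$ on $C(X,\R)$ (corresponding to $b=\taif$) and $b=\tinftyf$, separately.
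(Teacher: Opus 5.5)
Your proof is correct. The paper does not prove this statement itself: it imports it from \cite[Proposition~2.13]{LS26}. As Remark~\ref{r: equivalence of densities} shows, it is used alongside the separate automatic-continuity result \cite[Proposition~2.7]{LS26}, namely that every tropical linear functional on $C(X,\R)\cup\{\tinftyf,\,\taif\}$ is tropically continuous.

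Your route is self-contained and gives an explicit inverse, $\ell\mapsto b_\ell$ with $b_\ell(x)=\inf_{u\in C(X,\R)}(\ell(u)-u(x))$, whose upper semicontinuity is automatic. Your compactness step uses only finite $\oplus$, monotonicity and real homogeneity. It therefore shows that any tropical linear functional on $C(X,\R)\cup\{\tinftyf,\,\taif\}$ already coincides there with $l_{-b_\ell}$. So you recover the automatic continuity of \cite[Proposition~2.7]{LS26} as a byproduct instead of needing it; citing it simply buys brevity in this paper.

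The one step you should actually write out is the choice of $w_x$. As stated (``$+\ldots$'', ``truncation'') it is incomplete, and the displayed formula does not give a function in $C(X,\R)$ when $b(x)=-\infty$. A uniform choice removes all case distinctions:
\begin{itemize}
\item Fix $u\in C(X,\R)$ and a real $c>l_{-b}(u)$.
\item For every $x$ we have $b(x)+u(x)\le l_{-b}(u)<c$. By the definition of $b$ there are $u_x\in C(X,\R)$ and $\delta_x>0$ with $\ell(u_x)-u_x(x)+\delta_x<c-u(x)$.
\item Put $w_x\=(u(x)-u_x(x)+\delta_x)\otimes u_x$. Then $w_x(x)=u(x)+\delta_x>u(x)$ and $\ell(w_x)<c$.
\item Your finite-subcover argument gives $\ell(u)\le\max_i\ell(w_{x_i})<c$.
\end{itemize}
Letting $c\downarrow l_{-b}(u)$ yields $\ell(u)\le l_{-b}(u)$. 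This covers the cases where $b$ takes the value $-\infty$ somewhere or $l_{-b}(u)=-\infty$.

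In the injectivity step, also take the test function with $u\le 0$ everywhere, e.g.\ $u=-M\min\{1,d(\,\cdot\,,x_0)/r\}$ with $B(x_0,r)\subseteq W$. The bound $\sup_W(u+b)\le t$ uses this.
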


\begin{rem}\label{r: equivalence of densities}
    We call $b\in\density(X)$ the density in $\density(X)$ of the tropically continuous linear functional $l_{-b}$ (cf.~(\ref{eq: defi of lu})). We say that $b_1\: X\to\Rm$ and $b_2\: X\to\Rm$ are \emph{equivalent} if $\oplusu{x\in X}(f(x)\otimes b_1(x))=\oplusu{x\in X}(f(x)\otimes b_2(x))$ for all $f\in C(X,\R)$. For every $b_1\:X\to\Rm$, there exists a unique $b_2\in\density(X)$ equivalent to $b_1$ since every tropical linear functional on $C(X,\R)\cup\{\tinftyf,\,\taif\}$ is tropical continuous (cf.~\cite[Proposition~2.7]{LS26}). 
\end{rem}

Assume that $T\:X\to X$ is a continuous map on the compact metric space $(X,d)$. Let $\cM(X,T)$ denote the set of $T$-invariant Borel probability measures on $X$. For each $\phi\in C(X,\R)$, denote $\toverline{\phi}\=\phi-\energy$, where $\energy$ is the ergodic supremum of $\phi$ (cf.~(\ref{eq: defi of ergodic supremum})), and the set of $\phi$-maximizing measures by $\cM_{\max}(T,\phi)\=\bigl\{\mu\in\cM(X,T):\int\!\phi\,\mathrm{d}\mu=\energy\bigr\}$.

For a potential $\phi$ in $\Rm
^X$, denote 
\begin{equation}\label{eq: defi for S_n} 
	S_n^T\phi(x)\= \phi(x)\otimes \phi(T(x))\otimes\cdots\otimes \phi\bigl(T^{n-1}(x)\bigr)
\end{equation}
for all $n\in\N$ and $x\in X$; we often omit the superscript $T$ when it is understood from the context. For each $u\in C(X,\R)$ and each $\epsilon \geq 0$, denote
\begin{equation*}
    \var_u(\epsilon)\=\sup\{\abs{u(x)-u(y)}:x,\,y\in X,\,d(x,y)\leq\epsilon\}.
\end{equation*}

Recall that (cf.~\cite[Definition~1.1]{LS26}) the \emph{Bousch operator} $\cL_\phi\:\Rm^X\to\Rm^X$ is defined for $\phi\in \Rm^X$ by 
\begin{equation}\label{eq: defi of the Bousch operator}
    \cL_\phi(u)(x)\=\oplusu{y\in T^{-1}(x)}(u(y)\otimes \phi(y))
\end{equation}
and the \emph{tropical adjoint Bousch operator} $\cL_{\phi}^{\ostar}\:\density(X)\to \density(X)$ is defined for $\phi\in C(X,\R)$ by
\begin{equation}\label{eq: defi of the adjoint Bousch operator}
    \cL_\phi^{\ostar}(b)(x)\=b(T(x))\otimes \phi(x),
\end{equation}
where $\density(X)\=\{b \: X\rightarrow\Rminus : b \text{ is upper semi-continuous}\}\cup\{\tinftyf\}$ (cf.~Proposition~\ref{p: dual space equals completion} and~\cite[Remark~3.13]{LS26} for the reason of these definitions). Clearly, for all $n\in\N$, $x\in X$, $u\in\Rm^X$, and $b\in\density(X)$, 
\begin{align}
    \cL_\phi^n(u)(x) & =\oplusu{y\in T^{-n}(x)}(u(y)\otimes S_n\phi(y)) , \label{eq: iteration of Bousch}\\
    \bigl(\cL_\phi^{\ostar}\bigr)^n(b)(x)&=b(T^n(x))\otimes S_n\phi(x). \label{eq: iteration of adjoint Bousch}
\end{align}

Let $V$ be a subset of $\Rm^X$.
We recall the following related notions:
\begin{enumerate}[label=\rm{(\roman*)}]	
	\smallskip
	\item $Q\in\Rm$ is a \emph{tropical eigenvalue} of $\cL_\phi$ on $V$ if there exists $u\in V$ such that $\cL_\phi(u)=Q\otimes u$ and $u(x)\in\R$ for some $x\in X$. Such a function $u\in V$ is called a \emph{tropical eigenfunction} of $\cL_\phi$ in $V$ (associated with eigenvalue $Q$). The \emph{tropical eigenspace} $\es{\cL_\phi}{V}{Q}$ of $\cL_\phi$ for the (tropical) eigenvalue $Q$ consists of all $u\in V$ such that $\cL_\phi(u)=Q\otimes u$.

    \smallskip
	\item $Q\in\Rm$ is a \emph{tropical eigenvalue} of $\cL_\phi^{\ostar}$ (on $\density(X)$) if there exists $b\in \density(X)\smallsetminus\{\tinftyf,\,\taif\}$ of $\cL_\phi^{\ostar}$ satisfying $\cL_\phi^{\ostar}(b)=Q\otimes b$. Such a function $b$ is called a \emph{tropical eigendensity} of $\cL_\phi^{\ostar}$ (associated with eigenvalue $Q$). The \emph{tropical eigenspace} $\esbig{\cL_\phi^{\ostar}}{\density(X)}{Q}$ of $\cL_\phi^{\ostar}$ for the (tropical) eigenvalue $Q$ consists of all $b\in \density(X)$ such that $\cL_\phi^{\ostar}(b)=Q\otimes b$.

    \smallskip
    \item A subset $E$ of $\Rm^X$ is \emph{of tropical dimension one}, written as $\tdim E=1$, if there exists $u\in E$ such that $E\smallsetminus\{\taif,\,\tinftyf,\,(+\infty)\otimes u\}=\{a\otimes u:a\in\R\}$ and $u(x)\in\R$ for some $x\in X$.

    \smallskip
    \item $u\in C(X,\R)$ is a \emph{sub-action} for the potential $\phi$ if $\cL_{\toverline{\phi}}(u)\fless u$.
\end{enumerate}

Here we introduce the notion of \emph{invariant tropical Gibbs density}.

\begin{definition}[Invariant density]\label{d: T-invariance density}
    Let $T\: X\to X$ be a continuous map on the compact metric space $X$. We say a density $b\in\density(X)$ is \emph{$T$-invariant}, or simply \emph{invariant}, if $b(x)=\oplusu{y\in T^{-1}(x)}b(y)$ for all $x\in X$.
\end{definition}

\begin{definition}[Tropical Gibbs density]\label{d: tropical gibbs density}
    Let $T\: X\to X$ be a transitive expanding covering map with expanding constants $(\lambda,\gamma,\xi)$. We say a density $b\in\density(X)$ is a \emph{tropical Gibbs density} for some potential $\phi\in C(X,\R)$ if there exists a constant $M>0$ such that for all $n\in\N$ and $x\in X$,
    \begin{equation}\label{ineq: defining inequalities for tropical Gibbs}
       S_n\toverline{\phi}(x)-M\leq  \oplusu{y\in T_{x}^{-n}(B(T^n(x),\xi))}b(y)\leq S_n\toverline{\phi}(x)+M.
    \end{equation}
\end{definition}

The logarithmic-type zero-temperature limits connect thermodynamic objects to their tropical counterparts. Suppose $T\: X\to X$ is a mixing expanding covering map, $\omega\in\sD$, and $\phi\in H^\omega(X,\R)$. Denote
\begin{align} 
    \widetilde{\phi} \= \phi +\log u_{\phi}&-\log u_{\phi}\circ T-P(T, \phi),\label{eq: defi of normalized potential for thermo}\\
	l_{\beta}^{\mu}(f) \= \frac{1}{\beta}\log\int \! e^{\beta f} \, \mathrm{d} \mu_{\beta \phi},\quad&\text{ and }\quad
	l_{\beta}^{m}(f) \= \frac{1}{\beta}\log\int \! e^{\beta f} \, \mathrm{d} m_{\beta \phi},\label{eq: defi of l_beta}
\end{align}
for all $f\in C(X,\R)\cup\{\tinftyf,\,\taif\}$ and $\beta>0$.\footnote{For the constant infinity functions, the values of the functionals are defined by the standard convention for integrals of Borel measurable functions valued in $[0,+\infty]$ and the natural extension that $\log (+\infty)=+\infty$ and $\log 0=-\infty$. Hence, $l_{\beta}^\mu(\tinftyf)=l_{\beta}^m(\tinftyf)=+\infty$ and $l_{\beta}^{\mu}(\taif)=l_{\beta}^m(\taif)=-\infty$ for all $\beta>0$.}
We call pointwise limits of subsequences of $\bigl\{\frac{1}{\beta}\log u_{\beta \phi}\bigr\}_{\beta\in(1,+\infty)}$, $\bigl\{l_{\beta}^\mu\bigr\}_{\beta\in(1,+\infty)}$, and $\bigl\{l_{\beta}^m\bigr\}_{\beta\in(1,+\infty)}$, as $\beta\to+\infty$,  ``logarithmic-type zero-temperature limits''. These limits naturally connect to the large deviation principles.

\begin{definition}[Large deviation principle]\label{d: ldp}
    For all $r>0$, a family of Borel probability measures $\{\nu_{\beta}\}_{\beta\in(r,+\infty)}$ on a topological space $Y$ satisfies the \emph{large deviation principle as $\beta\to+\infty$} if there exists a lower semi-continuous function $I \: Y \rightarrow [0,+\infty]$ (called the \emph{rate function}) for which the following two inequalities hold:
\begin{align*}
		\liminf_{\beta\to+\infty}\frac{1}{\beta}\log(\nu_{\beta}(\cG))&\geq-\inf_{x\in \cG}I(x),\quad\text{ for every open set } \cG\subseteq Y,  \\
		\limsup_{\beta\to+\infty}\frac{1}{\beta}\log(\nu_{\beta}(\cK))&\leq-\inf_{x\in \cK}I(x),\quad\text{ for every closed set } \cK\subseteq Y. 
\end{align*}
Here $\beta$ is called the \emph{inverse temperature}. Note that $I(x)\in [0,+\infty]$ for all $x\in Y$, which follows from the fact that $\nu_{\beta}$ is a probability measure.	It immediately follows from Remark~(a) after \cite[Lemma~4.1.4]{DZ09} that if $Y$ is a compact metric space, then the rate function is unique as long as a large deviation principle holds. 
\end{definition}

\begin{rem}\label{r: two LDP}
By \cite[Theorems~4.3.1 and~4.4.2]{DZ09}, when $Y$ is a compact metric space, a family of probability measures $\{\nu_\beta\}_{\beta\in(r,+\infty)}$ satisfies the large deviation principle as $\beta\to+\infty$ with rate function $I$ if and only if for each $f\in C(Y,\R)$,
	\begin{equation}\label{eq: LDP in functionals}
		\lim_{\beta\to+\infty}\frac{1}{\beta}\log\int\!e^{\beta f}\,\mathrm{d}\nu_{\beta}=\sup_{x\in Y}(f(x)-I(x)).
	\end{equation}
\end{rem}

\subsection{No super-exponential suppresion}\label{ss: Dini quanta no freezing}
Here we prove Theorem~\ref{t: Dini quanta no freezing}~(i) assuming Theorems~\ref{t: existence} and ~\ref{t: zero-temperature limit}.

\begin{proof}[\bf Proof of Theorem~\ref{t: Dini quanta no freezing}~(i)]

    Fix $\phi\in H^\omega(X,\R)$ and an nonempty open subset $U$ of $X$. Note that there exists a sequence $\{\beta_n\}_{n\in\N}$ with $\beta_n\to+\infty$ as $n\to+\infty$ such that 
    \begin{equation*}
        \liminf_{\beta\to+\infty}\beta^{-1}\log(\mu_{\beta\phi}(U))=\lim_{n\to+\infty}\beta_n^{-1}\log(\mu_{\beta_n\phi}(U)).
    \end{equation*}

    By Theorems~\ref{t: zero-temperature limit}~(iii) and~\ref{t: existence}~(iv), there exists a subsequence $\{\beta'_n\}_{n\in\N}$ of $\{\beta_n\}_{n\in\N}$ such that $\bigl\{l_{\beta'_n}^\mu\bigr\}_{n\in\N}$  pointwise converges to a tropically continuous linear functional whose density $b'$ in $\density(X)$ is an invariant tropical Gibbs density for $\phi$. Thus, we see that $-b'\: X\to[0,+\infty]$ is lower semi-continuous. It then follows from Remark~\ref{r: two LDP}, (\ref{eq: defi of l_beta}), and Definition~\ref{d: ldp} that \begin{equation*}
        \lim_{n\to+\infty}\beta_n^{-1}\log (\mu_{\beta_n\phi}(U))=\lim_{n\to+\infty}(\beta'_{n})^{-1}\log(\mu_{\beta'_n\phi}(U))\geq \oplusu{x\in U}b'(x).
    \end{equation*}

    Let $M'>0$ be the constant in (\ref{ineq: defining inequalities for tropical Gibbs}) for the tropical Gibbs density $b'$ and $(\lambda,\gamma,\xi)$ be expanding constants of $T$. Note that by Lemma~\ref{l:expanding systems}, $T_{x}^{-n}(B(T^n(x),\xi))\subseteq B(x,\lambda^{-n}\xi)$ for all $x\in X$ and $n\in\N$. Since $U$ is nonempty and open, we see that for all $x\in U$, there exists $n_x\in\N$ such that $T_x^{-n_x}(B(T^{n_x}(x),\xi))\subseteq U$. Take $y\in U$. Since $b'$ is a tropical Gibbs density, we see that
    \begin{equation*}
        \oplusu{x\in U}b'(x)\geq \oplusu{x\in T_{y}^{-n_{y}}(B(T^{n_{y}}(y),\xi))}b'(x)\geq S_{n_{y}}\toverline{\phi}(y)-M'>-\infty.
    \end{equation*}
    We conclude that $\lim_{n\to+\infty}\beta_n^{-1}\log (\mu_{\beta_n\phi}(U))\geq \oplusu{x\in U}b'(x)>-\infty$, and thus $\phi$ does not admit super-exponential suppression. 

    Finally, we show that $\phi$ does not admit a freezing phase transition. If $\phi$ is cohomologous to a constant in $C(X,\R)$, then for each $t>0$, the unique equilibrium state for $\phi/t$ is the measure of maximal entropy, and thus it is independent of the temperature $t$. Now assume that $\phi$ is not comohologous to a constant in $C(X,\R)$, and suppose $\mu\in\cM(X,T)$ is a frozen state for $\phi$. By \cite[Theorem~4.1]{Je19}, $\mu$ is a $\phi$-maximizing measure. As $\phi$ is a Dini potential, by \cite[Proposition~2 and Theorem~1]{Bou01}, we see that $\phi$ satisfies the Walters condition and there exists $v\in C(X,\R)$ such that $\phi+v-v\circ T\leq\energy$. Since $\phi$ is not cohomologous to a constant in $C(X,\R)$, we see that $S\=(\phi+v-v\circ T)^{-1}(\energy)$ is a closed proper subset of $X$. Since $\energy$ is the ergodic supremum of $\phi$ (cf.~(\ref{eq: defi of ergodic supremum})), the maximizing measure $\mu$ is supported on $S$. Consequently, for the open set $X\smallsetminus S$, $\lim_{\beta\to+\infty}\frac1\beta\log(\mu_{\beta\phi}(X\smallsetminus S))=\lim_{\beta\to+\infty}\frac1\beta\log(\mu(X\smallsetminus S))=-\infty$, which contradicts (\ref{ineq: quantative non-freezing for dini}).
\end{proof}

\subsection{Existence of tropical eigenfunctions and eigendensities}\label{ss: existence}

In this subsection, we investigate the existence of tropical eigenfunctions and tropical eigendensities. To prove Theorem~\ref{t: existence}, we state the main results of this subsection in Proposition~\ref{p: main construction} and Proposition~\ref{p: uniqueness of tropical eigenvalue}, whose proofs are deferred
to Appendix~\ref{ss: Appendix A}.

\begin{prop}\label{p: main construction}
    Let $T\: X\to X$ be an open continuous distance-expanding map with expanding constants $(\lambda,\gamma,\xi)$, $\omega\in\sD$, $\phi\in H^{\omega}(X,\R)$, and $u\in C(X,\R)$. Define
    \begin{equation}\label{eq: defi of main construction}
        v_u(x)\=\limsup_{n\to+\infty}\cL_{\toverline{\phi}}^n(u)(x)
    \end{equation}
    for all $x\in X$. Then the following hold:
    \begin{enumerate}
        \smallskip
        \item There exists $L_2>0$ depending only on $T$ and $\omega$ such that $v_{u}\leq L_2\abs{\phi}_{\omega}+\norm{u}_{C^0}$ and for all $x,\, y\in X$ with $d(x,y)<\xi$,
        we have $v_{u}(x)\leq v_{u}(y)+\abs{\phi}_{\omega}\assocmodu(d(x,y))$.
        \smallskip
        \item $v_{u}\in C(X,\Rminus)$ is a tropical eigenfunction of $\cL_\phi$ associated with eigenvalue $\energy$.
        \smallskip
        \item Assume that $T$ is transitive. Then there exists a constant $C_3>0$ depending only on $T$ and $\omega$ such that $v_{u}\in H^{\assocmodu}(X,\R)$ with 
        \begin{equation*}
        \norm{v_{u}}_{C^0}\leq C_3\abs{\phi}_{\omega}\assocmodu(\diam X)+\norm{u}_{C^0}\quad \text{ and }\quad 
        \abs{v_{\tmif}}_{\assocmodu}\leq C_3\abs{\phi}_{\omega}.
        \end{equation*}
    \end{enumerate}
\end{prop}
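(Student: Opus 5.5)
The plan follows the Hölder-case argument of \cite{LS26}, with the Hölder distortion bound replaced by the associated modulus $\assocmodu$ of Lemma~\ref{l: associated modulus of continuity}. The key input is a \emph{distortion estimate along inverse branches}. For $x,y$ with $d(x,y)<\xi$, every $x'\in T^{-n}(x)$ is matched with $y'\coloneqq T_{x'}^{-n}(y)\in T^{-n}(y)$. Lemma~\ref{l:expanding systems} gives $d\bigl(T^i(x'),T^i(y')\bigr)\le\lambda^{-(n-i)}d(x,y)$, and therefore
\begin{equation*}
\abs{S_n\phi(x')-S_n\phi(y')}\le\abs{\phi}_{\omega}\sum_{k=1}^{n}\omega(\lambda^{-k}d(x,y))\le\abs{\phi}_{\omega}\assocmodu(d(x,y)).
\end{equation*}
To get a bound that is uniform in $n$, I will also control $u(x')-u(y')$. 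Since $d(x',y')\le\lambda^{-n}\xi\to0$, this difference is $\var_u(\lambda^{-n}\xi)$, which tends to $0$ as $n\to+\infty$. Plugging both bounds into (\ref{eq: iteration of Bousch}) with $\toverline{\phi}$ gives
\begin{equation*}
\cL^n_{\toverline{\phi}}(u)(x)\le\cL^n_{\toverline{\phi}}(u)(y)+\abs{\phi}_\omega\assocmodu(d(x,y))+\var_u(\lambda^{-n}\xi).
\end{equation*}
Taking $\limsup_{n\to+\infty}$ yields the local inequality in (i).

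For the upper bound $v_u\le L_2\abs{\phi}_\omega+\norm{u}_{C^0}$, I need $S_n\toverline{\phi}(x')\le L_2\abs{\phi}_\omega$ uniformly. I will get this by closing orbits. Cover $X$ by finitely many $\xi$-balls. The preimage $x'$ and the point $T^n(x')=x$ lie at distance comparable to the modulus scale. I will use either a sub-action, obtained from \cite[Proposition~2 and Theorem~1]{Bou01} via the Walters condition, or a direct argument with the Walters property. In the direct argument, the distortion estimate gives that $S_n\phi(x')$ differs by at most $\abs{\phi}_\omega\assocmodu(\diam X)$ plus a constant from the Birkhoff sum along a nearby periodic-type orbit. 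A periodic orbit has average at most $\energy$. I expect this uniform bound to be the main obstacle, because expanding maps without transitivity need not have periodic shadowing for every preimage. My first attempt will be to take the sub-action $w$ with $\toverline{\phi}+w-w\circ T\le0$. Then
\begin{equation*}
S_n\toverline{\phi}(x')\le w(x)-w(x')\le2\norm{w}_{C^0},
\end{equation*}
and it remains to make sure $\norm{w}_{C^0}$ is controlled by $L_2\abs{\phi}_\omega$, with $L_2$ depending only on $T$ and $\omega$. For that I will build $w$ itself as $\limsup$ of tropical averages $\oplus_{k\le n}\cL^k_{\toverline{\phi}}(\tmif)$ and use the local estimate above.

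For (ii), the main tool is tropical linearity and monotonicity, $\cL_{\toverline{\phi}}(\limsup_n f_n)=\limsup_n\cL_{\toverline{\phi}}(f_n)$. Finiteness of $T^{-1}(x)$ (Lemma~\ref{l:expanding systems}) lets the max and the $\limsup$ commute. This gives $\cL_{\toverline{\phi}}(v_u)=v_u$, that is, $\cL_\phi(v_u)=\energy\otimes v_u$. Continuity of $v_u$ with values in $\Rminus$ follows from the local inequality in (i): on each $\xi$-ball, $v_u$ is either identically $-\infty$ or finite and $\assocmodu$-continuous. To see that $v_u$ is not identically $-\infty$, I will use a $\phi$-maximizing ergodic measure $\mu$. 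For a generic point $z$, $S_n\toverline{\phi}(z)$ is not eventually very negative along a subsequence, since its average is $0$. Choose $x_n=T^n(z)$ and a limit point $x$ of $x_n$. The distortion estimate then transfers the bound $\cL^n_{\toverline{\phi}}(u)(x_n)\ge-\norm{u}_{C^0}-C$ to a neighborhood of $x$, which gives $v_u(x)>-\infty$. To make "not eventually very negative" precise, I will use the standard fact that $\limsup_n S_n\toverline{\phi}\ge0$ on a set of full $\mu$-measure.

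For (iii), transitivity lets me propagate finiteness. Since $\cL_{\toverline{\phi}}(v_u)=v_u$, if $v_u$ is finite at $y$, then it is finite at every point of $T^k$ of a neighborhood of $y$. Openness and transitivity show that these images cover $X$. The lower bound comes from the same equation. Combining with (i) applied along chains of $\xi$-balls of bounded length, with the length depending on $T$, gives
\begin{equation*}
\norm{v_u}_{C^0}\le C_3\abs{\phi}_\omega\assocmodu(\diam X)+\norm{u}_{C^0}.
\end{equation*}
The global seminorm bound $\abs{v_{\tmif}}_{\assocmodu}\le C_3\abs{\phi}_\omega$ follows from the local one for $d<\xi$. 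For $d\ge\xi$, I will use the oscillation bound, since $\assocmodu(d)\ge\assocmodu(\xi)$ there.
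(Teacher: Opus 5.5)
Your local inequality in (i) and the eigen-equation in (ii) (commuting a finite maximum with $\limsup$) are fine. Your ergodic-recurrence argument that $v_u$ is not identically $-\infty$ also works, although the paper's route is lighter. Subadditivity of $a_n=\sup_x S_n\phi(x)$ gives $a_n\geq n\energy$, so one picks $x_n$ with $\cL^n_{\toverline{\phi}}(\tmif)(x_n)\geq 0$ and passes to a limit point using the distortion estimate.

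\textbf{The gap: the uniform Birkhoff-sum bound.} The missing step is the bound $S_n\toverline{\phi}\leq L_1\abs{\phi}_{\omega}$ on all of $X$ and for all $n$, with $L_1$ depending only on $T$ and $\omega$. The paper takes this from Lemma~\ref{l: uniform upper bound without transi and surje}. You need it for the quantitative bound in (i), and also to know that $v_u$ never takes the value $+\infty$; only then does the local inequality give $v_u\in C(X,\Rminus)$ in (ii).

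Your sub-action route does not supply this bound. A sub-action $w$ quoted from \cite{Bou01} gives $S_n\toverline{\phi}\leq 2\norm{w}_{C^0}$, a constant depending on $\phi$ in an uncontrolled way, not $L_2\abs{\phi}_\omega$. Building $w$ yourself as $\oplusu{k}\cL^k_{\toverline{\phi}}(\tmif)$ is circular: this supremum is finite exactly when $\sup_k\sup_x S_k\toverline{\phi}(x)<+\infty$, which is the claim itself. The local estimate only says that on each $\xi$-ball it is either finite with controlled oscillation or identically $+\infty$; it cannot exclude the second case.

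What is missing is a closing lemma combined with a pigeonhole decomposition. This needs no transitivity, so your worry there is unfounded.
\begin{itemize}
\item Partition $X$ into $M=M(T)$ Borel sets of diameter less than $(1-\lambda^{-1})\xi$.
\item Suppose $i<j$ and $T^i(x)$, $T^j(x)$ lie in the same cell. Then $T^{-(j-i)}_{T^i(x)}$ maps a closed ball of radius $r<\xi$ about $T^j(x)$ into itself as a $\lambda^{-(j-i)}$-contraction.
\item Its fixed point $p$ satisfies $T^{j-i}(p)=p$ and shadows $T^i(x),\dots,T^{j-1}(x)$. Hence $S_{j-i}\toverline{\phi}(T^i(x))\leq S_{j-i}\toverline{\phi}(p)+\abs{\phi}_\omega\assocmodu(\xi)\leq\abs{\phi}_\omega\assocmodu(\xi)$.
\item Cut $x,T(x),\dots,T^{n-1}(x)$ greedily at the last return to the cell of the current starting point. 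The starting cells are pairwise distinct, so there are at most $M$ closable pieces.
\item Each piece is followed by one leftover term, bounded by $\sup\phi-\energy\leq\abs{\phi}_\omega\omega(\diam X)$.
\end{itemize}
This gives $S_n\toverline{\phi}\leq M\bigl(\assocmodu(\xi)+\omega(\diam X)\bigr)\abs{\phi}_\omega$, which is the bound you need in (i).

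\textbf{Issues in (iii).} Your chains cannot be spatial chains of $\xi$-balls: in a Cantor set such as a full shift, two points need not be joined by a $\xi$-chain in $X$. They must be dynamical. Transitivity gives $N_\xi$ such that for all $x,y$ some $m\leq N_\xi$ has $T^m(B(x,\xi))\cap B(y,\xi)\neq\emptyset$. Combined with $v_u(T(z))\geq v_u(z)+\toverline{\phi}(z)$, this bounds the oscillation of $v_u$ by $\bigl(2\assocmodu(\xi)+N_\xi\omega(\diam X)\bigr)\abs{\phi}_\omega$.

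Your $C^0$ bound in (iii) still rests on the upper bound from (i). The paper avoids this in the transitive case with the global distortion estimate of Lemma~\ref{l: distortion}~(ii). With $a_n=\sup_x\cL^n_\phi(\tmif)(x)$ and $D_1=C_2\abs{\phi}_\omega\assocmodu(\diam X)$, that lemma gives $\cL^n_\phi(\tmif)\geq(a_n-D_1)\otimes\tmif$. Hence $a_{n+m}\geq a_n+a_m-D_1$, so $0\leq a_n-n\energy\leq D_1$ and $\norm{\cL^n_{\toverline{\phi}}(\tmif)}_{C^0}\leq D_1$ for all $n$. Arzel\`a--Ascoli then makes $v_{\tmif}$ a uniform limit satisfying the stated bounds.
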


\begin{prop}[Uniqueness of tropical eigenvalues]
	\label{p: uniqueness of tropical eigenvalue}
	Let $T\: X\to X$ be an open continuous distance-expanding map with expanding constants $(\lambda,\gamma,\xi)$, $\omega\in\sD$, and $\phi\in H^{\omega}(X,\R)$. Then the following hold:
	\begin{enumerate}[label=\rm{(\roman*)}]	
		\smallskip
        \item If there exists $u\in C(X,\R)$ and $\lambda\in\Rm$ such that $\cL_\phi(u)=\lambda\otimes u$, then $\lambda=\energy$.
        \smallskip
		\item If there exists $u\in C(X,\Rminus)\smallsetminus\{\taif\}$ and $\lambda\in\Rm$ such that $\cL_\phi(u)=\lambda\otimes u$, then $\lambda\leq\energy$, and if in addition $T$ is transitive, then $u\in C(X,\R)$ and $\lambda=\energy$.
		\smallskip
		\item If there exists $b\in \density(X)\smallsetminus\{\tinftyf,\,\taif\}$ and $\lambda\in\Rm$ such that $\cL_\phi^{\ostar}(b)=\lambda\otimes b$, then $\lambda\leq\energy$, and if in addition $T$ is transitive, then $\lambda=\energy$.
	\end{enumerate}
\end{prop}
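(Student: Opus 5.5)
We prove the three statements in order, using the iteration formulas (\ref{eq: iteration of Bousch}) and (\ref{eq: iteration of adjoint Bousch}) together with Birkhoff-type bounds on $S_n\phi$.

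\textbf{(i).} Suppose $u\in C(X,\R)$ satisfies $\cL_\phi(u)=\lambda\otimes u$. Iterating gives $\cL_\phi^n(u)=(n\lambda)\otimes u$. By (\ref{eq: iteration of Bousch}),
\begin{equation*}
n\lambda+u(x)=\max_{y\in T^{-n}(x)}\bigl(u(y)+S_n\phi(y)\bigr).
\end{equation*}
For the upper bound, note that $\max_{y}S_n\phi(y)\le n\energy+o(n)$; this is the standard fact that $\lim_n\frac1n\max S_n\phi=\energy$. Since $u$ is bounded, dividing by $n$ gives $\lambda\le\energy$.

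For the lower bound, take an ergodic maximizing measure $\mu$ and a $\mu$-generic point $z$. For any $x$, we have $T^n(z)\in T^{-0}$-preimage relations only along the orbit of $z$, so we argue as follows. Apply the identity at $x_n=T^n(z)$ with the preimage $y=z$. This yields $n\lambda\ge S_n\phi(z)-2\norm{u}_{C^0}$. Dividing by $n$ and using genericity gives $\lambda\ge\energy$.

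\textbf{(ii).} Now let $u\in C(X,\Rminus)\smallsetminus\{\taif\}$. The same upper bound applies on the set where $u>-\infty$: take $x$ with $u(x)\in\R$, and note that $u$ is bounded above. This gives $\lambda\le\energy$.

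Assume further that $T$ is transitive. Let $A\=\{u>-\infty\}$. It is nonempty, and it is open because $u$ is continuous into $\Rminus$. The eigen-equation gives the following: if $y\in A$, then $T(y)\in A$. Conversely, $x\in A$ forces some preimage of $x$ to lie in $A$, since $\lambda$ is finite. The finiteness of $\lambda$ itself follows from the first relation combined with $\lambda\le\energy$.

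Thus $T(A)\subseteq A$ with $A$ open and nonempty. Because $T$ is an open map, transitivity then yields that $\bigcup_n T^n(A)$ is dense. Since $A$ is closed under $T$, this union is $A$ itself, so $A$ is dense. To upgrade density to $A=X$, we use the expanding structure. For $x\in X$, choose $n$ large and $y\in T^{-n}(x)$ whose inverse branch $T_y^{-n}(B(x,\xi))$ meets $A$; this is possible because the small pullbacks of $B(x,\xi)$ cover a dense set. Then $u(x)\ge u(y')+S_n\phi(y')-n\lambda>-\infty$ for some $y'\in A$ in that branch. Hence the branch point $T^n(y')$, which is near $x$, lies in $A$.

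A cleaner route is to use the closedness of $X\smallsetminus A$ and backward invariance of this complement: if $u(x)=-\infty$, then all preimages $y$ of $x$ have $u(y)=-\infty$. So $X\smallsetminus A$ is closed and $T^{-1}$-invariant. Its complement $A$ is open, nonempty, and forward invariant, and since $T$ is surjective and open, transitivity forces the closed backward-invariant set $X\smallsetminus A$ to be empty or have nonempty interior. Nonempty interior would contradict the density of $A$. Hence $u\in C(X,\R)$, and (i) gives $\lambda=\energy$.

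\textbf{(iii).} Here $\cL_\phi^{\ostar}(b)=\lambda\otimes b$ iterates to
\begin{equation*}
b(T^n(x))+S_n\phi(x)=n\lambda+b(x).
\end{equation*}
Pick $x$ with $b(x)\in\R$. Upper semicontinuity on compact $X$ bounds $b$ above by some $B$. Then $n\lambda+b(x)\le B+S_n\phi(x)\le B+n\energy+o(n)$, which gives $\lambda\le\energy$.

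For equality under transitivity, we would like a point $x$ with $b(x)\in\R$ whose orbit is generic for a maximizing measure. That is not available directly, so we instead show that $\{b>-\infty\}$ meets the orbit closure appropriately. This is the main obstacle. Our plan is to use the eigen-relation to see that $b(x)>-\infty$ implies $b(T^n(x))>-\infty$ for all $n$. The orbit of $x$ then supports, along $\omega$-limit points, only values of $b$ that are finite with controlled growth: $b(T^n(x))=n\lambda+b(x)-S_n\phi(x)$ must stay at most $B$.

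Averaging along empirical measures of $x$ gives invariant measures $\nu$ with $\int\phi\,\mathrm d\nu\ge\lambda$. This only reproves $\lambda\le\energy$. For the reverse inequality, we instead combine $b$ with a tropical eigenfunction $v$ from Proposition~\ref{p: main construction}, which is finite-valued under transitivity. Consider $l_{-b}(\cL_{\phi}^n v)$: tropical duality
\begin{equation*}
\oplusu{x}\bigl(\cL_\phi^n(v)(x)+b(x)\bigr)=\oplusu{y}\bigl(v(y)+(\cL_\phi^{\ostar})^n(b)(y)\bigr)
\end{equation*}
yields $n\energy+c=n\lambda+c$, where $c\=\oplusu{x}(v(x)+b(x))\in\R$. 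The quantity $c$ is finite because $v$ is real-valued and bounded and $b\not\equiv-\infty$ is bounded above. Hence $\lambda=\energy$.
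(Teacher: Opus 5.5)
\textbf{Verdict.} Your arguments for (i) and (iii) are sound, but (ii) has a genuine gap at the step where you show $u$ is finite everywhere.

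\textbf{Parts (i) and (iii).} In (iii), pairing $b$ with the real-valued eigenfunction $v_{\tmif}$ of Proposition~\ref{p: main construction} works. You use the identity $\oplusu{x\in X}(\cL_\phi(v)(x)\otimes b(x))=\oplusu{y\in X}(v(y)\otimes\cL_\phi^{\ostar}(b)(y))$, which gives $\energy\otimes c=\lambda\otimes c$ with $c\in\R$. This is not circular, since that proposition is proved without the present one.

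\textbf{The gap in (ii).} The failing step is the passage from ``$A=\{u>-\infty\}$ is open, nonempty, forward invariant and dense'' to ``$A=X$''.

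Your first route is wrong as written. The inequality $u(x)\geq u(y')+S_n\phi(y')-n\lambda$ requires $y'\in T^{-n}(x)$. Your $y'$ only lies in the pullback $T_y^{-n}(B(x,\xi))$, so you learn merely that $T^n(y')\in A$, which forward invariance already gave you.

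Your second route rests on the claim that, because $T$ is surjective, open and transitive, a closed backward-invariant set is either empty or has nonempty interior. You give no argument for this, and it does not follow from those properties. Take a transitive homeomorphism with a fixed point $p$, such as the two-sided full shift. Then $\{p\}$ is closed, backward invariant and nowhere dense, while $X\smallsetminus\{p\}$ is open, dense and invariant but not all of $X$.

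\textbf{The fix.} The missing ingredient is expansion. For a transitive open distance-expanding map, $X=\bigcup_{n\geq 1}T^n(U)$ for every nonempty open $U\subseteq X$. Equivalently, every backward orbit $\bigcup_{n}T^{-n}(x)$ is dense. This is exactly the property used in the proof of Theorem~\ref{t: existence}~(iv). It comes from $B(T^n(z),\xi)\subseteq T^n\bigl(B(z,\lambda^{-n}\xi)\bigr)$ (Lemma~\ref{l:expanding systems} and Remark~\ref{r: expanding parameter and long pull backs}), together with transitivity and compactness.

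With this property the step is immediate, since $X=\bigcup_{n\geq 1}T^n(A)\subseteq A$. Pointwise: for each $x\in X$ pick $n$ and $y\in T^{-n}(x)\cap A$. Then
$u(x)=\cL_\phi^n(u)(x)-n\lambda\geq u(y)+S_n\phi(y)-n\lambda>-\infty$,
using that $\lambda$ is finite. Hence $u\in C(X,\R)$, and (i) gives $\lambda=\energy$. Your detour through the density of $A$ is then unnecessary.
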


\begin{proof}[\bf Proof of Theorem~\ref{t: existence}]
    (i) It directly follows from Proposition~\ref{p: uniqueness of tropical eigenvalue}~(ii)(iii).

    \smallskip
    (ii) It directly follows from Proposition~\ref{p: main construction}~(ii)(iii).

    \smallskip
    (iii) We consider $v\coloneqq-b$ and it suffices to find a solution of $v\circ T=v+\toverline{\phi}$ in $\twidehat{C}(X,\R)\smallsetminus\{\tinftyf,\,\taif\}$.
	
	Let $v'\=v_{\tmif}\in H^{\assocmodu}(X,\R)$ be the tropical eigenfunction of $\cL_{\phi}$ from Proposition~\ref{p: main construction}. By (\ref{eq: defi of the Bousch operator}), we see that 
	\begin{equation}\label{ineq: v_0 is a sub-action}
	v'+\toverline{\phi}\fless v'\circ T.
	\end{equation}
	Consider $w\coloneqq v-v'$ and $\varphi\coloneqq\toverline{\phi}+v'-v'\circ T\in C(X,\R)$. It suffices to find $w\in\twidehat{C}(X,\R)\smallsetminus\{\tinftyf,\,\taif\}$ such that \begin{equation}\label{eq: target equation}
    w\circ T=w+\varphi.
    \end{equation}
	
	Note that $\varphi\fless\tmif$ by (\ref{ineq: v_0 is a sub-action}) and
	\begin{equation*}
	Q(T,\varphi)
    =\max_{\mu\in\cM(X,T)}\int \! \varphi \, \mathrm{d} \mu
    =\max_{\mu\in\cM(X,T)}\int \! \toverline{\phi} \, \mathrm{d} \mu
    =Q\bigl(T,\toverline{\phi}\bigr)
    =0.
	\end{equation*}
    We fix a maximizing meaure $\mu\in \cM(X,T)$ for $\varphi$. 
	
	It follows from $C(X,\R)\ni\varphi\fless\tmif$ and $\int\!\varphi\,\mathrm{d}\mu=0$ that $\supp\mu\subseteq\varphi^{-1}(0)$. Moreover, since $\mu$ is $T$-invariant, $T(\supp\mu)\subseteq\supp\mu$.  We use these two properties to construct a solution $w=w_0$ of (\ref{eq: target equation}). 
	
	Consider $S\coloneqq\bigl\{w\in\twidehat{C}(X,\R):w+\varphi\fless w\circ T,\, w|_{\supp\mu}=0,\, \tmif\fless w\bigr\}$ and $w_0\coloneqq\oplusu{w\in S}w\in\twidehat{C}(X,\R)$ (since $\twidehat{C}(X,\R)$ is tropically complete). Since $\varphi\fless\tmif$, we see that $\tmif\in S$ and consequently $S$ is not empty and $w_0\neq\taif$. It is straightforward to check that $w_0\in S$ and it follows from $w_0|_{\supp\mu}=0$ that $w_0\neq\tinftyf$.
    
    To show that $w_0\circ T=w_0+\varphi$. Consider $w_1\coloneqq w_0\circ T-\varphi$. 
	
	 Since $w_0\in S$, we have $\tmif\fless w_0\fless w_1$. So $w_0\circ T\fless w_1\circ T$, i.e.,  $w_1+\varphi\fless w_1\circ T$. Recall that $\supp\mu\subseteq\varphi^{-1}(0)$ and $T(\supp\mu)\subseteq \supp\mu$. It follows that  $w_1|_{\supp\mu}=(w_0\circ T-\varphi)|_{\supp\mu}=0$. We conclude that $w_1\in S$. 
	 
	Since $w_0$ is the maximal element of $S$, we see that $w_1 \fless w_0$ and consequently $w_1=w_0$. We conclude that $w_0\circ T=w_0+\varphi$, and $w_0\in\twidehat{C}(X,\R)\smallsetminus\{\tinftyf,\,\taif\}$.

    \smallskip
    (iv) Assume that $v$ is a tropical eigenfunction of $\cL_\phi$ in $C(X,\R)$ and $b$ is a tropical eigendensity of $\cL_\phi^{\ostar}$, both associated with eigenvalue $\energy$ (cf.~statement (i)). It follows from (\ref{eq: defi of the Bousch operator}) and~(\ref{eq: defi of the adjoint Bousch operator}) that for all $x\in X$,
    \begin{equation*}
        \begin{aligned}
            v(x)\otimes b(x)&=\cL_{\toverline{\phi}}(v)(x)\otimes b(x)=\oplusu{y\in T^{-1}(x)}\bigl(v(y)\otimes \toverline{\phi}(y)\otimes b(T(y))\bigr)\\
            &=\oplusu{y\in T^{-1}(x)}\bigl(v(y)\otimes \cL_{\toverline{\phi}}^{\ostar}(b)(y)\bigr)=\oplusu{y\in T^{-1}(x)}(v(y)\otimes b(y)),
        \end{aligned}
    \end{equation*}
    i.e., $v\otimes b$ is $T$-invariant (cf.~Definition~\ref{d: T-invariance density}). Now it suffices to verify that $v\otimes b$ is a tropical Gibbs density (cf.~Definition~\ref{d: tropical gibbs density}).

    Fix $x\in X$ and $n\in\N$. Write $W \= T_x^{-n}(B(T^n(x),\xi))$. Since $b$ is a tropical eigendensity of $\cL_\phi^{\ostar}$ associated with eigenvalue $\energy$, it follows from (\ref{eq: iteration of adjoint Bousch}) that
    \begin{equation*}
            \oplusu{y\in W}(v(y)\otimes b(y))
            =\oplusu{y\in  W }\bigl(v(y)\otimes \bigl(\cL_{\toverline{\phi}}^{\ostar}\bigr)^n(b)(y)\bigr)
            =\oplusu{y\in  W }\bigl(v(y)\otimes S_n\toverline{\phi}(y)\otimes b(T^n(y)\bigr).
    \end{equation*}
    Note that by (\ref{ineq: difference of the nearby trajectory sum}), $\Absbig{S_n\toverline{\phi}(y)-S_n\toverline{\phi}(x)}\leq\abs{\phi}_{\omega}\assocmodu(\xi) $ for all $y\in T_x^{-n}(B(T^n(x),\xi)) =W$, and by Lemma~\ref{l:expanding systems}, $\oplusu{y\in  W }b(T^n(y))=\oplusu{z\in T^n(W)}b(z)$. We conclude that 
    \begin{equation}\label{ineq: bound for gibbs density}   
    -\abs{\phi}_\omega\assocmodu(\xi)-\norm{v}_{C^0}+\oplusu{z\in  T^n(W) }b(z)\leq\oplusu{y\in  W }(v(y)\otimes b(y))-S_n\toverline{\phi}(x)\leq \abs{\phi}_\omega\assocmodu(\xi)+\norm{v}_{C^0}+\oplusu{z\in  T^n(W) }b(z).
    \end{equation}
    Recall that $b\in\density(X)\smallsetminus\{\tinftyf,\,\taif\}$ since it is a tropical eigendensity. Thus, $\oplusu{z\in  T^n(W) }b(z)\leq\oplusu{z\in X}b(z)<+\infty$.

    \smallskip

    \emph{Claim.} There exists $m_\xi\in\R$ such that for all $y\in X$, $\oplusu{z\in B(y,\xi)}b(z)\geq m_\xi$.

    \smallskip

    If the claim holds, then it follows from (\ref{ineq: bound for gibbs density}) and $T^n(W)=B(T^n(x),\xi)$ that (\ref{ineq: defining inequalities for tropical Gibbs}) holds for $v\otimes b$ with $M\=\abs{\phi}_\omega\assocmodu(\xi)+\norm{v}_{C^0}+\abs{m_\xi}+\Absbig{\oplusu{z\in X}b(z)}$, and $v\otimes b$ is a tropical Gibbs density.

    \smallskip

    Now we prove the claim. Since $X$ is compact, there exists a finite set $\{x_1,\, x_2,\,\ldots,\, x_k\}$ such that for all $y\in X$, there exists $1\leq i\leq k$ so that $d(y,x_i)<\xi/2$, and consequently $B(x_i,\xi/2)\subseteq B(y,\xi)$. Now take $m_\xi\=\min\bigl\{\oplusu{z\in B(x_i,\xi/2)}b(z):1 \leq i\leq k\bigr\}\leq\oplusu{z\in X}b(z)<+\infty$, and we conclude that for all $y\in X$,
    $\oplusu{z\in B(y,\xi)}b(z)\geq m_\xi$.
    To see that $m_\xi\in \R$, it suffices to show that for all nonempty open subsets $U$ of $X$, $\oplusu{z\in U}b(z)>-\infty$.

    Finally we argue by contradiction. Suppose that $\oplusu{z\in U}b(z)=-\infty$ for some nonempty open subset $U$ of $X$. Recall that $\cL_\phi(b)=\energy\otimes b$, $\phi\in C(X,\R)$, and $\energy\in\R$. It follows that $\oplusu{z\in T^n(U)}b(z)=-\infty$ for all $n\in\N$. By the transitivity of $T$, $X=\bigcup_{n=1}^{+\infty}T^n(U)$. We conclude that $\oplusu{z\in X}b(z)=-\infty$, which contradicts with $b\in\density(X)\smallsetminus\{\tinftyf,\,\taif\}$. The proof is now complete.
\end{proof}

\subsection{Zero temperature limits}\label{ss: zero temperature limits}
Here we study logarithmic-type zero-temperature limits and prove Theorem~\ref{t: zero-temperature limit}. Indeed, the results in this subsection are all based on the normality of the family $\bigl\{\beta^{-1}\log u_{\beta\phi}\bigr\}_{\beta\in (1,+\infty)}$ (Lemma~\ref{l: normality and zero limit of eigenfunction}), where $u_{\beta\phi}$ is the unique eigenfunction of the Ruelle operator $\cR_{\beta\phi}$. We first prove the other results assuming Lemma~\ref{l: normality and zero limit of eigenfunction} and conclude this subsection with its proof.

\begin{lemma}\label{l: normality and zero limit of eigenfunction}
    Let $T\:X\rightarrow X$ be a mixing expanding covering map, $\omega\in\sD$, and $\phi\in H^\omega(X,\R)$. Then the family $\bigl\{\beta^{-1}\log u_{\beta\phi}\bigr\}_{\beta\in (1,+\infty)}$ is normal.
\end{lemma}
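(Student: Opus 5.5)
The plan is to apply the Arzel\`a--Ascoli theorem to the family $f_\beta\=\beta^{-1}\log u_{\beta\phi}$, $\beta\in(1,+\infty)$. This requires two estimates with constants independent of $\beta$. The first is uniform boundedness, $\norm{f_\beta}_{C^0}\leq C$. The second is equicontinuity, of the form $\abs{f_\beta(x)-f_\beta(y)}\leq\abs{\phi}_{\omega}\assocmodu(d(x,y))$ whenever $d(x,y)<\xi$. Since $\assocmodu$ is a modulus of continuity by Lemma~\ref{l: associated modulus of continuity}, these two estimates give normality on the compact space $X$.

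For equicontinuity I will use the standard distortion bound. For $x,\,y$ with $d(x,y)<\xi$ and each $n\in\N$, the inverse branches $T_{x'}^{-n}$ from Remark~\ref{r: expanding parameter and long pull backs} pair $T^{-n}(x)$ bijectively with $T^{-n}(y)$. Paired points $x'$ and $y'$ then satisfy
\begin{equation*}
\abs{S_n\phi(x')-S_n\phi(y')}\leq\abs{\phi}_\omega\sum_{i=1}^n\omega\bigl(\lambda^{-i}d(x,y)\bigr)\leq\abs{\phi}_\omega\assocmodu(d(x,y)).
\end{equation*}
This is the estimate (\ref{ineq: difference of the nearby trajectory sum}) already used above. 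Next I use the representation $u_{\beta\phi}=\lim_{n\to+\infty}e^{-nP(T,\beta\phi)}\cR_{\beta\phi}^n(\mathbbold{1})$, which converges uniformly by the Ruelle--Perron--Frobenius theory for Dini potentials in \cite{FJ01a}. Comparing the sums term by term gives
\begin{equation*}
\cR_{\beta\phi}^n(\mathbbold{1})(x)\leq e^{\beta\abs{\phi}_\omega\assocmodu(d(x,y))}\,\cR_{\beta\phi}^n(\mathbbold{1})(y).
\end{equation*}
Passing to the limit, taking logarithms, and dividing by $\beta$ yields the equicontinuity bound, and it is uniform in $\beta$.

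For uniform boundedness I need both an upper and a lower bound on $f_\beta$. The oscillation bound comes first. Using mixing (or even transitivity) and openness, there is $k\in\N$ such that $T^k(B(z,\xi))=X$ for every $z\in X$. Then for any $x,\,y$ I pick $y'\in T^{-k}(y)$ with $d(x,y')<\xi$. The eigen-equation gives
\begin{equation*}
u_{\beta\phi}(y)=e^{-kP(T,\beta\phi)}\cR^k_{\beta\phi}(u_{\beta\phi})(y)\geq e^{-kP(T,\beta\phi)}e^{\beta S_k\phi(y')}u_{\beta\phi}(y').
\end{equation*}
The pressure $P(T,\beta\phi)$ lies between $\beta\energy$ and $\beta\energy+h_{\topo}(T)$, and $h_{\topo}(T)\leq\log N$. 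Hence, after dividing by $\beta>1$,
\begin{equation*}
f_\beta(y)\geq f_\beta(x)-\abs{\phi}_\omega\assocmodu(\xi)-2k\norm{\phi}_{C^0}-k\log N,
\end{equation*}
so the oscillation of $f_\beta$ is bounded uniformly in $\beta$. Second, the normalization $\int\!u_{\beta\phi}\,\mathrm{d}m_{\beta\phi}=1$ forces $\max u_{\beta\phi}\geq1\geq\min u_{\beta\phi}$, so $\max f_\beta\geq 0\geq\min f_\beta$. Combining the two facts gives $\norm{f_\beta}_{C^0}\leq C$ for a constant $C$ independent of $\beta$.

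The main obstacle I expect is the bound on $P(T,\beta\phi)$ and the choice of $k$ in the oscillation step. These are routine, but they must be stated carefully so that every constant stays independent of $\beta$. The pressure bound follows from the variational principle together with $h_{\topo}(T)\leq\log N$. Once both estimates are in place, Arzel\`a--Ascoli gives uniform convergence along a subsequence, which proves normality.
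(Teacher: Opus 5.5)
Your proposal is correct. The equicontinuity step matches the paper: the paper quotes \cite[Theorem~1]{FJ01a} to get $\abs{\log u_{\beta\phi}}_{\assocmodu,\xi}\leq\beta\abs{\phi}_\omega$, which is your distortion bound passed to the limit.

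The uniform bound is where you take a different route.
\begin{itemize}
\item The paper reruns the argument of \cite[Proposition~4.4.3]{PU10} on iterates. It uses only transitivity, through some $0\leq m\leq N_\xi$ with $T^m(B(x,\xi))\cap B(y,\xi)\neq\emptyset$, to prove $\cR^n_{\beta\phi}(\mathbbold{1})(x)\leq e^{\beta K_1+K_2}\cR^n_{\beta\phi}(\mathbbold{1})(y)$ uniformly in $n$. It then passes to $u_{\beta\phi}$ via \cite[Theorem~1~(4)]{FJ01a} and normalizes with $\int\! u_{\beta\phi}\,\mathrm{d}m_{\beta\phi}=1$, exactly as you do.
\item You instead apply the eigen-equation once, at a fixed time $k$, and reuse the local modulus. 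This is shorter and works directly with $u_{\beta\phi}$.
\end{itemize}

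Your route has two costs. First, you need an upper bound on $\beta^{-1}P(T,\beta\phi)$ (variational principle and $h_{\topo}(T)<+\infty$). The ratio-of-iterates argument never needs this, because the normalizing factor $e^{-nP(T,\beta\phi)}$ cancels in the ratio. Second, you need a uniform exactness time $k$. This genuinely uses mixing: it follows from the standard fact that open distance-expanding topologically mixing maps are topologically exact, combined with a finite cover by balls of radius $\xi/2$. You should cite that fact.

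Your parenthetical ``or even transitivity'' is false as stated. For a transitive, non-mixing expanding map whose cyclically permuted pieces lie at mutual distance at least $\xi$, every $T^k(B(z,\xi))$ stays inside a single piece. Under transitivity alone you would have to borrow the paper's device: take $y''\in B(x,\xi)$ with $T^m(y'')\in B(y,\xi)$ and $m\leq N_\xi$, apply the eigen-equation at $T^m(y'')$, and compare with $y$ by the local modulus. Since the lemma assumes mixing, this slip does not affect your proof.
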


Recall that for all $f\in C(X,\R)\cup\{\tinftyf,\,\taif\}$ and $\beta>0$,
\begin{equation*} 
	l_{\beta}^{\mu}(f)=\frac{1}{\beta}\log\int \! e^{\beta f} \, \mathrm{d} \mu_{\beta \phi}\quad\text{ and }\quad
 	l_{\beta}^{m}(f)=\frac{1}{\beta}\log\int \! e^{\beta f} \, \mathrm{d} m_{\beta \phi}.
 \end{equation*}

\begin{rem}\label{r: l_beta at infinity}
Note that for all $\beta>0$, letting $l_{\beta}$ be either $l_{\beta}^m$ or $l_{\beta}^{\mu}$, we have
\begin{equation}\label{eq: l_beta at infinity}
l_{\beta}(\tinftyf)=+\infty\quad\text{ and }\quad l_{\beta}(\taif)=\tai=-\infty,
\end{equation}
 and thus, if $\{l_{\beta_k}|_{C(X,\R)}\}_{k\in\N}$ is pointwise convergent as $k\to+\infty$, then $\{l_{\beta_k}\}_{k\in\N}$ is also pointwise convergent on $C(X,\R)\cup\{\tinftyf,\,\taif\}$ as $k\to+\infty$.
\end{rem}
It is straightforward to check that for all $f,\,g\in C(X,\R)$ and $\beta>0$, letting $l_\beta$ be either $l_{\beta}^m$ or $l_{\beta}^{\mu}$, we have
\begin{equation}\label{ineq: regularity of functionals at positive temperature}
	\abs{l_{\beta}(f)} \leq \norm{f}_{C^0}\quad\text{ and }\quad
	\abs{l_{\beta}(f)-l_{\beta}(g)}\leq\norm{f-g}_{C^0}. 
\end{equation}

\begin{lemma}\label{l: zero tem limit of pressure}
    Let $T\:X\rightarrow X$ be a mixing expanding covering map, $\omega\in\sD$, and $\phi\in H^\omega(X,\R)$. Then
    \begin{equation*}
        \lim_{\beta\to+\infty}\beta^{-1}P(T,\beta\phi)=\energy.
    \end{equation*}
\end{lemma}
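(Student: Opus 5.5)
The statement to prove is Lemma~\ref{l: zero tem limit of pressure}: $\beta^{-1}P(T,\beta\phi)\to\energy$. I would not use any of the tropical machinery here. The plan is to squeeze $\beta^{-1}P(T,\beta\phi)$ between two bounds using only the variational principle and finiteness of topological entropy.

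\emph{Lower bound.} For each $\beta>0$ and each $\mu\in\cM(X,T)$, the variational principle gives
\begin{equation*}
P(T,\beta\phi)\geq h_\mu(T)+\beta\int\!\phi\,\mathrm{d}\mu\geq\beta\int\!\phi\,\mathrm{d}\mu .
\end{equation*}
The space $\cM(X,T)$ is weak$^*$ compact and $\mu\mapsto\int\!\phi\,\mathrm{d}\mu$ is continuous, so a $\phi$-maximizing measure $\mu$ exists. Taking that $\mu$ yields $\beta^{-1}P(T,\beta\phi)\geq\energy$ for all $\beta>0$.

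\emph{Upper bound.} For every $\mu\in\cM(X,T)$ we have $h_\mu(T)\leq h_{\topo}(T)$ and $\int\!\phi\,\mathrm{d}\mu\leq\energy$. Hence
\begin{equation*}
P(T,\beta\phi)=\sup_{\mu\in\cM(X,T)}\Bigl(h_\mu(T)+\beta\int\!\phi\,\mathrm{d}\mu\Bigr)\leq h_{\topo}(T)+\beta\energy .
\end{equation*}
It remains to check that $h_{\topo}(T)<+\infty$. By Lemma~\ref{l:expanding systems}, $T$ is at most $N$-to-one and expanding, so the standard bound $h_{\topo}(T)\leq\log N$ holds (cf.~\cite{PU10}). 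Alternatively, the spanning-set count at scale $\xi$ grows at most like $N^n$ times a constant.

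Combining the two bounds gives
\begin{equation*}
0\leq\beta^{-1}P(T,\beta\phi)-\energy\leq\beta^{-1}h_{\topo}(T)\to0,
\end{equation*}
which proves the lemma. The only point needing care, though hardly an obstacle, is the finiteness of $h_{\topo}(T)$. The Dini condition and the mixing hypothesis are not needed beyond guaranteeing that the setting is well defined.
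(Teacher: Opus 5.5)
Your proof is correct, and it takes a different and more elementary route than the paper. The paper argues by contradiction inside the tropical framework. It uses the normality of $\{\beta^{-1}\log u_{\beta\phi}\}$ (Lemma~\ref{l: normality and zero limit of eigenfunction}, which relies on the Fan--Jiang theory for Dini potentials and on mixing for the existence of $u_{\beta\phi}$) to extract a subsequence along which $\beta_n^{-1}\log u_{\beta_n\phi}\to v$ uniformly and $\beta_n^{-1}P(T,\beta_n\phi)\to c$. It then passes to the limit in the eigenequation $\cR_{\beta\phi}(u_{\beta\phi})=e^{P(T,\beta\phi)}u_{\beta\phi}$, using $\|\cL_\phi(u)-\beta^{-1}\log\cR_{\beta\phi}(e^{\beta u})\|_{C^0}\le\beta^{-1}\log N$, to obtain $\cL_\phi(v)=c\otimes v$. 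Finally it invokes uniqueness of the tropical eigenvalue (Proposition~\ref{p: uniqueness of tropical eigenvalue}~(i)) to force $c=Q(T,\phi)$.

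Your squeeze via the variational principle avoids all of this. It holds for every continuous potential over any continuous map with $h_{\topo}(T)<+\infty$, and it gives the explicit rate $0\le\beta^{-1}P(T,\beta\phi)-Q(T,\phi)\le\beta^{-1}h_{\topo}(T)$.

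One small precision in your finiteness check: a spanning count at the single scale $\xi$ only bounds the entropy at that scale. To conclude $h_{\topo}(T)\le\log N$, run the same inverse-branch argument at every scale $\epsilon<\xi$. Cover $X$ by finitely many $\epsilon$-balls centered at points $x_i$; then $\bigcup_i T^{-n}(x_i)$ is $(n,\epsilon)$-spanning of cardinality at most $M_\epsilon N^n$. Alternatively, use positive expansivity.

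What the paper's route buys is economy within its own framework. The computation showing that every limit point $v$ satisfies $\cL_\phi(v)=c\otimes v$ is reused almost verbatim in the proof of Theorem~\ref{t: zero-temperature limit}~(i). There the lemma is invoked only to identify the constant as $Q(T,\phi)$, so that argument would go through unchanged with your proof substituted.
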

\begin{proof}
    We argue by contradiction. If $\frac{1}{\beta}P(T,\beta\phi)$ does not converge to $\energy$ as $\beta\to+\infty$, then there exists a sequence $\{\beta_n\}_{n\in\N}$ converging to $+\infty$ such that either $\liminf_{n\to+\infty}\beta_n^{-1}P(T,\beta_n\phi)>\energy$ or $\limsup_{n\to+\infty}\beta_n^{-1}P(T,\beta_n\phi)<\energy$.

     By Lemma~\ref{l: normality and zero limit of eigenfunction}, without loss of generality, we may assume that $\beta_n^{-1}\log u_{\beta_n\phi}$ uniformly converges to $v\in C(X,\R)$ and 
    $\lim_{n\to+\infty}\beta_n^{-1}P(T,\beta_n\phi)=c\in\Rm$.
    Note that by direct computation, we have $\oplusu{1\leq i\leq n}a_i\leq \frac1\beta\log \bigl(\sum_{i=1}^ne^{\beta a_i}\bigr)\leq \frac{\log n}{\beta}\otimes \bigl(\oplusu{1\leq i\leq n}a_i\bigr)$ for all $\beta>0$, $n\in\N$, and $a_i\in\R$, where $1\leq i\leq n$. It then follows from the definitions of the two operators ((\ref{eq: defi of the Ruelle operator}) and~(\ref{eq: defi of the Bousch operator})) that for all $u\in C(X,\R)$, 
    \begin{equation}\label{ineq: difference between the two operators}
        \Normbig{\cL_{\phi}(u)-\beta^{-1}\log (\cR_{\beta\phi}(\exp(\beta u)))}_{C^0}\leq \beta^{-1}\log N,
    \end{equation}
    where $N=\sup\bigl\{\card \bigl(T^{-1}(x)\bigr): x\in X\bigr\}<+\infty$ (cf.~Lemma~\ref{l:expanding systems}). Thus, by Lemma~\ref{l: projective estimate}, 
    \begin{equation*}
        \begin{aligned}
            \cL_\phi(v)
            &=\lim_{n\to+\infty}\cL_\phi\bigl(\beta_n^{-1}\log u_{\beta_n\phi}\bigr)
            =\lim_{n\to+\infty}\beta_n^{-1}\log(\cR_{\beta_n\phi}(u_{\beta_n\phi}))\\
            &=\lim_{n\to+\infty}\bigl(\beta_n^{-1}P(T,\beta_n\phi)+\beta_n^{-1}\log u_{\beta_n\phi}\bigr)
            =c\otimes v.
        \end{aligned}
    \end{equation*}
    Hence, by Proposition~\ref{p: uniqueness of tropical eigenvalue}~(i), $c=\energy$ and $v\in\es{\cL_\phi}{C(X,\R)}{\energy}$. This is a contradiction.     
    We conclude that $\lim_{\beta\to+\infty}\frac1\beta P(T,\beta\phi)=\energy$.
\end{proof}

Since $X$ is a compact metric space, we see that both $X$ and $C(X,\R)$ are separable. Thus, by the Arzel\`a--Ascoli theorem (cf.~\cite[Theorem~X.2.2]{Bour07}), an equicontinuous family of real-valued continuous functionals on $C(X,\R)$ (resp.~functions on $X$) that is uniformly bounded on every compact subset of $C(X,\R)$ (resp.~$X$) is a normal family. Thus, we only need to verify the equicontinuity and the uniform boundedness on compact subsets of $C(X,\R)$ (resp.~$X$) when showing the normality of a given family below. 

\begin{proof}[\bf Proof of Theorem~\ref{t: zero-temperature limit}]
    (i) The normality directly follows from Lemma~\ref{l: normality and zero limit of eigenfunction}. Now assume that $\beta_n^{-1}\log u_{\beta_n\phi}$ uniformly converges to $v\in C(X,\R)$ with $\beta_n\to+\infty$ as $n\to+\infty$. Following the argument of (\ref{ineq: difference between the two operators}) in the proof of Lemma~\ref{l: zero tem limit of pressure}, we have that for all $u\in C(X,\R)$, 
    \begin{equation*}
        \Normbig{\cL_{\phi}(u)-\beta^{-1}\log (\cR_{\beta\phi}(\exp(\beta u)))}_{C^0}\leq \beta^{-1}\log N ,
    \end{equation*}
    where $N=\sup\bigl\{\card \bigl(T^{-1}(x)\bigr): x\in X\bigr\}<+\infty$ (cf.~Lemma~\ref{l:expanding systems}). Thus, by Lemmas~\ref{l: projective estimate} and~\ref{l: zero tem limit of pressure}, 
    \begin{equation*}
        \begin{aligned}
            \cL_\phi(v)
            &=\lim_{n\to+\infty}\cL_\phi\bigl(\beta_n^{-1}\log u_{\beta_n\phi}\bigr)
            =\lim_{n\to+\infty}\beta_n^{-1}\log (\cR_{\beta_n\phi}(u_{\beta_n\phi}))\\
            &=\lim_{n\to+\infty}\bigl(\beta_n^{-1}P(T,\beta_n\phi)+\beta_n^{-1}\log u_{\beta_n\phi}\bigr)
            =\energy\otimes v,
        \end{aligned}
    \end{equation*}
    i.e., $v\in\es{\cL_\phi}{C(X,\R)}{\energy}$.

    \smallskip
    (ii) It follows from (\ref{ineq: regularity of functionals at positive temperature}) that $\bigl\{l_{\beta}^m|_{C(X,\R)}\bigr\}_{\beta\in(1,+\infty)}$ is uniformly bounded on every bounded subset of $C(X,\R)$, equicontinuous, and consequently normal. By (\ref{eq: l_beta at infinity}), if $\bigl\{l_{\beta_k}^m|_{C(X,\R)}\bigr\}_{k\in\N}$ is pointwise convergent as $k\to+\infty$, then so is $\bigl\{l_{\beta_k}^m\bigr\}_{k\in\N}$.

Now suppose $l\:C(X,\R)\cup\{\tinftyf,\,\taif\}\to\R$ is the pointwise limit of a convergent subsequence $\bigl\{l_{\beta_k}^m\bigr\}_{k\in\N}$ with $\beta_k\to+\infty$ as $k\to+\infty$. Adopting an argument that is verbatim the same as in the proof of \cite[Theorem~E~(i)]{LS26}, we see that $l$ is tropical linear.

Since every tropical linear functional on $C(X,\R)\cup\{\tinftyf,\,\taif\}$ is tropical continuous (cf.~\cite[Proposition~2.7]{LS26}), we see that $l$ is tropically continuous, and thus by Proposition~\ref{p: dual space equals completion}, has a unique density $b$ in $\density(X)$ . Note that for each $\beta>0$, $l_\beta^m(\tmif)=\beta^{-1}\log 1=0$ since $m_{\beta \phi}$ is a probability measure. It follows that $l(\tmif)=0$ and consequently $b\notin\{\tinftyf,\,\taif\}$.

Now we show that $b\in\density(X)\smallsetminus\{\tinftyf,\,\taif\}$ is in $\esbig{\cL_{\phi}^{\ostar}}{\density(X)}{\energy}$. Since $\energy\in\R$ (cf.~(\ref{eq: defi of ergodic supremum})), it follows that $\energy\otimes b\in\density(X)$. Note that $\cL_{\phi}^{\ostar}(b)\in\density(X)$ and that for every $b_1\:X\to\Rm$, there exists a unique $b_2\in\density(X)$ equivalent to $b_1$ (cf.~Remark~\ref{r: equivalence of densities}). Thus, it suffices to show that $\cL_{\phi}^{\ostar}(b)$ is equivalent to $\energy\otimes b$, i.e., $l(\cL_\phi(f))=l(f)\otimes\energy$ for all $f\in C(X,\R)$ (cf.~Remark~\ref{r: equivalence of densities}).

Fix $f\in C(X,\R)$. It follows from (\ref{ineq: regularity of functionals at positive temperature}) and the definitions of the two operators ((\ref{eq: defi of the Ruelle operator}) and~(\ref{eq: defi of the Bousch operator})) that for all $k\in\N$,
\begin{equation}\label{ineq:difference between Ruelle and Bousch operator}
 \Absbig{l_{\beta_k}^m(\cL_{\phi}(f))-l_{\beta_k}^m\bigl(\beta_k^{-1}\log \bigl(\cR_{\beta_k \phi} \bigl( e^{\beta_k f}\bigr)\bigr)\bigr)}
 \leq\Normbig{\cL_{\phi}(f)-\beta_k^{-1}\log\bigl(\cR_{\beta_k \phi}\bigl(e^{\beta_k f}\bigr)\bigr)}_{C^0}
 \leq\beta_k^{-1}\log N,
\end{equation}
where $N$ is the constant in Lemma~\ref{l:expanding systems}. Note that
\begin{align}
l_{\beta_k}^m\bigl(\beta_k^{-1}\log\bigl(\cR_{\beta_k\phi}\bigl(e^{\beta_k f} \bigr)\bigr)\bigr)
&=\frac{1}{\beta_k}\log\int \! \cR_{\beta_k \phi} \bigl(e^{\beta_k f} \bigr) \, \mathrm{d} m_{\beta_k \phi}
=\frac{1}{\beta_k}\log\int \! e^{P(T,\beta_k\phi)}\cdot e^{\beta_k f} \, \mathrm{d} m_{\beta_k\phi} \notag\\
&=\frac{P(T,\beta_k \phi)}{\beta_k}\otimes\frac{1}{\beta_k}\log\int \! e^{\beta_k f} \, \mathrm{d} m_{\beta_k\phi}
=\frac{P(T,\beta_k\phi)}{\beta_k}\otimes l_{\beta_k}^m(f),\label{eq:eigenmeasure_functional at positive temperature}
\end{align}
where the second equality holds since $m_{\beta\phi}$ is the eigenmeasure of  $\cR_{\beta\phi}^*$ with eigenvalue $e^{P(T, \beta\phi)}$. Recall $\lim\limits_{k\to+\infty}\beta_k=+\infty$,
$\lim\limits_{k\to+\infty}l_{\beta_k}^m(f)=l(f)$, $\lim\limits_{k\to+\infty}l_{\beta_k}^m(\cL_\phi(f))=l(\cL_\phi(f))$, and $\lim\limits_{\beta\to+\infty}\frac1\beta P(T,\beta\phi)=\energy$ (by Lemma~\ref{l: zero tem limit of pressure}). Combining (\ref{ineq:difference between Ruelle and Bousch operator}) and~(\ref{eq:eigenmeasure_functional at positive temperature}) and letting $k\to+\infty$, we conclude that $l(\cL_\phi(f))=l(f)\otimes\energy$, and~(ii) is now verified.

\smallskip
\smallskip

(iii) It follows from (\ref{ineq: regularity of functionals at positive temperature}) that $\bigl\{ l_{\beta}^\mu|_{C(X,\R)} \bigr\}_{\beta\in(1,+\infty)}$ is normal. By (\ref{eq: l_beta at infinity}), if $\bigl\{l_{\beta_k}^\mu|_{C(X,\R)}\bigr\}_{k\in\N}$ is pointwise convergent as $k\to+\infty$, then so is $\bigl\{l_{\beta_k}^\mu\bigr\}_{k\in\N}$.

Now suppose $\widehat{l}$ is the pointwise limit of a convergent subsequence $\bigl\{l_{\beta_k}^\mu\bigr\}_{k\in\N}$ with $\beta_k\to+\infty$ as $k\to+\infty$. By Lemma~\ref{l: normality and zero limit of eigenfunction}, (ii), and Remark~\ref{r: l_beta at infinity}, we take a subsequence $\{\beta'_k\}_{k\in\N}$ of $\{\beta_k\}_{k\in\N}$ such that as $k\to+\infty$, $\bigl\{{\beta'_k}^{-1}\log u_{\beta'_k \phi}\bigr\}_{k\in\N}$ uniformly converges to $v$ in $\es{\cL_\phi}{C(X,\R)}{\energy}$ and $\bigl\{l_{\beta'_k}^{m}\bigr\}_{k\in\N}$ pointwise converges to $l$. Adopting an argument that is verbatim the same as in the proof of \cite[Theorem~E~(ii)]{LS26}, we see that $\widehat{l}(f)=l(v\otimes f)$ for all $f\in C(X,\R)\cup\{\tinftyf,\,\taif\}$.

%Note that for all $\beta>0$ and $f\in C(X,\R)$, $l_{\beta}^{\mu}(f)=l_{\beta}^m \bigl( f+\beta^{-1}\log u_{\beta \phi} \bigr)$ since $\mu_{\beta \phi}=u_{\beta \phi}\cdot m_{\beta \phi}$. Thus, for all $k\in\N$ and $f\in C(X,\R)$,
%\begin{equation*}
	%\begin{aligned}
		%\Absbig{l(v\otimes f)-l_{\beta'_k}^{\mu}(f)}
		%&=\Absbig{l(v+f)-l_{\beta'_k}^m \bigl( f+{\beta'_k}^{-1}\log u_{\beta'_k \phi} \bigr) }\\&\leq\Absbig{l(v+f)-l_{\beta'_k}^m(v+f)}+\Absbig{l_{\beta'_k}^m(v+f)-l_{\beta'_k}^m \bigl( f+{\beta'_k}^{-1} \log u_{\beta'_k \phi} \bigr) }\\&\leq\Absbig{l(v+f)-l_{\beta'_k}^m(v+f)}+\Normbig{v-{\beta'_k}^{-1}\log u_{\beta'_k \phi}}_{C^0},
	%\end{aligned}
%\end{equation*}
%where the last inequality follows from (\ref{ineq: regularity of functionals at positive temperature}). By the choice of $\{\beta'_k\}_{k\in\N}$, it follows that $l(v\otimes f)=\widehat{l}(f)$ for all $f\in C(X,\R)$. Since $v\in C(X,\R)$, it follows from (\ref{eq: l_beta at infinity}) that $l(v\otimes f)=\widehat{l}(f)$ for all $f\in C(X,\R)\cup\{\tinftyf,\,\taif\}$. 

By (ii), $l$ is tropically continuous linear and its density $b$ in $\density(X)$ is in $\esbig{\cL_\phi^{\ostar}}{\density(X)}{\energy}\smallsetminus\{\tinftyf,\,\taif\}$. By Definition~\ref{d: defi of lu}, Proposition~\ref{p: dual space equals completion}, and Remark~\ref{r: equivalence of densities}, we see that $\widehat{l}(f)=\oplusu{x\in X}(f(x)\otimes v(x)\otimes b(x))$ for all $f\in C(X,\R )\cup\{\tinftyf,\,\taif\}$, i.e., $\widehat{l}$ is a tropically continuous linear functional and $v\otimes b$ is its density in $\density(X)$. Since $v\in C(X,\R)$ and $b\notin\{\tinftyf,\,\taif\}$, we see that $v\otimes b\notin\{\tinftyf,\,\taif\}$. Now (iii) is verified.
\end{proof}

Finally, we check Lemma~\ref{l: normality and zero limit of eigenfunction}. Let $(\lambda,\,\gamma,\,\xi)$ be the expanding constants of $T$ (cf.~Remark~\ref{r: expanding parameter and long pull backs}). Denote for each modulus of continuity $\omega$ and each constant $K>0$, 
\begin{equation*}
    H^\omega_{K}(X,\R)\coloneqq\bigl\{\phi\in H^\omega(X,\R):\phi>0,\,\phi(x)/\phi(y)\leq e^{K\omega(d(x,y))}\text{ for all }x,\,y\in X\text{ with }d(x,y)\leq \xi\bigr\}.
\end{equation*}

\begin{proof}[Proof of Lemma~\ref{l: normality and zero limit of eigenfunction}]
    Let $(\lambda,\gamma,\xi)$ be expanding constants of $T$, where $\lambda>1$, $\gamma>0$, and $\xi>0$. 
    
    We first check the equicontinuity. Note that if $\exp\phi\in H^{\omega}_{K_0}(X,\R)$ for some $K_0>0$, then $\exp(\beta\phi)\in H^{\omega}_{\beta K_0}(X,\R)$ for all $\beta>0$, and by \cite[Theorem~1]{FJ01a}, $u_{\beta\phi}\in H^{\assocmodu}_{\beta K_0}(X,\R)$ for all $\beta>0$. 
    Note that $\phi\in H^\omega(X,\R)$ implies that $\exp\phi\in H^\omega_{\abs{\phi}_\omega}(X,\R)$. We conclude that for all $\beta>0$, $u_{\beta\phi}\in H^{\assocmodu}_{\beta\abs{\phi}_\omega}(X,\R)$, i.e., $\abs{\log u_{\beta \phi}}_{\assocmodu,\xi}\leq \beta \abs{\phi}_{\omega}$.

   Now it suffices to verify the (uniform) boundedness. Since $X$ is compact and $T$ is transitive, there exists $N_{\xi}\in\N$ such that for all $x,\,y\in X$, there exists an integer $m$ satisfying $0\leq m\leq N_{\xi}$ and $T^m(B(x,\xi))\cap B(y,\xi)\neq\emptyset$. Recall that $N=\sup\bigl\{\card \bigl(T^{-1}(x)\bigr):x\in X\bigr\}<+\infty$ (cf.~Lemma~\ref{l:expanding systems}). Next, adopting an argument that is verbatim the same as in the proof of ~\cite[Proposition~4.4.3]{PU10} with $\psi\in H^\omega(X,\R)$ in place of $\phi$, $\exp(\abs{\psi}_\omega\assocmodu(\xi))$ in place of $C_1$, $N$ in place of $D$, and $N_\xi$ in place of $N$, we get the conclusion that for all $x,\,y\in X$, $\psi\in H^\omega(X,\R)$, and $n\in\N$,
   \begin{equation*}
       \frac{\sum_{z\in T^{-n}(x)}\exp(S_n\psi(z))}{\sum_{z\in T^{-n}(y)}\exp(S_n\psi(z))}\leq \exp(2\abs{\psi}_\omega\assocmodu(\xi))\exp(2N_\xi\norm{\psi}_{C^0}) N^{N_\xi}.
   \end{equation*}
   Note that for all $\beta>0$, $\beta\phi\in H^\omega(X,\R)$. Hence, there exist constants $K_1,\,K_2>0$ depending only on $T$, $\phi$, and $\omega$ that for all $\beta>0$,
   $\frac{\cR_{\beta\phi}^n(\mathbbold{1})(x)}{\cR_{\beta\phi}^n(\mathbbold{1})(y)}\leq \exp(\beta K_1+K_2)$.
   It then follows from \cite[Theorem~1~(4)]{FJ01a} that for all $\beta>0$,
   $u_{\beta\phi}(x)\leq \exp(\beta K_1+K_2)u_{\beta\phi}(y)$.
   Since $\int\! u_{\beta\phi}\,\mathrm{d}m_{\beta\phi}=1$, we see that $\exp(-\beta K_1-K_2)\leq u_{\beta\phi}\leq \exp(\beta K_1+K_2)$, and consequently for all $\beta>1$,
   $\Normbig{\frac{1}{\beta}\log u_{\beta\phi}}_{C^0}\leq K_1+K_2$.
\end{proof}

\subsection{Representation and uniqueness of eigenfunction and eigendensity}\label{ss: representation and uniqueness}

In this subsection, we establish the representations of tropical eigenobjects (Proposition~\ref{p: representation}) and generic zero-temperature large deviation principles of equilibrium states (Proposition~\ref{p: uniquely maximizing implies the large deviation principle}) for Dini potentials, proving Theorem~\ref{t: Dini quanta no freezing}~(ii).

\begin{prop}[Representation of tropical eigenfunctions and eigendensities]\label{p: representation}
	Let $T\:X\rightarrow X$ be a transitive expanding covering map on a compact metric space $(X,d)$, $\omega\in\sD$, and $\phi\in H^\omega(X,\R)$. Let $S_\phi(\,\cdot\,,\,\cdot\,) \: X\times X\rightarrow\R$ be the Ma\~{n}\'{e} potential associated with $\phi$, and $\Omega_\phi$ be the Aubry set with respect to $\phi$. Then the following hold:
\begin{enumerate}[label=\rm{(\roman*)}]	
	\smallskip
	\item Suppose $v$ is in $\es{\cL_\phi}{C(X,\R)}{\energy}$. Then $v(y)=\oplusu{x\in\Omega_\phi}(v(x)\otimes S_\phi(x,y))$ for every $y$ in $X$.
    
	\smallskip
	\item Suppose $b$ is in $\esbig{\cL_\phi^{\ostar}}{\density(X)}{\energy}$. Then $b(\,\cdot\,)$ is equivalent to $\oplusu{y\in\Omega_\phi}(S_\phi(\,\cdot\,,y)\otimes b(y))$. Moreover, if $b\neq\tinftyf$, then $b(x)=\oplusu{y\in\Omega_\phi}(S_\phi(x,y)\otimes b(y))$ for all $x\in X$.
    
	\smallskip
	\item The entries of $\{S_\phi(\,\cdot\,,y)\}_{y\in\Omega_\phi}$ are tropical eigendensities of $\cL_\phi^{\ostar}$ associated with eigenvalue $\energy$, and the entries of $\{S_\phi(x,\,\cdot\,)\}_{x\in\Omega_\phi}$ are tropical eigenfunctions of $\cL_\phi$ in $C(X,\R)$ associated with eigenvalue $\energy$. If $\phi$ has a unique maximizing measure, then both $\es{\cL_\phi}{C(X,\R)}{\energy}$ and $\esbig{\cL_\phi^{\ostar}}{\density(X)}{\energy}$ are of tropical dimension one. 
\end{enumerate}
\end{prop}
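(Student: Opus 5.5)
The plan is to follow the weak KAM / Hölder-case template of \cite{LS26}, with Hölder estimates replaced by the Dini distortion bound $\abs{S_n\phi(y)-S_n\phi(x)}\leq\abs{\phi}_\omega\assocmodu(\xi)$ for $y\in T_x^{-n}(B(T^n(x),\xi))$. I take the Mañé potential in the form
\[
S_\phi(x,y)=\lim_{\epsilon\to0}\limsup_{n}\sup\bigl\{S_n\toverline{\phi}(z):d(z,x)<\epsilon,\,d(T^n z,y)<\epsilon\bigr\},
\]
and the Aubry set as $\Omega_\phi=\{x:S_\phi(x,x)=0\}$. Two preliminary facts are needed. First, $S_\phi(x,y)\otimes S_\phi(y,z)\leq S_\phi(x,z)$; this follows by concatenating orbit segments through the local inverse branches $T_x^{-n}$, with distortion absorbed because $\assocmodu(\epsilon)\to0$. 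Second, $S_\phi$ is bounded above by $L_2\abs{\phi}_\omega$ as in Proposition~\ref{p: main construction}~(i), and $\Omega_\phi$ is a nonempty compact set containing the supports of all maximizing measures.

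For (i), iterate the eigen-equation to get $v(y)=\oplusu{z\in T^{-n}(y)}(v(z)\otimes S_n\toverline{\phi}(z))$. Fix $y$ and pick a maximizing backward branch $z_n\in T^{-n}(y)$. For the upper bound, choose $n_k$ with $z_{n_k}\to x$ and show that $x\in\Omega_\phi$ and $v(y)\leq v(x)\otimes S_\phi(x,y)$. To prove $x\in\Omega_\phi$, take $m<n$ in the subsequence so that $z_m$ and $z_n$ both lie near $x$. Then $T^{n-m}z_n=z_m$, and equality in the sub-action inequality along the chain gives $S_{n-m}\toverline{\phi}(z_n)=v(z_m)-v(z_n)\approx0$, so $S_\phi(x,x)\geq0$. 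The reverse inequality $S_\phi\leq0$ on the diagonal holds because any near-closed orbit with positive sum can be closed by the specification of expanding maps into a periodic orbit whose average exceeds $\energy$. For the lower bound, the sub-action inequality $v\circ T^n\geq v+S_n\toverline{\phi}$ together with the $\assocmodu$-continuity of $v$ gives $v(y)\geq v(x)\otimes S_\phi(x,y)$ for every $x$.

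For (ii), use $b(x)=b(T^n x)\otimes S_n\toverline{\phi}(x)$. The inequality $b(x)\geq b(w)\otimes S_\phi(x,w)$ for $w\in\Omega_\phi$ needs care because $b$ is only upper semicontinuous, so the inequality a priori holds only after passing to the equivalent upper semicontinuous representative; this is why the first statement is phrased via equivalence. The reverse direction comes from the forward orbit of $x$ accumulating on $\Omega_\phi$, and the $\limsup$ in $S_\phi$ handles this. When $b\neq\tinftyf$, Theorem~\ref{t: existence} and the Claim in its proof ensure that $b$ is real-valued somewhere on every open set, and upper semicontinuity upgrades the equivalence to pointwise equality.

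For (iii), the triangle inequality plus $S_\phi(x,x)=0$ on $\Omega_\phi$ shows that $S_\phi(x,\cdot)$ is a fixed point of $\cL_{\toverline{\phi}}$ and that $S_\phi(\cdot,y)$ satisfies $b\circ T\otimes\toverline{\phi}=b$. Continuity of $S_\phi(x,\cdot)$ with modulus $\assocmodu$ follows from the distortion bound, and upper semicontinuity of $S_\phi(\cdot,y)$ is built into the definition. If the maximizing measure is unique, I would prove $S_\phi(x,y)\otimes S_\phi(y,x)=0$ on $\Omega_\phi$. Otherwise, a strictly negative loop through $x,y$ would split $\Omega_\phi$ into two disjoint closed invariant pieces, each carrying a maximizing measure. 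Given this identity, the representations in (i) and (ii) collapse to $v=c\otimes S_\phi(x_0,\cdot)$ and $b=c'\otimes S_\phi(\cdot,x_0)$ for a fixed $x_0$, which gives tropical dimension one.

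\textbf{Main obstacle.} The hardest step is this last one, deducing the separation of $\Omega_\phi$ from a negative loop via static classes. I would first try to show that each static class supports an invariant measure by constructing $\epsilon$-pseudo-orbits and closing them with specification.
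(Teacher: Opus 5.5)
Your part (i) is the paper's argument: a maximizing backward chain, its accumulation points are Aubry points, and the lower bound comes from $v\circ T^n\geq v+S_n\toverline{\phi}$. Keep the paper's Ma\~n\'e potential, with $\sup$ over all $n\in\N$, rather than your $\limsup_n$ variant. The statement is about that object, and your argument for (i) runs verbatim with it. The genuine problems are in (ii) and (iii).

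\textbf{Part (ii), the inequality $b(x)\leq\oplusu{y\in\Omega_\phi}(S_\phi(x,y)\otimes b(y))$.} The claim that the forward orbit of $x$ accumulates on $\Omega_\phi$ is false in general. If $x$ is a periodic point whose orbit measure is not maximizing, $\omega(x)$ is that orbit, which misses the closed set $\Omega_\phi$. What is true is this:
\begin{itemize}
\item The inequality is trivial when $b(x)=-\infty$.
\item If $b(x)\in\R$, then $S_n\toverline{\phi}(x)=b(x)-b(T^n(x))\geq b(x)-\sup b$. So every weak$^*$ limit of the empirical measures of $x$ is maximizing, hence supported in $\Omega_\phi\cap\omega(x)$ by Lemma~\ref{l: maximizing measures supported on Aubry set}.
\item Along $T^{n_k}(x)\to y\in\Omega_\phi$, upper semicontinuity of $b$ at $y$ points the right way, giving $b(x)\leq S_\phi(x,y)\otimes b(y)$.
\end{itemize}

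\textbf{Part (ii), the inequality $b(x)\geq S_\phi(x,w)\otimes b(w)$.} Your diagnosis is off. $b$ is already upper semicontinuous; the real difficulty is that upper semicontinuity at the endpoint $T^n(z)\to w$ points the wrong way. Passing to an equivalent density does not cure that. The cure is to replace $z$ by $T_z^{-n}(w)$, which lands exactly on $w$ at a distortion cost $\abs{\phi}_\omega\assocmodu(\epsilon)$; this is Proposition~\ref{p: Mane_potential_properties}~(ii).

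With both repairs you get pointwise equality directly. That is a genuinely shorter route than the paper's, which pairs $b$ with $u\in C(X,\R)$, replaces $u$ by $v_u$, invokes (i), and then proves upper semicontinuity of the representation by compactness. The ``upgrade'' you describe would not work as stated. Equivalence forces equality only between two elements of $\density(X)$, so you would need upper semicontinuity of $x\mapsto\oplusu{y\in\Omega_\phi}(S_\phi(x,y)\otimes b(y))$. Finiteness of $b$ somewhere on each open set is irrelevant to this.

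\textbf{Part (iii), the eigen-identities.} The triangle inequality only gives $\cL_{\toverline{\phi}}(S_\phi(x,\cdot))\leq S_\phi(x,\cdot)$ and $S_\phi(T(z),y)\otimes\toverline{\phi}(z)\leq S_\phi(z,y)$. The reverse inequalities require splitting near-optimal orbit segments at their last or first step. The length-one segments are absorbed by $S_\phi(x,x)=0$ on $\Omega_\phi$ (Proposition~\ref{p: Mane_potential_properties}~(v) and Lemma~\ref{l: mane potential gives eigenmeasures}).

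\textbf{Part (iii), tropical dimension one.} You have misplaced the difficulty. You assert that the classes are closed and invariant, then propose pseudo-orbits and specification to put invariant measures on them. But a compact forward-invariant set carries an invariant measure by Krylov--Bogolyubov. Moreover, building pseudo-orbits inside a class already presupposes knowing how $T$ moves its points. The missing idea is precisely invariance:
\begin{itemize}
\item For $x\in\Omega_\phi$, iterate $S_\phi(T(z),x)=S_\phi(z,x)-\toverline{\phi}(z)$ starting from $S_\phi(x,x)=0$. This gives $S_\phi(T^n(x),x)=-S_n\toverline{\phi}(x)$.
\item By definition $S_\phi(x,T^n(x))\geq S_n\toverline{\phi}(x)$, so $S_\phi(x,T^n(x))\otimes S_\phi(T^n(x),x)=0$. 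In particular $T^n(x)\in\Omega_\phi$.
\item By upper semicontinuity of $S_\phi$ (Lemma~\ref{l: mane potential is upper semi-continuous}), the whole forward orbit closure $L_x\subseteq\Omega_\phi$ lies in the class of $x$.
\end{itemize}
If $S_\phi(x,y)\otimes S_\phi(y,x)<0$, then $L_x\cap L_y=\emptyset$. Invariant measures on $L_x$ and on $L_y$ are then two distinct maximizing measures, contradicting uniqueness. After that, your collapse of (i) and (ii) onto a single base point is the paper's argument.
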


The definitions of the Ma\'{n}\~{e} potential and the Aubry set are reviewed in Definitions~\ref{d: Mane potential} and~\ref{d: Aubry set}, respectively. The notion of \emph{tropical dimension one} is given in Subsection~\ref{ss: backgrounds for TTF}.

\begin{prop}[Generic zero-temperature large deviation principle]\label{p: uniquely maximizing implies the large deviation principle}
	Let $T\:X\to X$ be a mixing expanding map on a compact metric space $(X,d)$ and $\omega\in\sD$. Then for each $\phi\in H^{\omega}(X,\R)$ that has a unique maximizing measure, the family of equilibrium states $\{\mu_{\beta \phi}\}_{\beta\in(1,+\infty)}$ satisfies the large deviation principle as $\beta\to +\infty$ with rate function $-(v_0\otimes b_0)$, for some $v_0\in \es{\cL_\phi}{C(X,\R)}{\energy}$ and $b_0\in \esbig{\cL_\phi^{\ostar}}{\density(X)}{\energy}$. Moreover, if $d_{\omega}(\,\cdot\,,\,\cdot\,)\=\omega(d(\,\cdot\,,\,\cdot\,))$ is a metric on $X$, then the above property holds for a generic subset of $H^{\omega}(X,\R)$.
\end{prop}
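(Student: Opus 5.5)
The plan is to obtain the large deviation principle from Remark~\ref{r: two LDP}: on the compact metric space $X$ it suffices to show that for every $f\in C(X,\R)$ the limit $\lim_{\beta\to+\infty}l_\beta^\mu(f)$ exists and equals $\oplusu{x\in X}(f(x)\otimes v_0(x)\otimes b_0(x))$. The rate function is then $I\=-(v_0\otimes b_0)$. It is lower semi-continuous because $v_0\otimes b_0\in\density(X)$ is upper semi-continuous. It takes values in $[0,+\infty]$ because $l_\beta^\mu(\tmif)=0$ for every $\beta$ forces $\oplusu{x}(v_0\otimes b_0)(x)=0$.

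\emph{Step 1: uniqueness of the limit.} Fix $f\in C(X,\R)$ and a sequence $\beta_k\to+\infty$. By Theorem~\ref{t: zero-temperature limit}~(iii) there is a subsequence along which $l_{\beta_k}^\mu$ converges pointwise to a functional with density $v\otimes b$, where $v\in\es{\cL_\phi}{C(X,\R)}{\energy}$ and $b\in\esbig{\cL_\phi^{\ostar}}{\density(X)}{\energy}\smallsetminus\{\tinftyf,\,\taif\}$. Since $\phi$ has a unique maximizing measure, Proposition~\ref{p: representation}~(iii) shows that both eigenspaces have tropical dimension one. Hence $v=a\otimes v_0$ and $b=a'\otimes b_0$ for real constants $a,\,a'$ and fixed generators $v_0,\,b_0$. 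Here $v$ is real-valued and $b$ is neither $\taif$ nor $\tinftyf$, which excludes the degenerate elements of the eigenspace. Consequently $v\otimes b=(a+a')\otimes v_0\otimes b_0$.

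\emph{Step 2: fixing the constant.} The normalization $l_\beta^\mu(\tmif)=0$ passes to the limit and gives $\oplusu{x}(v\otimes b)(x)=0$. This determines $a+a'$ uniquely: after rescaling $v_0$ so that $\oplusu{x}(v_0\otimes b_0)(x)=0$, we get $a+a'=0$. Every subsequential limit of $\{l_\beta^\mu\}$ therefore has the same density $v_0\otimes b_0$. Since the family is normal, the full family converges pointwise on $C(X,\R)$. By Proposition~\ref{p: dual space equals completion} and Remark~\ref{r: equivalence of densities}, the limit is $f\mapsto\oplusu{x}(f\otimes v_0\otimes b_0)(x)$, which is exactly (\ref{eq: LDP in functionals}) with $I=-(v_0\otimes b_0)$.

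\emph{Step 3: genericity.} It remains to show that uniquely maximizing potentials form a generic subset of $H^\omega(X,\R)$ when $d_\omega$ is a metric. I would follow \cite[Theorem~3.2]{Je06}. For a countable dense family $\{g_j\}$ in $C(X,\R)$, define $U_{j,n}$ as the set of $\phi$ such that $\int g_j\,\mathrm{d}\mu$ varies by less than $1/n$ over $\cM_{\max}(T,\phi)$. Each $U_{j,n}$ is open, by upper semi-continuity of $\phi\mapsto\cM_{\max}(T,\phi)$ in the weak$^*$ topology, since the $H^\omega$ norm dominates the $C^0$ norm. The intersection of all $U_{j,n}$ is exactly the set of uniquely maximizing $\phi$.

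The main obstacle is density of each $U_{j,n}$. The perturbation has to lie in $H^\omega$, whereas the $g_j$ are merely continuous. This is where the hypothesis on $d_\omega$ enters. Because $d_\omega$ is a metric, $H^\omega(X,\R)$ is exactly the space of $d_\omega$-Lipschitz functions. Lipschitz functions with respect to $d_\omega$ separate points and are uniformly dense in $C(X,\R)$ by Stone--Weierstrass. One may therefore take the $g_j$ themselves in $H^\omega$. Then one perturbs $\phi\mapsto\phi+\epsilon g_j$. By convexity of $\epsilon\mapsto Q(T,\phi+\epsilon g_j)$, for all but countably many $\epsilon$ the integral $\int g_j\,\mathrm{d}\mu$ is constant on $\cM_{\max}(T,\phi+\epsilon g_j)$. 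Such $\epsilon$ can be taken arbitrarily small, so $\phi+\epsilon g_j\in U_{j,n}$ is arbitrarily close to $\phi$ in the $\norm{\cdot}_\omega$ norm, which proves density.
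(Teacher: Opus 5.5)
Your proposal is correct. For the large deviation principle it takes a somewhat different route from the paper.

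\textbf{The paper's route.} The paper first proves convergence of $l_\beta^m$. At that stage it uses only the tropical dimension one of $\esbig{\cL_\phi^{\ostar}}{\density(X)}{\energy}$: the limit density is pinned down as the unique $b_0$ with $\oplusu{x\in X}b_0(x)=0$, via $l_\beta^m(\tmif)=0$. It then selects $v_0$ by the condition $l(v_0)=0$. Finally it imports the argument of \cite[Corollary~4.3]{LS26}, based on the normalization $\int u_{\beta\phi}\,\mathrm{d}m_{\beta\phi}=1$, to get $\beta^{-1}\log u_{\beta\phi}\to v_0$ and $l_\beta^\mu(f)\to l(v_0+f)$.

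\textbf{Your route.} You apply Theorem~\ref{t: zero-temperature limit}~(iii) to $l_\beta^\mu$ directly and use tropical dimension one of both eigenspaces simultaneously. Each subsequential density $v\otimes b$ is then determined up to a single additive constant, and the one normalization $l_\beta^\mu(\tmif)=0$ fixes it. When you exclude degenerate elements for $b$, note also that $b\neq(+\infty)\otimes b_0$; this is immediate because $b$ is $\Rminus$-valued.

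\textbf{Comparison.} Your route is shorter and stays within the paper's own results, with no appeal to \cite{LS26}. The paper's route gives more as by-products: separate convergence of $l_\beta^m$ and of $\beta^{-1}\log u_{\beta\phi}$, with specific normalizations of $b_0$ and $v_0$.

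\textbf{Genericity.} Your Step~3 is the proof of \cite[Theorem~3.2]{Je06} written out; the paper simply cites that theorem after the same Stone--Weierstrass observation. Your details are sound:
\begin{itemize}
\item choosing a countable family $\{g_j\}\subseteq H^\omega(X,\R)$ dense in $C(X,\R)$;
\item openness of each $U_{j,n}$ from upper semi-continuity of $\cM_{\max}(T,\cdot)$;
\item density from differentiability of the convex function $\epsilon\mapsto Q(T,\phi+\epsilon g_j)$ off a countable set.
\end{itemize}
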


Here a \emph{generic} subset means a countable intersection of open and dense subsets. The notion of zero-temperature large deviation principle is reviewed in Subsection~\ref{ss: backgrounds for TTF}. 

To prove these two propositions, we state some auxiliary results from Proposition~\ref{p: Mane_potential_properties} to Proposition~\ref{p: condition_for_uniqueness}, whose proofs are deferred to Appendix~\ref{ss: Appendix A}.

\begin{definition}[Aubry set]\label{d: Aubry set}
Let $T\:X \rightarrow  X$ be an open continuous distance-expanding map. For a continuous potential $\phi\in C(X,\R)$, we call $x\in X$ an \emph{Aubry point} if for every $\epsilon>0$, there exists $y\in X$ and $n\in\N$ such that \begin{equation*}
	d(x,y)\leq\epsilon,\quad d(T^n(y),x)\leq\epsilon,\quad\text{ and } \quad\Absbig{S_n\toverline{\phi}(y)}\leq\epsilon.
\end{equation*}
The collection of all Aubry points in $X$ is called the \emph{Aubry set} and denoted by $\Omega_\phi$. 
\end{definition}

\begin{definition}[Ma\~{n}\'{e} potential]\label{d: Mane potential}
Let $T\:X \rightarrow  X$ be an open continuous distance-expanding map and $\omega\in\sD$. For a potential $\phi\in H^\omega(X,\R)$, the \emph{Ma\~{n}\'{e} potential} associated with $\phi$ is the function $S_\phi$ defined on $X\times X$ given by 
\begin{equation*}
S_\phi(x,y)\coloneqq\lim_{\epsilon\to0^+}\oplusu{n\in\N}\oplusu{\substack{d(z,x)\leq\epsilon\\ d(T^n(z),y)\leq\epsilon}}S_n\toverline{\phi}(z).
\end{equation*}
\end{definition}

\begin{prop}[Properties of the Ma\~{n}\'{e} potential] \label{p: Mane_potential_properties}
Let $T\:X \rightarrow  X$ be an open continuous distance-expanding map, $\omega\in\sD$, and $\phi\in H^\omega(X,\R)$. Then the following hold for all $x,\,y,\,z\in X$:
\begin{enumerate}[label=\rm{(\roman*)}]	
\smallskip
\item $u(x)\otimes S_\phi(x,y)\leq u(y)$ for all $u\in \es{\cL_\phi}{C(X,\Rminus)}{\energy}$.

\smallskip
\item $S_\phi(x,y)\otimes b(y)\leq b(x)$ for all $b\in\esbig{\cL_\phi^{\ostar}}{\density(X)}{\energy}$.

\smallskip
\item $S_\phi(x,z)\geq S_\phi(x,y)\otimes S_\phi(y,z)$.
\smallskip
\item $S_\phi(x,x)\leq 0$ and $S_\phi(x,x)=0$ if and only if $x\in\Omega_\phi$.
\smallskip
\item Assume, in addition, that $x\in\Omega_\phi$. Then $S_\phi(x,\,\cdot\,)$ is in $\es{\cL_\phi}{C(X,\Rminus)}{\energy}$, and 
$
    S_\phi(x,z)\leq S_\phi(x,w)+\abs{\phi}_{\omega}\assocmodu(d(z,w))
$
for all $z,\,w\in X$ satisfying $d(z,w)<\xi$, where $\xi>0$ is the constant in Lemma~\ref{l:expanding systems}. Moreover, if $T$ is transitive, then $S_\phi(x,\,\cdot\,)\in H^{\assocmodu}(X,\R)$.
\end{enumerate}
\end{prop}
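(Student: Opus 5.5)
The plan is to work directly from Definition~\ref{d: Mane potential}, using two basic estimates. The first is the distortion bound (\ref{ineq: difference of the nearby trajectory sum}): for $y\in T_x^{-n}(B(T^n(x),\xi))$ we have $\abs{S_n\toverline{\phi}(y)-S_n\toverline{\phi}(x)}\leq\abs{\phi}_\omega\assocmodu(\xi)$. More precisely, one gets $\abs{\phi}_\omega\assocmodu(d(T^nx,T^ny))$, since backward branches contract by $\lambda^{-1}$ and $\sum_k\omega(\lambda^{-k}t)=\assocmodu(t)$. The second is the fact that every orbit segment of $\toverline{\phi}$ has $S_n\toverline{\phi}\leq$ (a bounded quantity) on near-closed loops. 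This last fact holds because $\toverline{\phi}+v'-v'\circ T\leq 0$ for the sub-action $v'=v_{\tmif}$ of Proposition~\ref{p: main construction}, which gives $S_n\toverline{\phi}(z)\leq v'(T^nz)-v'(z)$.

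\emph{(i) and (ii).} Let $u\in\es{\cL_\phi}{C(X,\Rminus)}{\energy}$. The eigen-equation gives $u(T^nz)\geq u(z)+S_n\toverline{\phi}(z)$ for all $z$ and $n$, which is the iterate (\ref{eq: iteration of Bousch}) read as a lower bound. Take $z$ with $d(z,x)\leq\epsilon$ and $d(T^nz,y)\leq\epsilon$. Then $u(x)+S_n\toverline{\phi}(z)\leq u(y)+\var_u(\epsilon)+\var_u(\epsilon)$. Taking the supremum over such $z,n$ and letting $\epsilon\to0$ yields (i); continuity of $u$ is all that is used, and the values $-\infty$ cause no trouble. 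For (ii), the eigen-relation $b(z)=b(T^nz)+S_n\toverline{\phi}(z)$ holds pointwise. We need $b(x)\geq b(y)+S_n\toverline{\phi}(z)$ approximately. The orbit point $T^nz$ is only near $y$, so we pull back instead. For $w$ near $y$, set $z'=T_z^{-n}(w)$. Then $b(z')=b(w)+S_n\toverline{\phi}(z')$ and $S_n\toverline{\phi}(z')\geq S_n\toverline{\phi}(z)-\abs{\phi}_\omega\assocmodu(2\epsilon)$, and $z'$ lies within $\epsilon+\lambda^{-n}2\epsilon$ of $x$. Taking $w=y$ and using upper semicontinuity of $b$ at $x$ as $\epsilon\to0$ gives $\limsup b(z')\leq b(x)$, which proves (ii).

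\emph{(iii).} The triangle-type inequality is proved by concatenation. Given orbit pieces $z\to$ near $y$ in $n$ steps and $z_2$ near $y\to$ near $z$-target in $m$ steps, pull the second piece back along the first using $T_z^{-n}$ applied to $z_2$; this is defined once $2\epsilon<\xi$. The Birkhoff sums add up, up to an error of $\abs{\phi}_\omega\assocmodu(2\epsilon)\to0$.

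\emph{(iv).} First, $S_\phi(x,x)\leq0$. Given a loop $z\to T^nz$ with both ends within $\epsilon$ of $x$, expansivity and the closing lemma (pull back $B(T^nz,\xi)$ by $T_z^{-n}$, which contracts into itself) produce a periodic point $p$ whose orbit shadows the loop. The distortion bound gives $S_n\toverline{\phi}(z)\leq S_n\toverline{\phi}(p)+\abs{\phi}_\omega\assocmodu(2\epsilon)$, and $S_n\toverline{\phi}(p)\leq 0$ because $p$ carries an invariant measure and $\energy$ is the ergodic supremum; so the limit is $\leq0$. For the equivalence with Aubry points, note that $S_\phi(x,x)=0$ means the suprema over loops tend to $0$ from below through values $\geq-\epsilon'$. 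Combined with the upper bound $\leq\epsilon'$ just shown, this gives loops with $\abs{S_n\toverline{\phi}}$ small, which is exactly Definition~\ref{d: Aubry set}. The converse is immediate from the definition.

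\emph{(v).} The H\"older-type estimate in $z,w$ comes from pulling back endpoints, as in (ii). Terminal points within $\xi$ of each other are matched by inverse branches $T^{-n}$ from a common piece, at a cost of $\abs{\phi}_\omega\assocmodu(d(z,w))$ after $\epsilon\to0$. This also yields continuity into $\Rminus$, and finiteness of the values follows from (iv) and (iii): $S_\phi(x,\cdot)\geq S_\phi(x,x)\otimes S_\phi(x,\cdot)$ is tautological, so the real input is that $S_\phi(x,z)>-\infty$ by transitivity, since some orbit from near $x$ reaches near $z$ and gives a finite Birkhoff sum. The eigen-equation $\cL_{\toverline{\phi}}S_\phi(x,\cdot)=S_\phi(x,\cdot)$ is proved by splitting the last step of orbit pieces. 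The inequality ``$\leq$'' holds because a piece ending near $y'\in T^{-1}(y)$ extends by one step to a piece ending near $y$. The inequality ``$\geq$'' holds because a piece ending near $y$ passes at time $n-1$ near some preimage $y'$, by the local injectivity in Lemma~\ref{l:expanding systems}. The $n=1$ terms are where $x\in\Omega_\phi$ is needed: prepending a near-zero loop at $x$ lets us assume $n\geq2$.

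I expect the main obstacle to be this last point together with the closing argument in (iv). The difficulty is getting uniform control of $S_n\toverline{\phi}$ along shadowing orbits under only Dini regularity, and all of it is handled through $\assocmodu$.
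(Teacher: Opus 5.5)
Your arguments for (i)--(iv), and for the eigen-equation in (v), are sound. (The sub-action bound in your opening paragraph is not available here, since without transitivity $v_{\tmif}$ may take the value $-\infty$. You never use it, though: the closing argument handles (iv), and $S_\phi<+\infty$ comes from Lemma~\ref{l: uniform upper bound without transi and surje}.)

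The genuine gap is the regularity estimate in (v). Both $S_\phi(x,\cdot)\in C(X,\Rminus)$ and the transitive finiteness claim rest on it. Pulling back ``as in (ii)'' does not transfer. In (ii) the two endpoints differ by at most $\epsilon$, so the initial point moves by at most $\lambda^{-n}\epsilon$. Here you pull $w$ back from $T^n(y)\approx z$, and $T_y^{-n}(w)$ can be at distance comparable to $\lambda^{-n}d(z,w)$ from $y$. This does not vanish as $\epsilon\to0$ unless the lengths $n$ of the approximating pieces tend to $+\infty$. Your sketch never uses $x\in\Omega_\phi$ at this point (you assign that hypothesis only to the $n=1$ terms), and without it the estimate is false.

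Take the full shift $\Sigma$, $\phi=-d_2(\,\cdot\,,0^\infty)$ (Lipschitz, with $Q(\sigma,\phi)=0$), and $x=1^\infty$. Then $S_\phi(x,x)=-1$: all terms are $\leq 0$, the first one is $-1$, and $n=1$ attains it. But $S_\phi(x,1^k0^\infty)=-\infty$ for every $k\in\N$, because an orbit piece from $[1^M]$ into $[1^k0^{M-k}]$ needs $n\geq M-k$ steps, and its first $M-k$ terms each equal $-1$. So $S_\phi(x,\cdot)$ is not even continuous at $x$. In this example $\sigma$ is transitive and every scale-$\epsilon$ supremum is finite. So it also shows that your finiteness argument (``some orbit from near $x$ reaches near $z$ and gives a finite Birkhoff sum'') does not yield $S_\phi(x,z)>-\infty$: finiteness at each scale says nothing about the limit.

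The repair uses the trick you already have, pushed further.

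\begin{itemize}
\item For $x\in\Omega_\phi$ and each $N\in\N$, restricting the supremum in Definition~\ref{d: Mane potential} to $n\geq N$ does not change $S_\phi(x,\cdot)$. To see this, prepend to a given piece a concatenation (as in (iii)) of Aubry loops at $x$ of total length at least $N$ and total Birkhoff sum at least $-\eta$, choosing their scale after $N$ and $\eta$.
\item Now take a piece with $n\geq N$, $d(y,x)\leq\epsilon$, and $d(T^n(y),z)\leq\epsilon$. Pull back in two steps, $y''=T_y^{-n}(z)$ and then $y'=T_{y''}^{-n}(w)$. This gives $T^n(y')=w$, $d(y',x)\leq 2\epsilon+\lambda^{-N}\xi$, and $S_n\toverline{\phi}(y')\geq S_n\toverline{\phi}(y)-\abs{\phi}_\omega\bigl(\assocmodu(\epsilon)+\assocmodu(d(z,w))\bigr)$.
\item Letting $\epsilon\to0$ and $N\to+\infty$ yields exactly the stated inequality. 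The two-step pullback matters: a single pullback costs $\assocmodu(d(z,w)+\epsilon)$, and $\assocmodu$ need not be right-continuous when $\omega$ has jumps.
\end{itemize}

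Continuity into $\Rminus$ then follows at once. For transitive $T$, the set $\{S_\phi(x,\cdot)>-\infty\}$ contains $x$, is open and closed by the estimate, and is forward invariant by the eigen-equation. Hence it is all of $X$, and the $H^{\assocmodu}$ bound follows from compactness as in Lemma~\ref{l: distortion}.
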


\begin{lemma} [Ma\~n\'e potential as eigendensity] \label{l: mane potential gives eigenmeasures}
Let $T\:X \rightarrow  X$ be an open continuous distance-expanding map, $\omega\in\sD$, and $\phi\in H^\omega(X,\R)$. The Ma\~{n}\'{e} potential satisfies \begin{equation}\label{eq: Mane potential (_,y) almost eigendensity}
	S_\phi(T(x),y)=S_\phi(x,y)-\toverline{\phi}(x)
\end{equation}
 for all $x,\,y\in X$ with $T(x)\neq y$. Moreover, for every $y\in\Omega_\phi$, (\ref{eq: Mane potential (_,y) almost eigendensity}) holds for every $x\in X$, i.e., $S_\phi(\,\cdot\,,y)$ is in $\esbig{\cL_\phi^{\ostar}}{\density(X)}{\energy}$.
\end{lemma}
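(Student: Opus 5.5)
Write $\Phi_\epsilon(x,y)\=\oplusu{n\in\N}\oplusu{d(z,x)\leq\epsilon,\,d(T^n(z),y)\leq\epsilon}S_n\toverline{\phi}(z)$, so that $S_\phi(x,y)=\lim_{\epsilon\to0^+}\Phi_\epsilon(x,y)$. The plan is to compare orbit segments starting near $x$ with orbit segments starting near $T(x)$, by deleting or prepending the first step, and to control the discrepancy by $\toverline{\phi}$ and its modulus $\omega$. Both inequalities in (\ref{eq: Mane potential (_,y) almost eigendensity}) come from this comparison.

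\emph{Inequality $S_\phi(T(x),y)\geq S_\phi(x,y)-\toverline{\phi}(x)$ when $T(x)\neq y$.} Take $z$ with $d(z,x)\leq\epsilon$ and $d(T^n(z),y)\leq\epsilon$. Then $S_n\toverline{\phi}(z)=\toverline{\phi}(z)+S_{n-1}\toverline{\phi}(T(z))$. We have $\abs{\toverline{\phi}(z)-\toverline{\phi}(x)}\leq\abs{\phi}_\omega\omega(\epsilon)$, and $d(T(z),T(x))\leq\var_T(\epsilon)\to0$ by uniform continuity of $T$. The case $n=1$ is the delicate one, because then there is no remaining segment. Here we use $T(x)\neq y$: for $\epsilon$ small, $d(T(z),y)\leq\epsilon$ is incompatible with $d(T(z),T(x))\leq\var_T(\epsilon)$, so $n\geq2$. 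Hence $T(z)$ is an admissible point for $\Phi_{\epsilon'}(T(x),y)$ with $\epsilon'=\max\{\epsilon,\var_T(\epsilon)\}$, and $\Phi_\epsilon(x,y)\leq\toverline{\phi}(x)+\abs{\phi}_\omega\omega(\epsilon)+\Phi_{\epsilon'}(T(x),y)$. Letting $\epsilon\to0$ gives the inequality. Monotonicity of $\Phi_\epsilon$ in $\epsilon$ guarantees that the limits exist.

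\emph{Reverse inequality, for all $x,y$.} Take $w$ with $d(w,T(x))\leq\epsilon$ (with $\epsilon<\xi$) and $d(T^n(w),y)\leq\epsilon$. Lemma~\ref{l:expanding systems} gives $z\=T_x^{-1}(w)$ with $d(z,x)\leq\lambda^{-1}\epsilon\leq\epsilon$ and $T^{n+1}(z)=T^n(w)$. Then
\begin{equation*}
S_{n+1}\toverline{\phi}(z)=\toverline{\phi}(z)+S_n\toverline{\phi}(w)\geq\toverline{\phi}(x)-\abs{\phi}_\omega\omega(\epsilon)+S_n\toverline{\phi}(w).
\end{equation*}
Therefore $\Phi_\epsilon(x,y)\geq\toverline{\phi}(x)-\abs{\phi}_\omega\omega(\epsilon)+\Phi_\epsilon(T(x),y)$, and in the limit $S_\phi(x,y)\geq\toverline{\phi}(x)+S_\phi(T(x),y)$. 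This proves (\ref{eq: Mane potential (_,y) almost eigendensity}) when $T(x)\neq y$. Since $\toverline{\phi}$ is finite, no tropical-infinity issues arise, even if $S_\phi$ takes the value $-\infty$.

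\emph{The case $y\in\Omega_\phi$ and $T(x)=y$.} This is the main obstacle: the $n=1$ term can be larger than anything the remaining segments give. We still need $S_\phi(x,y)\leq\toverline{\phi}(x)+S_\phi(y,y)=\toverline{\phi}(x)$, using Proposition~\ref{p: Mane_potential_properties}~(iv). Here the $n=1$ contribution is $\toverline{\phi}(z)\to\toverline{\phi}(x)$, which is fine. For $n\geq2$ the previous argument applies with the remaining segment starting near $y$, giving a bound $\toverline{\phi}(x)+\Phi_{\epsilon'}(y,y)+o(1)$. By Proposition~\ref{p: Mane_potential_properties}~(iv), $\Phi_{\epsilon'}(y,y)\to S_\phi(y,y)=0$. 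So the required inequality holds, and the reverse inequality was proved above for all $x,y$.

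Thus (\ref{eq: Mane potential (_,y) almost eigendensity}) holds for every $x$, which is precisely $\cL_\phi^{\ostar}(S_\phi(\,\cdot\,,y))=\energy\otimes S_\phi(\,\cdot\,,y)$ by (\ref{eq: defi of the adjoint Bousch operator}). It remains to check that $S_\phi(\,\cdot\,,y)\in\density(X)$. Upper semicontinuity follows because the $\epsilon$-balls are closed, so that $\limsup_{x'\to x}S_\phi(x',y)\leq\Phi_{2\epsilon}(x,y)$ for every $\epsilon$. Values lie in $\Rminus$ because $S_n\toverline{\phi}$ is bounded above by the bounded-distortion estimate (\ref{ineq: difference of the nearby trajectory sum}) together with a sub-action (Proposition~\ref{p: main construction}), or directly by Proposition~\ref{p: Mane_potential_properties}~(iii)(iv).
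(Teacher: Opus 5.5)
Your proposal is correct and follows essentially the paper's route. For $T(x)\neq y$ you exclude the $n=1$ term for small $\epsilon$ and compare orbit segments by deleting or prepending the first step. The paper excludes $n=1$ via $\epsilon<\frac12\min_{w\in T^{-1}(y)}d(x,w)$ and $\epsilon<\xi$ rather than via the modulus of continuity of $T$, which is an inessential difference. For $y=T(x)\in\Omega_\phi$ you establish $S_\phi(x,T(x))=\toverline{\phi}(x)$ essentially as in the paper's claim, using $S_\phi(y,y)\le 0$ and $S_\phi(y,y)=0$.

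The one loose point is the finiteness of $S_\phi(\,\cdot\,,y)$. Cite Lemma~\ref{l: uniform upper bound without transi and surje} ($S_n\toverline{\phi}\leq L_1\abs{\phi}_\omega$) directly. Proposition~\ref{p: main construction} yields a real-valued sub-action only when $T$ is transitive, and Proposition~\ref{p: Mane_potential_properties}~(iii)(iv) gives no upper bound on $S_\phi(x,y)$ when $S_\phi(y,x)=-\infty$.
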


\begin{prop} [Sufficient condition for uniqueness]\label{p: condition_for_uniqueness}
Let $T\:X \rightarrow  X$ be a transitive expanding covering map, $\omega\in\sD$, and $\phi\in H^\omega(X,\R)$. Assume that $\phi$ has a unique maximizing measure. Then 
\begin{equation*} 
   \tdim\es{\cL_\phi}{C(X,\Rminus)}{\energy}
   =1
   = \tdim\esbig{\cL_\phi^{\ostar}}{\density(X)}{\energy}.
\end{equation*}
\end{prop}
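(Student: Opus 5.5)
The plan is to prove the two equalities separately, using the representation machinery of Proposition~\ref{p: Mane_potential_properties} and Lemma~\ref{l: mane potential gives eigenmeasures}. The key input from unique maximization is the following property of the Aubry set: if $\phi$ has a unique maximizing measure $\mu$, then for all $x,\,y\in\Omega_\phi$ we have $S_\phi(x,y)\otimes S_\phi(y,x)=0$.

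\textbf{Step 1 (the key property).} First I would show that $\Omega_\phi$ coincides with the Mather set $\supp\mu$, or at least that every Aubry point lies in a single ``static class''. The argument goes as follows.
\begin{enumerate}
\item By Definition~\ref{d: Aubry set}, each $x\in\Omega_\phi$ comes with almost-closed orbit segments $y,\,T(y),\,\ldots,\,T^n(y)$ with $\abs{S_n\toverline{\phi}(y)}\leq\epsilon$.
\item Closing these segments via the expanding/covering structure (Lemma~\ref{l:expanding systems}) gives periodic orbits whose $\toverline{\phi}$-sums are $\geq -\epsilon-\abs{\phi}_\omega\assocmodu(\epsilon)$. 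This is where the Dini hypothesis enters, through the estimate (\ref{ineq: difference of the nearby trajectory sum}).
\item The corresponding periodic measures have integrals tending to $\energy$, so every weak$^*$ limit is maximizing and hence equals $\mu$.
\item For $x,\,y\in\Omega_\phi$, I would then concatenate an $\epsilon$-chain from $x$ to $y$ and back, using that both points are approximated by long pieces of $\mu$-generic orbits. Since $\mu$ is the only maximizing measure, the orbit segments joining neighbourhoods of $x$ and $y$ inside the support lose only $o(1)$ in $S_n\toverline{\phi}$.
\item This gives $S_\phi(x,y)\otimes S_\phi(y,x)\geq 0$. Combining with Proposition~\ref{p: Mane_potential_properties}~(iii)(iv), we get $S_\phi(x,y)\otimes S_\phi(y,x)\leq S_\phi(x,x)=0$, hence equality.
\end{enumerate}

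\textbf{Step 2 (eigenfunctions).} Take $u\in\es{\cL_\phi}{C(X,\Rminus)}{\energy}\smallsetminus\{\taif\}$. By transitivity and Proposition~\ref{p: uniqueness of tropical eigenvalue}~(ii), $u$ is real-valued. Fix $x_0\in\Omega_\phi$, which is nonempty since $\supp\mu\subseteq\Omega_\phi$. Proposition~\ref{p: representation}~(i) gives $u(y)=\oplusu{x\in\Omega_\phi}(u(x)\otimes S_\phi(x,y))$; its proof uses only Proposition~\ref{p: Mane_potential_properties}, so it is available here. By Proposition~\ref{p: Mane_potential_properties}~(i), $u(x)\geq u(x_0)\otimes S_\phi(x_0,x)$, and the same inequality with $x$ and $x_0$ swapped together with Step 1 forces $u(x)=u(x_0)\otimes S_\phi(x_0,x)$ for every $x\in\Omega_\phi$. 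Then by Proposition~\ref{p: Mane_potential_properties}~(iii),
\[
u(y)=\oplusu{x\in\Omega_\phi}\bigl(u(x_0)\otimes S_\phi(x_0,x)\otimes S_\phi(x,y)\bigr)=u(x_0)\otimes S_\phi(x_0,y),
\]
where $\leq$ follows from (iii) and $\geq$ from taking $x=x_0$. Thus every nontrivial eigenfunction is a real multiple of $S_\phi(x_0,\,\cdot\,)$. The remaining elements of the eigenspace in $C(X,\Rminus)$ are just $\taif$, which is what tropical dimension one requires; the $+\infty$ multiple is excluded automatically by real-valuedness.

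\textbf{Step 3 (eigendensities).} The argument is symmetric. For $b\in\esbig{\cL_\phi^{\ostar}}{\density(X)}{\energy}\smallsetminus\{\tinftyf,\,\taif\}$, Proposition~\ref{p: Mane_potential_properties}~(ii) and Step 1 give $b(y)=S_\phi(y,x_0)\otimes b(x_0)$ on $\Omega_\phi$. The representation $b(x)=\oplusu{y\in\Omega_\phi}(S_\phi(x,y)\otimes b(y))$ then collapses to $S_\phi(x,x_0)\otimes b(x_0)$. Lemma~\ref{l: mane potential gives eigenmeasures} shows that $S_\phi(\,\cdot\,,x_0)$ is itself an eigendensity. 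I also need $b(x_0)\in\R$. If $b(x_0)=-\infty$ on all of $\Omega_\phi$, then the representation forces $b=\taif$. If $b(x_0)=+\infty$, this is excluded by upper semicontinuity and $b\neq\tinftyf$, because densities take values in $\Rminus$.

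\textbf{Main obstacle.} I expect Step 1 to be the hardest part, in particular the chain construction showing $S_\phi(x,y)+S_\phi(y,x)\geq0$ in the Dini setting. The Walters-type distortion is only $\assocmodu$-small rather than geometrically summable, so one must check that gluing finitely many pieces costs $O(\assocmodu(\epsilon))$, which tends to $0$. I would first try proving $\Omega_\phi=\supp\mu$ directly. Once that holds, minimality-type recurrence of $\mu$-generic points on the support supplies the return segments, and this is the only place where uniqueness of $\mu$ is used.
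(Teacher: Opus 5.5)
Your Steps~2 and~3 are fine. Once $S_\phi(x,y)\otimes S_\phi(y,x)=0$ is known for all $x,\,y\in\Omega_\phi$, collapsing the representations of Proposition~\ref{p: representation}~(i)(ii) onto $S_\phi(x_0,\,\cdot\,)$ and $S_\phi(\,\cdot\,,x_0)$ is exactly the deduction the paper makes, and your handling of $b(x_0)=\pm\infty$ is correct. But that identity is the entire content of the proposition, and Step~1 does not establish it.

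Your suggested first attempt, $\Omega_\phi=\supp\mu$, is false in general. On $\{0,\,1\}^{\N}$ let $H=\{000\ldots\}\cup\{0^j1000\ldots:j\in\N_0\}$ and $\phi=-d_2(\,\cdot\,,H)$, which is Lipschitz. Its unique maximizing measure is $\delta_{000\ldots}$. Yet $x=1000\ldots$ is an Aubry point at distance $1$ from $\supp\mu$: the periodic points $(1\,0^N)^{\infty}$ lie within $2^{-N-1}$ of $x$ and have $\toverline{\phi}$-sums of size at most $2^{-N}$. So Aubry points need not be ``approximated by long pieces of $\mu$-generic orbits.''

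The assertion in~(iv) that the connecting segments ``lose only $o(1)$'' is also unjustified, and segment by segment it is false. Take a continuous sub-action $v$ (e.g.\ the eigenfunction of Proposition~\ref{p: main construction}) and set $\varphi=\toverline{\phi}+v-v\circ T\leq 0$. A segment from near $a$ to near $b$ then has $S_n\toverline{\phi}\approx S_n\varphi+v(b)-v(a)$, and only a full round trip telescopes. Making (iv)--(v) rigorous would need:
\begin{enumerate}
\item this sub-action;
\item lower bounds on partial $\varphi$-sums along the near-closed loops at $x$ and $y$;
\item ergodicity of $\mu$ to join the places where those loops approach $\supp\mu$;
\item a shadowing lemma with an $O(\assocmodu(\epsilon))$ Dini error per junction.
\end{enumerate}
None of these appears in the proposal.

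The missing idea is that you never need to leave the forward orbit of an Aubry point. For $x\in\Omega_\phi$, Lemma~\ref{l: mane potential gives eigenmeasures} with $y=x$, together with $S_\phi(x,x)=0$, gives $S_\phi(T^n(x),x)=-S_n\toverline{\phi}(x)$. Definition~\ref{d: Mane potential} gives $S_\phi(x,T^n(x))\geq S_n\toverline{\phi}(x)$. With Proposition~\ref{p: Mane_potential_properties}~(iii)(iv) these force $S_\phi(x,T^n(x))\otimes S_\phi(T^n(x),x)=0$. Upper semicontinuity of $S_\phi$ (Lemma~\ref{l: mane potential is upper semi-continuous}) extends this to every $z$ in the orbit closure $L_x=\overline{\{T^n(x):n\in\N\}}$.

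Since $L_x$ is a compact forward-invariant subset of $\Omega_\phi$, it supports an invariant measure. That measure is maximizing by Lemma~\ref{l: maximizing measures supported on Aubry set}, hence equals $\mu$. So $\supp\mu\subseteq L_x\cap L_y$ for all $x,\,y\in\Omega_\phi$. The relation $S_\phi(x,y)\otimes S_\phi(y,x)=0$ is transitive by Proposition~\ref{p: Mane_potential_properties}~(iii)(iv), which gives the claim.

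This is how the paper argues, phrased there by contradiction. It uses uniqueness of $\mu$ only through Krylov--Bogolyubov on orbit closures, and it covers Aubry points off $\supp\mu$ such as $1000\ldots$ above. It needs no closing, shadowing, or Dini gluing estimates.
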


We conclude this subsection with the following proofs of Propositions~\ref{p: representation} and~\ref{p: uniquely maximizing implies the large deviation principle}, leading to the proof of Theorem~\ref{t: Dini quanta no freezing}~(ii).

\begin{proof}[Proof of Proposition~\ref{p: representation}]
    (i) \& (ii) We adopt the proof from that in the case of H\"{o}lder potentials and defer it to Appendix. 

    \smallskip
    (iii) It follows from Proposition~\ref{p: Mane_potential_properties}~(v) and Lemma~\ref{l: mane potential gives eigenmeasures} that for all $x\in\Omega_\phi$, $S_\phi(x,\,\cdot\,)\in\es{\cL_\phi}{C(X,\R)}{\energy}$ and $S_\phi(\,\cdot\,,x)\in\esbig{\cL_\phi^{\ostar}}{\density(X)}{\energy}$. It follows from Proposition~\ref{p: Mane_potential_properties}~(iv) that $S_\phi(x,x)=0$ for all $x\in\Omega_\phi$. We conclude that for all $x\in\Omega_\phi$, $S_\phi(x,\,\cdot\,)\neq\taif$ and $S_\phi(\,\cdot\,,x)\notin\{\tinftyf,\,\taif\}$.
    Now the first part of (iii) is verified.
    
    For the remaining part, we observe that $\tdim\es{\cL_\phi}{C(X,\Rminus)}{\energy}
   =1
   = \tdim\esbig{\cL_\phi^{\ostar}}{\density(X)}{\energy}$ follows from Proposition~\ref{p: condition_for_uniqueness}. Now we verify $\tdim\es{\cL_\phi}{C(X,\R)}{\energy}=1$.
   
   Since $\es{\cL_\phi}{C(X,\Rminus)}{\energy}$ is of tropical dimension one, there exists $u\in\es{\cL_\phi}{C(X,\Rminus)}{\energy}$ such that $\es{\cL_\phi}{C(X,\Rminus)}{\energy}\smallsetminus\{\taif,\,\tinftyf,\,+\infty\otimes u\}=\{a\otimes u:a\in\R\}$. Now let $v\coloneqq v_{\tmif}$ be the function constructed in Proposition~\ref{p: main construction}. By Proposition~\ref{p: main construction}~(ii)(iii), $v\in\es{\cL_\phi}{C(X,\R)}{\energy}\subseteq\es{\cL_\phi}{C(X,\Rminus)}{\energy}\smallsetminus\{\taif,\,\tinftyf,\,+\infty\otimes u\}=\{a\otimes u: a\in\R\}$. It follows that there exists $a_0\in\R$ such that $a_0\otimes u=v$, and consequently, $\es{\cL_\phi}{C(X,\R)}{\energy}\subseteq\{a\otimes v: a\in\R\}$. Since $v\in\es{\cL_\phi}{C(X,\R)}{\energy}$, it follows from the tropical linearity of $\cL_\phi$ that $a\otimes v\in\es{\cL_\phi}{C(X,\R)}{\energy}$ for all $a\in \R$. We conclude that $\es{\cL_\phi}{C(X,\R)}{\energy}=\{a\otimes v:a\in\R\}$ and $\tdim\es{\cL_\phi}{C(X,\R)}{\energy}=1$.
\end{proof}

\begin{proof}[Proof of Proposition~\ref{p: uniquely maximizing implies the large deviation principle}]
    Note that for all $b\in\density(X)\smallsetminus\{\tinftyf,\,\taif\}$, $\oplusu{x\in X}b(x)\in\R$. Let $\phi$ be an $\omega$-continuous function with a unique maximizing measure. It follows from Proposition~\ref{p: condition_for_uniqueness} that $\tdim\esbig{\cL_\phi^{\ostar}}{\density(X)}{\energy}=1$. Thus, there exists a unique $b_0\in\esbig{\cL_\phi^{\ostar}}{\density(X)}{\energy}$ such that $\oplusu{x\in X}b_0(x)=0$. Note that for all $\beta>0$, $l_{\beta}^m(\tmif)= \beta^{-1}\log 1=0$. It then follows from Theorem~\ref{t: zero-temperature limit}~(ii) that every pointwise convergent subsequence $\bigl\{l_{\beta_n}^m\bigr\}_{n\in\N}$ of the family $\bigl\{l_{\beta}^m\bigr\}_{\beta\in(1,+\infty)}$ with $\beta_n\to+\infty$ as $n\to+\infty$ must converge to some tropical linear functional $l$ whose density $b$ in $\density(X)$ is in $\esbig{\cL_\phi^{\ostar}}{\density(X)}{\energy}$ and $\oplusu{x\in X}b(x)=l(\tmif)=0$, which implies that $b=b_0$. Recall from Theorem~\ref{t: zero-temperature limit}~(ii) that the family $\bigl\{l_\beta^m|_{C(X,\R)}\bigr\}_{\beta\in(1,+\infty)}$ is normal. By Remark~\ref{r: l_beta at infinity}, we conclude that as $\beta\to+\infty$, $l_\beta^m$ must pointwise converge to $l$ whose density in $\density(X)$ is $b_0$. 
    
    Since $\phi$ has a unique maximizing measure, it follows from Proposition~\ref{p: representation}~(iii) that $\tdim\es{\cL_\phi}{C(X,\R)}{\energy}=1$. Thus, there exists a unique $v_0\in\es{\cL_\phi}{C(X,\R)}{\energy}$ such that $\oplusu{x\in X}(v_0(x)\otimes b_0(x))=l(v_0)=0$. Now adopting an argument that is verbatim the same as in the proof of \cite[Corollary~4.3]{LS26}, we see that $\lim_{\beta\to+\infty}\frac1\beta\log u_{\beta\phi}=v_0$ in $C(X,\R)$ and $\lim_{\beta\to+\infty}l_{\beta}^{\mu}(f)=l(v_0+f)$ for all $f\in C(X,\R)$.

 Since $b_0$ is the density of $l$ in $\density(X)$, by Remark~\ref{r: equivalence of densities}, we see that $\lim_{\beta\to+\infty}l_{\beta}^{\mu}(f)=\oplusu{x\in X}(f(x)\otimes(v_0(x)\otimes b_0(x)))$
for all $f\in C(X,\R)$. It then follows from (\ref{eq: defi of l_beta}) and Remark~\ref{r: two LDP} that $\{\mu_{\beta \phi}\}_{\beta\in(1,+\infty)}$ satisfies the large deviation principle as $\beta\to+\infty$ with rate function $-(v_0\otimes b_0)$.

Now further assume that $d_{\omega}(\,\cdot\,,\,\cdot\,)\=\omega(d(\,\cdot\,,\,\cdot\,))$ is a metric on $X$. By the Stone--Weistrass theorem, we see that $H^{\omega}(X,\R)$ is a dense subset of $C(X,\R)$. By \cite[Theorem~3.2]{Je06}, we see that for a generic $\phi\in H^\omega(X,\R)$, $\phi$ has a unique maximizing measure. The proof is now complete.
\end{proof}

\begin{proof}[\bf Proof of Theorem~\ref{t: Dini quanta no freezing}~(ii)]
     It follows from Proposition~\ref{p: uniquely maximizing implies the large deviation principle} that, provided that $\phi$ has a unique maximizing measure, $\{\mu_{\beta\phi}\}_{\beta\in (0,+\infty)}$ satisfies the large deviation principle with rate function $-(v_0\otimes b_0)$ for some $v_0\in \es{\cL_\phi}{C(X,\R)}{\energy}$ and $b_0\in \esbig{\cL_\phi^{\ostar}}{\density(X)}{\energy}$. By Theorem~\ref{t: existence}~(iv), we see that $v_0\otimes b_0$ above is an invariant tropical Gibbs density. If $d_{\omega}(\,\cdot\,,\,\cdot\,)\=\omega(d(\,\cdot\,,\,\cdot\,))$ is a metric on $X$, it follows from Proposition~\ref{p: uniquely maximizing implies the large deviation principle} that the above property holds for a generic subset of $H^{\omega}(X,\R)$. 
\end{proof}

\begin{rem}\label{r: generic for symbolic}
    Here we explain the proof of the first theorem on page 1. Note that statements~(i) and~(iii) follow from Theorem~\ref{t: Dini quanta no freezing}~(i) and Theorem~\ref{t: non TPO}, respectively. When $\omega\in\cD$ vanishes only at $0$, it follows from \cite[Theorem~9]{PT14} that $d_{\omega}(\,\cdot\,,\,\cdot\,)\=\omega(d_2(\,\cdot\,,\,\cdot\,))$ is a metric on $\Sigma$ as $d_2$ is an ultrametric on $\Sigma$, and thus statement~(ii) directly follows from Theorem~\ref{t: Dini quanta no freezing}~(ii). Now assume that $\omega\in\cD$ and $\omega(t)=0$ for some $t\in (0,+\infty)$. As $\omega(\diam_{d_2}\Sigma)>0$, we see that $t\in (0,\diam_{d_2}\Sigma)$ and $H^\omega(\Sigma,\R)$ consists of locally constant potentials depending on first $n$ symbols for some $n\in\N$. By \cite[Theorem~A~(a)]{WY19}, there exists an open and dense subset $U$ of $H^{\omega}(\Sigma,\R)$ such that every $\phi\in U$ has a unique maximizing measure that is supported on a periodic orbit. Thus, statement~(ii) follows from Theorem~\ref{t: Dini quanta no freezing}~(ii). 
\end{rem}

\section{Failure of Typical Periodic Optimization}\label{s: failure of TPO}

In this section, we use tools from optimal transport and investigate fine properties of the Fibonacci subshift to prove Theorems~\ref{t: energy transport estimate} and~\ref{t: non TPO}. Throughout this section, we let $(\Sigma,\sigma)$ be the full shift on $\Sigma\=\{0,\,1\}^\N$ equipped with the metric $d\=d_2$, and denote by $\mu_K$ be the unique measure in $\cM(\Sigma,\sigma)$ supported on the Fibonacci subshift $K$. We establish the following result, which in particular implies Theorem~\ref{t: energy transport estimate} (cf.~Remark~\ref{r: range of the technical conditions}):

\setcounter{thml}{7}
\begin{thml}\label{t: energy transport estimate: general condition}
    %Let $(\Sigma,\sigma)$ be the full shift on $\Sigma=\{0,\,1\}^\N$ equipped with the metric $d_2$. 
    Suppose a modulus of continuity $\omega$ satisfies the scale doubling condition and the stable condition. Then there exists a constant $C>0$ such that for each $m\in\cM(\Sigma,\sigma)$ and each $\omega$-continuous function $g\: \Sigma\to\R$ with $\norm{g}_{\omega}\leq 1$,
    \begin{equation*}
        \int_{\Sigma}\! g\,\mathrm{d}\mu_K-\int_{\Sigma}\! g\,\mathrm{d}m\leq C\int_{\Sigma}\!\omega(d(x,K))\,\mathrm{d}m(x).
    \end{equation*} 
\end{thml}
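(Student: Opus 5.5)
We follow the route of Steps b.1--b.3 in the overview. First, since periodic measures are weak$^*$ dense in $\cM(\Sigma,\sigma)$ and both sides of the inequality are weak$^*$ continuous in $m$ (the integrands $g$ and $\omega(d(\cdot,K))$ are continuous), it suffices to prove the estimate for $m=m_{\cO}$, the periodic measure on a periodic orbit $\cO$. By Kantorovich duality, the left-hand side with $\norm{g}_\omega\leq 1$ is bounded by the $\omega$-Wasserstein distance $W_\omega(m_{\cO},\mu_K)$. Via the multiscale formula of \cite[Lemma~3.1]{Kl15}, this is controlled by $\sum_n (\omega(2^{-n})-\omega(2^{-n-1}))\,D_{n+1}(m_{\cO},\mu_K)$ in terms of cylinder discrepancies. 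We will not estimate this directly against $m_{\cO}$. Instead, we insert an intermediate measure built by shadowing.

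\emph{Shadowing.} Choose a point $z\in\cO$ and decompose its period into maximal pieces, each shadowed by a segment of a $K$-orbit. Piece breaks occur at accidents, where the shadowing length drops (Definition of shadow length and accident, following Bruin--Leplaideur). Let $\eta_z$ be the empirical measure of the concatenated shadowing orbit pieces. Then
\begin{equation*}
\int\! g\,\mathrm{d}\mu_K-\int\! g\,\mathrm{d}m_{\cO}=\Bigl(\int\! g\,\mathrm{d}\mu_K-\int\! g\,\mathrm{d}\eta_z\Bigr)+\Bigl(\int\! g\,\mathrm{d}\eta_z-\int\! g\,\mathrm{d}m_{\cO}\Bigr).
\end{equation*}
The second bracket is bounded pointwise along the orbit by $\omega$ of the distance between shadowed and shadowing points. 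At a point whose remaining shadow length is $\ell$, we have $d(\sigma^j z,K)\approx 2^{-\ell}$, so summing over a piece gives sums of the form $\sum_{\ell}\omega(2^{-\ell})$. We then compare these with $\int\omega(d(x,K))\,\mathrm{d}m_{\cO}$. The scale doubling condition is what lets us replace the distance to the shadowing orbit by $d(\cdot,K)$ up to a constant.

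\emph{First bracket.} Since accidents only happen at bi-special scales $q_m$ (Proposition~\ref{p: bi-special scales}), we reduce to bounding $D_{q_m+1}(\eta_z,\mu_K)$ (Proposition~\ref{p: reduction to bi-special scales}). At scale $q_m$, the $(q_m+1)$-words visited by $\eta_z$ define a flow $F_m$ on the Rauzy graph $\Gamma_m$. Each junction between shadowing pieces gives a source--sink pair, and adding connecting flows of bounded mass per junction repairs $F_m$ into a circulation $H_m$ (Proposition~\ref{p: boundary repair}). A circulation on $\Gamma_m$ decomposes along the two fundamental cycles, which have lengths $q_m$ and $q_{m-1}$. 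The measure $\mu_K$ corresponds to the Perron--Frobenius direction, so the discrepancy is bounded by the stable coordinate $s_m(H_m)$ plus the repair mass (Proposition~\ref{p: cylinder discrepancy estimated through H_m}).

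\emph{Main obstacle.} The key step is the inductive control of $s_m(H_m)$ across scales. The Fibonacci renormalization maps $\Gamma_{m+1}$ to $\Gamma_m$ and contracts the stable coordinate by the factor $\varphi^{-1}$ (golden ratio); new junctions at level $m$ add stable mass proportional to the number of accidents at that scale, weighted by $1/\text{period}$. We would prove a recursion
\begin{equation*}
s_{m+1}\leq \varphi^{-1}s_m+C\,\#\{\text{accidents at scale }q_m\}/\card\cO,
\end{equation*}
iterate it, and insert the result into the multiscale sum. The stable condition on $\omega$ is exactly what makes
\begin{equation*}
\sum_{m'\geq m}\varphi^{-(m'-m)}\bigl(\omega(2^{-q_{m'}})-\omega(2^{-q_{m'+1}})\bigr)\lesssim \omega(2^{-q_m})
\end{equation*}
hold. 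Each accident at scale $q_m$ forces a stretch of length about $q_m$ at distance about $2^{-q_m}$ from $K$, so this total is dominated by $C\int\omega(d(x,K))\,\mathrm{d}m_{\cO}$. The part we expect to be delicate is making the geometry of bad windows (Proposition~\ref{p: geometry of bad window}) precise enough that each accident's cost at every coarser scale is charged only once.
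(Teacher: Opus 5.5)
Your overall route is the paper's: density of periodic measures, Kantorovich duality, Bruin--Leplaideur shadowing to build $\eta_z$, reduction to bi-special scales, repaired circulations on Rauzy graphs, and the stable coordinate. The gap is in the step you single out as the main one, and it is not cosmetic.

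\textbf{The recursion runs the wrong way.} Since $s_m((\pi_m)_\#H)=\frac{1-\sqrt5}{2}\,s_{m+1}(H)$, the stable coordinate is contracted when you push from the finer graph $\Gamma_{m+1}$ to the coarser graph $\Gamma_m$. The true relation is therefore $s_m(H_m)=\frac{1-\sqrt5}{2}\,s_{m+1}(H_{m+1})+s_m(\mathcal{E}_m)$, that is, $|s_m(H_m)|\le\varphi^{-1}|s_{m+1}(H_{m+1})|+|s_m(\mathcal{E}_m)|$.

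Your recursion $s_{m+1}\le\varphi^{-1}s_m+\cdots$ goes in the opposite direction and is false. Lifting from $\Gamma_m$ to $\Gamma_{m+1}$ expands the stable coordinate by $\varphi$. Moreover, $s_4(H_4)$ is not a priori small, so a coarse-to-fine iteration has nothing to start from.

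The correct iteration starts at the fine end. Once $q_m$ exceeds every shadow length, $U_m=\mathbb{Z}/N\mathbb{Z}$ and hence $H_m=0$. Iterating from there gives $|s_m(H_m)|\le\sum_{l\ge0}\varphi^{-l}|s_{m+l}(\mathcal{E}_{m+l})|$. So junction errors created at a fine scale leak, contracted, into all coarser scales, not into finer ones.

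\textbf{This changes what the stable condition must do.} By Lemma~\ref{l: comparison}, the stable coordinate enters the cylinder discrepancy with the factor $q_m$. (The two loops have lengths $F_{m-1}$ and $F_{m-2}$, not $q_m$ and $q_{m-1}$.) After exchanging the order of summation in $\sum_m q_m\Delta_m|s_m(H_m)|$, you must bound the backward sum $\sum_{k\le m}\varphi^{-(m-k)}q_k\Delta_k$ by $\gamma_2 q_m\Delta_m$. This is exactly the stable condition (\ref{ineq: stable condition}), and it is genuinely restrictive: for $\omega_\alpha$ it holds only when $\alpha<2$.

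The inequality you display is a forward sum without the $q_{m'}$ weights. It holds for every modulus of continuity by telescoping, so your outline never actually uses the hypothesis. That is a symptom of the reversed direction.

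\textbf{Further ingredients your outline omits.}
\begin{itemize}
\item The paper needs $\Delta_m\le\overline{c}\,\Delta_{m+1}$, which it deduces from the stable condition. This is used to move the $\kappa_{m+1}$ and $\card U_{m+1}$ terms of $s_m(\mathcal{E}_m)$ onto the bad-window and bad-block estimates.
\item The bad-block estimate relies on the accident gap bound $b_j\ge c_3(r_{j-1}+1)$. This is what handles components of $U_m$ that contain whole pieces $[B_j,B_{j+1})$. Your heuristic that ``each accident at scale $q_m$ forces a stretch of length $q_m$'' is where this input is hiding, and it has to be invoked explicitly.
\end{itemize}

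\textbf{The shadowing bracket needs no scale doubling.} Choose $y^j\in K$ realizing $d(\sigma^{B_j}(p),K)$. Because $d_2$ is an ultrametric, this gives $d(\sigma^i(p),z_i)=d(\sigma^i(p),K)$ exactly. Hence $W_\omega(\nu_p,\eta_z)\le\int\omega(d(\cdot,K))\,\mathrm{d}\nu_p$ with constant one. Scale doubling is used in the bad-window and bad-block estimates instead.
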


The precise definitions of the two technical conditions are given in Definition~\ref{d: two technical conditions}. 

To prove Theorem~\ref{t: energy transport estimate: general condition}, we first transform the above problem into controlling the differences between some periodic measure $\nu_p$ and $\mu_K$ on cylinder sets using tools from optimal transport in Subsection~\ref{ss: Wasserstein}. We then construct a measure $\eta_z$ supported on $K$, which is close to $\nu_p$ in Subsection~\ref{ss: accident decomposition}, and use flows on the Rauzy graphs to estimate the cylinder discrepancies between $\eta_z$ and $\mu_K$ (cf.~Definition~\ref{d: D_n}) in Subsection~\ref{ss: comparing flows on Rauzy graphs}. The proof of Theorem~\ref{t: non TPO} is also presented in Subsection~\ref{ss: comparing flows on Rauzy graphs}. Basic facts of the Fibonacci subshift and its Rauzy graphs are reviewed in Subsection~\ref{ss: Fibonacci subshift and Rauzy graph}.

\subsection{Wasserstein reduction to cylinder discrepancies}\label{ss: Wasserstein}

A central object in optimal transport is the Wasserstein distance, which dates back to the works of Gini, Kantorovich, Wasserstein, Mallows, and Tanaka~\cite{Gi14,Ka42,Ma72,Ta73,Va69}. For more backgrounds on optimal transport, see Villani \cite{Vi03,Vi09}. While the $1$-Wasserstein distance, in the form of the supremum over Lipschitz test functions (cf.~(\ref{eq: W_omega in terms of H^omega functions})), naturally appears in the study of the Typically Periodic Optimization Conjecture as a useful metric on the space of Borel probability measures (see e.g.~\cite{Bou08,HJXZ26a}), here we adopt an equivalent definition in terms of the transport form of the $1$-Wasserstein distance for the proof of Theorem~\ref{t: energy transport estimate: general condition}. For connections between Wasserstein distances and other topics in analysis, see Figalli and Glaudo \cite{FG23}. For a different line of work at the interface between ergodic optimization and optimal transport, see Lopes--Mengue \cite{LM12}, Contreras--Lopes--Oliveira \cite{CLO14}, and Lopes--Oliveira--Thieullen \cite{LOT15}. 

Generally, for each $p\in[1,+\infty)$, the $p$-Wasserstein distance between probability measures is obtained by minimizing the transport cost with respect to $d^p$ and taking the infimum to the power $1/p$, where $d$ is the underlying metric. Here we focus on the case $p=1$, while replacing the underlying metric $d$ by a new metric $d_{\omega}(\,\cdot\,,\,\cdot\,)\=\omega(d(\,\cdot\,,\,\cdot\,))$ on $\Sigma$ for some modulus of continuity $\omega$ that vanishes only at $0$. Note that a modulus of continuity $\omega$ satisfying the scale doubling condition vanishes only at $0$ (cf.~Definition~\ref{d: two technical conditions}).

\begin{definition}\label{d: Wasserstein}
    For each modulus of continuity $\omega$ that vanishes only at $0$, we define the \emph{$\omega$-Wasserstein distance} $W_{\omega}\:\cP(\Sigma)\times\cP(\Sigma)\to[0,+\infty)$ by 
    \begin{equation*}
        W_{\omega}(\mu,\nu)\=\inf_{\pi}\int_{\Sigma\times \Sigma}\!\omega(d(x,y))\,\mathrm{d}\pi(x,y)
    \end{equation*}
    for all Borel probability measures $\mu,\,\nu$ in $\cP(\Sigma)$, where the infinum ranges over all Borel probabilty measures $\pi$ on $\Sigma\times\Sigma$ whose marginal distributions are $\mu$ and $\nu$, respectively.
\end{definition}

Note that for each modulus of continuity $\omega$ that vanishes only at $0$, $d_{\omega}(x,y)\=\omega(d(x,y))$ defines a new metric on $\Sigma$, and by the Kantorovich duality formula (cf.~\cite[Remark~6.5]{Vi09}), it follows that 

\begin{equation}\label{eq: W_omega in terms of H^omega functions}
    W_{\omega}(\mu,\nu)=\sup\biggl\{\int_\Sigma\! f\,\mathrm{d}\mu-\int_\Sigma\! f\,\mathrm{d}\nu:f\in H^{\omega}(\Sigma,\R),\,\abs{f}_{\omega}\leq 1\biggr\}.
\end{equation}

Denote the $(\omega,K)$-plateau potential $f_{K,\omega}(\,\cdot\,)\=-\omega(d(\,\cdot\,,K))\in H^{\omega}(\Sigma,\R)$ for each modulus of continuity $\omega$\footnote{Here, without assuming that $\omega$ vanishes only at $0$, $f_{K,\omega}\in H^{\omega}(\Sigma,\R)$ follows from the monotonicity of $\omega$ and the fact that $d$ is an ultrametric on $\Sigma$, i.e., $d(x,y)\leq\max\{d(x,z),\,d(y,z)\}$ for all $x,\,y,\,z\in \Sigma$.}. Since periodic measures are weak$^*$ dense in the set $\cM(\Sigma,\sigma)$ of invariant measures (cf.~\cite{Sig74}), to prove Theorem~\ref{t: energy transport estimate: general condition}, it suffices to prove the following result, whose proof is given in Subsection~\ref{ss: comparing flows on Rauzy graphs}.

\begin{prop}[Transport gap]
\label{p: transport gap}
Under the hypotheses of Theorem~\ref{t: energy transport estimate: general condition}, there exists a constant $C>0$ such that for every periodic measure $\nu_p\in\cM(\Sigma,\sigma)$,
\begin{equation}\label{ineq: transport gap}
W_{\omega}(\nu_p,\mu_K)\leq
C \int_\Sigma\! -f_{K,\omega}\,\mathrm{d}\nu_p.
\end{equation}
\end{prop}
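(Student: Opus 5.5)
We will follow the route sketched in Part~b of the overview and reduce the transport estimate (\ref{ineq: transport gap}) to cylinder discrepancies. Let $\nu_p$ be supported on a periodic orbit $\cO$ of period $p$ with a point $z\in\cO$. The first step is a multiscale bound for $W_\omega$ in the spirit of \cite[Lemma~3.1]{Kl15}. For the ultrametric $d=d_2$, a greedy coupling that matches mass cylinder by cylinder gives
\begin{equation*}
W_{\omega}(\mu,\nu)\leq \sum_{n\geq 0}\bigl(\omega(2^{-n})-\omega(2^{-n-1})\bigr)\,D_{n+1}(\mu,\nu),
\end{equation*}
where $D_n$ is the total variation between the $n$-block distributions. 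The second step is to build an intermediate measure $\eta_z$ supported on $K$. Along the orbit of $z$, at each time $i$ we let the shadow length $\ell_i$ be the largest $\ell$ for which $z_{i}\ldots z_{i+\ell-1}\in\fL(K)$. We then cut the orbit at accidents into maximal pieces that are factors of Fibonacci words, realize each piece by a point of $K$, and let $\eta_z$ be the empirical average of these points over the period. Pairing the $i$-th point of the orbit with its shadowing point produces a coupling whose cost is $\frac1p\sum_i\omega(2^{-\ell_i})$. Since $d(\sigma^i z,K)\approx 2^{-\ell_i}$, the scale doubling condition turns this cost into $C\int -f_{K,\omega}\,\mathrm{d}\nu_p$, so $W_\omega(\nu_p,\eta_z)\leq C\int -f_{K,\omega}\,\mathrm{d}\nu_p$.

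The third step is to bound $W_\omega(\eta_z,\mu_K)$ by the same right-hand side. Because both measures live on $K$ and the Fibonacci complexity is $n+1$, Proposition~\ref{p: bi-special scales} and Proposition~\ref{p: reduction to bi-special scales} should let us replace the full multiscale sum by a sum over the bi-special scales $q_m$ of $D_{q_m+1}(\eta_z,\mu_K)$, weighted by the increments of $\omega$ between consecutive $q_m$. At scale $q_m$ the $(q_m+1)$-block counts of $\eta_z$ define a flow $F_m$ on the Rauzy graph $\Gamma_m$. This flow is a circulation except at the junctions between shadowing pieces, and each junction creates a source–sink pair (Proposition~\ref{p: geometry of bad window}). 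Adding connecting flows as in Proposition~\ref{p: boundary repair} yields a circulation $H_m$, at a cost proportional to $J_m/p$, where $J_m$ is the number of junctions whose accident length is at least about $q_m$. Any circulation on $\Gamma_m$ decomposes along the two basic cycles (of lengths $F_{m}$ and $F_{m-1}$). Its deviation from the $\mu_K$-frequencies is measured by the stable coordinate $s_m(H_m)$, so Proposition~\ref{p: cylinder discrepancy estimated through H_m} gives $D_{q_m+1}\lesssim |s_m(H_m)|+J_m/p$.

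The main obstacle is controlling $s_m(H_m)$ uniformly across scales. Passing from $\Gamma_m$ to $\Gamma_{m+1}$ through the inductive structure of Propositions~\ref{p: structure of Gamma_m}--\ref{p: cycle properties}, the stable coordinate is multiplied by a factor $\approx\varphi^{-1}$ (the golden ratio inverse), up to the new repairs. Iterating therefore gives
\begin{equation*}
|s_m(H_m)|\lesssim \sum_{k\leq m}\varphi^{-(m-k)}J_k/p.
\end{equation*}
Summing against the $\omega$-weights and exchanging the order of summation, the stable condition on $\omega$ should absorb the geometric tail, since the weight at scale $q_m$ is at most a constant times the weight at $q_k$ for $k\le m$. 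This yields
\begin{equation*}
W_\omega(\eta_z,\mu_K)\lesssim \frac1p\sum_k J_k\,\omega(2^{-q_k}).
\end{equation*}
Each junction at scale $k$ forces a window of about $q_k$ consecutive orbit points with $\ell_i\leq 2q_k$, so the right-hand side is again at most $C\int -f_{K,\omega}\,\mathrm{d}\nu_p$ by scale doubling. Combining the two Wasserstein bounds by the triangle inequality proves (\ref{ineq: transport gap}). I expect the delicate part to be the bookkeeping between junction counts and the repair flows across the inductive Rauzy structure.
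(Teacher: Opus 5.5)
Your architecture follows the paper's: an accident-based shadowing measure $\eta_z$, reduction to bi-special scales, good-edge chains on $\Gamma_m$ repaired into circulations $H_m$, and comparison through the stable coordinate. The first two steps are fine. In fact $d_2(\sigma^i(p),K)=2^{-\ell_i}$ exactly, so $W_\omega(\nu_p,\eta_z)\le\int-f_{K,\omega}\,\mathrm{d}\nu_p$ needs no scale doubling.

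The genuine gap is in the stable-coordinate step, where your recursion runs the wrong way. The contraction of Proposition~\ref{p: cycle properties}~(ii) is for $(\pi_m)_\#\colon\sZ_{m+1}\to\sZ_m$, that is, from fine to coarse. The actual relation is $H_m=(\pi_m)_\#(H_{m+1})+\cE_m$, so $s_m(H_m)=\frac{1-\sqrt5}{2}s_{m+1}(H_{m+1})+s_m(\cE_m)$. Since $H_m=0$ once $U_m=\Z/N\Z$, iterating gives $\abs{s_m(H_m)}\le\sum_{l\ge0}\varphi^{-l}\abs{s_{m+l}(\cE_{m+l})}$ with $\varphi=\frac{1+\sqrt5}{2}$. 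So the stable coordinate at scale $m$ is governed by errors at \emph{finer} scales.

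Your bound $\abs{s_m(H_m)}\lesssim\sum_{k\le m}\varphi^{-(m-k)}J_k/p$ is false. Let $p$ be the periodization of $s=\vartheta^n(0)$. Since $00\in\fL(K)$, we get $ss=\vartheta^n(00)\in\fL(K)$, hence $\ell_i\ge\abs{s}$ for all $i$. So there is no bad window, no source--sink pair and no repair at any scale $k$ with $q_k<\abs{s}$, and your right-hand side vanishes there. Yet $H_m=\eta_z^m$ is $N^{-1}$ times a closed walk $aC_m^L+bC_m^S$ with $a,b\in\N_0$ not both zero. Hence $s_m(H_m)=\frac{-2a+(1+\sqrt5)b}{2\sqrt5N}\neq0$, since $\sqrt5$ is irrational.

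Consequently your use of the stable condition is also inverted. In the correct direction, exchanging sums leaves $\sum_m\abs{s_m(\cE_m)}\sum_{k=4}^m\varphi^{-(m-k)}q_k\Delta_k$. One then needs the discounted weights at \emph{coarser} scales to be dominated by the weight at scale $m$. This is exactly (\ref{ineq: stable condition}), and it is a real restriction: it fails for $\omega_\alpha$ with $\alpha\ge2$. Your justification, that the weight at $q_m$ is at most a constant times the weight at $q_k$ for $k\le m$, is only a monotonicity statement. It is not the stable condition, and if it sufficed it would make that condition superfluous.

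The bookkeeping also needs repair:
\begin{itemize}
\item Connecting routes have length up to $q_m+1$, so the repair costs $(q_m+1)\kappa_m/N$, not $J_m/p$.
\item $D_{q_m+1}$ carries the term $N^{-1}\card U_m$ from discarded bad-window edges, and $q_m\abs{s_m(H_m)}$ rather than $\abs{s_m(H_m)}$.
\item The error $\cE_m$ contains $G_m=F_m-(\pi_m)_\#(F_{m+1})$, which contributes $(NF_m)^{-1}\card(U_{m+1}\smallsetminus U_m)$.
\item The weights are the increments $\Delta_m$ and $q_m\Delta_m$, and one uses $\Delta_m\le\oc\,\Delta_{m+1}$ (a consequence of the stable condition) to shift indices.
\end{itemize}

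Finally, you must bound the multiscale junction costs by the energy without overcounting one accident at many scales. For this you need that each residual $r_j$ is itself a bi-special length $q_{n_j}$ (Proposition~\ref{p: accident residual is bi-special}). Then $q_k-r_j\ge q_k/3$, and the contributions telescope over $(r_j,D_j]$, as in Lemmas~\ref{l: bad window estimates} and~\ref{l: bad block estimates}. Your remark about ``a window of about $q_k$ points'' silently assumes this.
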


\begin{proof}[\bf Proof of Theorem~\ref{t: energy transport estimate: general condition} admitting Proposition~\ref{p: transport gap}]
    Let $C>0$ be the constant in Proposition~\ref{p: transport gap}. Note that a modulus of continuity $\omega$ satisfying the scale doubling condition vanishes only at $0$. Since periodic measures are weak$^*$ dense in $\cM(\Sigma,\sigma)$ (cf.~\cite{Sig74}), we see from Proposition~\ref{p: transport gap} and (\ref{eq: W_omega in terms of H^omega functions}) that $W_{\omega}(\nu,\mu_K)\leq C\int_{\Sigma}\!-f_{K,\omega}\,\mathrm{d}\nu$ for all $\nu\in\cM(\Sigma,\sigma)$. The proof is now complete with (\ref{eq: W_omega in terms of H^omega functions}).
\end{proof}

To establish Proposition~\ref{p: transport gap}, we recall the calculation for $W_{\omega}$ (cf.~\cite[Lemma~3.1]{Kl15}) in Proposition~\ref{p: multiscale formula}. We first prepare the following notations. For all $n\in\N$ and $w=\{w_i\}_{i=1}^n\in\{0,\,1\}^n$, denote $[w]\=\{x\in\Sigma: x_i=w_i\text{ for all }1\leq i\leq n\}$.

\begin{definition}[Cylinder discrepancy]\label{d: D_n}
For $\mu,\,\nu\in\cP(\Sigma)$ and $n\in\N$, define
\begin{equation*}
D_n(\mu,\nu)\=
\sum_{w\in\{0,\,1\}^n}
\abs{
\mu([w])-\nu([w])
}.
\end{equation*}
\end{definition}

As $(\Sigma,d_{\omega})$ is a compact ultrametric space for a modulus of continuity $\omega$ that vanishes only at $0$, the following multiscale formula is an immediate consequence of \cite[Lemma~3.1]{Kl15}.

\begin{prop}[Multiscale formula for $W_\omega$]
\label{p: multiscale formula}
Let $\omega$ be a modulus of continuity that vanishes only at $0$. Then for all $\mu,\,\nu\in\cP(\Sigma)$, 
\begin{equation*}
W_\omega(\mu,\nu)=\frac12\sum_{n=1}^{+\infty}
\bigl(\omega\bigl(2^{-n+1}\bigr)-\omega(2^{-n})\bigr)D_n(\mu,\nu).
\end{equation*}
\end{prop}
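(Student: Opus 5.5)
The plan is to derive the formula from the general ultrametric identity of \cite[Lemma~3.1]{Kl15}, but I also sketch a self-contained route through the duality formula (\ref{eq: W_omega in terms of H^omega functions}). This way the constants, in particular the factor $\frac12$ and the indexing of the sum, can be checked directly.

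\textbf{Step 1: the ultrametric.} First I check that $d_\omega=\omega\circ d$ is an ultrametric on $\Sigma$. Because $\omega$ is nondecreasing and $d=d_2$ is an ultrametric,
\begin{equation*}
\omega(d(x,y))\le\omega(\max\{d(x,z),d(z,y)\})=\max\{d_\omega(x,z),d_\omega(z,y)\},
\end{equation*}
and positivity off the diagonal is exactly the hypothesis that $\omega$ vanishes only at $0$. The closed $d_\omega$-balls are the cylinders $[w]$, and two points in the same $n$-cylinder but different $(n+1)$-cylinders are at $d_\omega$-distance exactly $\omega(2^{-n})$. The weights in the formula should then be the jumps $\omega(2^{-n+1})-\omega(2^{-n})$ of the distance function across the tree levels.

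\textbf{Step 2: the lower bound $\geq$.} Write
\begin{equation*}
\omega(d(x,y))=\sum_{n\ge1}\bigl(\omega(2^{-n+1})-\omega(2^{-n})\bigr)\mathbbold{1}_{\{x|_n\ne y|_n\}}.
\end{equation*}
This telescopes correctly: if $x,y$ first differ at position $p$, the indicator is $1$ exactly for $n\ge p$, and the sum equals $\omega(2^{1-p})-\lim\omega(2^{-n})=\omega(d(x,y))$, using continuity of $\omega$ at $0$. For any coupling $\pi$ of $\mu,\nu$,
\begin{equation*}
\pi(x|_n\neq y|_n)\ge\tfrac12 D_n(\mu,\nu),
\end{equation*}
since $\pi(x|_n=y|_n)\le\sum_w\min\{\mu[w],\nu[w]\}=1-\frac12D_n$. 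Integrating the telescoped expression against $\pi$ (Tonelli) gives $\ge$.

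\textbf{Step 3: the upper bound $\le$.} I expect this to be the main point. I would build a coupling greedily down the tree of cylinders: at each level keep as much common mass within each child cylinder as possible, i.e.\ mass $\min\{\mu[w],\nu[w]\}$ on each $[w]$ with $|w|=n$, nested consistently. Concretely, one defines, recursively on $n$, couplings of the normalized restrictions which keep $\min\{\mu[w],\nu[w]\}$ of mass matched inside $[w]$ and transport the excess arbitrarily within the parent. Nestedness holds because the matched mass inside $[w']$ for a child $w'$ of $w$ is at most $\min\{\mu[w'],\nu[w']\}$, and the sum over children of these minima is at most the minimum at the parent. With this coupling, $\pi(x|_n\ne y|_n)=\frac12D_n$ exactly at every level, so equality holds in Step 2. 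The technical care lies in passing to the limit $n\to\infty$: the couplings at finite depth form a consistent projective family, so a limiting measure on $\Sigma\times\Sigma$ exists by Kolmogorov extension/compactness. Alternatively, and more simply, one can quote \cite[Lemma~3.1]{Kl15} for a compact ultrametric space whose balls are the cylinders, and read off the weights as the distance jumps from Step 1.
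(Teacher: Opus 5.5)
Your proposal is correct. It proves the formula directly, whereas the paper only quotes it: the paper's whole argument is that $(\Sigma,d_\omega)$ is a compact ultrametric space whose balls are the cylinders, so the identity is an instance of \cite[Lemma~3.1]{Kl15}. There a level-$n$ ball has $d_\omega$-diameter $\omega(2^{-n})$ and its parent has diameter $\omega(2^{-n+1})$.

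Your Steps~2--3 are essentially the standard proof of that lemma, specialized to the binary tree of cylinders. The telescoping identity for $\omega(d(x,y))$, together with $\pi([w]\times[w])\le\min\{\mu([w]),\nu([w])\}$, gives the lower bound for every coupling. A coupling with $\pi([w]\times[w])=\min\{\mu([w]),\nu([w])\}$ for all words $w$ gives equality.

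What your route buys:
\begin{itemize}
\item it is self-contained;
\item it shows the infimum is attained;
\item it uses only the monotonicity of $\omega$ and $\lim_{t\to0^+}\omega(t)=0$. The hypothesis that $\omega$ vanishes only at $0$ matters only for $d_\omega$ being a metric, which the paper needs to invoke Kloeckner's lemma and the duality formula.
\end{itemize}
The citation buys brevity. Also, despite your opening sentence, you never use the duality formula: both of your bounds are primal.

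One place to tighten is Step~3. The inequality $\sum_{a}\min\{\mu([wa]),\nu([wa])\}\le\min\{\mu([w]),\nu([w])\}$ is only a necessary condition for nestedness. Moreover, the excess at a diagonal pair is not moved ``arbitrarily'': the marginals force it. To make the refinement from level $n$ to level $n+1$ explicit, proceed as follows.
\begin{itemize}
\item Write $m(w)=\min\{\mu([w]),\nu([w])\}$. Fix $u$ of length $n$ and assume $\mu([u])\ge\nu([u])$, so the column of $u$ carries no off-diagonal mass.
\item Split the diagonal mass $m(u)$ of the pair $(u,u)$ into four pieces: $m(u0)$ at $(u0,u0)$, $m(u1)$ at $(u1,u1)$, $\nu([u0])-m(u0)$ at $(u1,u0)$, and $\nu([u1])-m(u1)$ at $(u0,u1)$.
\item The second-coordinate sums are then $\nu([u0])$ and $\nu([u1])$. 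The first-coordinate sums are at most $\mu([u0])$ and $\mu([u1])$; this is exactly where $\mu([u])\ge\nu([u])$ is used.
\item Split each off-diagonal pair $(u,v)$, $u\neq v$, as the product of the residual conditional distributions of the children of $u$ (under $\mu$) and of $v$ (under $\nu$).
\end{itemize}
This produces a consistent projective family with the required diagonal masses, and Kolmogorov extension (or a weak$^*$ limit, since cylinders are clopen) finishes the argument as you say.
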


\subsection{Fibonacci subshift and its Rauzy graphs}\label{ss: Fibonacci subshift and Rauzy graph}
Here we review the Fibonacci subshift and its Rauzy graphs. Serving as a model for quasicrystals, the Fibonacci subshift arises naturally in a variety of research contexts, including low-complexity symbolic dynamics \cite{MH40}, the geometry and diffraction of aperiodic tillings \cite{CS03,Hof95}, and the spectral theory of quasiperiodic Schr\"{o}dinger operators \cite{DGY16}. Introduced by Rauzy in \cite{Ra83}, Rauzy graphs provide finite representations of the language of the subshift and serve as a useful tool for studying recurrence, complexity, and related dynamical properties (see e.g.\ \cite{Bru22}). 

In particular, probability measures supported on the Fibonacci subshift induces compatible circulations on Rauzy graphs of different scales (cf.~Proposition~\ref{p: cycle properties}). Thus to estimate the cylinder discrepancy (cf.~Definition~\ref{d: D_n}), we define the stable coordinate and establish a comparison lemma (Lemma~\ref{l: comparison}) for circulations on the Rauzy graphs. 

Consider the Fibonacci substitution
$\vartheta\:\Sigma\to\Sigma$ generated by $0\mapsto01$ and $1\mapsto 0$. Let $\mathbf f=\{f_i\}_{i=1}^{+\infty}\in\Sigma$ be the unique one-sided fixed point of $\vartheta$ satisfying $f_1=0$, and $K\=\overline{
\{\sigma^n(\mathbf f):n\in\N_0\}
}$ be the Fibonacci subshift. For each $n\in\N$, denote 
\begin{equation*}
\fL_n(K)\=\{w\in\{0,\,1\}^n:\text{there exists }x\in K\text{ such that }x_i=w_i\text{ for all }1\leq i\leq n\}.
\end{equation*}
The union $\fL(K)\=\bigcup_{n=1}^{+\infty}\fL_n(K)$ is the \emph{Fibonacci language}. 

The following result is well known (cf.~\cite[Theorem~1.6.9]{Fo02} and~\cite[Example 2.1.1]{Lo02}). 

\begin{prop}[Basic properties of the Fibonacci shift]
\label{p: fibonacci-basic}
The Fibonacci subshift $K$ is minimal and uniquely ergodic. Moreover, for each $n\in\N$, $\card(\fL_n(K))=n+1$.
\end{prop}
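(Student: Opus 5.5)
The plan is to derive each assertion of Proposition~\ref{p: fibonacci-basic} from the combinatorics of the substitution $\vartheta$. The substitution is \emph{primitive}: its incidence matrix $\begin{pmatrix}1&1\\1&0\end{pmatrix}$ has square with all entries positive. Primitivity is the only structural input needed for minimality and unique ergodicity. The complexity count instead uses the Sturmian structure of $\mathbf f$, namely the uniqueness of a right special factor of each length.

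\emph{Minimality.} I would first show that $\mathbf f$ is uniformly recurrent. Every word $w$ occurring in $\mathbf f=\vartheta^k(\mathbf f)$ occurs inside $\vartheta^k(0)$ for some $k$, since $\vartheta^k(0)$ is a prefix of $\mathbf f$ whose length tends to infinity. Both letters occur in every factor of $\mathbf f$ of length $3$, because $11$ and $000$ never appear. Writing $\mathbf f=\vartheta^k(\mathbf f)$ as a concatenation of blocks $\vartheta^k(0),\vartheta^k(1)$, any window of length $3\max_a\abs{\vartheta^k(a)}$ therefore contains a full block $\vartheta^k(0)$, and hence contains $w$. Uniform recurrence of $\mathbf f$ is equivalent to minimality of its orbit closure $K$.

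\emph{Unique ergodicity.} I would show that for each word $w$ the frequency of occurrences of $w$ in factors $u$ of $\mathbf f$ converges uniformly as $\abs{u}\to+\infty$. Decompose $u$ into blocks $\vartheta^k(a)$ plus two boundary pieces of bounded length. By Perron--Frobenius for the primitive matrix, $\abs{\vartheta^k(a)}_w/\abs{\vartheta^k(a)}$ converges to a limit independent of $a$. Uniform convergence of word frequencies is equivalent to unique ergodicity, by the standard Oxtoby-type criterion.

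\emph{Complexity.} Set $p(n)\=\card\fL_n(K)$. Since both $0$ and $1$ occur, $p(1)=2$. A word is called right special if both extensions $w0$ and $w1$ lie in $\fL(K)$. Then $p(n+1)-p(n)$ equals the number of right special words of length $n$, since every word extends to the right in at least one way. So it suffices to show there is exactly one right special word of each length.

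\emph{Existence} follows because the reversals of the prefixes of $\mathbf f$ are right special. This uses the palindromic and recognizability structure: the words $\vartheta^k(0)$ with their last two letters removed are palindromes, and one can check that both extensions occur.

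\emph{Uniqueness} is the main obstacle. My approach is a desubstitution argument. Every word of $\fL(K)$ of length at least $2$ has a unique decomposition (cutting before each $0$) as an image under $\vartheta$ of a shorter word of $\fL(K)$. One then shows that right special words of length $n$ correspond injectively to right special words of roughly length $n/\varphi$, and concludes by induction. If this becomes messy, I would fall back on the balance property instead: first prove that $\mathbf f$ is balanced by induction on $\vartheta^k$. Then use the classical fact (Morse--Hedlund) that an aperiodic balanced sequence has $p(n)=n+1$. Aperiodicity here follows from the irrational frequency $1/\varphi$ of $0$.

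These are all standard facts about Sturmian sequences, so the proof may cite \cite{Fo02,Lo02} and only sketch the steps above.
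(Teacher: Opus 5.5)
Your proposal is correct and matches what the paper does: the paper gives no argument and simply cites \cite[Theorem~1.6.9]{Fo02} and \cite[Example~2.1.1]{Lo02}, and your outline is the standard argument behind those references (primitivity of $\vartheta$ for minimality and unique ergodicity, the Sturmian/balanced structure of $\mathbf f$ for $\card(\fL_n(K))=n+1$). If you write it out, use the balance route for uniqueness of right special words rather than desubstitution: two distinct right special words of length $n$ with longest common suffix $u$ would force both $0u0$ and $1u1$ to occur, contradicting balance, whereas your claim that every factor is a $\vartheta$-image of a unique shorter word holds only up to boundary letters (for instance, $10$ is not a $\vartheta$-image at all).
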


The following notion is standard in symbolic dynamics and combinatorics
on words (cf.~\cite{Cas97,Fo02,Lo02}).
\begin{definition}[Special words]
A word $w\in\fL(K)$ is called
\begin{enumerate}
    \smallskip
    \item \emph{right-special} if both $w0$ and $w1$ belong to
    $\fL(K)$;
    \smallskip
    \item \emph{left-special} if both $0w$ and $1w$ belong to
    $\fL(K)$;
    \smallskip
    \item \emph{bi-special} if it is both left-special and right-special.
\end{enumerate}
\end{definition}

Since $\card(\fL_n(K))=n+1$ for all $n\in\N$ (cf.~Proposition~\ref{p: fibonacci-basic}), it is straightforward to see that for all $n\in\N$, there is a unique left-special (resp.~right-special) word in $\fL_n(K)$. If the two special words in $\fL_n(K)$ coincide, then this word is bi-special and $n$ is called a \emph{bi-special scale}. Let $\{F_n\}_{n\in\N_0}$ be the Fibonacci sequence defined by $F_0\=0$, $F_1\=1$, and $F_{n+1}\=F_{n}+F_{n-1}$ for all $n\in\N$. It turns out that bi-special scales are related to the Fibonacci sequence. The result below follows from \cite[Proposition~2.2]{BL15}.

\begin{prop}[Bi-special scales]
\label{p: bi-special scales}
The set of bi-special words is $\{f_1f_2\ldots f_{F_m-2}:m\in\N,\,m\geq 4\}$.
\end{prop}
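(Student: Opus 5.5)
The plan is to reduce bi-speciality to being a palindromic prefix of $\mathbf f$, and then to identify the palindromic prefixes of $\mathbf f$. Set $w_k\=\vartheta^k(0)$. Then $w_0=0$, $w_1=01$, $w_k=w_{k-1}w_{k-2}$, and $\abs{w_k}=F_{k+2}$. Since $\mathbf f=\vartheta(\mathbf f)$, each $w_k$ is a prefix of $\mathbf f$.

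\emph{Step 1 (left-special words are prefixes of $\mathbf f$).} I will show that both $0\mathbf f$ and $1\mathbf f$ lie in $K$. Every prefix $w_k$, $k\geq 2$, occurs in $\mathbf f$ preceded by $0$ and also preceded by $1$. Indeed, $w_{k+1}w_k=w_kw_{k-1}w_k$ is a prefix of $\mathbf f$, and $w_{k+2}=w_{k+1}w_k$ contains $w_k$ right after $w_{k+1}$, whose last letter alternates with $k$. Since $\card(\fL_n(K))=n+1$ by Proposition~\ref{p: fibonacci-basic}, there is exactly one left-special word of each length. Hence the left-special word of length $n$ is exactly $f_1\ldots f_n$.

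\emph{Step 2 (reversal and reduction to palindromes).} I will show that $\fL(K)$ is closed under reversal. This follows by induction from the fact that the $w_k$ are palindromes up to their last two letters, which is established in Step~3; equivalently, $\vartheta$ is conjugate to its reverse substitution $0\mapsto 10$, $1\mapsto 0$. Reversal swaps left- and right-special words, so the unique right-special word of length $n$ is the reversal of $f_1\ldots f_n$. Consequently $n$ is a bi-special scale if and only if $f_1\ldots f_n$ is a palindrome.

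\emph{Step 3 (lengths $F_m-2$ give palindromes).} Put $p_m\=f_1\ldots f_{F_m-2}$. By induction on $m$ I will show that $w_{m-2}=p_m\,ab$ with $ab\in\{01,10\}$, and that
\begin{equation*}
w_{m-3}w_{m-4}=p_m\,ab,\qquad w_{m-4}w_{m-3}=p_m\,ba.
\end{equation*}
This is the standard near-commutation identity, and it follows by induction from $w_k=w_{k-1}w_{k-2}$. Palindromicity of $p_{m}$ then comes from $p_m=p_{m-1}\,ab\,p_{m-2}$, combined with the same identity read backwards. The base cases $m=4,5$ are $p_4=0$ and $p_5=010$.

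\emph{Step 4 (no other palindromic prefixes).} I expect this to be the main obstacle. Suppose $q$ is a palindromic prefix with $F_m-2<\abs{q}<F_{m+1}-2$. Then $p_m$ is both a prefix and, by palindromicity, a suffix of $q$. So $q$ has period $\abs{q}-\abs{p_m}<F_{m-1}$, while $q$ also contains $p_{m}$, which has minimal period $F_{m-2}$. I will compare these with the Fine--Wilf theorem: two periods $r,s$ with $r+s-\gcd(r,s)\leq \abs{q}$ would force $q$ to have period $\gcd(r,s)$. That would make some factor of length about $F_{m-1}$ periodic with a small period, contradicting the complexity $n+1$, or making $00$ or $111$ appear, which are both forbidden. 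If this counting gets messy, I will instead argue directly via Rauzy graphs. The Rauzy graph of $\fL_n(K)$ has exactly one left-special vertex and one right-special vertex. These coincide precisely when $n$ equals the length of a return path to the special vertex minus one. The return words to $p_m$ are $w_{m-3}$ and $w_{m-4}$, of lengths $F_{m-1}$ and $F_{m-2}$, so the next coincidence occurs at $F_m-2+F_{m-1}=F_{m+1}-2$.
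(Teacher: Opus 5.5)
Your Steps 1--3 are sound, though the wording is compressed. For Step~1, the two occurrences of $w_k$ you need are after $w_{k+1}$ (inside $w_{k+2}=w_{k+1}w_k$) and after $w_{k+2}$ (inside $w_{k+3}=w_{k+2}w_{k+1}$, since $w_k$ is a prefix of $w_{k+1}$). Together these steps show that the bi-special words are exactly the palindromic prefixes of $\mathbf f$, and that every $p_m$, $m\geq 4$, is one. So you have the inclusion of $\{p_m\}_{m\geq 4}$ into the set of bi-special words. (The paper gives no argument of its own here; it quotes \cite[Proposition~2.2]{BL15}.)

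The genuine gap is Step~4, the reverse inclusion, which is the real content of the statement. As written, the Fine--Wilf argument does not work. Fine--Wilf needs two periods of the \emph{same} word, but $F_{m-2}$ is a period of $p_m$, not of $q$, so ``$q$ contains $p_m$'' gives you no second period of $q$. Your closing sentence is not an argument, and it contains a false claim: $00$ is a factor of $\mathbf f=0100101\ldots$; the short forbidden words are $11$ and $000$. The Rauzy-graph fallback assumes what must be proved, in three places:
\begin{itemize}
\item the criterion for when the left- and right-special vertices coincide is never stated precisely or justified;
\item the claim that the return words to $p_m$ are $w_{m-3},w_{m-4}$ needs a recognizability argument showing $p_m$ occurs only at block boundaries of $\vartheta^{m-3}(\mathbf f)$;
\item the passage from return words to ``the next coincidence is after $F_{m-1}$ steps'' is not explained.
\end{itemize}
This is essentially the evolution of Sturmian Rauzy graphs from \cite{AR91}, which is at least as strong as the proposition.

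You can repair the Fine--Wilf idea by applying it to $p_m$ rather than to $q$.
\begin{itemize}
\item \emph{Periods of $p_m$.} Your Step~3 identities give $p_m=w_{m-4}\,p_{m-1}=w_{m-3}\,p_{m-2}$, with $p_{m-1},p_{m-2}$ prefixes of $p_m$. Hence $p_m$ has the coprime periods $F_{m-2}$ and $F_{m-1}$, and $\abs{p_m}=F_{m-1}+F_{m-2}-2$.
\item \emph{No smaller divisor period.} By Fine--Wilf, a period $g<F_{m-2}$ of $p_m$ dividing $F_{m-2}$ would force period $1$. This is impossible for $m\geq 5$, since $p_m$ then contains the letter $1$.
\item \emph{Forcing $r=F_{m-2}$.} Let $q$ be a palindromic prefix with $F_m-2<\abs{q}<F_{m+1}-2$. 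The border $p_m$ gives $q$ the period $r=\abs{q}-\abs{p_m}\leq F_{m-1}-1$, and $p_m$ inherits it. Since $r+F_{m-2}-\gcd(r,F_{m-2})\leq F_m-2=\abs{p_m}$, the word $p_m$ has period $\gcd(r,F_{m-2})$. By the previous point this gcd equals $F_{m-2}$, so $F_{m-2}\mid r$. As $0<r<F_{m-1}<2F_{m-2}$, this forces $r=F_{m-2}$.
\item \emph{Excluding $r=F_{m-2}$.} The prefix $f_1\ldots f_{F_m-1}$ of $q$ would then have period $F_{m-2}$, giving $f_{F_m-1}=f_{F_{m-1}-1}$. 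These are the penultimate letters of $w_{m-2}$ and $w_{m-3}$, which differ by the parity alternation you already use in Steps~1 and~3.
\item \emph{Base case.} For $m=4$, check directly that $01$ is not a palindrome.
\end{itemize}
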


Now we introduce the notion of \emph{Rauzy graph}, which is a standard tool in combinatorics on words (cf.~\cite{Ra83,Fo02}). For a subshift of the one‑sided full shift $(\Sigma,\sigma)$, the associated Rauzy graphs are subgraphs of the de Bruijn--Good graphs for the full shift $\Sigma$. For a use of de Bruijn--Good graphs (cf.~\cite{Br46,Goo46}) for the Hunt--Ott conjecture, see \cite{BZ16,DLZ24}.

\begin{definition}[Rauzy graph]\label{d: Rauzy graph}
For each $n\in\N$, the \emph{Rauzy graph} of the Fibonacci subshift $K$ at scale $n$, denoted by $R_n(K)$, is the directed graph whose vertex and edge sets are
\begin{equation*}
V(R_n(K))=\fL_{n}(K)\quad\text{ and }\quad
E(R_n(K))=\fL_{n+1}(K),
\end{equation*}
respectively. An edge $e = x_1x_2 \ldots x_{n+1} \in \fL_{n+1}(K)$ is directed from the vertex $x_1x_2 \ldots x_{n}\in\fL_{n}(K)$ to the vertex $x_2x_3\ldots x_{n+1}\in\fL_{n}(K)$.
\end{definition}

Note that a left-special (resp.~right-special) word corresponds to a vertex
with indegree (resp.~outdegree) greater than one in the Rauzy graph. From now on, we focus on the Rauzy graphs at bi-special scales. Denote $q_k\=F_k-2$ for all $k\in\N_0$ and $\Gamma_m\= R_{q_m}(K)$ for all integers $m\geq 4$.  

For a directed graph $\Gamma=(V,E)$, let $C_1(\Gamma; \R)$ and $C_0(\Gamma; \R)$ denote the real vector spaces freely generated by its directed edges and vertices, respectively. We use the following coefficient notation: for $H\in C_1(\Gamma;\R)$, $H=\sum_{e\in E}H(e)e$, where $H(e)\in\R$ for all $e\in E$; for $P\in C_0(\Gamma;\R)$, $P=\sum_{v\in V}P(v)v$, where $P(v)\in\R$ for all $v\in V$. The boundary operator
$\partial\: C_1(\Gamma; \mathbb{R}) \to C_0(\Gamma; \mathbb{R})$
is a linear operator given by
$\partial e\=t(e)-o(e)$ for all directed edges $e\in E$, where $o(e),\,t(e)$ denote the origin and terminus of the edge $e$, respectively.

\begin{definition}[Circulation space]
For each integer $m\geq 4$, the \emph{circulation space} (or first homology group) of $\Gamma_m$ is
\begin{equation*}
\sZ_m\=\ker( \partial \colon C_1(\Gamma_m;\R) \to C_0(\Gamma_m;\R) ).
\end{equation*}
\end{definition}

The following result is a special case of \cite[Theorem~1]{AR91}.
\begin{prop}[Structure of $\Gamma_m$]
\label{p: structure of Gamma_m}
Let $m\geq 4$ be an integer. The Rauzy graph $\Gamma_m$ consists of two directed return loops based at the unique bi-special vertex. We denote the longer loop by $C_m^L$ and the shorter one by $C_m^S$. Their lengths are $F_{m-1}$ and $F_{m-2}$, respectively. Consequently, 
$\mathscr{Z}_m = \operatorname{span}\bigl\{ C_m^L,\, C_m^S \bigr\}$ and $\dim \mathscr{Z}_m = 2$.
\end{prop}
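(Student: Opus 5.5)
We will derive Proposition~\ref{p: structure of Gamma_m} directly from the combinatorics of the Fibonacci language, using only Propositions~\ref{p: fibonacci-basic} and~\ref{p: bi-special scales}.

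\emph{Step 1: degrees.} Fix $m\geq 4$ and write $n\=q_m=F_m-2$. By Proposition~\ref{p: fibonacci-basic}, $\Gamma_m$ has $\card(\fL_n(K))=n+1$ vertices and $\card(\fL_{n+1}(K))=n+2$ edges. Every word of $\fL(K)$ extends both to the right and to the left, since $K$ is minimal, so every vertex has in- and outdegree at least one. The edge count exceeds the vertex count by exactly one. Hence exactly one vertex has outdegree $2$, namely the unique right-special word, and exactly one has indegree $2$, the unique left-special word. By Proposition~\ref{p: bi-special scales}, $n$ is a bi-special scale, so both are the same vertex $w=f_1\ldots f_{n}$. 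Every other vertex therefore has in- and outdegree $1$.

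\emph{Step 2: two loops.} Start at $w$ and follow each of its two outgoing edges. Away from $w$ the path is forced, because outdegree is $1$. It cannot revisit a vertex $v\neq w$, since $v$ has indegree $1$. It therefore returns to $w$, through one of its two incoming edges. The graph is strongly connected: $K$ is minimal, so every word is a factor of every sufficiently long word. Consequently the two forced paths exhaust all vertices and edges. The two return paths must use distinct incoming edges, because a shared incoming edge would force them to coincide backwards from that edge until $w$. They are thus two edge-disjoint simple cycles through $w$ whose lengths sum to $n+2=F_m=F_{m-1}+F_{m-2}$.

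\emph{Step 3: lengths.} The main obstacle is identifying the individual lengths as $F_{m-1}$ and $F_{m-2}$. The first return words of $w$ in $\mathbf f$ are exactly the labels read along the two loops, so their lengths are the loop lengths. I would use $\vartheta^{k}(0)\vartheta^{k-1}(0)=\vartheta^{k+1}(0)$ and the classical fact that $\vartheta^{k}(0)$ and $\vartheta^{k-1}(0)$ share the common prefix of length $F_{k+2}-2$. This fact follows from the near-commutation $\vartheta^{k}(0)\vartheta^{k-1}(0)$ versus $\vartheta^{k-1}(0)\vartheta^{k}(0)$, which differ only in their last two letters. Taking $k=m-2$, one checks that $w$ occurs in $\mathbf f$ at the starts of the consecutive blocks of the decomposition of $\mathbf f$ into $\vartheta^{m-3}(0)$ and $\vartheta^{m-4}(0)$. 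Hence the return times of $w$ are $\abs{\vartheta^{m-3}(0)}=F_{m-1}$ and $\abs{\vartheta^{m-4}(0)}=F_{m-2}$. Both return times occur, and no others arise, since there are only two loops. Alternatively, the lengths can be taken from \cite[Theorem~1]{AR91}, which the statement cites, and only Steps 1--2 need to be proved in detail.

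\emph{Step 4: circulations.} The Euler characteristic gives $\dim\sZ_m=\card E-\card V+1=2$, since $\Gamma_m$ is connected. The two cycles $C_m^L$ and $C_m^S$ are circulations supported on disjoint edge sets, so they are linearly independent. Therefore $\sZ_m=\operatorname{span}\{C_m^L,C_m^S\}$.
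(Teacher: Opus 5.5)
Your proof is correct, but it takes a different route from the paper. The paper gives no argument: it quotes \cite[Theorem~1]{AR91} for the loop structure and lengths, and leaves the ``Consequently'' part implicit. You instead give a self-contained derivation that uses only Propositions~\ref{p: fibonacci-basic} and~\ref{p: bi-special scales}:
\begin{enumerate}
\item the two-loop shape comes from the degree count (complexity $n+1$ together with bi-speciality of $q_m$) and forward/backward determinism away from $w$;
\item the lengths come from the $\vartheta^{m-3}$-block decomposition of $\mathbf f$;
\item $\dim\sZ_m=2$ comes from the Euler characteristic.
\end{enumerate}
The citation is shorter and covers whole families of subshifts. Your argument is elementary and makes explicit the facts used later, namely edge-disjointness of $C_m^L$ and $C_m^S$ (Lemma~\ref{l: comparison}) and $\card E(\Gamma_m)=q_m+2$ (Proposition~\ref{p: boundary repair}). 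Steps~1, 2 and~4 work verbatim for any Sturmian subshift at a bi-special length; only Step~3 is specific to $\vartheta$.

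Two points in Step~3 need repair, though neither is a real gap.

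First, the ``classical fact'' is mis-indexed. Since $\abs{\vartheta^{k-1}(0)}=F_{k+1}<F_{k+2}-2$ for $k\geq 3$, the words $\vartheta^{k}(0)$ and $\vartheta^{k-1}(0)$ cannot share a prefix of length $F_{k+2}-2$. What you actually need, with $k=m-2$, is near-commutation at level $m-3$: the words $\vartheta^{m-3}(0)\vartheta^{m-4}(0)$ and $\vartheta^{m-4}(0)\vartheta^{m-3}(0)$ agree in their first $F_m-2$ letters. This covers a block start where a short block is followed by a long one. When a long block is followed by a long block, you also need that $\vartheta^{m-3}(0)$ begins with $\vartheta^{m-4}(0)$.

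Second, ``no others arise, since there are only two loops'' needs one line, because occurrences of $w$ strictly inside blocks have not been excluded. Here is that line. Let $\ell_S\leq\ell_L$ be the loop lengths, so $\ell_S+\ell_L=F_m$.
\begin{itemize}
\item Since $w$ occurs at every block start, the return time from the start of a short block is at most $F_{m-2}<F_m/2\leq\ell_L$. That return time must therefore be $\ell_S$, so $\ell_S\leq F_{m-2}$ and hence $\ell_L\geq F_{m-1}$.
\item Every return time is at most $F_{m-1}$, and the longer loop is actually traversed by $\mathbf f$, since every edge occurs in $\mathbf f$ infinitely often. Hence $\ell_L=F_{m-1}$ and $\ell_S=F_{m-2}$.
\end{itemize}
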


Now for each integer $m\geq 4$ and each $H\in\sZ_m$, we can define its loop coordinate $\bigl(x_m^L(H),x_m^S(H)\bigr)\in\R^2$ with respect to the basis $\bigl\{C_m^L,\,C_m^S\bigr\}$, i.e., 
\begin{equation}\label{eq: defi of loop coordinate}
H=x_m^L(H)C_m^L+x_m^S(H)C_m^S.
\end{equation}

Let $m\geq 4$ be an integer. We consider the graph homomorphism $\pi_m$ from $\Gamma_{m+1}$ to $\Gamma_{m}$ given by 
\begin{equation*}
\pi_m(x_1x_2\ldots x_{q_{m+1}})\=x_1x_2\ldots x_{q_m}
\end{equation*}
for all $x_1x_2\ldots x_{q_{m+1}}\in\fL_{q_{m+1}}(K)=V(\Gamma_{m+1})$. This map naturally induces a linear pushforward $(\pi_m)_\#$ from $C_1(\Gamma_{m+1}; \mathbb{R})$ to $C_1(\Gamma_m; \mathbb{R})$. Since $(\pi_m)_\#$ commutes with the boundary operator, it sends circulations to circulations and restricts to a linear map between the circulation spaces.

\begin{prop}[Projections of circulations]
\label{p: loops-under-projection}
Let $m\geq 4$ be an integer. The pushforward $(\pi_m)_\#$ acts on the two generating loops
$C_{m+1}^L,\, C_{m+1}^S \in \mathscr{Z}_{m+1}$ by
\begin{equation}\label{eq: projection of cycles}
(\pi_m)_\#\bigl(C_{m+1}^L\bigr) = C_m^L + C_m^S\quad\text{ and }
\quad
(\pi_m)_\#\bigl(C_{m+1}^S\bigr) = C_m^L.
\end{equation}
\end{prop}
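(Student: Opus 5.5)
The plan is to identify each generating loop with a return word of the bi-special word, and then to read off the projection from the recursion $s_{k}=s_{k-1}s_{k-2}$ of the standard Fibonacci words. We write $s_k\=f_1f_2\ldots f_{F_k}$, which is a prefix of $\mathbf f$ because $\mathbf f=\vartheta(\mathbf f)$ and $\vartheta^{k-1}(0)=s_k$. We also write $w_m\=f_1\ldots f_{q_m}$, the bi-special vertex of $\Gamma_m$ by Proposition~\ref{p: bi-special scales}. Since $q_m<q_{m+1}$, the word $w_m$ is a prefix of $w_{m+1}$, so $\pi_m$ sends the bi-special vertex of $\Gamma_{m+1}$ to that of $\Gamma_m$.

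\textbf{Step 1: loops as words.} A closed path of length $\ell$ in $\Gamma_m$ based at $w_m$ is determined by a word $r$ of length $\ell$ such that $rw_m\in\fL(K)$ and $w_m$ is a suffix of $rw_m$. Its edges are the length-$(q_m+1)$ factors of $rw_m$. The path is a simple return loop exactly when $w_m$ occurs in $rw_m$ only as a prefix and as a suffix. Using Proposition~\ref{p: structure of Gamma_m}, I would show that the two return loops $C_m^L$ and $C_m^S$ are read by $r_m^L=s_{m-1}$ and $r_m^S=s_{m-2}$; their lengths $F_{m-1}$ and $F_{m-2}$ match. For this it suffices to check two things. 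First, $s_{m-1}w_m$ and $s_{m-2}w_m$ are factors of $\mathbf f$ ending in $w_m$; this follows from the classical near-commutation identity $s_{k}s_{k-1}$ versus $s_{k-1}s_k$, which differ only in their last two letters, together with $q_m=F_m-2$. Second, each of the two paths is a closed path based at $w_m$ of the correct length. Since $\Gamma_m$ has exactly two loops of lengths $F_{m-1}\neq F_{m-2}$ (for $m\geq 5$), a closed path of length $F_{m-2}$ must be $C_m^S$. A closed path of length $F_{m-1}$ is either $C_m^L$ or a concatenation of copies of $C_m^S$, and the latter is excluded once we know $w_m$ does not reoccur inside $s_{m-1}w_m$ at position $F_{m-2}$.

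\textbf{Step 2: projection.} The graph homomorphism $\pi_m$ maps the path read by a word $r$ from $w_{m+1}$ to the path read by the same word $r$ from $w_m$. This holds because edges project by truncation, so the projected edges are again the consecutive length-$(q_m+1)$ factors of $rw_{m+1}$, and these coincide with those of $rw_m$ up to the trailing part. Hence $(\pi_m)_\#(C^L_{m+1})$ is the closed walk in $\Gamma_m$ read by $s_m=s_{m-1}s_{m-2}=r_m^Lr_m^S$, which as a $1$-chain equals $C_m^L+C_m^S$. Likewise, $(\pi_m)_\#(C^S_{m+1})$ is read by $s_{m-1}=r_m^L$, giving $C_m^L$. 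As a consistency check, any closed walk from the bi-special vertex decomposes into first-return segments, so its image is $aC_m^L+bC_m^S$ with $a,b\in\N_0$ and $aF_{m-1}+bF_{m-2}$ equal to its length. Comparing lengths $F_m$ and $F_{m-1}$ leaves only $(1,1)$ and $(1,0)$, apart from degenerate all-short solutions.

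\textbf{Main obstacle.} The crux is ruling out those all-short decompositions, i.e.\ showing that the word read contains an occurrence of the long return word. This matters most for small $m$ (e.g.\ $m=4$, where $F_{2}=1$ divides everything). I would handle it directly via the substitution: $\vartheta$ maps return words of $w_m$ to return words of $w_{m+1}$, with $\vartheta(s_{m-1})=s_m$. I would also use the fact that $s_{m-2}s_{m-2}$ followed by $w_m$ is not in $\fL(K)$ in the required position, since the short return word cannot be repeated consecutively at the base, as $11\notin\fL(K)$ after desubstitution. The small case $m=4$ (with $\Gamma_4$ and $\Gamma_5$ of length-$1$ and length-$3$ vertices) would be verified by hand.
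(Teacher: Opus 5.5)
Your argument is correct, but it uses more combinatorics on words than the paper needs. The paper never identifies the loops explicitly. It only observes that $(\pi_m)_\#$ carries a closed walk based at the bi-special vertex of $\Gamma_{m+1}$ to a closed walk based at the bi-special vertex of $\Gamma_m$. Hence $(\pi_m)_\#(C_{m+1}^L)=aC_m^L+bC_m^S$ and $(\pi_m)_\#(C_{m+1}^S)=a'C_m^L+b'C_m^S$ with $a,b,a',b'\in\N_0$. It then solves $F_m=aF_{m-1}+bF_{m-2}$ and $F_{m-1}=a'F_{m-1}+b'F_{m-2}$. For $m\ge5$, the facts $F_m\le 2F_{m-1}$, $F_{m-2}\ge 2$, and $\gcd(F_{m-2},F_{m-1})=\gcd(F_{m-2},F_m)=1$ force $(a,b,a',b')=(1,1,1,0)$. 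The case $m=4$ is checked by drawing $R_1(K)$ and $R_3(K)$.

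This is exactly your ``consistency check'', and it is already a complete proof for $m\ge5$. The all-short solutions you single out as the main obstacle do not exist there, because $F_{m-2}$ divides neither $F_{m-1}$ nor $F_m$; they occur only at $m=4$, where $F_2=1$. For the same reason, the alternative ``concatenation of copies of $C_m^S$'' in your Step~1 is excluded by divisibility alone, with no need to inspect occurrences of $w_m$.

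What your route buys is an explicit description of the loops by the return words $s_{m-1}$ and $s_{m-2}$. Under it, $(\pi_m)_\#$ is simply the recursion $s_m=s_{m-1}s_{m-2}$ (equivalently, $\vartheta$ acting on return words). Once Step~1 is in place, Step~2 yields the formula directly, with no case analysis even at $m=4$. The price is the standard-word facts (near-commutation of $s_{m-1}s_{m-2}$ and $s_{m-2}s_{m-1}$, and so on) that the length count avoids.

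A few slips to fix:
\begin{itemize}
\item With $s_k=f_1\ldots f_{F_k}$ you have $s_k=\vartheta^{k-2}(0)$, not $\vartheta^{k-1}(0)$.
\item The recursion $s_k=s_{k-1}s_{k-2}$ holds only for $k\ge4$, since $s_1=s_2=0$; this suffices here.
\item In Step~1, the closed-path condition is that $w_m$ is a \emph{prefix} of $rw_m$, since being a suffix is automatic. So you must show that $s_{m-1}w_m$ and $s_{m-2}w_m$ are factors of $\mathbf f$ \emph{beginning} with $w_m$.
\end{itemize}

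Your convention of encoding a loop by $rw_m$ rather than $w_mr$ is the right one. It is what makes truncation of edges compatible with reading the same $r$ in Step~2. The splitting of the walk read by $r_m^Lr_m^S$ into $C_m^L$ followed by $C_m^S$ uses exactly that $r_m^Sw_m$ begins with $w_m$, which is worth stating explicitly.
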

\begin{proof}
    Let $(\pi_m)_\#\bigl(C_{m+1}^L\bigr) = aC_m^L + bC_m^S$ and $(\pi_m)_\#\bigl(C_{m+1}^S\bigr) =a' C_m^L+b'C_m^S$, where $a,\,b,\,a',\,b'\in\R$. It immediately follows from the definition of $\pi_m$ that $a,\,b,\,a',\,b'\in\N_0$. Now computing the length of the loops, we see that $F_m=aF_{m-1}+bF_{m-2}$ and $F_{m-1}=a'F_{m-1}+b'F_{m-2}$. If $m\geq 5$, it follows from $F_m\leq 2F_{m-1}$, $\gcd(F_{m},F_{m-2})=\gcd(F_{m-1},F_{m-2})=1$, and $F_{m-2}\geq 2$ that the unique solution is $(a,b,a',b')=(1,1,1,0)$. If $m=4$, then it is straightforward to check that (\ref{eq: projection of cycles}) holds by drawing $\Gamma_m=R_{1}(K)$ and $\Gamma_{m+1}=R_3(K)$.
\end{proof}

Let $m\geq 4$ be an integer. For the invariant measure $\mu_K$, we consider the chain $\mu_m\in C_1(\Gamma_m;\R)$ given by $\mu_m(e)\=\mu_K([e])$ for all $e\in E(\Gamma_m)=\fL_{q_m+1}(K)$. It follows from the invariance of $\mu_K$ that $\partial\mu_m(v)=\mu_K([0v])+\mu_K([1v])-\mu_K([v0])-\mu_K([v1])=\mu_K\bigl(\sigma^{-1}([v])\bigr)-\mu_K([v])=0$, i.e., $\mu_m\in\sZ_m$. Since $\mu_K$ is supported on $K$, we see that $(\pi_m)_{\#}(\mu_{m+1})=\mu_m$. It then follows from Proposition~\ref{p: loops-under-projection} that 
\begin{equation}\label{eq: inductive formula of loop coordinates}
    \begin{pmatrix}
x_m^L(\mu_m)\\
x_m^S(\mu_m)
\end{pmatrix}=
\begin{pmatrix}
    1 & 1\\
    1 & 0
\end{pmatrix}
\begin{pmatrix}
    x_{m+1}^L(\mu_{m+1})\\
x_{m+1}^S(\mu_{m+1})
\end{pmatrix}.
\end{equation}
Note that eigenvalues of $\begin{pmatrix}
    1 & 1\\
    1 & 0
\end{pmatrix}$ are $\frac{1+\sqrt{5}}{2}$ and $\frac{1-\sqrt{5}}{2}$, and the associated eigenspaces are spanned by $\begin{pmatrix}
    (1+\sqrt{5})/2\\
    1
\end{pmatrix}$ and $\begin{pmatrix}
    (1-\sqrt{5})/2\\
    1
\end{pmatrix}$. Since $\begin{pmatrix}
    a\\
    b
\end{pmatrix}=\frac{2a+(\sqrt{5}-1)b}{2\sqrt{5}}\begin{pmatrix}
    (1+\sqrt{5})/2\\
    1
\end{pmatrix}+\frac{-2a+(\sqrt{5}+1)b}{2\sqrt{5}}\begin{pmatrix}
    (1-\sqrt{5})/2\\
    1
\end{pmatrix}$ for all $a,\,b\in\R$, we define the unstable and stable coordinates
$u_m(H),\,s_m(H)$ for each $H\in\sZ_m$
by 
\begin{equation}\label{eq: defi of two coordinates}
u_m(H)\=\frac{2x_m^L(H)+(\sqrt{5}-1)x_m^S(H)}{2\sqrt{5}}\quad\text{ and }\quad s_m(H)\=\frac{-2x_m^L(H)+(1+\sqrt{5})x_m^S(H)}{2\sqrt{5}}.
\end{equation}
It then follows from (\ref{eq: inductive formula of loop coordinates}) that 
\begin{equation}\label{eq: eigendirection coordinate for mu_K}
    u_m(\mu_m)=\frac{1+\sqrt{5}}{2}u_{m+1}(\mu_{m+1})\quad\text{ and }\quad s_m(\mu_m)=\frac{1-\sqrt{5}}{2}s_{m+1}(\mu_{m+1}).
\end{equation}
By the Perron--Frobenius theorem\footnote{Note that for each integer $n\geq 4$, the loop coordinates $x_n^L(\mu_n)$ and $x_n^S(\mu_n)$ are positive. Thus, for each integer $m\geq 4$, $\begin{pmatrix}
    1 &1\\
    1 &0
\end{pmatrix}^k\begin{pmatrix}
    x_{m+k}^L(\mu_{m+k})\\
    x_{m+k}^S(\mu_{m+k})
\end{pmatrix}$ converges to $\begin{pmatrix}
    (1+\sqrt{5})/2\\
    1
\end{pmatrix}$ as $k\to+\infty$ under the Hilbert projective metric.} (cf.~\cite[Theorem~1]{Bi57}), we see that $s_m(\mu_m)=0$. We conclude the above discussions in the following proposition.

\begin{prop}[Circulation properties]\label{p: cycle properties}
Let $m\geq 4$ be an integer. The following hold:

\begin{enumerate}
\smallskip
\item The chain $\mu_m$ is in $\sZ_m$, $(\pi_{m})_{\#}(\mu_{m+1})=\mu_m$, and $s_m(\mu_m)=0$.
\smallskip
\item For each $H\in \sZ_{m+1}$, $s_m((\pi_m)_{\#}(H))=\frac{1-\sqrt{5}}{2}s_{m+1}(H)$.
\end{enumerate}
\end{prop}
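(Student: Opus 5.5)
The statement to prove is Proposition~\ref{p: cycle properties}. Most of it is already contained in the discussion preceding it, so the plan is to organize those observations and supply the two missing justifications: linearity of the coordinates, and the vanishing of the stable coordinate.

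\emph{Statement (i).} First, $\mu_m\in\sZ_m$ follows from the invariance computation already displayed. For a vertex $v\in\fL_{q_m}(K)$, the incoming edges are $0v,1v$ and the outgoing edges are $v0,v1$, where words outside $\fL(K)$ have $\mu_K$-measure $0$. Hence $\partial\mu_m(v)=\mu_K(\sigma^{-1}[v])-\mu_K([v])=0$.

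Second, for $(\pi_m)_\#(\mu_{m+1})=\mu_m$, I would note that each edge $e\in\fL_{q_m+1}(K)$ is the image under $\pi_m$ of exactly those edges of $\Gamma_{m+1}$ having $e$ as a prefix. Because $\mu_K$ is supported on $K$, the cylinder $[e]$ is the disjoint union of these cylinders up to a null set, so the coefficients add up correctly.

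Third, for $s_m(\mu_m)=0$, Proposition~\ref{p: loops-under-projection} and linearity give the recursion (\ref{eq: inductive formula of loop coordinates}), hence (\ref{eq: eigendirection coordinate for mu_K}). Iterating, $s_m(\mu_m)=\bigl(\frac{1-\sqrt5}{2}\bigr)^k s_{m+k}(\mu_{m+k})$. Rather than invoking Perron--Frobenius, it suffices to bound $s_{m+k}(\mu_{m+k})$ uniformly in $k$. The loop coordinates of $\mu_n$ are nonnegative and satisfy $\sum_e\mu_n(e)=1=F_{n-1}x_n^L+F_{n-2}x_n^S$, so both are bounded by $1$. Therefore $\abs{s_m(\mu_m)}\le C\abs{(1-\sqrt5)/2}^k\to0$, which gives $s_m(\mu_m)=0$.

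\emph{Nonnegativity of the loop coordinates.} This is the only point I expect to need care, and it is mild. Each loop $C_n^L,C_n^S$ contains an edge not lying on the other loop; for instance, the two edges leaving the bi-special vertex are distinct. The coefficient of $\mu_n$ on such an edge is its $\mu_K$-measure, which is $\geq0$, so both loop coordinates are nonnegative.

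\emph{Statement (ii).} This is purely linear algebra. Write $H=x^LC_{m+1}^L+x^SC_{m+1}^S$. By (\ref{eq: projection of cycles}), $(\pi_m)_\#H=(x^L+x^S)C_m^L+x^LC_m^S$. Plugging into (\ref{eq: defi of two coordinates}) gives
\[
s_m((\pi_m)_\#H)=\frac{-2(x^L+x^S)+(1+\sqrt5)x^L}{2\sqrt5}=\frac{(\sqrt5-1)x^L-2x^S}{2\sqrt5}.
\]
On the other hand,
\[
\frac{1-\sqrt5}{2}s_{m+1}(H)=\frac{1-\sqrt5}{2}\cdot\frac{-2x^L+(1+\sqrt5)x^S}{2\sqrt5}=\frac{(\sqrt5-1)x^L-2x^S}{2\sqrt5},
\]
using $(1-\sqrt5)(1+\sqrt5)=-4$. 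The two expressions agree, which proves (ii).
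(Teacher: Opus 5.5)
Your proposal is correct. For $\mu_m\in\sZ_m$, for $(\pi_m)_\#(\mu_{m+1})=\mu_m$, and for statement~(ii), you follow the paper's own argument. That argument uses the invariance of $\mu_K$, the fact that $\mu_K$ is carried by $K$, and direct substitution of (\ref{eq: projection of cycles}) into (\ref{eq: defi of two coordinates}).

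The one genuine divergence is the proof that $s_m(\mu_m)=0$.

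\begin{itemize}
\item \emph{The paper's route.} It iterates (\ref{eq: inductive formula of loop coordinates}) and appeals to the Perron--Frobenius theorem in Birkhoff's projective form. The loop coordinates of each $\mu_{m+k}$ are positive, so applying $\begin{pmatrix}1&1\\1&0\end{pmatrix}^k$ to them drives them, in the Hilbert projective metric, onto the Perron eigenline. Hence $(x_m^L(\mu_m),x_m^S(\mu_m))$ has no stable component.
\item \emph{Your route.} You use the exact relation $s_m(\mu_m)=\bigl(\frac{1-\sqrt5}{2}\bigr)^k s_{m+k}(\mu_{m+k})$. You combine it with an a priori bound on $s_{m+k}(\mu_{m+k})$ that comes from the mass normalization $F_{n-1}x_n^L+F_{n-2}x_n^S=1$. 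The stable coordinate is then killed by the contraction factor.
\end{itemize}

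Your argument is more elementary and self-contained. It also justifies the nonnegativity of the loop coordinates, which the paper only asserts in a footnote. What it uses is that the stable eigenvalue has modulus less than $1$. The projective argument needs only positivity of the coordinates and primitivity of the matrix, regardless of the size of the second eigenvalue.

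You could also drop the sign discussion entirely. The two loops are edge-disjoint, because their lengths $F_{n-1}+F_{n-2}=F_n=q_n+2$ exhaust the edges of $\Gamma_n$. Hence $1=\norm{\mu_n}_n=F_{n-1}\abs{x_n^L(\mu_n)}+F_{n-2}\abs{x_n^S(\mu_n)}$, exactly as in the proof of Lemma~\ref{l: comparison}, and this already bounds both coordinates.
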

\begin{proof}
    Here (i) is already proved in the discussions above. For (ii), let $H=x_{m+1}^L(H)C^L_{m+1}+x_{m+1}^S(H)C_{m+1}^S\in \sZ_{m+1}$, it follows from Proposition~\ref{p: loops-under-projection} that $(\pi_m)_{\#}(H)=(x_{m+1}^L(H)+x_{m+1}^S(H))C_m^L+x_{m+1}^L(H)C_m^S$, i.e., $x_m^L((\pi_m)_{\#}(H))=x_{m+1}^L(H)+x_{m+1}^S(H)$ and $x_m^S((\pi_m)_{\#}(H))=x_{m+1}^L(H)$. This, together with (\ref{eq: defi of two coordinates}), yields (ii).
\end{proof}

For each integer $m\geq 4$ and each $H=\sum_{e\in E(\Gamma_m)}H(e)e\in C_1(\Gamma_m;\R)$, 
we define the mass and norm by 
\begin{equation}\label{eq: defi of mass and norm}
    \mathfrak{m}_m(H)\=\sum_{e\in E(\Gamma_m)}H(e)\quad\text{ and }\quad\norm{H}_m\=\sum_{e\in E(\Gamma_m)}\abs{H(e)}.
\end{equation}
We conclude this subsection with the following comparison lemma.

\begin{lemma}[Comparison]
\label{l: comparison}
There exists a constant $c_1>0$ such that for each integer $m\geq 4$ and each $H\in\sZ_m$,
\begin{equation*}
\norm{H-\mu_m}_m\leq c_1(\abs{\mathfrak{m}_m(H)-1}+q_m
\abs{s_m(H)}).
\end{equation*}
\end{lemma}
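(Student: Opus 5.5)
Since $\sZ_m$ is two-dimensional with basis $\{C_m^L,C_m^S\}$ (Proposition~\ref{p: structure of Gamma_m}), I will work in the eigen-coordinates $(u_m,s_m)$ of (\ref{eq: defi of two coordinates}) and reduce the claim to two one-dimensional estimates. By definition of $u_m,s_m$, every $H\in\sZ_m$ can be written as
\begin{equation*}
H=u_m(H)\,E_m^u+s_m(H)\,E_m^s,\qquad E_m^u\=\tfrac{1+\sqrt5}{2}C_m^L+C_m^S,\quad E_m^s\=\tfrac{1-\sqrt5}{2}C_m^L+C_m^S.
\end{equation*}
By Proposition~\ref{p: cycle properties}~(i), $s_m(\mu_m)=0$, so $\mu_m=u_m(\mu_m)E_m^u$. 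Hence
\begin{equation*}
H-\mu_m=\bigl(u_m(H)-u_m(\mu_m)\bigr)E_m^u+s_m(H)\,E_m^s.
\end{equation*}
The triangle inequality for $\norm{\cdot}_m$ then gives $\norm{H-\mu_m}_m\le \abs{u_m(H)-u_m(\mu_m)}\,\norm{E_m^u}_m+\abs{s_m(H)}\,\norm{E_m^s}_m$. The two loops share only the bi-special vertex, and their edges are disjoint because the bi-special vertex has outdegree two and each loop leaves it by a different edge. So for $H=aC_m^L+bC_m^S$ we have $\norm{H}_m=\abs a F_{m-1}+\abs b F_{m-2}$ and $\mathfrak m_m(H)=aF_{m-1}+bF_{m-2}$.

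With these formulas, $\norm{E_m^s}_m=\frac{\sqrt5-1}{2}F_{m-1}+F_{m-2}\le 2F_m\le c\,q_m$, since $q_m=F_m-2$ is comparable to $F_m$ for $m\ge4$ (taking the constant large enough to absorb the small cases $m=4,5$). This handles the stable term with the factor $q_m$. For the unstable term, mass is linear and $\mathfrak m_m(\mu_m)=\sum_{e}\mu_K([e])=1$. Therefore
\begin{equation*}
\mathfrak m_m(H)-1=\bigl(u_m(H)-u_m(\mu_m)\bigr)\mathfrak m_m(E_m^u)+s_m(H)\,\mathfrak m_m(E_m^s).
\end{equation*}
Since $E_m^u$ has positive coefficients, $\mathfrak m_m(E_m^u)=\norm{E_m^u}_m$. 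Solving for the $u$-difference and multiplying by $\norm{E_m^u}_m$ gives
\begin{equation*}
\abs{u_m(H)-u_m(\mu_m)}\,\norm{E_m^u}_m\le \abs{\mathfrak m_m(H)-1}+\abs{s_m(H)}\,\abs{\mathfrak m_m(E_m^s)}.
\end{equation*}
Moreover $\abs{\mathfrak m_m(E_m^s)}\le\norm{E_m^s}_m\le c\,q_m$. In fact $\mathfrak m_m(E_m^s)=\frac{1-\sqrt5}{2}F_{m-1}+F_{m-2}$ is exponentially small by Binet's formula, but the crude bound suffices.

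Combining the two estimates yields $\norm{H-\mu_m}_m\le \abs{\mathfrak m_m(H)-1}+2c\,q_m\abs{s_m(H)}$, which is the claim with $c_1=\max\{1,2c\}$. The only point needing care is the structural fact that the two loops are edge-disjoint with lengths exactly $F_{m-1},F_{m-2}$. That fact is given by Proposition~\ref{p: structure of Gamma_m}, and the edge-disjointness follows from the outdegree-two observation above. The rest is linear algebra with constants uniform in $m$.
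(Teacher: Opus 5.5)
Your proof is correct, and it is organized differently from the paper's.

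The paper first uses linearity of $\mathfrak{m}_m$ and $s_m$, together with $\mathfrak{m}_m(\mu_m)=1$ and $s_m(\mu_m)=0$, to reduce to the homogeneous bound $\norm{H}_m\le c_1(\abs{\mathfrak{m}_m(H)}+q_m\abs{s_m(H)})$ for $H\in\sZ_m$. It then splits according to the sign of $x_m^L(H)x_m^S(H)$. If the two loop coordinates have the same sign, then $\norm{H}_m=\abs{\mathfrak{m}_m(H)}$ exactly. If they have opposite signs, the terms $-2x_m^L(H)$ and $(1+\sqrt5)x_m^S(H)$ in the formula for $s_m(H)$ have the same sign. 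So $\abs{x_m^L(H)}+\abs{x_m^S(H)}$ is bounded by a constant times $\abs{s_m(H)}$, and hence $\norm{H}_m\le 2q_m(\abs{x_m^L(H)}+\abs{x_m^S(H)})$ is bounded by a constant times $q_m\abs{s_m(H)}$.

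You split along the eigenbasis $E_m^u,E_m^s$ instead of along quadrants. The stable part is controlled by $\norm{E_m^s}_m=O(q_m)$. The unstable coefficient is recovered from the mass, because $E_m^u$ is a positive chain and so $\mathfrak{m}_m(E_m^u)=\norm{E_m^u}_m$. The paper's case split is slightly shorter and uses the two-dimensionality directly. Your version is more structural and would go through unchanged for circulation spaces of higher dimension: all it needs is a positive Perron--Frobenius direction, on which mass equals norm, and an $O(q_m)$ bound on the norms of the complementary eigen-chains.

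Your side remark also has content. Since $\mathfrak{m}_m(E_m^s)=\bigl(\frac{1-\sqrt5}{2}\bigr)^{m-1}$ while $\norm{E_m^s}_m\asymp q_m$ uniformly in $m\ge 4$, testing the lemma on $H=\mu_m+tE_m^s$ shows that the factor $q_m$ in front of $\abs{s_m(H)}$ is sharp.

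One small correction concerns your justification that $C_m^L$ and $C_m^S$ are edge-disjoint. Distinct first edges at the bi-special vertex do not by themselves stop two paths from merging later. What does stop it is that every other vertex of $\Gamma_m$ has indegree one, since the bi-special word is the unique left-special word of length $q_m$. Alternatively, it follows from the count $F_{m-1}+F_{m-2}=F_m=q_m+2=\card E(\Gamma_m)$. The paper uses the same fact implicitly when it writes $\norm{H}_m=F_{m-1}\abs{x_m^L(H)}+F_{m-2}\abs{x_m^S(H)}$.
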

\begin{proof}
   Since $\mu_K$ is a probability measure, we see that $\mathfrak{m}_m(\mu_m)=1$ for each integer $m\geq 4$. By Proposition~\ref{p: cycle properties}~(i), $s_m(\mu_m)=0$ for each integer $m\geq 4$. Thus, it suffices to prove the following claim.

    \smallskip
    \emph{Claim.} There exists a constant $c_1>0$ such that for each integer $m\geq 4$ and each $H\in\sZ_m$,
    \begin{equation*}\label{ineq: estimate of norm H by mass and stable coordinate}
        \norm{H}_m\leq c_1(\abs{\mathfrak{m}_m(H)}+q_m\abs{s_m(H)}).
    \end{equation*}
    \smallskip
    Let $m\geq 4$ be an integer and $H\in\sZ_m$. Recall that $H=x_m^L(H)C_m^L+x_m^S(H)C_m^S$ (cf.~(\ref{eq: defi of loop coordinate})). It then follows from Proposition~\ref{p: structure of Gamma_m} that $\norm{H}_m=F_{m-1}\Absbig{x_m^L(H)}+F_{m-2}\Absbig{x_m^S(H)}$ and $\mathfrak{m}_m(H)=F_{m-1}x_{m}^L(H)+F_{m-2}x_{m}^S(H)$.
    
    If $x_m^L(H)x_{m}^S(H)\geq 0$, then $\norm{H}_m=\abs{\mathfrak{m}_m(H)}$. If $x_{m}^L(H)x_{m}^S(H)<0$, then by (\ref{eq: defi of two coordinates}), 
    \begin{equation*}
        \begin{aligned}
        \norm{H}_m\leq 2q_m\bigl(\Absbig{x_m^L(H)}+\Absbig{x_m^S(H)}\bigr)\leq 2q_m(\sqrt{5}+2)\abs{s_m(H)}.
        \end{aligned}
    \end{equation*}
    We conclude that the claim holds with $c_1\=4+2\sqrt{5}$ and the proof is now complete.
\end{proof}

\subsection{Accident decomposition and the approximating measure}\label{ss: accident decomposition}

In the remaining parts of this section, we fix some periodic point $p\in \Sigma$ of prime period $N\in\N$ for the proof of Proposition~\ref{p: transport gap}. Then the corresponding periodic measure $\nu_p$ is equal to $\frac{1}{N}\sum_{i=0}^{N-1}\delta_{\sigma^i(p)}$, where $\delta_x$ stands for the Dirac measure at $x\in\Sigma$. In this subsection, we construct the measure $\eta_z$ close to $\nu_p$ and supported on the Fibonacci subshift $K$, with the notion of accident introduced by Bruin and Leplaideur in \cite{BL13,BL15}. 

\begin{definition}[Shadow length and accidents]\label{d: shadow length and accident}
For each $x\in\Sigma$, we define the shadow length
\begin{equation*}
\ell_K(x)\=\sup\{n\in\N:x_1x_2\ldots x_{n}\in\fL_n(K)\}.
\end{equation*}
For the periodic point $p$, denote $\ell_i\=\ell_K\bigl(\sigma^i(p)\bigr)$ for all $i\in\Z/N\Z$. Sine $K$ is $\sigma$-invariant, $l_{i}\geq l_{i-1}-1$ for all $i\in\Z/N\Z$. We say that there is an \emph{accident} at $\sigma^i(p)$ for some $i\in Z/N\Z$ if $l_i\geq l_{i-1}$.
\end{definition}

It is straightforward to see that there exists $i\in\Z/N\Z$ so that we have an accident at $\sigma^i(p)$. Without loss of generality, we may assume that there is an accident at $p$. We then have the following decomposition of the periodic orbit.

\begin{definition}[Accident times for the periodic orbit]\label{d: accident times}
Let $0=B_0<B_1<\cdots<B_k=N$ for some $k\in\N$ be the accident times for the periodic orbit $\bigl\{\sigma^i(p):i\in\Z/N\Z\bigr\}$, inductively defined by $B_0\=0$ and $B_{j+1}\=\min\bigl\{N\geq i>B_j:\text{ there is an accident at }\sigma^{i}(p)\bigr\}$.
\end{definition}

For each $j\in \Z/k\Z$, denote
\begin{equation}\label{eq: defi of Dj bj rj}
    D_j\=\ell_{B_j},\quad b_j\=B_{j+1}-B_j\in\N,\quad\text{ and }\quad r_j\=D_j-b_j\in\N_0.
\end{equation} 
It follows that for each $j\in\Z/k\Z$ and each integer $t\in [0,b_j)$, $\ell_{B_j+t}=D_j-t=r_j+b_j-t$. Now we can calculate the right-hand side of (\ref{ineq: transport gap}): for each modulus of continuity $\omega$,
\begin{equation}\label{ineq: estimate of bonus}
    \int\!-f_{K,\omega}\,\mathrm{d}\nu_p=\frac1N\sum_{i=0}^{N-1}\omega\bigl(2^{-\ell_i}\bigr)=\frac1N\sum_{j=0}^{k-1}(\assocsum{2}{D_j}(1)-\assocsum{2}{r_j}(1)).
\end{equation}

Now we review the following results from \cite{BL15}, which connect accidents to bi-special words and providing estimates on $b_j$.
\begin{prop}[Accident residual is bi-speical]\label{p: accident residual is bi-special}
Suppose $j\in\Z/k\Z$. If $r_j\in\N$, then the residual word $\bigl(\sigma^{B_{j+1}}(p)\bigr)\bigl|_{r_j}$ is
bi-special. Consequently, there exists a unique integer $n_j\geq 3$ such that $r_j=F_{n_j}-2=q_{n_j}$.
\end{prop}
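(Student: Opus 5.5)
The plan is to argue directly from the definition of an accident at $\sigma^{B_{j+1}}(p)$. Write $w\=\bigl(\sigma^{B_{j+1}}(p)\bigr)|_{r_j}$ and set $t\=b_j-1$, so that the point $y\=\sigma^{B_j+t}(p)=\sigma^{B_{j+1}-1}(p)$ has shadow length $\ell_{B_{j+1}-1}=r_j+1$. The word $y|_{r_j+1}=p_{B_{j+1}}\,w$ is therefore in $\fL(K)$, while $y|_{r_j+2}=p_{B_{j+1}}\,w\,a$ is not, where $a\=p_{B_{j+1}+r_j+1}$ is the symbol following $w$ in $\sigma^{B_{j+1}}(p)$. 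On the other side, since there is an accident at $\sigma^{B_{j+1}}(p)$, we have $\ell_{B_{j+1}}\ge \ell_{B_{j+1}-1}=r_j+1$. Hence $w\,a\in\fL(K)$.

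Right-specialness comes from factorial closure. The word $p_{B_{j+1}}w\in\fL(K)$ must have some legal extension $p_{B_{j+1}}w\,a'\in\fL(K)$, because $K$ is a subshift and every legal word extends to the right. Since $p_{B_{j+1}}wa\notin\fL(K)$, we get $a'\neq a$. Both $wa'$ (a suffix of a legal word) and $wa$ then lie in $\fL(K)$, so $w$ is right-special.

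Left-specialness needs a second witness before $w$. One witness is already present: $p_{B_{j+1}}w\in\fL(K)$. For the other, I would use $\ell_{B_{j+1}}\ge r_j+1$ to see that $wa$ is legal, and then show that $\bar c\,w\,a\in\fL(K)$, where $\bar c\=1-p_{B_{j+1}}$. The tool is the fact from Proposition~\ref{p: fibonacci-basic} that each length has exactly one right-special word. Suppose every legal left extension $cw$ of $w$ used the single letter $c=p_{B_{j+1}}$. Then $w$ would have a unique left extension, and $wa$ would also extend only to $p_{B_{j+1}}wa$, since $K$ is minimal with no isolated left behaviour. That would make $p_{B_{j+1}}wa$ legal, contradicting the previous paragraph. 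Making this step rigorous is the main obstacle. I would use the standard equivalence, valid for Sturmian words with complexity $n+1$, that a word has a unique left extension if and only if it is not left-special, and that for non-left-special words every right extension lifts uniquely to the left. Equivalently, one counts: since $\card\fL_{n+1}(K)-\card\fL_n(K)=1$, if $w$ is not left-special then left extension is a bijection from words starting with $w$ of length $r_j+1$ onto those of length $r_j+2$, and this forces $p_{B_{j+1}}wa\in\fL(K)$. If this bookkeeping becomes delicate, I would fall back on citing \cite[Proposition~2.2]{BL15} and the accident lemmas there, whose proofs carry out exactly this argument.

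Once $w$ is bi-special with $r_j\in\N$, Proposition~\ref{p: bi-special scales} gives $w=f_1\cdots f_{F_m-2}$ for some $m\ge4$, so $r_j=F_m-2$. Uniqueness of $n_j\=m$ follows because $F_m-2$ is strictly increasing for $m\ge3$. The bound $n_j\ge3$ is automatic, since $r_j\ge1$ forces $F_m\ge3$.
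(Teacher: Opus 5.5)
Your argument is correct. It differs from the paper in being self-contained: the paper gets the first half by citing \cite[Lemma~2.9]{BL15} and then applies Proposition~\ref{p: bi-special scales}.

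You instead derive bi-speciality directly from three shadow-length facts:
\begin{itemize}
\item $p_{B_{j+1}}w\in\fL(K)$ and $p_{B_{j+1}}wa\notin\fL(K)$, because $\ell_{B_{j+1}-1}=r_j+1$;
\item $wa\in\fL(K)$, because there is an accident at $B_{j+1}$.
\end{itemize}
This makes visible that the first half uses nothing about Sturmian complexity. It needs only that $K$ is a binary subshift with $\sigma(K)=K$; the complexity $n+1$ enters only through Proposition~\ref{p: bi-special scales}.

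The step you flag as the main obstacle is in fact one line. Since $K$ is minimal, $\sigma(K)=K$, so $wa$ has a legal left extension $c'wa$. Its prefix $c'w$ is legal, so if $w$ were not left-special then $c'=p_{B_{j+1}}$, contradicting $p_{B_{j+1}}wa\notin\fL(K)$.

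So neither the phrase about ``isolated left behaviour'' nor the fallback count via $\card\fL_{n+1}(K)-\card\fL_n(K)=1$ is needed. The count is also somewhat misattributed. What makes first-letter deletion $c'wx\mapsto wx$ a bijection when $w$ is not left-special is surjectivity of $\sigma$ on $K$, not the complexity count.

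Finally, add the case $r_j=0$, where $n_j=3$ since $q_3=F_3-2=0$. The paper records it, and Proposition~\ref{p: accident gap} uses $n_j$ for every $j$.
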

\begin{proof}
    The first half immediately follows from \cite[Lemma~2.9]{BL15}. For the second half, the case when $r_j\in\N$ follows from Proposition~\ref{p: bi-special scales} and we get $n_j=3$ when $r_j=0$.
\end{proof}

\begin{prop}[Accident gap estimate]
\label{p: accident gap}
There exists a constant $c_3>0$ such that
for each $j\in\Z/k\Z$, $b_j\geq c_3(r_{j-1}+1)$.
\end{prop}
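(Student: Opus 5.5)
The plan is to exploit the combinatorics of the Fibonacci language at the accident time $B_j$, where the residual word is bi-special. Fix $j\in\Z/k\Z$. If $r_{j-1}=0$, then $b_j\geq 1$ suffices for the bound, provided $c_3\leq 1$. So assume $r_{j-1}\in\N$. By Proposition~\ref{p: accident residual is bi-special}, applied to the index $j-1$, the word $w\=\bigl(\sigma^{B_j}(p)\bigr)\bigl|_{r_{j-1}}$ is bi-special. Hence $r_{j-1}=q_n$ for some $n\geq 4$, and $w=f_1\ldots f_{q_n}$.

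Set $x\=\sigma^{B_j}(p)$. Its prefix of length $D_j=\ell_{B_j}\geq r_{j-1}$ lies in $\fL(K)$, while its prefix of length $D_j+1$ does not. The key point is that the accident at $B_j$ arises because the orbit at step $B_j-1$ had legal prefix of length $D_{j-1}$, and this legal word ends, after $b_{j-1}$ shifts, in the bi-special word $w$ of length $q_n$ followed by a forbidden continuation. By definition of an accident, $\ell_{B_j}\geq \ell_{B_j-1}=r_{j-1}+1$, so $D_j\geq q_n+1$. Thus $x|_{D_j}$ is a legal word beginning with $w$.

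I then use the structure of the Rauzy graph $\Gamma_n$ (Proposition~\ref{p: structure of Gamma_m}). Any legal word beginning with the bi-special vertex $w$ traces a path in $\Gamma_n$ starting at $w$. The next accident, at time $B_{j+1}=B_j+b_j$, produces a new residual. If $r_j\in\N$, this residual is again bi-special, and in particular returns to a bi-special vertex. I would show that $b_j$ is at least the length of a full return loop to $w$ in $\Gamma_n$, i.e.\ at least $F_{n-2}$. The mechanism is as follows. Before time $B_j+F_{n-2}$, the shadow lengths satisfy $\ell_{B_j+t}=D_j-t$. Since the word read from position $t$ still contains a long legal piece with no bi-special occurrence of the relevant length, an accident could only occur at a position where the current suffix of the legal word is bi-special of length at least that of the previous legal overlap. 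Along a loop of $\Gamma_n$, bi-special returns happen only after the loop length. The same argument gives the case $r_j=0$, using the fact that the accident condition $\ell_{i}\geq\ell_{i-1}$ forces the legal word to be extendable at both ends. Since $F_{n-2}\geq c(F_n-2+1)=c(r_{j-1}+1)$ for a uniform $c>0$, the bound follows with $c_3\=\min\{1,c\}$.

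The main obstacle is making rigorous the claim that no accident can occur within fewer than $F_{n-2}$ steps after $B_j$. Concretely, one has to show that an accident at $B_j+t$ with $0<t<F_{n-2}$ would force a legal word of length about $q_n+t$ containing $w$ at two positions at distance $t$. This contradicts the minimal return time of the bi-special word $w$ in the Fibonacci word, which is $F_{n-2}$ (the shorter loop). This is essentially \cite[Lemma~2.9 and the surrounding estimates]{BL15}, and I would try first to cite or adapt the return-time statement for bi-special words there.
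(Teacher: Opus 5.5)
Your route differs from the paper's. The paper splits into $r_j\ge 3$, where it quotes $b_j\ge F_{n_{j-1}-1}+F_{n_j-2}$ from the proof of \cite[Lemma~4.2]{BL15}, and $r_j\le 2$, where it uses only $b_j+r_j=D_j\ge r_{j-1}+1$. You aim directly at $b_j\ge F_{n-2}$ (with $r_{j-1}=q_n$) via return times of $w$ in $\Gamma_n$. That idea is sound, but the step you flag as the main obstacle is not supplied, and the mechanism you sketch for it is not correct as stated.

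A second copy of $w$ at distance $b_j$ exists only if the next residual $(\sigma^{B_{j+1}}(p))|_{r_j}$ has length $r_j\ge q_n$. In that case, being bi-special, it equals $f_1\ldots f_{r_j}$ and so begins with $w$. Then the legal word $x_1\ldots x_{b_j+q_n}$ (a prefix of $x|_{D_j}$) labels a closed walk of length $b_j\ge 1$ at $w$ in $\Gamma_n$. By Proposition~\ref{p: structure of Gamma_m} such a walk is a concatenation of $C_n^L$ and $C_n^S$, so $b_j\ge F_{n-2}$.

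Nothing in your proposal establishes $r_j\ge q_n$, and it fails in general because residuals can shrink. For example, if $\sigma^{B_j-1}(p)$ begins with $101011$, then $r_{j-1}=3$, $D_j=4$, $b_j=4$, and $r_j=0$. Your assertion that an accident can only occur where the suffix is bi-special ``of length at least that of the previous legal overlap'' is precisely this unproved claim. Likewise, ``the same argument'' cannot cover $r_j=0$, since then there is no second occurrence of $w$ at all.

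The repair uses only facts you already wrote down. By Proposition~\ref{p: accident residual is bi-special}, $r_j\in\{q_m:m\ge 3\}$, so $r_j<q_n$ forces $r_j\le q_{n-1}$. Your inequality $D_j\ge q_n+1$ then gives $b_j=D_j-r_j\ge q_n+1-q_{n-1}=F_{n-2}+1$. This is the paper's estimate $b_j\ge r_{j-1}+1-r_j$, applied to every short residual rather than only to $r_j\le 2$. Combining this with the case $r_j\ge q_n$ and with $F_n\le 3F_{n-2}$, you get $b_j\ge F_{n-2}>(r_{j-1}+1)/3$. The trivial bound $b_j\ge 1$ covers $r_{j-1}=0$, so $c_3=1/3$ works.

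Once patched, your argument is self-contained and in fact more reliable than the citation in the paper's proof. For $p=(01)^\infty$ one has $\ell_0=5$, $\ell_1=4$, $k=1$, $r_{j-1}=r_j=3=q_5$ and $b_j=2=F_3$; the same happens whenever $\sigma^{B_j-1}(p)$ begins with $1010101$. So your bound $F_{n-2}$ is attained there, whereas the quoted inequality would demand $b_j\ge F_4+F_3=5$. The proposition itself is unaffected, since $2\ge\frac13\cdot 4$.
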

\begin{proof}
    Let $n_a\geq 3$, $a\in\Z/k\Z$, be the integers from Proposition~\ref{p: accident residual is bi-special}. Fix $j\in \Z/k\Z$. Assume that $r_{j-1}\geq 3$. First consider the case where $r_j\geq 3$. It follows from the proof of \cite[Lemma~4.2]{BL15} that $b_j\geq F_{n_{j-1}-1}+F_{n_j-2}$. For the Fibonacci sequence $\{F_n\}_{n\in\N_0}$, there exists a constant $c>0$ such that $F_{n+1}\leq cF_n$ for all $n\in\N$. It follows that $b_j\geq \frac{1}{c}F_{n_{j-1}}=\frac{1}{c}(r_{j-1}+2)$. Now consider the case where $r_j\leq 2$. By Definition~\ref{d: accident times}, $l_{B_j}\geq l_{B_{j}-1}$, i.e., $b_j+r_j\geq r_{j-1}+1$. As $r_{j-1}\geq 3$ and $r_j\leq 2$, we see that $b_j\geq r_{j-1}-1\geq \frac{1}{2}(r_{j-1}+1)$. If $r_{j-1}\leq 2$, then $b_j\geq 1\geq \frac{1}{3}(r_{j-1}+1)$. We take $c_3\=\min\bigl\{\frac{1}{c},\,\frac{1}{3}\bigr\}\in (0,+\infty)$ and the proof is now complete.
\end{proof}

We conclude this subsection with the construction of the measure $\eta_z$.

\begin{definition}[Construction of $\eta_z$]\label{d: defi of eta_z}
For each $j\in\Z/k\Z$, we fix some $y^{j}\in K$ such that $d\bigl(\sigma^{B_j}(p),y^j\bigr)=d\bigl(\sigma^{B_j}(p),K\bigr)=2^{-D_j}$. Then define $z_{B_j+t}\=\sigma^t\bigl(y^{j}\bigr)$ for each $j\in\Z/k\Z$ and each integer $t\in [0,b_j)$. It follows that $d\bigl(\sigma^i(p),z_i\bigr)=d\bigl(\sigma^i(p),K\bigr)=2^{-l_i}$ for all $i\in\Z/N\Z$. Define $\eta_z\=\frac1N\sum_{i=0}^{N-1}\delta_{z_i}$.
\end{definition}

\begin{prop}[Difference between $\nu_p$ and $\eta_z$]
\label{p: difference of nu_p and eta_z}
For each modulus of continuity $\omega$ that vanishes only at $0$, $W_{\omega}(\nu_p,\eta_z)\le\frac1N\sum_{j=0}^{k-1}(\assocsum{2}{D_j}(1)-\assocsum{2}{r_j}(1))$.
\end{prop}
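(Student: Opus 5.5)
The plan is to bound $W_\omega(\nu_p,\eta_z)$ with the explicit diagonal coupling. Set
\begin{equation*}
\pi\=\frac1N\sum_{i=0}^{N-1}\delta_{(\sigma^i(p),z_i)}.
\end{equation*}
Its first marginal is $\frac1N\sum_{i=0}^{N-1}\delta_{\sigma^i(p)}=\nu_p$, and its second marginal is $\frac1N\sum_{i=0}^{N-1}\delta_{z_i}=\eta_z$ (cf.~Definition~\ref{d: defi of eta_z}). So $\pi$ is admissible in Definition~\ref{d: Wasserstein}, and
\begin{equation*}
W_\omega(\nu_p,\eta_z)\leq\int_{\Sigma\times\Sigma}\!\omega(d(x,y))\,\mathrm{d}\pi(x,y)=\frac1N\sum_{i=0}^{N-1}\omega\bigl(d\bigl(\sigma^i(p),z_i\bigr)\bigr).
\end{equation*}

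The next step is to evaluate each term. By Definition~\ref{d: defi of eta_z}, $d(\sigma^i(p),z_i)=2^{-\ell_i}$ for all $i$. This rests on two facts. First, $y^j$ agrees with $\sigma^{B_j}(p)$ on exactly the first $D_j$ symbols. Second, the shift lowers the agreement length by one, so $\sigma^t(y^j)$ agrees with $\sigma^{B_j+t}(p)$ on exactly $D_j-t=\ell_{B_j+t}$ symbols for $0\le t<b_j$. Here $D_j-t\ge r_j+1\ge1$, so these lengths stay positive.

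Hence the sum equals $\frac1N\sum_{i=0}^{N-1}\omega(2^{-\ell_i})$. Regrouping over the blocks $[B_j,B_{j+1})$ and using $\ell_{B_j+t}=r_j+b_j-t$, the $j$-th block contributes
\begin{equation*}
\sum_{s=r_j+1}^{D_j}\omega(2^{-s})=\assocsum{2}{D_j}(1)-\assocsum{2}{r_j}(1).
\end{equation*}
This is exactly the computation (\ref{ineq: estimate of bonus}), so it can be cited directly, and the claimed inequality follows.

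There is no real obstacle. The only point needing care is the exact agreement length after shifting. It follows from the choice of $y^j$ realizing $d(\sigma^{B_j}(p),K)=2^{-D_j}$, together with the identity $\ell_{B_j+t}=D_j-t$ established before (\ref{ineq: estimate of bonus}). The hypothesis that $\omega$ vanishes only at $0$ is used solely so that $W_\omega$ is defined as in Definition~\ref{d: Wasserstein}.
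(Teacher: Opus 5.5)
Your proposal is correct and follows the paper's proof essentially verbatim. Like the paper, you use the diagonal coupling $\pi=\frac1N\sum_{i=0}^{N-1}\delta_{(\sigma^i(p),z_i)}$ and the identity $d(\sigma^i(p),z_i)=2^{-\ell_i}$ from Definition~\ref{d: defi of eta_z}, then conclude with the block regrouping (\ref{ineq: estimate of bonus}).
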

\begin{proof}
    Consider $\pi\=\frac1N\sum_{i=0}^{N-1}\delta_{(\sigma^i(p),z_i)}$. Note that $\pi$ is a Borel probability measures on $\Sigma\times\Sigma$ with marginal distributions $\nu_p$ and $\eta_z$. It then follows from Definition~\ref{d: Wasserstein} and~(\ref{ineq: estimate of bonus}) that 
    \begin{align*}
            W_{\omega}(\nu_p,\eta_z)&\leq \int_{\Sigma\times\Sigma}\!\omega(d(x,y))\,\mathrm{d}\pi(x,y)=\frac{1}{N}\sum_{i=0}^{N-1}\omega\bigl(d\bigl(\sigma^i(p),z_i\bigr)\bigr)\\
            &=\frac1N\sum_{i=0}^{N-1}\omega\bigl(2^{-l_i}\bigr)=\frac1N\sum_{j=0}^{k-1}(\assocsum{2}{D_j}(1)-\assocsum{2}{r_j}(1)).\qedhere
    \end{align*}
\end{proof}

\subsection{Comparing flows on Rauzy graphs}\label{ss: comparing flows on Rauzy graphs}

By Propositions~\ref{p: difference of nu_p and eta_z} and $\ref{p: multiscale formula}$, it suffices to estimate $W_\omega(\eta_z,\mu_K)$ and $D_n(\eta_z,\mu_K)$ for the proof of Proposition~\ref{p: transport gap}. To this end, we consider the circulations induced by $\eta_z$ on Rauzy graphs $\{\Gamma_m\}_{m\geq 4}$.

Let $e_{i,m}\=z_i|_{q_m+1}\in\fL_{q_m+1}(K)=E(\Gamma_m)$. In general, the chain $\sum_{i=0}^{N-1}e_{i,m}$ may not be in $\sZ_m$. In order to apply Proposition~\ref{p: structure of Gamma_m}, we first modify $\sum_{i=0}^{N-1}e_{i,m}\in C_1(\Gamma_m;\R)$ to get a circulation. We need the following definition, which captures when the terminus $t(e_{i,m})$ may not be equal to the origin $o(e_{i+1,m})$.

\begin{definition}[Bad windows]\label{d: bad windows}
For each integer $m\geq 4$, we define $U_m\=\{i\in\mathbb Z/N\mathbb Z:
\ell_i\le q_m\}$.
\end{definition}

Note that if $i\notin U_m$ for some $i\in\Z/N\Z$ and $m\geq 4$ is an integer, then 
\begin{equation}\label{eq: good edge vertex cancelling}
z_{i+1}|_{q_m}=\sigma z_i|_{q_m}\quad\text{ and }\quad t(e_{i,m})=o(e_{i+1,m}).
\end{equation}

\begin{prop}[Geometry of the bad windows]
\label{p: geometry of bad window}
Let $m\geq 4$ be an integer and $j\in\Z/k\Z$. 
Then $U_m\cap[B_j,B_{j+1})=[B_{j+1}-w_{j,m},B_{j+1})$, where $w_{j,m}\=\min\{(q_m-r_j)_+,\,b_j\}$. Moreover, $\card U_m=\sum_{j=0}^{k-1}w_{j,m}$.
\end{prop}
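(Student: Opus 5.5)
The proof is a direct computation from the explicit formula for the shadow lengths on each accident block. By the observation after (\ref{eq: defi of Dj bj rj}), for each $j\in\Z/k\Z$ and each integer $t\in[0,b_j)$ we have $\ell_{B_j+t}=r_j+b_j-t$. This is the only input needed. For fixed $m\geq 4$ and fixed $j$, I will determine exactly which $i=B_j+t$ with $0\leq t<b_j$ satisfy $\ell_i\leq q_m$.

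The condition $\ell_{B_j+t}\leq q_m$ reads $r_j+b_j-t\leq q_m$, that is, $t\geq b_j-(q_m-r_j)$. Since $\ell_{B_j+t}$ is strictly decreasing in $t$, the set of admissible $t$ is a terminal segment of $[0,b_j)$. Writing $i=B_j+t$, the condition becomes $i\geq B_{j+1}-(q_m-r_j)$, so
\[
U_m\cap[B_j,B_{j+1})=[B_{j+1}-(q_m-r_j),B_{j+1})\cap[B_j,B_{j+1}).
\]
I then split into three cases.
\begin{enumerate}
\item If $q_m-r_j\leq 0$, the set is empty. This matches $w_{j,m}=0$.
\item If $0<q_m-r_j\leq b_j$, the set is $[B_{j+1}-(q_m-r_j),B_{j+1})$, and here $w_{j,m}=q_m-r_j$.
\item If $q_m-r_j>b_j$, the whole interval $[B_j,B_{j+1})$ lies in $U_m$, matching $w_{j,m}=b_j$.
\end{enumerate}
In every case the intersection is $[B_{j+1}-w_{j,m},B_{j+1})$ with $w_{j,m}=\min\{(q_m-r_j)_+,b_j\}$.

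For the cardinality statement, note that the intervals $[B_j,B_{j+1})$, $j=0,\ldots,k-1$, partition $\Z/N\Z\cong\{0,1,\ldots,N-1\}$ by Definition~\ref{d: accident times}, since $B_0=0$ and $B_k=N$. Summing the sizes $w_{j,m}$ of the disjoint pieces gives $\card U_m=\sum_{j=0}^{k-1}w_{j,m}$.

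There is no real obstacle here. The only care needed is bookkeeping: the indices are read modulo $N$, consistent with the assumption that an accident occurs at $p$ so that $B_0=0$, and the formula $\ell_{B_j+t}=D_j-t$ is valid precisely because no accident occurs strictly between $B_j$ and $B_{j+1}$.
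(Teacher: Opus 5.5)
Your proposal is correct and follows the paper's proof: on each accident block you use $\ell_i=r_j+(B_{j+1}-i)$ to solve $\ell_i\leq q_m$, and then you sum over the partition $\{[B_j,B_{j+1})\}_{j}$ of $\{0,\ldots,N-1\}$. Your three-case check that the solution set equals $[B_{j+1}-w_{j,m},B_{j+1})$ just expands the single step the paper states in one line.
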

\begin{proof}
    By Definitions~\ref{d: shadow length and accident} and~\ref{d: accident times}, we see that for each integer $i\in [B_j,B_{j+1})$, $l_i=D_j-(i-B_j)=r_j+b_j+B_j-i=r_j+(B_{j+1}-i)$. It then follows from Definition~\ref{d: bad windows} that $U_m\cap [B_j,B_{j+1})=\{i\in \N_0\cap[B_j,B_{j+1}):r_j+B_{j+1}-i\leq q_m\}=[B_{j+1}-w_{j,m},B_{j+1})$. Since $\{[B_j,B_{j+1})\}_{j\in\Z/k\Z}$ forms a partition of $\N_0\cap[0,N)$, we see that $\card U_m=\sum_{j=0}^{k-1}w_{j,m}$.
\end{proof}

Suppose $m\geq 4$ is an integer. Let $\kappa_m$ denote the number of connected components of $U_m$ in the cyclic time space $\mathbb Z/N\mathbb Z$ if $\emptyset\subsetneq U_m\subsetneq\Z/N\Z$ and set $\kappa_m\=0$ if $U_m=\emptyset$ or $U_m=\Z/N\Z$. Now consider the good edge chain
\begin{equation}\label{eq: defi of good edge chain}
F_m\=\frac1N\sum_{i\in (\Z/N\Z)\smallsetminus U_m}e_{i,m}.
\end{equation}

\begin{prop}[Boundary repair]
\label{p: boundary repair}
Let $m\geq 4$ be an integer. There exists a chain $Q_m\in C_1(\Gamma_m;\mathbb R)$
such that $\partial Q_m=\partial F_m$
and $\norm{Q_m}_m\leq (q_m+1)\kappa_m/N$.
\end{prop}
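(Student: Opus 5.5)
The plan is to compute $\partial F_m$ explicitly and to observe that it is concentrated at the endpoints of the connected components of $U_m$. We then repair each source--sink pair by a path in $\Gamma_m$ of length at most $q_m+1$.

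\textbf{The boundary of $F_m$.} Write $\partial F_m=\frac1N\sum_{i\notin U_m}\bigl(t(e_{i,m})-o(e_{i,m})\bigr)$. By (\ref{eq: good edge vertex cancelling}), whenever $i\notin U_m$ we have $t(e_{i,m})=o(e_{i+1,m})$. If moreover $i+1\notin U_m$, the term $t(e_{i,m})$ cancels against $-o(e_{i+1,m})$ in the sum.

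If $U_m=\emptyset$, every term cancels and we take $Q_m\=0$. If $U_m=\Z/N\Z$, then $F_m=0$ and again $Q_m\=0$ works. Both cases are consistent with $\kappa_m=0$.

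Otherwise, $(\Z/N\Z)\smallsetminus U_m$ is a disjoint union of $\kappa_m$ cyclic intervals $[a_s,c_s]$, with $s=1,\dots,\kappa_m$. The sum telescopes on each interval, and we get
\[
\partial F_m=\frac1N\sum_{s=1}^{\kappa_m}\bigl(t(e_{c_s,m})-o(e_{a_s,m})\bigr).
\]

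\textbf{Repair paths.} Relabel the intervals cyclically so that the gap $U_m$-component after $c_s$ ends just before $a_{s+1}$. For each $s$ we need a chain $P_s$ with $\partial P_s=o(e_{a_{s+1},m})-t(e_{c_s,m})$; the summation over $s$ then telescopes correctly. Set
\[
Q_m\=-\frac1N\sum_s P_s.
\]
Then
\[
\partial Q_m=\frac1N\sum_s\bigl(t(e_{c_s,m})-o(e_{a_{s+1},m})\bigr)=\partial F_m,
\]
after reindexing $a_{s+1}\mapsto a_s$ in the sum. Take $P_s$ to be the chain of a directed path in $\Gamma_m$ from $t(e_{c_s,m})$ to $o(e_{a_{s+1},m})$, so that $\norm{P_s}_m$ equals the path length. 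The required bound $\norm{Q_m}_m\le (q_m+1)\kappa_m/N$ then reduces to showing that every pair of vertices of $\Gamma_m$ is joined by a directed path of length at most $q_m+1$.

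\textbf{Bounding path lengths.} This is the main point to verify. By Proposition~\ref{p: structure of Gamma_m}, $\Gamma_m$ consists of two loops of lengths $F_{m-1}$ and $F_{m-2}$ through the bi-special vertex $v^*$. From any vertex $u$ we follow its loop forward to $v^*$. We then continue along the loop containing the target $w$ until we reach $w$.

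The loops share only $v^*$. If $u$ and $w$ lie on the same loop with $w$ ahead of $u$, the direct path has length less than that loop's length, which is at most $F_{m-1}$. In general, the two legs have length at most $F_{m-1}$ and $F_{m-2}$ when the loops differ, for a total of at most $F_{m-1}+F_{m-2}$. The same bound holds in the remaining case, where $w$ is behind $u$ on the same loop: one can go $u\to v^*$ and then traverse to $w$, again at most $F_{m-1}+F_{m-2}$ edges.

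Since $F_{m-1}+F_{m-2}=F_m=q_m+2$, this gives length at most $q_m+2$, one more than needed. To remove the extra step, note that an $F_m$-length path would be a full loop plus extra edges, which cannot be geodesic. Concretely, the worst case is $u$ just after $v^*$ on one loop and $w$ just before $v^*$ on the other. That path has length $(F_{m-1}-1)+(F_{m-2}-1)=q_m$.

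Checking each case carefully, every shortest path has length at most $F_m-1=q_m+1$ (at most $\max$ loop length minus one, plus the other loop minus one, and so on). If this bookkeeping fails in some edge case, an alternative is to use the de Bruijn-type connection directly. Any vertex $w\in\fL_{q_m}(K)$ is reachable from $u$ by a path that reads a word $x$ with $ux$ in $\fL(K)$ ending in $w$. Minimality and the uniform recurrence of $K$ at bi-special scales give such an $x$ with $\abs{x}\le q_m+1$.

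Summing over the $\kappa_m$ paths gives $\norm{Q_m}_m\le (q_m+1)\kappa_m/N$.
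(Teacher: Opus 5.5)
Your proof is correct and follows the paper's argument: $\partial F_m$ telescopes over the runs of $(\Z/N\Z)\smallsetminus U_m$ into $\kappa_m$ source--sink pairs of weight $1/N$, and each pair is repaired by a directed path in $\Gamma_m$. For the length bound you can drop the case analysis and the fallback entirely: a shortest directed path repeats no vertex, so it has at most $\card V(\Gamma_m)-1=\card\fL_{q_m}(K)-1=q_m$ edges, which is within the required $q_m+1$ (the paper phrases this bound via $\card E(\Gamma_m)=q_m+2$).
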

\begin{proof}
    It follows from (\ref{eq: good edge vertex cancelling}) and Proposition~\ref{p: geometry of bad window} that $\partial F_m$ consists of $\kappa_m$ pairs of sources and sinks with weight $1/N$. For each such pair of source and sink, we pick a route on $\Gamma_m$ from the source to the sink, and let $Q_m$ be $1/N$ times of the sum of these $\kappa_m$ routes. It follows that $\partial Q_m=\partial F_m$. Since $\card (E(\Gamma_m))=\card(\fL_{q_m+1}(K))=q_m+2$, we see that the length of each route is bounded by $q_m+1$ (cf.~Proposition~\ref{p: structure of Gamma_m}), and $\norm{Q_m}_m\leq (q_m+1)\kappa_m/N$.
\end{proof}

From now on, we fix some $Q_m$ in Proposition~\ref{p: boundary repair} for each integer $m\geq 4$ and consider 
\begin{equation}\label{eq: defi of H_m}
    H_m\=F_m-Q_m\in\sZ_m.
\end{equation}

\begin{prop}[Estimating cylinder discrepancy with $H_m$]
\label{p: cylinder discrepancy estimated through H_m}
Let $c_1$ be the constant from Lemma~\ref{l: comparison}. Then for each integer $m\geq 4$,
\begin{equation}\label{ineq: cyclinder discrepancy with H_m}
D_{q_m+1}(\eta_z,\mu_K)\leq (c_1+1)\bigl(N^{-1}\card U_m+N^{-1}(q_m+1)\kappa_m+q_m\abs{s_m(H_m)}\bigr).
\end{equation}
\end{prop}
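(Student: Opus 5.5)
The plan is to compare $D_{q_m+1}(\eta_z,\mu_K)$ with $\norm{H_m-\mu_m}_m$, and then control the latter by Lemma~\ref{l: comparison}. By Definition~\ref{d: D_n}, cylinders $[w]$ with $w\notin\fL_{q_m+1}(K)$ have zero mass for both $\eta_z$ and $\mu_K$, since both are supported on $K$. The remaining words are exactly the edges of $\Gamma_m$. Define the full edge chain
\begin{equation*}
E_m\=\frac1N\sum_{i=0}^{N-1}e_{i,m}\in C_1(\Gamma_m;\R).
\end{equation*}
Then $E_m(e)=\eta_z([e])$ for every $e\in E(\Gamma_m)$, so $D_{q_m+1}(\eta_z,\mu_K)=\norm{E_m-\mu_m}_m$.

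Next I will split
\begin{equation*}
E_m-\mu_m=(E_m-F_m)+Q_m+(H_m-\mu_m),
\end{equation*}
using $F_m=H_m+Q_m$ from (\ref{eq: defi of H_m}), and apply the triangle inequality. The first term is $\frac1N\sum_{i\in U_m}e_{i,m}$, whose norm is at most $N^{-1}\card U_m$. The second term satisfies $\norm{Q_m}_m\le N^{-1}(q_m+1)\kappa_m$ by Proposition~\ref{p: boundary repair}. For the third term, $H_m\in\sZ_m$, so Lemma~\ref{l: comparison} gives
\begin{equation*}
\norm{H_m-\mu_m}_m\le c_1\bigl(\abs{\mathfrak m_m(H_m)-1}+q_m\abs{s_m(H_m)}\bigr).
\end{equation*}

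It remains to bound the mass defect. Since $\mathfrak m_m(E_m)=1$ and $\mathfrak m_m$ is linear,
\begin{equation*}
\abs{\mathfrak m_m(H_m)-1}=\abs{\mathfrak m_m(E_m-F_m)+\mathfrak m_m(Q_m)}\le \norm{E_m-F_m}_m+\norm{Q_m}_m\le N^{-1}\card U_m+N^{-1}(q_m+1)\kappa_m .
\end{equation*}
Summing the three contributions gives
\begin{equation*}
D_{q_m+1}(\eta_z,\mu_K)\le (1+c_1)\bigl(N^{-1}\card U_m+N^{-1}(q_m+1)\kappa_m\bigr)+c_1q_m\abs{s_m(H_m)}.
\end{equation*}
This is bounded by the right-hand side of (\ref{ineq: cyclinder discrepancy with H_m}). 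The argument is routine; the only point needing care is the identification $E_m(e)=\eta_z([e])$ together with the vanishing of the off-language cylinders, and both follow from $z_i\in K$.
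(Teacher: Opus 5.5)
Your proposal is correct and follows essentially the same route as the paper. The paper also identifies $D_{q_m+1}(\eta_z,\mu_K)$ with $\norm{\eta_z^m-\mu_m}_m$ using that both measures are supported on $K$, splits through $F_m$ and $H_m$ with the triangle inequality, applies Lemma~\ref{l: comparison}, and bounds the mass defect $\abs{\mathfrak{m}_m(H_m)-1}$ by $N^{-1}\card U_m+N^{-1}(q_m+1)\kappa_m$.
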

\begin{proof}
    Let $m\geq 4$ be an intger. Denote $\eta_z^m\=\sum_{e\in E(\Gamma_m)}\eta_z([e])e=\frac{1}{N}\sum_{i=0}^{N-1}e_{i,m}\in C_1(\Gamma_m;\R)$. Note that both $\eta_z$ and $\mu_K$ are supported on $K$. It then follows from Definition~\ref{d: D_n} and~(\ref{eq: defi of mass and norm}) that $D_{q_m+1}(\eta_z,\mu_K)=\norm{\eta_z^m-\mu_m}_m$.  We see that 
    \begin{equation}\label{ineq: norm of eta_z minus mu_m}
        \begin{aligned}
            \norm{\eta_z^m-\mu_m}_m&\leq\norm{\eta_z^m-F_m}_m+\norm{F_m-H_m}_m+\norm{H_m-\mu_m}_m\\
            &\leq N^{-1}\card U_m+N^{-1}(q_m+1)\kappa_m+\norm{H_m-\mu_m}_m\\
            &\leq N^{-1}\card U_m+N^{-1}(q_m+1)\kappa_m+c_1(\abs{\mathfrak{m}_m(H_m)-1}+q_m\abs{s_m(H_m)}),
        \end{aligned}
    \end{equation}
    where the second inequality follows from (\ref{eq: defi of good edge chain}) and Proposition~\ref{p: boundary repair}, and the third inequality follows from Lemma~\ref{l: comparison}. It also follows from Proposition~\ref{p: boundary repair} and Lemma~\ref{l: comparison} that  
    \begin{equation*}
        \abs{\mathfrak{m}_m(H_m)-1}\leq \abs{\mathfrak{m}_m(Q_m)}+\abs{\mathfrak{m}_m(F_m)-1}\leq N^{-1}(q_m+1)\kappa_m+N^{-1}\card U_m.
    \end{equation*}
   This, together with (\ref{ineq: norm of eta_z minus mu_m}), yields (\ref{ineq: cyclinder discrepancy with H_m}).
\end{proof}

To state the next result, we denote 
\begin{equation}\label{eq: defi of Deltam}
    \Delta_m\=\begin{cases}
        \omega\bigl(2^{-1-q_{m-1}}\bigr)-\omega\bigl(2^{-1-q_m}\bigr) &\text{if }m\in\N\text{ and }m\geq5,\\
        \omega(1)-\omega\bigl(2^{-1-q_4}\bigr) &\text{if }m=4.
    \end{cases}
\end{equation}
\begin{prop}[Reduction to bi-sepcial scales]
\label{p: reduction to bi-special scales}
Let $\omega$ be a modulus of continuity that vanishes only at $0$. Then for all Borel probability measures $\mu,\,\nu$ on $\Sigma$,
\begin{equation*}
W_{\omega}(\mu,\nu)\leq
\frac{1}{2}\sum_{m=4}^{+\infty}\Delta_mD_{q_{m}+1}(\mu,\nu).
\end{equation*}
\end{prop}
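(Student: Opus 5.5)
We start from the multiscale formula of Proposition~\ref{p: multiscale formula}, which writes
\begin{equation*}
W_\omega(\mu,\nu)=\frac12\sum_{n=1}^{+\infty}\bigl(\omega(2^{-n+1})-\omega(2^{-n})\bigr)D_n(\mu,\nu).
\end{equation*}
Every coefficient $\omega(2^{-n+1})-\omega(2^{-n})$ is nonnegative because $\omega$ is nondecreasing. The plan is to group the indices $n$ into consecutive blocks between bi-special scales. Inside each block we bound $D_n(\mu,\nu)$ by the single discrepancy at the right endpoint of the block. The sum of the coefficients over a block then telescopes to $\Delta_m$.

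The first ingredient is that $D_n(\mu,\nu)$ is nondecreasing in $n$. For each $w\in\{0,1\}^n$, the cylinder $[w]$ is the disjoint union $[w0]\cup[w1]$. By the triangle inequality,
\begin{equation*}
\abs{\mu([w])-\nu([w])}\leq\abs{\mu([w0])-\nu([w0])}+\abs{\mu([w1])-\nu([w1])}.
\end{equation*}
Summing over $w$ gives $D_n(\mu,\nu)\leq D_{n+1}(\mu,\nu)$.

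Next we partition $\N$ into blocks. Recall $q_m=F_m-2$, so $q_4=1$, and $q_m$ is strictly increasing for $m\geq4$. Set $I_4\=\{1,\,2,\,\ldots,\,q_4+1\}$, and for $m\geq5$ set $I_m\=\{q_{m-1}+2,\,\ldots,\,q_m+1\}$. These sets are nonempty and together partition $\N$. For $n\in I_m$ we have $n\leq q_m+1$, so monotonicity gives $D_n(\mu,\nu)\leq D_{q_m+1}(\mu,\nu)$.

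The coefficients telescope over each block:
\begin{equation*}
\sum_{n\in I_m}\bigl(\omega(2^{-n+1})-\omega(2^{-n})\bigr)=\omega\bigl(2^{-\min I_m+1}\bigr)-\omega\bigl(2^{-q_m-1}\bigr).
\end{equation*}
For $m=4$ we have $\min I_4=1$, so the right-hand side is $\omega(1)-\omega(2^{-1-q_4})$. For $m\geq5$ we have $\min I_m=q_{m-1}+2$, so it is $\omega(2^{-1-q_{m-1}})-\omega(2^{-1-q_m})$. In both cases this is exactly $\Delta_m$ as defined in (\ref{eq: defi of Deltam}).

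All terms in the multiscale series are nonnegative, so the series may be regrouped block by block. Combining the three steps,
\begin{equation*}
W_\omega(\mu,\nu)\leq\frac12\sum_{m=4}^{+\infty}\Delta_m D_{q_m+1}(\mu,\nu).
\end{equation*}
The argument has no real obstacle. The only points needing care are checking that the blocks start at $n=1$ and cover $\N$, and that their boundaries match the definition of $\Delta_m$ exactly.
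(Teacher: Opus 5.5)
Your proof is correct and follows the paper's route: you use the monotonicity of $D_n(\mu,\nu)$ in $n$, group the multiscale sum into the blocks $\{1,\ldots,q_4+1\}$ and $\{q_{m-1}+2,\ldots,q_m+1\}$ for $m\geq 5$, bound each $D_n$ in a block by $D_{q_m+1}$, and telescope the coefficients to $\Delta_m$. The only addition is that you justify the monotonicity of $D_n$ via the splitting $[w]=[w0]\cup[w1]$, which the paper states without proof.
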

\begin{proof}
    Let $\mu,\,\nu$ be two probability measures on $\Sigma$. By Definition~\ref{d: D_n}, we see that $\{D_n(\mu,\nu)\}_{n\in\N}$ is a nondecreasing sequence. Denote $\omega_{n}\=\omega(2^{-n})$ for all $n\in\N_0$. It then follows from Proposition~\ref{p: multiscale formula} and~(\ref{eq: defi of Deltam}) that 
    \begin{align*}
        2W_\omega(\mu,\nu)&\leq D_{q_4+1}(\mu,\nu)\sum_{n=1}^{q_4+1}(\omega_{n-1}-\omega_n)+\sum_{m= 5}^{+\infty}D_{q_{m}+1}(\mu,\nu)\sum_{n=q_{m-1}+2}^{q_{m}+1}(\omega_{n-1}-\omega_n)\\
        &=(\omega_0-\omega_{q_4+1})D_{q_4+1}(\mu,\nu)+\sum_{m=5}^{+\infty}(\omega_{q_{m-1}+1}-\omega_{q_m+1})D_{q_{m}+1}(\mu,\nu)=\sum_{m=4}^{+\infty}\Delta_mD_{q_{m}+1}(\mu,\nu).\qedhere
    \end{align*}
\end{proof}

With Propositions~\ref{p: cylinder discrepancy estimated through H_m} and Propositions~\ref{p: reduction to bi-special scales}, We can estimate $W_{\omega}(\eta_z,\mu_K)$ through the weighted sum of the three terms on the right-hand side of (\ref{ineq: cyclinder discrepancy with H_m}). To this end, we introduce the following conditions.

\begin{definition}[Two technical conditions]\label{d: two technical conditions}
    Let $c_3>0$ be the constant in Proposition~\ref{p: accident gap}. A modulus of continuity $\omega$ is said to satisfy the \emph{scale doubling} condition if there exists a constant $\gamma_1>1$ such that for each integer $n\geq 2$,
    \begin{equation}\label{ineq: scale doubling condition}
        \omega\bigl(2^{-{q_{n}-1}}\bigr)<\gamma_1\omega\bigl(2^{-q_{n+1}-1}\bigr),
    \end{equation}
    and is said to satisfy the \emph{stable} condition if there exists a constant $\gamma_2>0$ such that for all integers $m\geq 4$,
    \begin{equation}\label{ineq: stable condition}
        \sum_{k=4}^m\biggl(\frac{\sqrt{5}-1}{2}\biggr)^{m-k}q_k\Delta_k\leq \gamma_2 q_m\Delta_m.
    \end{equation}
\end{definition}

\begin{rem}[Range of the technical conditions]\label{r: range of the technical conditions}
    For the family of moduli $\{\omega_\alpha\}_{\alpha\in(0,+\infty)}$ defined by (\ref{eq: defi of the omegaalpha family}), it is straightforward to check that $\omega_\alpha$ satisfies the scale doubling condition for all $\alpha\in(0,+\infty)$, and satisfies the stable condition if and only if $\alpha\in (0,2)$. Note that $\omega_\alpha$ satisfies the Dini condition if and only if $\alpha\in(1,+\infty)$. 
\end{rem}

Now we estimate the weighted sum of each of the three terms on the right-hand side of (\ref{ineq: cyclinder discrepancy with H_m}) in Lemmas~\ref{l: bad window estimates},~\ref{l: bad block estimates}, and Proposition~\ref{p: stable coordinate estimates II}, respectively.

\begin{lemma}[Bad window estimates]
\label{l: bad window estimates}
Let $\omega$ be a modulus of continuity satisfying the scale doubling condition. Then there exists a constant $c_4>0$ independent of the periodic point $p$ such that
\begin{equation}\label{ineq: bad window estimates}
\sum_{m= 4}^{+\infty}\Delta_m\cdot \card U_m\leq c_4\sum_{j=0}^{k-1}(\assocsum{2}{D_j}(1)-\assocsum{2}{r_j}(1)).
\end{equation}
\end{lemma}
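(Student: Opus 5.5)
By Proposition~\ref{p: geometry of bad window}, $\card U_m=\sum_{j=0}^{k-1}w_{j,m}$ with $w_{j,m}=\min\{(q_m-r_j)_+,\,b_j\}$. Interchanging the sums over $m$ and $j$, the claim reduces to a bound for each accident block separately. Namely, it suffices to find $c_4$, depending only on $\omega$, such that for all integers $r\geq 0$ and $b\geq 1$, with $D=r+b$,
\begin{equation*}
\sum_{m=4}^{+\infty}\Delta_m\min\{(q_m-r)_+,\,b\}\leq c_4\sum_{i=r+1}^{D}\omega\bigl(2^{-i}\bigr).
\end{equation*}
The right-hand side is exactly $\assocsum{2}{D}(1)-\assocsum{2}{r}(1)$. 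Once this is established, we apply it with $r=r_j$ and $b=b_j$ and sum over $j\in\Z/k\Z$.

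To prove the per-block inequality, we use $\min\{(q_m-r)_+,b\}=\card\{i\in\N: r<i\leq \min\{q_m,D\}\}$, which is a sum of indicators. Swapping sums gives
\begin{equation*}
\sum_{m\geq4}\Delta_m\min\{(q_m-r)_+,b\}=\sum_{i=r+1}^{D}\ \sum_{m\geq 4:\ q_m\geq i}\Delta_m .
\end{equation*}
The inner sum telescopes by (\ref{eq: defi of Deltam}). If $m_i$ denotes the smallest $m\geq4$ with $q_m\geq i$, then the inner sum equals $\omega\bigl(2^{-1-q_{m_i-1}}\bigr)$ when $m_i\geq 5$ and $\omega(1)$ when $m_i=4$. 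Here we use $\omega(2^{-1-q_m})\to0$.

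It therefore remains to compare $\omega\bigl(2^{-1-q_{m_i-1}}\bigr)$ with $\omega(2^{-i})$. Since $q_{m_i}\geq i$ and $\omega$ is nondecreasing, $\omega(2^{-i})\geq\omega(2^{-q_{m_i}})\geq \omega(2^{-q_{m_i}-1})$. The scale doubling condition (\ref{ineq: scale doubling condition}) with $n=m_i-1$ gives
\begin{equation*}
\omega\bigl(2^{-q_{m_i-1}-1}\bigr)<\gamma_1\,\omega\bigl(2^{-q_{m_i}-1}\bigr)\leq\gamma_1\,\omega\bigl(2^{-i}\bigr).
\end{equation*}
For the finitely many $i\leq q_4=1$, that is $m_i=4$, we bound $\omega(1)\leq C'\omega(2^{-1})$. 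This uses scale doubling at small $n$ again, or the fact that $\omega$ vanishes only at $0$, which gives a constant $C'=\omega(1)/\omega(1/2)$. Taking $c_4=\max\{\gamma_1,\,C'\}$ closes the argument.

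The main point of care is the bookkeeping of indices, namely that the exponents match $q_n+1$ as in (\ref{ineq: scale doubling condition}) and that the boundary case $m=4$ is handled. None of the constants depend on $p$, since only $\omega$ and the Fibonacci scales enter.
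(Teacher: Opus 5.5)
Your proof is correct, but it organizes the per-block estimate differently from the paper.

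The paper keeps the sum over $m$ and splits it according to the value of $w_{j,m}$. On the range $q_m\geq r_j+b_j+1$, where $w_{j,m}=b_j$, the tail of $\Delta_m$ telescopes, and one application of scale doubling gives the bound $\gamma_1 b_j\,\omega(2^{-D_j})$. On the range $r_j+1\leq q_m\leq r_j+b_j$, where $w_{j,m}=q_m-r_j$, it treats the first index $m''$ separately and bounds the remaining terms via $\Delta_m\leq(\gamma_1-1)\,\omega(2^{-q_m-1})$. It then uses the Fibonacci growth estimate $q_m\leq 3(q_m-q_{m-1})$ to spread $q_m\,\omega(2^{-q_m-1})$ over $\sum_{n=q_{m-1}+1}^{q_m}\omega(2^{-n})$, and ends with $c_4=5\gamma_1-3$.

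You instead write $\min\{(q_m-r)_+,b\}$ as the number of $i\in(r,D]$ with $i\leq q_m$ and interchange the sums (an Abel or layer-cake summation). Each $i$ then receives a complete telescoping tail $\sum_{m\geq m_i}\Delta_m$, which a single use of the scale doubling condition compares with $\omega(2^{-i})$. This has three advantages:
\begin{enumerate}
\item it removes the case analysis;
\item it needs nothing about $\{q_m\}$ beyond monotonicity and $q_m\to+\infty$;
\item it gives the sharper constant $c_4=\gamma_1$.
\end{enumerate}
The last point holds because your boundary case $m_i=4$, which forces $i=1$ and $r=0$, is exactly the scale doubling condition at $n=2$. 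Since $q_2=-1$ and $q_3=0$, that condition reads $\omega(1)<\gamma_1\,\omega(2^{-1})$, so $C'=\gamma_1$. This is the same inequality the paper invokes in its case $m''=4$.

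What the paper's decomposition buys is the intermediate bound (\ref{ineq: bad window- remaining item for m modest part}) on $\sum_{m=m''+1}^{m'-1}q_m\Delta_m$. That bound is reused in Lemma~\ref{l: bad block estimates}, where the weight is $q_m$ rather than $\min\{(q_m-r_j)_+,b_j\}$. Your summation trick does not produce that estimate directly, but it is not needed for the present lemma.
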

\begin{proof}
   By Proposition~\ref{p: geometry of bad window}, it suffices to show that there exist a constant $c_4>0$ such that for each $j\in\Z/k\Z$,
    \begin{equation}\label{ineq: bad window estimates-single j}
        \sum_{m=4}^{+\infty}\Delta_mw_{j,m}\leq c_4(\assocsum{2}{D_j}(1)-\assocsum{2}{r_j}(1)).
    \end{equation}
    Denote $\omega_n\=\omega(2^{-n})$ for $n\in\N_0$ and $M(a,b)\=\{m\in\N: m\geq 4,\, a\leq q_m\leq b\}$ for $a,\,b\in[0,+\infty]$.

    Now fix $j\in\Z/k\Z$. Recall that $w_{j,m}=\min\{(q_m-r_j)_+,\,b_j\}$ for each integer $m\geq 4$. It follows that 
    \begin{equation}\label{eq: calculating w_j,m}
        \sum_{m= 4}^{+\infty}\Delta_m w_{j,m}=\sum_{m\in M(r_j+b_j+1,+\infty)}\Delta_m b_j+\sum_{m\in M(r_j+1,r_j+b_j)}\Delta_m(q_m-r_j).
    \end{equation}
    We now treat the two parts in the right-hand side of (\ref{eq: calculating w_j,m}) separately. 
    
    Since $q_4=1$, we see that $m'\=\min M(r_j+b_j+1,+\infty)\geq 5$. Let $\gamma_1>1$ be the constant from (\ref{ineq: scale doubling condition}). It then follows from (\ref{ineq: scale doubling condition}), (\ref{eq: defi of Dj bj rj}), and the monotonicity of $\omega$ that
    \begin{equation}\label{ineq: bad window-m big part}
        \begin{aligned}
        \sum_{m\in M(r_j+b_j+1,+\infty)}\Delta_mb_j
        &=\sum_{m=m'}^{+\infty}(\omega_{q_{m-1}+1}-\omega_{q_m+1})b_j
        =b_j\omega_{q_{m'-1}+1}
        \leq \gamma_1b_j\omega_{q_{m'}+1}\\
        &\leq\gamma_1b_j\omega_{r_j+b_j}
        =\gamma_1b_j\omega_{D_j}
        \leq\gamma_1(\assocsum{2}{D_j}(1)-\assocsum{2}{r_j}(1)).
        \end{aligned}
    \end{equation}

    Consider $m''\=\min M(r_j+1,+\infty)$. Note that if $m''\geq m'$, then the second part of the right-hand side of (\ref{eq: calculating w_j,m}) vanishes so that (\ref{ineq: bad window estimates}) follow from (\ref{ineq: bad window-m big part}). Now suppose $m''<m'$. It follows that 
    \begin{equation*}
        \sum_{m\in M(r_j+1,r_j+b_j)}\Delta_m(q_m-r_j)=\Delta_{m''}(q_{m''}-r_j)+\sum_{m=m''+1}^{m'-1}\Delta_m(q_m-r_j),
    \end{equation*}
    where the second part of the right-hand side vanishes if $m'=m''+1$.  For the first item, if $m''\geq 5$, then  
   \begin{equation*}
        \begin{aligned}
       \Delta_{m''}(q_{m''}-r_j)
       &=(\omega_{q_{m''-1}+1}-\omega_{q_{m''}+1})(q_{m''}-r_j)
       \leq (\gamma_1-1)\omega_{q_{m''}+1}(q_{m''}-r_j)\\
       &\leq(\gamma_1-1)\sum_{n=r_j+1}^{q_{m''}}\omega_n\leq (\gamma_1-1)(\assocsum{2}{D_j}(1)-\assocsum{2}{r_j}(1)),
       \end{aligned}
   \end{equation*}
   where the first inequality follows from the scale doubling condition (cf.~(\ref{ineq: scale doubling condition})), the second inequality follows from the monotonicity of $\omega$, and the last inequality follows from the fact that $m''<m'$ (cf.~(\ref{eq: defi of Dj bj rj})). If $m''=4$, then it follows from the definition of $m''$ that $r_j=0$, and we see from (\ref{eq: defi of Deltam}) and (\ref{ineq: scale doubling condition}) that $\Delta_{m''}(q_{m''}-r_j)=\Delta_4(1-0)=\omega_0-\omega_2\leq\omega_0\leq \gamma_1\omega_1\leq\gamma_1\sum_{n=r_j+1}^{r_j+b_j}\omega_n=\gamma_1(\assocsum{2}{D_j}(1)-\assocsum{2}{r_j}(1))$. 
   
   For the remaining items, 
    \begin{equation}\label{ineq: bad window- remaining item for m modest part}
        \begin{aligned}
        \sum_{m=m''+1}^{m'-1}
        &\Delta_m(q_m-r_j)\leq\sum_{m=m''+1}^{m'-1}\Delta_mq_m
        =\sum_{m=m''+1}^{m'-1}q_m(\omega_{q_{m-1}+1}-\omega_{q_m+1})\\
        &\leq(\gamma_1-1)\sum_{m=m''+1}^{m'-1}q_m\omega_{q_m+1}
        \leq (\gamma_1-1)\sum_{m=m''+1}^{m'-1}3\sum_{n=q_{m-1}+1}^{q_m}\omega_n
        =3(\gamma_1-1)\sum_{n=q_{m''}+1}^{q_{m'-1}}\omega_n\\
        &=3(\gamma_1-1)(\assocsum{2}{q_{m'-1}}(1)-\assocsum{2}{q_{m''}}(1))
        \leq 3(\gamma_1-1)(\assocsum{2}{D_j}(1)-\assocsum{2}{r_j}(1)),
        \end{aligned}
    \end{equation}
   where the third inequality follows from the fact that $q_m\leq 3(q_m-q_{m-1})$ for each integer $m\geq 4$. We conclude that 
   \begin{equation}\label{ineq: bad window-m modest part}
    \sum_{m\in M(r_j+1,r_j+b_j)}\Delta_m(q_m-r_j)\leq (4\gamma_1-3)(\assocsum{2}{D_j}(1)-\assocsum{2}{r_j}(1)).
   \end{equation}
   
    Combining (\ref{eq: calculating w_j,m}), (\ref{ineq: bad window-m big part}), and~(\ref{ineq: bad window-m modest part}), we see that (\ref{ineq: bad window estimates-single j}) holds with $c_4\=5\gamma_1-3$.
\end{proof}

\begin{lemma}[Bad block estimates] \label{l: bad block estimates}
Let $\omega$ be a modulus of continuity satisfying the scale doubling condition. Then there exists a constant $c_5>0$ independent of the periodic point $p$ such that
\begin{equation}\label{ineq: bad block estimate}
\sum_{m= 4}^{+\infty}\Delta_mq_m\kappa_m\leq c_5\sum_{j=0}^{k-1}(\assocsum{2}{D_j}(1)-\assocsum{2}{r_j}(1)).
\end{equation}
\end{lemma}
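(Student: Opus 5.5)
The plan is to bound $\kappa_m$ by counting right endpoints of components of $U_m$, charge each endpoint to a pair of consecutive accident blocks, and then repeat the single-block computation of Lemma~\ref{l: bad window estimates}.

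\textbf{Step 1: counting components.} By Proposition~\ref{p: geometry of bad window}, $U_m\cap[B_j,B_{j+1})=[B_{j+1}-w_{j,m},B_{j+1})$. So every connected component of $U_m$ (when $\emptyset\subsetneq U_m\subsetneq\Z/N\Z$) ends at some time $B_{j+1}-1$ with two properties:
\begin{enumerate}
\item $w_{j,m}>0$, i.e.\ $r_j<q_m$;
\item the time $B_{j+1}$ is good, i.e.\ $\ell_{B_{j+1}}=D_{j+1}>q_m$.
\end{enumerate}
Hence
\[
\kappa_m\leq\card\{j\in\Z/k\Z:\ r_j<q_m<D_{j+1}\}.
\]
Exchanging the order of summation, it suffices to prove that for every $j$,
\begin{equation*}
\sum_{m\geq4:\ r_j<q_m<D_{j+1}}\Delta_mq_m\leq c\Bigl(\bigl(\assocsum{2}{D_j}(1)-\assocsum{2}{r_j}(1)\bigr)+\bigl(\assocsum{2}{D_{j+1}}(1)-\assocsum{2}{r_{j+1}}(1)\bigr)\Bigr).
\end{equation*}
Summing this over $j$ counts each block at most twice, which gives (\ref{ineq: bad block estimate}).

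\textbf{Step 2: scales inside the two blocks.} As in (\ref{ineq: bad window- remaining item for m modest part}), the scale doubling condition together with $q_m\leq3(q_m-q_{m-1})$ gives
\[
\Delta_mq_m\leq 3(\gamma_1-1)\sum_{n=q_{m-1}+1}^{q_m}\omega_n .
\]
The ranges $(q_{m-1},q_m]$ are disjoint for distinct $m$. Consider the scales $m$ for which $(q_{m-1},q_m]$ lies inside $(r_j,D_j]$ or inside $(r_{j+1},D_{j+1}]$. Their total contribution is bounded by $3(\gamma_1-1)$ times the right-hand side above. The at most two boundary scales (the first $m$ with $q_m>r_j$, and the analogous one for block $j+1$) are treated exactly like the $m''$ term in the proof of Lemma~\ref{l: bad window estimates}, including the special case $m=4$ with $r_j=0$.

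\textbf{Step 3: the gap (main obstacle).} What remains are the scales with $D_j<q_m\leq r_{j+1}$. These lie in the window $(D_j,r_{j+1}]$, which belongs to neither block, so they can only be controlled through the structure of accidents. For each such $m$ I would use $\Delta_mq_m\leq(\gamma_1-1)q_m\omega_{q_m+1}$. There are two facts I would combine.
\begin{enumerate}
\item Every such $q_m$ satisfies $q_m<D_{j+1}=r_{j+1}+b_{j+1}$. By Proposition~\ref{p: accident residual is bi-special}, $r_{j+1}=q_{n_{j+1}}$ is itself a bi-special scale.
\item The accident gap (Proposition~\ref{p: accident gap}) gives $b_{j+2}\geq c_3(r_{j+1}+1)$.
\end{enumerate}
The first attempt is to compare the whole gap sum with its top term $\approx r_{j+1}\omega_{r_{j+1}}$. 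This uses the geometric growth $q_{m+1}\approx\frac{1+\sqrt5}{2}q_m$ together with scale doubling, so that $\omega$ changes by at most a factor $\gamma_1$ per Fibonacci scale and the products $q_m\omega_{q_m}$ grow roughly geometrically. The top term $r_{j+1}\omega_{r_{j+1}}$ would then be charged to block $j+2$: it holds at least $c_3(r_{j+1}+1)$ times of shadow lengths at most about $r_{j+2}+b_{j+2}$. This charging step uses a further block $j+2$, so the constant $c_5$ in (\ref{ineq: bad block estimate}) grows accordingly.

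If the geometric-growth comparison fails, because $\gamma_1$ is not small enough relative to the golden ratio, the fallback is different. I would refine the count in Step 1 by observing that on the gap the component ending at $B_{j+1}-1$ already contains all of block $j$. It can then be charged to the first time at which the shadow length exceeds $q_m$, and a cruder Abel summation is used there instead. I expect this gap case to be the only genuinely new difficulty compared with Lemma~\ref{l: bad window estimates}.
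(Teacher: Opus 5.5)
Your count in Step~1, $\kappa_m\le\card\{j:\ r_j<q_m<D_{j+1}\}$, is correct, and Step~2 is fine. The gap is the reduction you draw from the count. Write $E_i$ for $\sum_{n=r_i+1}^{D_i}\omega(2^{-n})$. The per-$j$ inequality, which bounds the scales $r_j<q_m<D_{j+1}$ by $c(E_j+E_{j+1})$, is false in general. Take a gap scale $D_j<q_m\le r_{j+1}$. The component ending at $B_{j+1}-1$ contains all of block $j$, misses block $j+1$, and extends backwards. At this right end nothing ties $r_{j+1}$ to $D_j$: at $B_{j+1}$ the accident conditions only give the lower bounds $D_{j+1}\ge r_j+1$ and $b_{j+1}\ge c_3(r_j+1)$.

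Concretely, let $p$ contain the word $11Wc$, where:
\begin{itemize}
\item $W\in\fL(K)$ begins with $1$ and ends with $av$;
\item $v$ is bi-special and $avc\notin\fL(K)$;
\item $\abs{W}\le\abs{v}+3$. To get this, back up from an occurrence of $av$ to the nearest preceding $1$; since $000\notin\fL(K)$, this adds at most two letters.
\end{itemize}
Then the second $1$ is a one-point block $j$ with $D_j=1$ and $r_j=0$. The next block has $D_{j+1}=\abs{W}$ and $r_{j+1}=\abs{v}$. Take $\omega_\alpha$ with $\alpha=1$ (cf.~(\ref{eq: defi of the omegaalpha family})), which satisfies the scale doubling condition. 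Then $q_m\Delta_m\to(\sqrt5-1)/2$, so your left-hand side for this $j$ is $\gtrsim\log\abs{v}$, whereas $E_j+E_{j+1}\le 4\,\omega(1/2)$. Since $\abs{v}$ can be any bi-special length, no constant works. The energy paying for these scales sits in the blocks preceding $j$, where the component begins.

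Step~3 does not close this.
\begin{itemize}
\item Comparing the gap sum with its top term presupposes that $q_m\omega(2^{-q_m})$ grows geometrically. For the moduli $\omega_\alpha$, $\alpha\in(1,2)$, for which the lemma is needed, it decays, and for $\alpha=1$ it is flat.
\item Charging to block $j+2$ fails as well. The bound $b_{j+2}\ge c_3(r_{j+1}+1)$ says nothing about the shadow lengths in block $j+2$, which are at least $r_{j+2}+1$, and nothing bounds $r_{j+2}$ by $r_{j+1}$.
\item Your fallback points in the right direction only if the charging goes \emph{backwards} from block $j$, and it omits the mechanism that makes this work.
\end{itemize}

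The missing idea is to charge each component to its first block $j'$.
\begin{itemize}
\item If that block is partial ($r_{j'}<q_m<r_{j'}+b_{j'}$), the scale is internal to block $j'$. It is handled as in your Step~2, i.e.\ as in Lemma~\ref{l: bad window estimates}.
\item If it is full, then $B_{j'}-1\notin U_m$ gives $q_m\le r_{j'-1}$, and fullness gives $r_{j'}+b_{j'}\le q_m$. This is exactly the configuration in which Proposition~\ref{p: accident gap} is effective. The bound $b_{j'}\ge c_3(r_{j'-1}+1)$ confines $q_m$ to $[c_3(r_{j'-1}+1),r_{j'-1}]$, so only boundedly many scales are charged to $j'$. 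Every shadow length in block $j'$ is at most $q_m$, so by scale doubling $E_{j'}\ge b_{j'}\,\omega(2^{-q_m})\ge c_3\,q_m\,\omega(2^{-q_m})\ge\frac{c_3}{\gamma_1-1}\,q_m\Delta_m$.
\end{itemize}
This is the paper's split into type-one and type-two components.
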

\begin{proof}
    Let $m\geq 4$ be an integer. We may assume that $\emptyset\subsetneq U_m\subsetneq \Z/N\Z$ (otherwise $\kappa_m=0$). Now $U_m$ has $\kappa_m$ connected components. Due to Proposition~\ref{p: geometry of bad window}, we say that a connected component $I\subseteq U_m$ is of type one if there exists $j\in \Z/k\Z$ such that $0<w_{j,m}<b_j$ and $U_m\cap[B_j,B_{j+1})\subseteq I$, otherwise we say that $I$ is of type two. Denote the number of type one (resp.~two) components by $\kappa_m^1$ (resp.~$\kappa_m^2$). It follows that 
    \begin{equation}\label{ineq: single kappa 1}
    \kappa_m^1\leq \card\{j\in\Z/k\Z:0<w_{j,m}<b_j\}=\card\{j\in\Z/k\Z: r_j<q_m<r_j+b_j\}.
    \end{equation}

    Now consider a component $I\subseteq U_m$ of type two. It follows that for all $j\in \Z/k\Z$, $I\cap[B_j,B_{j+1})$ is either $\emptyset$ or the whole interval $[B_j,B_{j+1})$. Since $\emptyset\subsetneq U_m\subsetneq \Z/N\Z$, we see that there exists a unique $j\in\Z/k\Z$ such that $[B_j,B_{j+1})\subseteq I$ and $[B_{j-1},B_{j})\cap I=\emptyset$, i.e., $r_j+b_j\leq q_m\leq r_{j-1}$. It follows that 
    \begin{equation}\label{ineq: single kappa 2}
    \kappa_m^2\leq \card\{j\in\Z/k\Z:r_j+b_j\leq q_m\leq r_{j-1}\}.
    \end{equation}
    
     Denote $\omega_n\=\omega(2^{-n})$ for $n\in\N_0$ and $M(a,b)\=\{m\in\N: m\geq 4,\, a\leq q_m\leq b\}$ for $a,\,b\in[0,+\infty]$. For the contribution of both components, it follows from (\ref{ineq: single kappa 1}) and~(\ref{ineq: single kappa 2}) that
    \begin{equation}\label{ineq: kappa_1}
        \begin{aligned}
        \sum_{m=4}^{+\infty}\Delta_mq_m\kappa_m^1
        &\leq\sum_{m =4}^{+\infty}\card\{j\in\Z/k\Z: r_j<q_m<r_j+b_j\}\cdot q_m\Delta_m\\
        &=\sum_{j\in\Z/k\Z}\sum_{m\in M(r_j+1,r_j+b_j-1)}q_m\Delta_m  
        \end{aligned}
    \end{equation}
and
    \begin{equation}\label{ineq: kappa_2}
        \begin{aligned}
            \sum_{m=4}^{+\infty}\kappa_m^2q_m\Delta_m&\leq \sum_{m=4}^{+\infty}\card\{j\in\Z/k\Z: r_j+b_j\leq q_m\leq r_{j-1}\}\cdot q_m\Delta_m\\
            &=\sum_{j\in\Z/k\Z}\sum_{m\in M( r_j+b_j,r_{j-1})} q_m\Delta_m.
        \end{aligned}
    \end{equation}
    Now fix $j\in\Z/k\Z$. Let $\gamma_1>1$ be the constant from (\ref{ineq: scale doubling condition}). We first deal with the contribution of type one components. Consider $m''\=\min (M(r_j+1,+\infty))$ and $m'\=\min (M(r_j+b_j+1,+\infty))$ as in the proof of Lemma~\ref{l: bad window estimates}. If $q_{m''}>r_j+b_j-1$, then $M(r_j+1,r_j+b_j-1)=\emptyset$ and $\sum_{m\in M(r_j+1,r_j+b_j-1)}q_m\Delta_m=0$. Now assume $q_{m''}\leq r_j+b_j-1$. Then by (\ref{ineq: bad window- remaining item for m modest part}), we see that
    \begin{equation*}
        \sum_{m\in M(r_j+1,r_j+b_j-1)}q_m\Delta_m\leq\sum_{m=m''}^{m'-1}\Delta_mq_m\leq \Delta_{m''}q_{m''}+3(\gamma_1-1)(\assocsum{2}{D_j}(1)-\assocsum{2}{r_j}(1)).
    \end{equation*}
    Note that for each integer $n$ satisfying $r_j+1\leq n\leq q_{m''}$, $\omega_n\geq\omega_{q_{m''}}\geq\omega_{q_{m''}+1}$. It then follows from $q_{m''}\leq r_j+b_j-1$ that 
    \begin{equation*}
        \begin{aligned}
            \assocsum{2}{D_j}(1)-\assocsum{2}{r_j}(1)&\geq \sum_{n=r_j+1}^{q_{m''}}\omega_n\geq (q_{m''}-r_j)\omega_{q_{m''}+1}\geq\frac{q_{m''}-r_j}{\gamma_1^2-1}\Delta_{m''},
        \end{aligned}
    \end{equation*}
    where the last inequality follows from (\ref{ineq: scale doubling condition}) and (\ref{eq: defi of Deltam}). Note that by Proposition~\ref{p: accident residual is bi-special}, there exists an integer $n_j\geq 3$ such that $r_j=q_{n_j}$. As $q_{m''}>r_j=q_{n_j}$, it follows from the property of the Fibonacci sequence that $q_{m''}-r_j\geq\frac{1}{3}q_{m''}$. 
    
    We conclude that 
    \begin{equation}\label{ineq: kappa1 contribution}
        \sum_{m\in M(r_j+1,r_j+b_j-1)}q_m\Delta_m\leq3(\gamma_1-1)(\gamma_1+2)(\assocsum{2}{D_j}(1)-\assocsum{2}{r_j}(1)).
    \end{equation}
    
    We then turn to the contribution of type two components. Let $c_3>0$ be the constant in Proposition~\ref{p: accident gap}. It follows from Proposition~\ref{p: accident gap} that $M(r_j+b_j,r_{j-1})\subseteq M(c_3(r_{j-1}+1),r_{j-1})$. Since the sequence $\{q_m\}_{m\geq 4}$ grows geometrically, it follows that there exists a constant $c''>0$ only dependent on $c_3$ such that $\card (M(r_j+b_j, r_{j-1}))\leq c''$. Note that by Proposition~\ref{p: accident gap}, we have for each $m\in M(r_j+b_j,r_{j-1})$,
    \begin{equation*}
         \assocsum{2}{D_{j}}(1)-\assocsum{2}{r_{j}}(1)=\sum_{n=r_j+1}^{r_j+b_j}\omega_n\geq b_j\omega_{q_{m}}\geq c_3(r_{j-1}+1)\omega_{q_m}\geq c_3q_m\omega_{q_m}. 
    \end{equation*}
    We conclude that
    \begin{equation}\label{ineq: kappa2 contribution}
        \sum_{m\in M(r_j+b_j,r_{j-1})} q_m\Delta_m\leq \frac{c''}{c_3}(\assocsum{2}{D_{j}}(1)-\assocsum{2}{r_{j}}(1)).
    \end{equation}
    
    Combining (\ref{ineq: kappa_1}), (\ref{ineq: kappa_2}), (\ref{ineq: kappa1 contribution}), and~(\ref{ineq: kappa2 contribution}), we see that (\ref{ineq: bad block estimate}) holds with $c_5\=3\bigl(\gamma_1^2+\gamma_1-2\bigr)+c''/c_3$.
\end{proof}

Let $m\geq 4$ be an integer. To estimate $s_m(H_m)$, we investigate the difference between $(\pi_m)_\#(H_{m+1})$ and $H_m$. Recall Definition~\ref{d: bad windows}.  Note that $U_m\subseteq U_{m+1}$. Set $G_m\=F_m-(\pi_{m})_\#(F_{m+1})$. By (\ref{eq: defi of good edge chain}), we see that 
\begin{equation}\label{eq: norm of Gm}
\norm{G_m}_m=N^{-1}\card(U_{m+1}\smallsetminus U_m).
\end{equation}
Recall (\ref{eq: defi of H_m}). We then have $H_m=(\pi_{m})_\#(H_{m+1})+\cE_m$, where $\cE_m\=G_m+(\pi_{m})_\#(Q_{m+1})-Q_m\in\sZ_m$.

\begin{prop}[Stable coordinate estimates I]
For each integer $m\geq 4$, we have
\begin{align}
    s_m(H_m)&=\bigl(\bigl(1-\sqrt{5}\bigr)\big/2\bigr)s_{m+1}(H_{m+1})+s_m(\cE_m)\label{eq: induc of s_m(H_m)}\quad\text{ and }\\
\abs{s_m(\cE_m)}&\leq 5 \bigl((NF_m)^{-1}\card(U_{m+1}\smallsetminus U_m)+
N^{-1}\kappa_m+N^{-1}\kappa_{m+1}\bigr).\label{ineq: control of stable coordinate of the tail}
\end{align}
\end{prop}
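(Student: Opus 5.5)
The identity (\ref{eq: induc of s_m(H_m)}) is immediate. By definition $H_m=(\pi_m)_\#(H_{m+1})+\cE_m$. Both $H_m$ and $(\pi_m)_\#(H_{m+1})$ lie in $\sZ_m$, the latter because $(\pi_m)_\#$ commutes with $\partial$. Hence $\cE_m\in\sZ_m$, and $s_m$ is a linear functional on $\sZ_m$ by (\ref{eq: defi of two coordinates}). Applying $s_m$ and using Proposition~\ref{p: cycle properties}~(ii) gives $s_m((\pi_m)_\#(H_{m+1}))=\frac{1-\sqrt5}{2}s_{m+1}(H_{m+1})$, which is (\ref{eq: induc of s_m(H_m)}).

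For (\ref{ineq: control of stable coordinate of the tail}), I would bound $s_m$ on $\sZ_m$ by an $\ell^1$-type quantity measured against the loop lengths. For $H\in\sZ_m$ written as $x^L C_m^L+x^S C_m^S$, the two loops of Proposition~\ref{p: structure of Gamma_m} share only the bi-special vertex, so they are edge-disjoint. Therefore $\norm{H}_m=F_{m-1}\abs{x^L}+F_{m-2}\abs{x^S}\ge F_{m-2}(\abs{x^L}+\abs{x^S})$. From (\ref{eq: defi of two coordinates}), $\abs{s_m(H)}\le \frac{2\abs{x^L}+(1+\sqrt5)\abs{x^S}}{2\sqrt5}\le \frac{1+\sqrt5}{2\sqrt5}(\abs{x^L}+\abs{x^S})$. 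Combining these gives $\abs{s_m(H)}\le \frac{c}{F_{m-2}}\norm{H}_m\le\frac{c'}{F_m}\norm{H}_m$ with explicit constants, since $F_m\le 3F_{m-2}$. The catch is that $G_m$, $Q_m$ and $(\pi_m)_\#(Q_{m+1})$ are not individually circulations, so this bound cannot be applied term by term.

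To get around this, I would not bound the pieces separately but apply the estimate to $\cE_m$ directly. Then $\abs{s_m(\cE_m)}\le \frac{c'}{F_m}\norm{\cE_m}_m \le\frac{c'}{F_m}\bigl(\norm{G_m}_m+\norm{(\pi_m)_\#Q_{m+1}}_m+\norm{Q_m}_m\bigr)$. The three norms are controlled as follows.
\begin{enumerate}
\item By (\ref{eq: norm of Gm}), $\norm{G_m}_m=N^{-1}\card(U_{m+1}\smallsetminus U_m)$.
\item By Proposition~\ref{p: boundary repair}, $\norm{Q_m}_m\le (q_m+1)\kappa_m/N$.
\item Since $(\pi_m)_\#$ does not increase $\norm{\cdot}$, $\norm{(\pi_m)_\#Q_{m+1}}_m\le (q_{m+1}+1)\kappa_{m+1}/N$.
\end{enumerate}
Dividing by $F_m$ and using $q_m+1<F_m$ and $q_{m+1}+1<F_{m+1}\le 2F_m$ yields a bound of the form $C\bigl((NF_m)^{-1}\card(U_{m+1}\smallsetminus U_m)+N^{-1}\kappa_m+N^{-1}\kappa_{m+1}\bigr)$.

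The main obstacle is getting the constant down to $5$. That requires a careful evaluation of $\sup_{H}\abs{s_m(H)}F_m/\norm{H}_m$ over $\sZ_m$. Since this is a ratio of a linear functional to an $\ell^1$ norm in two coordinates, the supremum is attained on the extreme directions $H=C_m^L$ or $H=C_m^S$. This gives the values $\frac{F_m}{\sqrt5 F_{m-1}}$ and $\frac{(1+\sqrt5)F_m}{2\sqrt5F_{m-2}}$. The second is at most $\frac{(1+\sqrt5)\cdot 3}{2\sqrt5}<2.2$, using $F_m\le 3F_{m-2}$ (check $m=4$ by hand). Multiplying by the factor $2$ from $q_{m+1}+1\le 2F_m$ in the $\kappa_{m+1}$ term gives at most about $4.4\le5$, so the constant $5$ covers all three terms.
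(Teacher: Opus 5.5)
Your proposal is correct and is essentially the paper's proof: you get the identity from Proposition~\ref{p: cycle properties}~(ii), and for the inequality you bound $\abs{s_m}$ on $\sZ_m$ by a multiple of $F_{m-2}^{-1}\norm{\cdot}_m$, apply that bound to the circulation $\cE_m$ as a whole, and then split $\norm{\cE_m}_m$ using the triangle inequality, (\ref{eq: norm of Gm}), and Proposition~\ref{p: boundary repair}. The paper uses the slightly cruder bound $\abs{s_m(H)}\leq F_{m-2}^{-1}\norm{H}_m$ together with $F_{m+1}\leq 5F_{m-2}$; your extreme-direction computation is unnecessary, since your first estimate with $c'=3\cdot\frac{1+\sqrt5}{2\sqrt5}<2.2$ already gives $2c'<5$.
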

\begin{proof}
    Let $m\geq 4$ be an integer. Equality (\ref{eq: induc of s_m(H_m)}) immediately follows from Proposition~\ref{p: cycle properties}~(ii) and that $H_m=(\pi_{m})_\#(H_{m+1})+\cE_m$. Note that $\abs{s_m(H)}\leq F_{m-2}^{-1}\norm{H}_m$ for each $H\in\sZ_m$ (cf.~(\ref{eq: defi of two coordinates}), (\ref{eq: defi of mass and norm}), and Proposition~\ref{p: structure of Gamma_m}). Thus, $\abs{s_m(\cE_m)}\leq F_{m-2}^{-1}\norm{\cE_m}_m\leq F_{m-2}^{-1}(\norm{G_m}_m+\norm{Q_{m+1}}_{m+1}+\norm{Q_m}_m)$. This, together with Proposition~\ref{p: boundary repair}, (\ref{eq: norm of Gm}), and that $F_m\leq F_{m+1}\leq 5F_{m-2}$, yields (\ref{ineq: control of stable coordinate of the tail}).
\end{proof}

\begin{prop}[Stable coordinate estimates II]
\label{p: stable coordinate estimates II}
Let $\omega$ be a modulus of continuity satisfying the scale doubling condition and the stable condition. Then there exists a constant $c_6>0$ such that
\begin{equation}\label{ineq: stable coordinate estimates II}
\sum_{m= 4}^{+\infty}\Delta_mq_m\abs{s_m(H_m)}\leq\frac{c_6}{N}
\sum_{j=0}^{k-1}(\assocsum{2}{D_j}(1)-\assocsum{2}{r_j}(1)).
\end{equation}
\end{prop}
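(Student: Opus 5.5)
The plan is to unroll the recursion (\ref{eq: induc of s_m(H_m)}) from infinity downward. Then I would swap the order of summation, so that the stable condition absorbs the geometric weights. Finally, each error term $s_k(\cE_k)$ is controlled by the same per-accident bookkeeping used in Lemmas~\ref{l: bad window estimates} and~\ref{l: bad block estimates}.

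\emph{Step 1: termination and unrolling.} Since $K$ is minimal and aperiodic (Proposition~\ref{p: fibonacci-basic}), the periodic point $p$ is not in $K$. Hence $L\=\max_i \ell_i<+\infty$. For every $m$ with $q_m\geq L$ we have $U_m=\Z/N\Z$, so $F_m=0$, $\kappa_m=0$, $Q_m=0$ (the routes in Proposition~\ref{p: boundary repair} may be taken empty), and $H_m=0$. Writing $\theta\=(1-\sqrt5)/2$ and iterating (\ref{eq: induc of s_m(H_m)}) then gives the finite sum
\begin{equation*}
s_m(H_m)=\sum_{k\geq m}\theta^{k-m}s_k(\cE_k).
\end{equation*}

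\emph{Step 2: Fubini and the stable condition.} Since $\abs{\theta}=(\sqrt5-1)/2$, we get
\begin{equation*}
\sum_{m\geq4}\Delta_mq_m\abs{s_m(H_m)}\leq\sum_{k\geq4}\abs{s_k(\cE_k)}\sum_{m=4}^{k}\biggl(\frac{\sqrt5-1}{2}\biggr)^{k-m}q_m\Delta_m\leq\gamma_2\sum_{k\geq4}q_k\Delta_k\abs{s_k(\cE_k)}
\end{equation*}
by (\ref{ineq: stable condition}). Inserting (\ref{ineq: control of stable coordinate of the tail}) reduces everything to bounding $N^{-1}$ times three sums: (a) $\sum_k \Delta_k\,\card(U_{k+1}\smallsetminus U_k)$, using $q_k/F_k\leq1$; (b) $\sum_k q_k\Delta_k\kappa_k$; and (c) $\sum_k q_k\Delta_k\kappa_{k+1}$. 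Each must be bounded by $C\sum_j(\assocsum{2}{D_j}(1)-\assocsum{2}{r_j}(1))$. Sum (b) is exactly Lemma~\ref{l: bad block estimates}.

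\emph{Step 3: the shifted sums.} For (a), Proposition~\ref{p: geometry of bad window} gives $\card(U_{k+1}\smallsetminus U_k)=\sum_j(w_{j,k+1}-w_{j,k})$. For fixed $j$, this difference vanishes unless $q_{k+1}>r_j$ and $q_k<r_j+b_j$, and it is at most $\min\{q_{k+1}-r_j,\,b_j\}$. I then repeat the case split of Lemma~\ref{l: bad window estimates}, with the index sets $M(\cdot,\cdot)$ shifted by one. The extremal index $k=m'-1$ is handled by the scale doubling bound $\Delta_k\leq\omega_{q_{k-1}+1}\leq\gamma_1\omega_{q_k+1}$, together with $\omega_{q_k+1}b_j\leq\sum_{n=r_j+1}^{D_j}\omega_n$. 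The intermediate indices are handled by $q_{k+1}\leq 3(q_{k+1}-q_k)$ and the telescoping of $\sum\omega_n$. For (c), I redo the proof of Lemma~\ref{l: bad block estimates} with $\kappa_{k+1}$ in place of $\kappa_k$. The type one and type two components are now counted at scale $q_{k+1}$ but weighted by $q_k\Delta_k$. One more application of scale doubling, plus the bounded ratio $q_{k+1}/q_k$, transfers the weight to the same estimates (\ref{ineq: kappa1 contribution}) and (\ref{ineq: kappa2 contribution}), at the cost of a multiplicative constant depending only on $\gamma_1$ and $c_3$.

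\emph{Main obstacle.} I expect Step 3 to be the main difficulty, specifically the weight mismatch. The natural comparison $\Delta_k\lesssim\Delta_{k+1}$ is not available from the scale doubling condition alone, so I cannot simply quote the two lemmas with shifted indices. I would first try to always bound $\Delta_k$ by $\gamma_1\omega_{q_k+1}$ and compare directly with the tail $\sum_{n}\omega_n$ over the relevant range, as in (\ref{ineq: bad window- remaining item for m modest part}). This works because every scale $q_k$ with nonzero contribution lies in $[r_j,\,r_j+b_j]$ up to one Fibonacci step. The boundary cases at $m''$ and $m'$ need separate checking, as in the original lemmas.
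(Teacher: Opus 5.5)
Your Steps 1 and 2 are exactly the paper's argument. Your Step 3 can be completed, but it rests on a misjudgment: the comparison $\Delta_k\lesssim\Delta_{k+1}$ that you call unavailable does follow from the hypotheses, namely from the stable condition you already used in Step 2. Evaluate (\ref{ineq: stable condition}) at level $m+1$ and keep only the term $k=m$. This gives $\frac{\sqrt5-1}{2}q_m\Delta_m\leq\gamma_2q_{m+1}\Delta_{m+1}\leq 3\gamma_2q_m\Delta_{m+1}$, i.e.\ $\Delta_m\leq\oc\,\Delta_{m+1}$ with $\oc=\frac{3(\sqrt5+1)}{2}\gamma_2$. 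This is the claim in the paper's proof. Combined with $\card(U_{m+1}\smallsetminus U_m)\leq\card U_{m+1}$ and $q_m\leq q_{m+1}$, it bounds your sums (a) and (c) by $\oc$ times the left-hand sides of (\ref{ineq: bad window estimates}) and (\ref{ineq: bad block estimate}) with the index shifted by one, so both lemmas are cited verbatim.

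Your alternative does go through, and it shows the shifted sums need only scale doubling: you bound $\Delta_k$ via $\omega_{q_{k-1}+1}\leq\gamma_1^2\omega_{q_{k+1}+1}$ and compare with $\sum_n\omega_n$ directly. For (a) it is even shorter than you indicate. Every $i\in U_{k+1}\smallsetminus U_k$ satisfies $q_k<\ell_i\leq q_{k+1}$, so $\Delta_k\leq\gamma_1^3\omega(2^{-\ell_i})$, where the third power is needed only at $k=4$. These sets are disjoint in $k$, so $\sum_k\Delta_k\card(U_{k+1}\smallsetminus U_k)\leq\gamma_1^3\sum_i\omega(2^{-\ell_i})$, which is the required right-hand side by (\ref{ineq: estimate of bonus}).

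For (c) the trade is worse. The bounds (\ref{ineq: kappa1 contribution}) and (\ref{ineq: kappa2 contribution}) are inequalities for sums of $q_m\Delta_m$, so scale doubling cannot ``transfer'' $q_k\Delta_k$ into them; you must rerun their proofs. The type one terms with $k\geq m''$ already lie in the original index set. The extra term $k=m''-1$ and all type two terms need $q_k\Delta_k\leq\gamma_1^{3}q_{k+1}\omega_{q_{k+1}+1}$, together with $q_{m''}-r_j\geq q_{m''}/3$ and Proposition~\ref{p: accident gap}. Everything closes with constants depending only on $\gamma_1$ and $c_3$, but the paper's one-line use of the stable condition makes this rework unnecessary.
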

\begin{proof}
     Note that for sufficiently large $m\in\N$, $U_m=\Z/N\Z$ (cf.~Definition~\ref{d: bad windows}), and consequently $F_m=Q_m=H_m=G_m=0$. It then follows from (\ref{eq: induc of s_m(H_m)}) that 
    \begin{equation*}
        \abs{s_m(H_m)}\leq\sum_{l=0}^{+\infty}\frac
    {(\sqrt{5}-1)^l}{2^l}\abs{s_{m+l}(\cE_{m+l})},
    \end{equation*}
    and thus, 
    \begin{equation*}
    \begin{aligned}
    \sum_{m= 4}^{+\infty}\abs{s_m(H_m)}q_m\Delta_m
    &\leq\sum_{m=4}^{+\infty}\sum_{l=0}^{+\infty}\frac
    {(\sqrt{5}-1)^l}{2^l}q_m\Delta_m\abs{s_{m+l}(\cE_{m+l})}
    =\sum_{m=4}^{+\infty}\abs{s_{m}(\cE_m)}\sum_{k=4}^m\frac
    {(\sqrt{5}-1)^{m-k}}{2^{m-k}}q_k\Delta_k.
    \end{aligned}
    \end{equation*} 
    Let $\gamma_2>0$ be the constant from (\ref{ineq: stable condition}). By (\ref{ineq: stable condition}), we see that 
    \begin{equation*}
    \sum_{m=4}^{+\infty}\abs{s_m(H_m)}q_m\Delta_m\leq \gamma_2\sum_{m=4}^{+\infty}\abs{s_m(\cE_m)}q_m\Delta_m.
    \end{equation*}
     Now applying (\ref{ineq: control of stable coordinate of the tail}) and Lemma~\ref{l: bad block estimates}, we see that it suffices to control $\sum_{m= 4}^{+\infty}\Delta_m\cdot\card U_{m+1}$ and $\sum_{m=4}^{+\infty}q_m\Delta_m\kappa_{m+1}$. To apply Lemmas~\ref{l: bad window estimates} and~\ref{l: bad block estimates} for the two sums, respectively, we only need to prove the following result:
     
     \smallskip
     \emph{Claim.} There exists a constant $\oc>0$ independent of the periodic point $p$ such that for each integer $m\geq 4$, $\Delta_m\leq \oc\Delta_{m+1}$.
     
     \smallskip
     Let $\gamma_2>0$ be the constant from (\ref{ineq: stable condition}). It follows that for each integer $m\geq 4$, $\gamma_2q_{m+1}\Delta_{m+1}\geq \frac{\sqrt{5}-1}{2}q_{m}\Delta_m$. Since $q_{m+1}\leq 3q_m$ for all $m\in\N$ with $m\geq 4$, we see that the claim holds with $\oc\=\frac{3\sqrt{5}+3}{2}\gamma_2$, and~(\ref{ineq: stable coordinate estimates II}) holds with $c_6\=5\gamma_2(\oc c_4+c_5+\oc c_5)$, where $c_4$ and $c_5$ are the constants from Lemmas~\ref{l: bad window estimates} and~\ref{l: bad block estimates}, respectively.
\end{proof}

\begin{proof}[Proof of Proposition~\ref{p: transport gap}]
    Let $p$ be some periodic point of prime period $N\in\N$. Combining Propositions~\ref{p: cylinder discrepancy estimated through H_m}, \ref{p: reduction to bi-special scales}, and~\ref{p: stable coordinate estimates II}, and Lemmas~\ref{l: bad window estimates} and~\ref{l: bad block estimates}, we see that
    \begin{equation*}
        W_{\omega}(\eta_z,\mu_K)\leq N^{-1} (c_1+1)(c_4+2c_5+c_6)\sum_{j=0}^{k-1}(\assocsum{2}{D_j}(1)-\assocsum{2}{r_j}(1)),
    \end{equation*}
    where $c_1$, $c_4$, $c_5$, $c_6$ are the constants in Lemmas~\ref{l: comparison}, \ref{l: bad window estimates}, and~\ref{l: bad block estimates}, and Proposition~\ref{p: stable coordinate estimates II}, respectively. By Proposition~\ref{p: difference of nu_p and eta_z} and (\ref{ineq: estimate of bonus}), we conclude that (\ref{ineq: transport gap}) holds with $C\=1+(c_1+1)(c_4+2c_5+c_6)$, completing the proof. 
\end{proof}

\begin{proof}[\bf Proof of Theorem~\ref{t: non TPO}]
    Let $\omega$ be a modulus of continuity satisfying the scale doubling condition, the stable condition, and the Dini condition. By Remark~\ref{r: range of the technical conditions}, we see that such $\omega$ exists. Let $C>0$ be the constant from Theorem~\ref{t: energy transport estimate: general condition}. Then for each constant $\epsilon\in [0,1/C)$, each $g\in H^{\omega}(\Sigma,\R)$ with $\norm{g}_{\omega}\leq 1$, and each invariant measure $\nu\in\cM(\Sigma,\sigma)\smallsetminus\{\mu_K\}$,
    \begin{equation*}
        \begin{aligned}
        &\int_\Sigma\! (f_{K,\omega}+\epsilon g)\,\mathrm{d}\mu_K-\int_\Sigma\!(f_{K,\omega}+\epsilon g)\,\mathrm{d}\nu =\int_\Sigma\! -f_{K,\omega}\,\mathrm{d}\nu+\epsilon\biggl(\int_\Sigma\! g\,\mathrm{d}\mu_K-\int_\Sigma\! g\,\mathrm{d}\nu\biggr)\geq (1-\epsilon C)\int_\Sigma\! -f_{K,\omega}\,\mathrm{d}\nu>0,
        \end{aligned}
    \end{equation*}
    where the first inequality follows from Theorem~\ref{t: energy transport estimate: general condition} and the last inequality follows from the unique ergodicity of $K$ (cf.~Proposition~\ref{p: fibonacci-basic}). We conclude that for the open subset $U\=\{h\in H^{\omega}(\Sigma,\R):\norm{h-f_{K,\omega}}_{\omega}<1/C\}$ of $H^\omega(\Sigma,\R)$, $\mu_K$ is the unique $h$-maximizing measure for each $h\in U$.
\end{proof}

\begin{rem}\label{r: range of thmB}
    As can be seen from the above proof, the conclusion of Theorem~\ref{t: non TPO} holds for all moduli of continuity satisfying the scale doubling condition, the stable condition, and the Dini condition. 
\end{rem}

\section{Non-freezing for weak log-log Dini potentials}\label{s: non-freezing for log Dini}

In this section, we prove Theorem~\ref{t: non-freezing for log Dini} after the preparations in Lemmas~\ref{l: a general non-freezing criterion} to \ref{l: maximizing measure not much weight on nonmaximizing block}. In the proof of Theorem~\ref{t: non-freezing for log Dini}, we construct a sequence of invariant measures $\{\mu_{n_k}\}_{k\in\N}$ for which finer estimates hold so that Lemma~\ref{l: a general non-freezing criterion} applies. 

\begin{lemma}[A general non-freezing criterion]\label{l: a general non-freezing criterion}
Let $T\: X\to X$ be a continuous map on some compact metric space $(X,d)$, $\phi\: X\to\R$ be a continuous function, and $\mu\in\cM_{\max}(T,\phi)$. Assume that there exists a sequence $\{\mu_n\}_{n\in\N}$ in $\cM(X,T)$ such that $h_{\mu_n}(T)>h_{\mu}(T)$ for all sufficiently large $n\in\N$ and \footnote{Here we adopt the convention that $\frac{a}{0}\coloneqq+\infty$ for $a\in(0,+\infty)$ to include the case where $\int_X\!\phi\,\mathrm{d}\mu=\int_X\!\phi\,\mathrm{d}\mu_n$ for some $n\in\N$.}
\begin{equation} \label{eq: a general non-freezing criterion}
    \limsup_{n\to+\infty}\frac{h_{\mu_n}(T)-h_{\mu}(T)}{\int_{X}\!\phi\,\mathrm{d\mu-\int_X\!\phi\,\mathrm{d}\mu_n}}=+\infty.
\end{equation}
Then $\mu$ is not a frozen state for $\phi$.
\end{lemma}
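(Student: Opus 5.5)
We argue by contradiction. Suppose $\mu$ is a frozen state for $\phi$, so there is $t_c\in(0,+\infty)$ with $\mu\in\fE(T,\phi/t)$ for every $t\in(0,t_c)$. Writing $\beta=1/t$, this means that for every $\beta>1/t_c$,
\begin{equation*}
h_{\mu}(T)+\beta\int_X\!\phi\,\mathrm{d}\mu=P(T,\beta\phi)\geq h_{\nu}(T)+\beta\int_X\!\phi\,\mathrm{d}\nu\qquad\text{for all }\nu\in\cM(X,T).
\end{equation*}
The plan is to fix one such $\beta_0>1/t_c$ and test this variational inequality against the measures $\nu=\mu_n$ supplied by the hypothesis.

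For each $n$ large enough that $h_{\mu_n}(T)>h_\mu(T)$, the inequality with $\nu=\mu_n$ rearranges to
\begin{equation*}
0<h_{\mu_n}(T)-h_{\mu}(T)\leq\beta_0\biggl(\int_X\!\phi\,\mathrm{d}\mu-\int_X\!\phi\,\mathrm{d}\mu_n\biggr).
\end{equation*}
Since $\mu\in\cM_{\max}(T,\phi)$, the bracket is nonnegative. It is in fact strictly positive, because the left-hand side is positive; this rules out the degenerate case covered by the convention $a/0=+\infty$ in the footnote. Dividing gives
\begin{equation*}
\frac{h_{\mu_n}(T)-h_{\mu}(T)}{\int_X\!\phi\,\mathrm{d}\mu-\int_X\!\phi\,\mathrm{d}\mu_n}\leq\beta_0
\end{equation*}
for all sufficiently large $n$. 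Hence the $\limsup$ in (\ref{eq: a general non-freezing criterion}) is at most $\beta_0<+\infty$, a contradiction, so $\mu$ is not a frozen state for $\phi$.

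There is no real obstacle here. The only points needing care are the sign and zero-denominator bookkeeping, and the observation that a single inverse temperature in the frozen range already suffices. No finiteness of entropy issues arise beyond $h_\mu(T)<+\infty$, which holds because $P(T,\beta_0\phi)$ is finite for the expanding systems considered; in the general continuous setting, if $h_\mu(T)=+\infty$ the hypothesis $h_{\mu_n}(T)>h_\mu(T)$ cannot hold, so the statement is vacuous.
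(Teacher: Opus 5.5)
Your proposal is correct and is the paper's argument in contrapositive form. The paper fixes an arbitrary $\beta>0$ and uses the unbounded $\limsup$ to find $n$ with $h_{\mu_n}(T)+\beta\int_X\phi\,\mathrm{d}\mu_n>h_\mu(T)+\beta\int_X\phi\,\mathrm{d}\mu$, so $\mu$ is an equilibrium state for no $\beta\phi$ at all; you instead fix one $\beta_0$ in the supposed frozen range and bound the ratio by $\beta_0$, with the zero-denominator and infinite-entropy cases handled adequately.
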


\begin{proof}
	Fix $\beta>0$. It follows that there exists $n\in\N$ such that $h_{\mu_n}(T)-h_{\mu}(T)> \beta\bigl(\int_X\!\phi\,\mathrm{d}\mu-\int_X\!\phi\,\mathrm{d}\mu_n\bigr)$, i.e., $h_{\mu_n}(T)+\beta\int_X\!\phi\,\mathrm{d}\mu_n>h_{\mu}(T)+\beta\int_X\!\phi\,\mathrm{d}\mu$, which means that $\mu$ is not an equilibrium state for $\beta\phi$. We conclude that $\mu$ is not a frozen state for $\phi$.
\end{proof}

Lemmas~\ref{l: associated modulus rate} and~\ref{l: subseqeunce for log Dini} below provide an interpretation of the weak log-log Dini condition (cf.~(\ref{eq: key threshold})). 
\begin{lemma}\label{l: associated modulus rate}
    Let $\omega$ be a modulus of continuity and $\lambda>1$. Then for each $z>0$,
    \begin{equation}\label{eq: liminf from continuous r to discrete i}
         \liminf_{r\to 0^+}\frac{\int_r^1\!\frac{\omega(t)}{t}\,\mathrm{d}t}{\log(\log(r^{-1}))}=\liminf_{r\to 0^+}\frac{\int_r^z\!\frac{\omega(t)}{t}\,\mathrm{d}t}{\log(\log (r^{-1}))}=\log\lambda\cdot\liminf_{n\to+\infty}\frac{\assocsum{\lambda}{n}(z)}{\log n}.
    \end{equation}
\end{lemma}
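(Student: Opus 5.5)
The plan is to prove the two equalities in (\ref{eq: liminf from continuous r to discrete i}) separately, each by comparing numerators up to a bounded additive error while noting that the denominator $\log(\log(r^{-1}))\to+\infty$.

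\emph{First equality.} For fixed $z>0$, the difference $\int_r^1\omega(t)/t\,\mathrm{d}t-\int_r^z\omega(t)/t\,\mathrm{d}t=\int_z^1\omega(t)/t\,\mathrm{d}t$ (with orientation) is a finite constant independent of $r$ once $r<\min\{z,1\}$. This holds because $\omega$ is bounded on compact intervals away from $0$ and $1/t$ is integrable there. Dividing by $\log(\log(r^{-1}))\to+\infty$ kills the constant, so the two $\liminf$'s agree. This also covers the case where both sides are $+\infty$.

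\emph{Second equality: discretization.} Substitute $t=\lambda^{-s}z$, so $\mathrm{d}t/t=-\log\lambda\,\mathrm{d}s$. For $r=\lambda^{-x}z$ this gives
\begin{equation*}
\int_r^z\frac{\omega(t)}{t}\,\mathrm{d}t=\log\lambda\int_0^{x}\omega(\lambda^{-s}z)\,\mathrm{d}s.
\end{equation*}
Since $s\mapsto\omega(\lambda^{-s}z)$ is nonincreasing, the integral over $[n-1,n]$ lies between $\omega(\lambda^{-n}z)$ and $\omega(\lambda^{-(n-1)}z)$. Summing over integer steps yields, for $n\le x<n+1$,
\begin{equation*}
\assocsum{\lambda}{n}(z)\le\int_0^{x}\omega(\lambda^{-s}z)\,\mathrm{d}s\le\omega(z)+\assocsum{\lambda}{n}(z).
\end{equation*}
The upper bound uses that the extra fractional piece and the index shift are each controlled by the boundedness of $\omega$, and only the bounded term $\omega(z)$ is lost.

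\emph{Denominator.} We have $\log(\log(r^{-1}))=\log(x\log\lambda-\log z)=\log x+o(1)=\log n+o(1)$ as $x\to+\infty$, with $n=\lfloor x\rfloor$. The ratio for $r\in[\lambda^{-n-1}z,\lambda^{-n}z]$ is therefore $\log\lambda\cdot(\assocsum{\lambda}{n}(z)+O(1))/(\log n+o(1))$. Taking $\liminf$ as $r\to0^+$ corresponds to $n\to+\infty$ through all integers, since every $n$ is realized. This gives the claimed identity, with the convention that $+\infty$ values match: if one side is infinite, the sandwich forces the other to be infinite too.

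The only mildly delicate point is bookkeeping these bounded errors uniformly as $n\to+\infty$. Care is also needed when $\log\log$ is undefined or negative for $r$ near $1$, which is irrelevant since only $r\to0^+$ matters. I do not expect a genuine obstacle.
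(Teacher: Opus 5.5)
Your proof is correct and follows essentially the same route as the paper: the paper also sandwiches $\int_{z\lambda^{-n}}^{z}\omega(t)/t\,\mathrm{d}t$ between $\log\lambda\cdot\assocsum{\lambda}{n}(z)$ and $\log\lambda\cdot\bigl(\assocsum{\lambda}{n-1}(z)+\omega(z)\bigr)$, using monotonicity of $\omega$ on the intervals $[z\lambda^{-i},z\lambda^{1-i}]$ (your substitution $t=\lambda^{-s}z$ is the same computation), and then compares the denominators for $r$ between consecutive scales. One small slip: $\log(x\log\lambda-\log z)=\log x+\log\log\lambda+o(1)$, so the denominator error is $O(1)$ rather than $o(1)$; this is harmless because $\log n\to+\infty$.
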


\begin{proof}
	Fix $z>0$. It is straightforward to see that the first equality holds. Now we prove the second. Since $\omega$ is nondecreasing, we see that for each $i\in\N$, 
	\begin{equation}\label{ineq: interval integration for omega(t) over t}
		\omega\bigl(z\lambda^{-i}\bigr)\cdot\log \lambda\leq \int_{z\lambda^{-i}}^{z\lambda^{1-i}}\!\frac{\omega(t)}{t}\,\mathrm{d}t\leq \omega\bigl(z\lambda^{1-i}\bigr)\cdot\log \lambda.
	\end{equation}
    For each $n\in\N$, summing (\ref{ineq: interval integration for omega(t) over t}) over $i=1,\,2,\ldots,\,n$, we see from (\ref{eq: defi of associated modulus}) that  
    $\assocsum{\lambda}{n}(z) \cdot \log\lambda
    \leq\int_{z\lambda^{-n}}^{z}\!\frac{\omega(t)}{t}\,\mathrm{d}t
    \leq (\assocsum{\lambda}{n-1}(z)+\omega(z))\cdot\log\lambda$.
    Thus, for each integer $n>1+\frac{1+\log z}{\log\lambda}$ and each $r\in\bigl(z\lambda^{-n},z\lambda^{1-n}\bigr)$, 
    \begin{equation*}
        \frac{\assocsum{\lambda}{n-1}(z)\cdot\log\lambda}{\log(\log(\lambda^{n}/z))}
    \leq\frac{\int_{r}^z\!\frac{\omega(t)}{t}\,\mathrm{d}t}{\log(\log (r^{-1}))}
    \leq \frac{(\assocsum{\lambda}{n-1}(z)+\omega(z))\cdot\log\lambda}{\log(\log(\lambda^{n-1}/z))}.
    \end{equation*} 
    Now (\ref{eq: liminf from continuous r to discrete i}) follows. 
\end{proof}

\begin{lemma}\label{l: subseqeunce for log Dini}
    Let $\omega$ be a modulus of continuity satisfying the weak log-log Dini condition, $\lambda>1$, and $z>0$. Assume that $\{a_k\}_{k\in\N}$ is a sequence of strictly increasing positive integers satisfying $\delta\=\inf_{k\in\N}\frac{\log a_k}{\log a_{k+1}}>0$. Then 
    \begin{equation}\label{eq: subsequence for log Dini}
        \liminf_{k\to+\infty}\frac{\assocsum{\lambda}{a_k}(z)}{\log a_k}=0=\liminf_{n\to+\infty}\frac{\assocsum{\lambda}{n}(z)}{\log n}.
    \end{equation}
\end{lemma}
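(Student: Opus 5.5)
The second equality follows directly from Lemma~\ref{l: associated modulus rate}: the weak log-log Dini condition (\ref{eq: key threshold}) says the left-hand side of (\ref{eq: liminf from continuous r to discrete i}) vanishes, so $\liminf_{n\to+\infty}\assocsum{\lambda}{n}(z)/\log n=0$. The task is therefore to show that the $\liminf$ along the subsequence $\{a_k\}$ is also $0$. Since $\assocsum{\lambda}{n}(z)\geq 0$, the subsequential $\liminf$ is at least the full $\liminf$, so only the upper bound needs proof.

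The plan is an interpolation argument. The sums $\assocsum{\lambda}{n}(z)=\sum_{i=1}^n\omega(\lambda^{-i}z)$ are nondecreasing in $n$. We do not expect monotonicity of the ratio with $\log n$. Fix $\epsilon>0$ and choose $n$ arbitrarily large with $\assocsum{\lambda}{n}(z)\leq\epsilon\log n$. Because the $a_k$ are strictly increasing integers, $a_k\to+\infty$, so for $n\geq a_1$ there is a unique $k$ with $a_k\leq n<a_{k+1}$. Monotonicity then gives
\begin{equation*}
\assocsum{\lambda}{a_k}(z)\leq\assocsum{\lambda}{n}(z)\leq\epsilon\log n<\epsilon\log a_{k+1}\leq\epsilon\delta^{-1}\log a_k,
\end{equation*}
where the last step uses $\log a_k\geq\delta\log a_{k+1}$. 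Hence $\assocsum{\lambda}{a_k}(z)/\log a_k\leq\epsilon/\delta$.

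As $n\to+\infty$ along such choices, the corresponding index $k\to+\infty$. The reason is that $n<a_{k+1}$ forces $a_{k+1}\to+\infty$. So infinitely many indices $k$ satisfy the bound. Letting $\epsilon\to0$ gives the first equality.

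One technical point needs care: $\log a_k$ must be positive for the division to make sense. If $a_1=1$, then $\log a_1=0$ and $\delta=0$ unless we discard it, so implicitly $a_k\geq2$ and this is harmless. There is no real obstacle. The only delicate point is to use monotonicity of $\assocsum{\lambda}{n}(z)$ in $n$ rather than of the ratio, which the displayed chain handles.
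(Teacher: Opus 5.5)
Your proof is correct and takes essentially the same approach as the paper. The paper argues by contradiction: it assumes $\assocsum{\lambda}{a_k}(z)>c\log a_k$ and uses the monotonicity of $n\mapsto\assocsum{\lambda}{n}(z)$ together with $\log a_k\geq\delta\log a_{k+1}$ to get $\assocsum{\lambda}{n}(z)/\log n\geq c\delta$ on each block $[a_k,a_{k+1})$. That is exactly the contrapositive of your direct chain of inequalities, and your remark that $\delta>0$ forces $a_1\geq 2$ is also right.
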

\begin{proof}
    As $\omega$ satisfies the weak log-log condition (cf.~(\ref{eq: key threshold})), the second equality in (\ref{eq: subsequence for log Dini}) follows from Lemma~\ref{l: associated modulus rate}. To prove the first equality in (\ref{eq: subsequence for log Dini}), we argue by contradiction. Assume that $\liminf_{k\to+\infty}\frac{\assocsum{\lambda}{a_k}(z)}{\log a_k}>0$. Then there exists a constant $c>0$ such that $\assocsum{\lambda}{a_k}(z)>c\log a_k$ for all $k\in\N$. As the sequence $\{a_k\}_{k\in\N}$ is strictly increasing, we see that for each $k\in\N$ and each integer $n\in [a_k,a_{k+1})$, 
    \begin{equation*}
        \frac{\assocsum{\lambda}{n}(z)}{\log n}\geq \frac{\assocsum{\lambda}{a_k}(z)}{\log a_{k+1}}>\frac{c\log a_k}{\log a_{k+1}}\geq c\delta>0,
    \end{equation*}
    which leads to a contradiction with the second equality in (\ref{eq: subsequence for log Dini}). Thus, the assumption does not hold, and the first equality in (\ref{eq: subsequence for log Dini}) holds.
\end{proof}

The following results, Lemmas~\ref{l: birkhoff sum controlled by asscosum} and~\ref{l: maximizing measure not much weight on nonmaximizing block}, provide a quantitative estimate on the weight that a maximizing measure assigns to some non-maximizing orbit, which is a key ingredient in the proof of Theorem~\ref{t: non-freezing for log Dini}. 

\begin{lemma}\label{l: birkhoff sum controlled by asscosum}
    Let $(X,\sigma)$ be a mixing $1$-step subshift of finite type on a finite alphabet $A$ equipped with the metric $d_{\lambda}$ for some $\lambda>1$, $s\in\N_0$, $\omega$ be a modulus of continuity, and $\phi\in H^{\omega}(X,\R)$ be an $\omega$-continuous function. Denote $M\=s\norm{\phi-Q(\sigma,\phi)}_{C^0}$. Assume that $\sigma^{s+1}([a])=X$ for all $a\in A$. Then for all $x\in X$ and $n\in\N$,
    \begin{equation*}
        S_n^{\sigma}\phi(x)-nQ(\sigma,\phi)\leq \abs{\phi}_{\omega}\cdot\assocsum{\lambda}{n}(1)+M.
    \end{equation*}
\end{lemma}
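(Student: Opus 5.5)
The plan is to close up the orbit segment $x, \sigma(x), \ldots, \sigma^{n-1}(x)$ into a periodic point of period $n+s$ and compare Birkhoff sums. Fix $x\in X$ and $n\in\N$. Let $a\=x_n$ be the last symbol of the word $x_1x_2\ldots x_n$ (which is allowed in $X$). Since $\sigma^{s+1}([a])=X$, there is a point $w\in[a]$ with $\sigma^{s+1}(w)\in[x_1]$. In other words, there is an allowed word $a\,c_1c_2\ldots c_s\,x_1$ of length $s+2$. Because $X$ is a $1$-step subshift of finite type, allowedness is determined by consecutive pairs. Hence the infinite periodic concatenation
\begin{equation*}
y\=\bigl(x_1x_2\ldots x_n c_1\ldots c_s\bigr)^{\infty}
\end{equation*}
lies in $X$ and satisfies $\sigma^{n+s}(y)=y$. (When $s=0$ the word is just $a\,x_1$, and the same reasoning applies.)

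Next I compare $S_n\phi(x)$ with $S_n\phi(y)$. For $0\leq i\leq n-1$, the points $\sigma^i(x)$ and $\sigma^i(y)$ agree in their first $n-i$ coordinates. Hence $d_\lambda(\sigma^i(x),\sigma^i(y))\leq\lambda^{-(n-i)}$, and so $\abs{\phi(\sigma^i x)-\phi(\sigma^i y)}\leq\abs{\phi}_\omega\,\omega(\lambda^{-(n-i)})$, since $\omega$ is nondecreasing. Summing over $i$ and reindexing by $k=n-i\in\{1,\ldots,n\}$ gives
\begin{equation*}
S_n\phi(x)\leq S_n\phi(y)+\abs{\phi}_\omega\sum_{k=1}^{n}\omega(\lambda^{-k})=S_n\phi(y)+\abs{\phi}_\omega\,\assocsum{\lambda}{n}(1).
\end{equation*}

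Then I use periodicity. The measure $\frac{1}{n+s}\sum_{i=0}^{n+s-1}\delta_{\sigma^i(y)}$ is $\sigma$-invariant, so $S_{n+s}\phi(y)\leq (n+s)Q(\sigma,\phi)$, that is, $S_{n+s}\toverline{\phi}(y)\leq 0$. Writing
\begin{equation*}
S_n\toverline{\phi}(y)=S_{n+s}\toverline{\phi}(y)-S_s\toverline{\phi}\bigl(\sigma^n(y)\bigr),
\end{equation*}
the subtracted term is at least $-s\norm{\phi-Q(\sigma,\phi)}_{C^0}=-M$. Hence $S_n\phi(y)-nQ(\sigma,\phi)\leq M$. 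Combining this with the previous display yields the claim.

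There is no real obstacle here. The only points needing care are that the $1$-step hypothesis is exactly what makes the glued periodic sequence legal, and the indexing of the distance bound so that it produces $\assocsum{\lambda}{n}(1)$ rather than a shifted sum. With the metric $d_\lambda(x,y)=\lambda^{1-p}$, agreement in the first $n-i$ coordinates means $p\geq n-i+1$, so the distance is at most $\lambda^{-(n-i)}$, which matches the sum from $k=1$ to $n$.
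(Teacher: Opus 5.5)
Your proof is correct and matches the paper's argument: glue $x_1\ldots x_n$ to a connecting word of length $s$ to get a periodic point $y$ of period $n+s$, use $S_{n+s}\phi(y)\leq (n+s)Q(\sigma,\phi)$, and compare $S_n\phi(x)$ with $S_n\phi(y)$ via $\omega$-continuity. You also correctly supply details the paper leaves implicit, namely why the $1$-step hypothesis makes $y$ legal and the index bookkeeping that gives exactly $\assocsum{\lambda}{n}(1)$.
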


\begin{proof}
    Let $x=x_1x_2\ldots\in X$ and $n\in\N$, where $x_k\in A$ for all $k\in\N$. Since $\sigma^{s+1}([a])=X$ for all $a\in A$, there exists a word $c\in\fL_s(X)$ such that $x_ncx_1\in\fL(X)$. Thus, consider the periodic point $y\=x_1x_2\ldots x_ncx_1x_2\ldots x_nc\ldots\in X$. It immediately follows that $S_{n+s}^{\sigma}\phi(y)\leq (n+s)Q(\sigma,\phi)$ and $\abs{S_n^{\sigma}\phi(x)-S_{n}^{\sigma}\phi(y)}\leq \abs{\phi}_{\omega}\cdot\assocsum{\lambda}{n}(1)$. Combining the two inequalities, we see that $S_n^{\sigma}\phi(x)-nQ(\sigma,\phi)\leq \abs{\phi}_{\omega}\cdot\assocsum{\lambda}{n}(1)+M$. 
\end{proof}

To state the following lemma, we first linearly extend the function $n\mapsto\assocsum{\lambda}{n}(1)$ by 
\begin{equation*}
\assocsum{\lambda}{t}(1)\=(n+1-t)\assocsum{\lambda}{n}(1)+(t-n)\assocsum{\lambda}{n+1}(1)
\end{equation*}
for all $n\in\N_0$ and $t\in [n,n+1)$. Note that the function $\assocsum{\lambda}{\,\cdot\,}(1)\:[0,+\infty)\to[0,+\infty)$ is nondecreasing and concave. 
\begin{lemma}\label{l: maximizing measure not much weight on nonmaximizing block}
    Under the hypotheses of Lemma~\ref{l: birkhoff sum controlled by asscosum}, let $\mu\in\cM_{\max}(\sigma,\phi)$ be a $\phi$-maximizing measure. Assume that $y=y_1y_2\ldots\in X$ is a periodic point of period $L$, where $L\in\N$, and $a\=\frac{1}{L}S_L^{\sigma}\phi(y)<Q(\sigma,\phi)$. If $\mu([y_1y_2\ldots y_L])>0$, then 
    \begin{equation}\label{ineq: key estimate for nonmax block lemma}
    L(Q(\sigma,\phi)-a)\leq \abs{\phi}_{\omega}\assocsum{\lambda}{L}(1)+M+\abs{\phi}_{\omega}\assocsum{\lambda}{L/\mu([y_1y_2\ldots y_L])}(1).
    \end{equation}
    Consequently, if $L\leq N$ and $L(Q(\sigma,\phi)-a)>2\abs{\phi}_{\omega}\assocsum{\lambda}{N}(1)+M$ for some $N\in\N$, then 
    \begin{equation}\label{ineq: maximizing measure has controlled weight on nonmaximizing orbits}
    \mu([y_1y_2\ldots y_L])<L/N.
    \end{equation}
\end{lemma}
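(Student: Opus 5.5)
The plan is to use the ergodic decomposition together with a Birkhoff/Kac-type argument along $\mu$-typical orbits. The idea is to compare the time an orbit spends reading the bad block $J\coloneqq y_1y_2\ldots y_L$ with the energy it must lose there. Write $Q\coloneqq Q(\sigma,\phi)$ and $p\coloneqq\mu([J])>0$.

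Since every ergodic component of a maximizing measure is maximizing, I will first reduce to the case where $\mu$ is ergodic. After this reduction some component still gives $[J]$ mass at least $p$, and the right-hand side of (\ref{ineq: key estimate for nonmax block lemma}) is nonincreasing in $p$, so it suffices to treat that component. For a $\mu$-generic point $x\in[J]$, Birkhoff's theorem gives $\frac1n S_n\phi(x)\to Q$. Moreover, the visits of the orbit to $[J]$ have asymptotic frequency $p$.

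The core step is to decompose a long orbit segment of $x$ into two kinds of pieces. The first kind consists of blocks of length $L$ on which the orbit reads $J$. On such a block the Birkhoff sum is at most $La+\abs{\phi}_\omega\assocsum{\lambda}{L}(1)$, by comparing with $y$ via $\omega$-continuity exactly as in the proof of Lemma~\ref{l: birkhoff sum controlled by asscosum}. The second kind consists of the gaps between consecutive selected blocks. On each gap of length $g$ Lemma~\ref{l: birkhoff sum controlled by asscosum} gives the bound $gQ+\abs{\phi}_\omega\assocsum{\lambda}{g}(1)+M$.

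I will select the $J$-blocks greedily so that they are disjoint. Then, within a time window of length $n$, the number of selected blocks is $k\approx np/L$ up to $o(n)$. This is because occurrences of $J$ can overlap at most $L$-fold, and each occurrence lies in some selected block. Summing over the pieces and using $S_n\phi(x)\geq nQ-o(n)$ gives
\begin{equation*}
kL(Q-a)\leq k\abs{\phi}_\omega\assocsum{\lambda}{L}(1)+\sum_{\text{gaps}}\bigl(\abs{\phi}_\omega\assocsum{\lambda}{g}(1)+M\bigr)+o(n).
\end{equation*}

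The main obstacle is bounding the sum over gaps. Concavity of $t\mapsto\assocsum{\lambda}{t}(1)$ (Jensen's inequality) handles the $\omega$-terms. The gaps have average length at most about $L/p$, so their contribution is at most $k\abs{\phi}_\omega\assocsum{\lambda}{L/p}(1)$. The additive constant $M$, however, appears once per gap, which produces the $kM$ term. Dividing by $k$ and letting $n\to+\infty$ yields (\ref{ineq: key estimate for nonmax block lemma}).

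The greedy selection has to be arranged carefully so that the gap count is comparable to $k$. It must also ensure the average selected gap is at most $L/p$ rather than $1/p$, which is exactly why the argument of $\assocsum{\lambda}{\cdot}(1)$ is $L/p$. If overlap causes trouble, I would instead select only occurrences at times congruent to a fixed residue mod $L$, choosing the residue with maximal frequency. That frequency is at least $p/L$, which again gives average gap at most $L/p$.

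For the consequence (\ref{ineq: maximizing measure has controlled weight on nonmaximizing orbits}), argue by contradiction. If $p\geq L/N$, then $L/p\leq N$. Since $L\leq N$ and $\assocsum{\lambda}{\cdot}(1)$ is nondecreasing, the right-hand side of (\ref{ineq: key estimate for nonmax block lemma}) is at most $2\abs{\phi}_\omega\assocsum{\lambda}{N}(1)+M$, which contradicts the hypothesis.
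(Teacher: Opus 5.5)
Your argument is correct, but it takes a different route from the paper's. The paper never passes to ergodic components or to individual orbits. Instead it induces on $C=[y_1y_2\ldots y_L]$ under $\zeta=\sigma^L$. Successive returns then automatically give disjoint $J$-blocks, since they occur at multiples of $L$; this is your residue-class fallback with the residue fixed for free. Kac's formula $\int_C\tau_C\,\mathrm{d}\mu=\mu(U)\leq 1$ replaces your frequency count and gives directly $\int R\,\mathrm{d}\mu_C=L\mu(U)/\mu(C)\leq L/\mu(C)$ for $R=L\tau_C$. Your step $S_n\phi(x)\geq nQ-o(n)$ along a typical orbit is replaced by the identity $\int G\,\mathrm{d}\mu_C=0$ for the induced sum $G=S_R\phi-RQ$. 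Jensen's inequality is then applied to $\int\assocsum{\lambda}{R-L}(1)\,\mathrm{d}\mu_C$ rather than to a finite average of gap lengths. The inducing argument is shorter. It handles non-ergodicity of $\mu$ (under $\sigma$ and under $\sigma^L$) and overlapping occurrences of $J$ with no bookkeeping. Your version is more elementary and finitary, at the cost of the ergodic-decomposition reduction, the boundary terms, and the overlap counting.

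One small correction: greedy selection does not give $k\approx np/L$ in general. If $J$ has no self-overlaps, then $k\approx np$. What your observation actually proves (each selected block contains at most $L$ starting positions of occurrences) is the lower bound $k\geq (np-o(n))/L$. That lower bound is exactly what you need to get an average gap of at most $L/p-L+o(1)$.
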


\begin{proof}

Denote $C\=[y_1y_2\ldots y_L]$, $\zeta\=\sigma^L$, and $U\=\bigcup_{n=0}^{+\infty}\zeta^{-n}(C)$. If $\mu(C)=0$, then (\ref{ineq: maximizing measure has controlled weight on nonmaximizing orbits}) naturally holds. Now assume that $\mu(C)>0$ and denote $\mu_C\=\frac{1}{\mu(C)}\mu|_{C}$. In the following, we first induce on $C$ and prove that the integral of the normalized potential $G$ with respect to $\mu_C$ vanishes. This, together with Lemma~\ref{l: birkhoff sum controlled by asscosum} yields (\ref{ineq: key estimate for nonmax block lemma}).

For $x\in X$, we define the return time $\tau_C(x)\=\inf\{n\in\N:\zeta^nx\in C\}$. Applying Kac's formula (cf.~\cite[Theorem~3.6]{Kre95}), we see that
\begin{equation}\label{eq: kac formula}
    \int_C\!\tau_C(x)\,d\mu(x)=\mu(U),
\end{equation}
and $\tau_C\in L^1(\mu_C)$. Now consider the return map $F\:C\to C$ given by $F(x)\= \zeta^{\tau_C(x)}(x)$ for $x\in C$ satisfying $\tau_C(x)<+\infty$. As $\mu$ is $\zeta$-invariant, by Poincar\'{e}'s recurrence theorem, $F$ is defined for $\mu$-a.e.\ $x\in C$. It is straightforward to check that $\mu_C$ is $F$-invariant (cf.~\cite[Theorem~3.7]{Kre95}).   

Set $R\=L\tau_C$ and $G(x):=\sum_{j=0}^{\tau_C(x)-1}(S_L^{\sigma}\phi-LQ(\sigma,\phi))\bigl(\zeta^j(x)\bigr)=\bigl(S_{R(x)}^{\sigma}\phi\bigr)(x)-R(x)Q(\sigma,\phi)$ for $x\in C$. It follows that $\abs{G}\leq L\norm{\phi-Q(\sigma,\phi)}_{C^0}\tau_C$, and thus $G\in L^1(\mu_C)$. For each $n\in\N$, denote $\tau_n(x)\=\sum_{i=0}^{n-1}\tau_C\bigl(F^i(x)\bigr)$ for $x\in C$. Since $\tau_C,\,G\in L^1(\mu_C)$ and $\mu_C$ is $F$-invariant, by Birkhoff's theorem (cf.~\cite[Theorem~4.5.5]{BrSt02}), the limits $\overline{\tau}(x)\=\lim_{n\to+\infty}\tau_n(x)/n$ and $\overline{G}(x)\=\lim_{n\to+\infty}\frac{1}{n}\sum_{i=0}^{n-1}G\bigl(F^i(x)\bigr)$ exist for $\mu_C$-a.e.\ $x\in C$ and $\overline{\tau},\,\overline{G}\in L^1(\mu_C)$. 

Similarly, $\phi_1(x)\=\lim_{n\to+\infty}\frac{1}{n}S_n^{\sigma}\phi(x)$ exists for $\mu$-a.e.\ $x\in X$, $\phi_1\in L^1(\mu)$, and $\int\!\phi_1\,\mathrm{d}\mu=\int\!\phi\,\mathrm{d\mu}=Q(\sigma,\phi)$. By \cite[Proposition~2.1]{Je06}, $\phi_1(x)\leq Q(\sigma,\phi)$ for $\mu$-a.e.\ $x\in X$. It then follows from $\int\!\phi_1\,\mathrm{d}\mu=Q(\sigma,\phi)$ that $\phi_1(x)=Q(\sigma,\phi)$ for $\mu$-a.e.\ $x\in X$. In other words, for $\mu$-a.e.\ $x\in X$, 
\begin{equation}\label{eq: Birkhoff average zero wrt maximizing measure}
\lim_{n\to+\infty}\frac{1}{n}(S_n^{\sigma}\phi(x)-nQ(\sigma,\phi))=0.
\end{equation}
We then calculate $\overline{G}$. Note that $\sum_{i=0}^{n-1}G\bigl(F^i(x)\bigr)=S_{L\tau_n(x)}^{\sigma}\phi(x)-L\tau_n(x)Q(\sigma,\phi)$ for $x\in C$ and $n\in\N$. Since $\tau_n\geq n$ for each $n\in\N$, we have $L\tau_n\to+\infty$. This, together with (\ref{eq: Birkhoff average zero wrt maximizing measure}) and $\overline{\tau}\in L^1(\mu_C)$, yields that for $\mu_C$-a.e.\ $x\in C$,
\begin{equation*}
    \overline{G}(x)=\lim_{n\to+\infty}\frac{1}{n}\sum_{i=0}^{n-1}G\bigl(F^i(x)\bigr)=\lim_{n\to+\infty}L\cdot\frac{\tau_n(x)}{n}
    \cdot\frac{S_{L\tau_n(x)}^{\sigma}\phi(x)-L\tau_n(x)Q(\sigma,\phi)}{L\tau_n(x)}=L\cdot\overline{\tau}(x)\cdot 0=0
\end{equation*}
Again by Birkhoff's theorem (cf.~\cite[Theorem~4.5.5]{BrSt02}), we see that $\int\!G\,\mathrm{d}\mu_C=\int\!\oG\,\mathrm{d}\mu_C=0$.

Now we apply Lemma~\ref{l: birkhoff sum controlled by asscosum} to estimate $G$. As $x\in C$ agrees with the periodic point $y$ in its first $L$ coordinates and $\phi\in H^\omega(X,\R)$, we see that for each $x\in C$,
\begin{equation}\label{ineq: S_L for x in C}
    S_L^{\sigma}\phi(x)\leq S_L^{\sigma}\phi(y)+\abs{\phi}_{\omega}\cdot\assocsum{\lambda}{L}(1)=La+\abs{\phi}_{\omega}\cdot\assocsum{\lambda}{L}(1).
\end{equation}
Since $R=L\tau_C\geq L$, it follows from (\ref{ineq: S_L for x in C}) and Lemma~\ref{l: birkhoff sum controlled by asscosum} that for each $x\in C$,
\begin{equation*}
    G(x)= S_{R(x)}^{\sigma}\phi (x)-R(x)Q(\sigma,\phi)\leq L(a-Q(\sigma,\phi))+\abs{\phi}_{\omega}\cdot\assocsum{\lambda}{L}(1)+\abs{\phi}_{\omega}\cdot\assocsum{\lambda}{R(x)-L}(1)+M.
\end{equation*}
Thus, 
\begin{equation}\label{ineq: estimate int G wrt muC}
    \begin{aligned}
    0=\int\!G\,\mathrm{d}\mu_C \leq L(a-Q(\sigma,\phi))+\abs{\phi}_{\omega}\cdot\assocsum{\lambda}{L}(1)
    +M+\abs{\phi}_{\omega}\int\!\assocsum{\lambda}{R(x)-L}(1)\,\mathrm{d}\mu_C.
    \end{aligned}
\end{equation}
As the function $\assocsum{\lambda}{\,\cdot\,}(1)\:[0,+\infty)\to[0,+\infty)$ is nondecreasing and concave, by Jensen's inequality and~(\ref{eq: kac formula}), we see from (\ref{ineq: estimate int G wrt muC}) that
\begin{equation*}
\begin{aligned}
    L(Q(\sigma,\phi)-a)
    &\leq \abs{\phi}_{\omega}\cdot\assocsum{\lambda}{L}(1)+M+\abs{\phi}_{\omega}\assocsum{\lambda}{\int\!(R(x)-L)\,\mathrm{d}\mu_C}(1)\\
    &=\abs{\phi}_{\omega}\cdot\assocsum{\lambda}{L}(1)+M+\abs{\phi}_{\omega}\assocsum{\lambda}{L\mu(U)/\mu(C)-L}(1)
    \leq \abs{\phi}_{\omega}\cdot\assocsum{\lambda}{L}(1)+M+\abs{\phi}_{\omega}\assocsum{\lambda}{L/\mu(C)}(1),
\end{aligned}
\end{equation*}
which proves (\ref{ineq: key estimate for nonmax block lemma}). Finally, (\ref{ineq: key estimate for nonmax block lemma}) together with the fact that $\assocsum{\lambda}{\,\cdot\,}(1)$ is nondecreasing yields (\ref{ineq: maximizing measure has controlled weight on nonmaximizing orbits}).
\end{proof}

We recall the following notions that will be used in the proof of Theorem~\ref{t: non-freezing for log Dini}. For a function $p\:S\to[0,1]$ on some finite set $S$ satisfying $\sum_{s\in S}p(s)=1$, we denote the \emph{entropy} of $p$ by 
\begin{equation*}
    H(p)\=-\sum_{s\in S}p(s)\log p(s). 
\end{equation*}
For a $\sigma$-invariant Borel probability measure $\mu$ on some subshift of finite type $(X,\sigma)$, we denote for each $n\in\N$,
\begin{equation*}
    H_n(\mu)\=-\sum_{w\in\fL_n(X)}\mu([w])\log\mu([w]).
\end{equation*}
As is well known, $\{H_n(\mu)\}_{n\in\N}$ is subadditive and $h_\mu(\sigma)=\inf\{H_n(\mu)/n : n\in\N\}$. Now we are ready to prove Theorem~\ref{t: non-freezing for log Dini}.

\begin{proof}[\bf Proof of Theorem~\ref{t: non-freezing for log Dini}]
    In this proof, we first set up a symbolic coding of $X$ from a Markov partition. Essentially, we need to deal with the case where there exists some non-maximizing periodic orbit for the potential $\phi$. Since the modulus of continuity $\omega$ satisfies the weak log-log condition, we apply Lemma~\ref{l: subseqeunce for log Dini} to find good scales $\{n_k\}_{k\in\N}$ for which Lemma~\ref{l: maximizing measure not much weight on nonmaximizing block} applies, which is the most delicate part of the proof. We then construct $\mu_n\in\cM(X,T)$ from some $\phi$-maximizing measure $\mu$ by sparsely inserting the word $J_n$ from the non-maximizing orbit into words of length $2n$ (cf.~(\ref{eq: defi of Nn ln Jn}) and (\ref{eq: defi of w(u,j)})). The use of separated positions for insertions (cf.~(\ref{eq: defi of In})) is in the spirit of the classical marker constructions (cf.~\cite{Boy83,Kri82}). More recently, Chazottes, Kucherenko, and Quas employed a similar but different argument to show that every equilibrium state for a continuous potential on $A^{Z^d}$ of summable variations is fully supported (cf.~\cite[Theorem~4.1]{CKQ25}). Finally, we estimate $h_{\mu_n}(T)-h_{\mu}(T)$ and $\int_X\!\phi\,\mathrm{d}\mu-\int_X\!\phi\,\mathrm{d}\mu_n$. Indeed, we have a good control for the entropy difference along the sequence $\{n_k\}_{k\in\N}$ (cf.~(\ref{ineq: entropy difference})) so that Lemma~\ref{l: a general non-freezing criterion} applies to $\{\mu_{n_k}\}_{k\in\N}$.

    Now we start the proof. Assume that $(\lambda,\gamma,\xi)$ be expanding constants of $T\:X\to X$. Take a Markov partition $\cR=\{R_1,\,R_2,\,\ldots,\,R_k\}$ of $X$ for $T$, where $k\in\N$ (cf.~\cite[Theorem~4.5.2]{PU10}). Consider the (symbolic) coding $\pi\:(X',d_{\lambda})\to (X,d)$ given by the Markov partition $\cR$, where $X'$ is a $1$-step subshift of finite type on the alphabet $\{1,\,2,\,\ldots,\,k\}$.  It is well known that $\pi$ is surjective and Lipschitz continuous (cf.~\cite[Theorem~4.5.7]{PU10}), and the induced map $\pi_{*}\: \cM(X',\sigma)\to\cM(X,T)$ is surjective. Denote the Lipschitz constant of $\pi$ by $\Lip\pi$. Since $(X,T)$ is mixing, $(X',\sigma)$ is also mixing. Thus, assume that for $s\in\N_0$, $\sigma^{s+1}([i])=X'$ for all $i\in\{1,\,2,\,\ldots,\,k\}$.

    Let $\phi\in H^{\omega}(X,\R)$ be an $\omega$-continuous potential. If $\int\!\phi\,\mathrm{d}\mu=\energy$ for all $\mu\in\cM(X,T)$, then the equilibrium state for $\beta\phi$ is equal to the measure of maximal entropy for $X$ for each $\beta\in(0,+\infty)$, and thus $\phi$ does not admit a freezing phase transition. Now assume that there exists $\mu\in\cM(X,T)$ such that $\int\!\phi\,\mathrm{d}\mu<\energy$. Denote $\widetilde{\phi}\=\phi\circ\pi$. As $\pi_*\:\cM(X',\sigma)\to\cM(X,T)$ is surjective, we see that $Q\bigl(\sigma,\widetilde{\phi}\bigr)=\energy$ and there exists $\mu'\in\cM(X',\sigma)$ such that $\int\!\widetilde{\phi}\,\mathrm{d}\mu'<\energy$. Since periodic measures are weak$^*$ dense in $\cM(X',\sigma)$ (cf.~\cite{Sig74}), there exists a periodic point $y\in X'$ of periodic $l\in\N$ such that $a\=\frac{1}{l}S_l^{\sigma}\widetilde{\phi}(y)<\energy$.

    Now to show that $\phi$ does not admit a freezing phase transition, it suffices to show that for each $\phi$-maximizing measure $\mu$, $\mu$ is not a frozen state for $\phi$. Fix $\mu\in\cM_{\max}(T,\phi)$ and $\mu'\in\pi_*^{-1}(\mu)$. For each $n\in\N$, set
    \begin{align}
    N_n\=n^3, \quad 
    l_n&\=l\big\lceil(\log n+1)^{3/4}\cdot(\assocsum{\lambda}{N_n}(\Lip\pi)+1)^{1/4}\big\rceil\in\N, \quad
    J_n\=y_1y_2\ldots y_{l_n}\in\fL(X'), \quad\text{ and}\label{eq: defi of Nn ln Jn}\\
    I_n&\=\{k(l_n+2s+1):k\in\N,\,k(l_n+2s+1)<2n\}.\label{eq: defi of In}
    \end{align}
    Since $\lim_{t\to 0^+}\omega(t)=\omega(0)=0$, we see that $\lim_{n\to+\infty}l_n/n=0$ and $I_n\neq\emptyset$ for all sufficiently large $n\in\N$. As $\omega$ is weak log-log Dini (cf.~(\ref{eq: key threshold})), it follows from Lemma~\ref{l: associated modulus rate} that $\liminf_{n\to+\infty}\frac{\assocsum{\lambda}{n}(\max{1,\,\Lip\pi})}{\log n}=0$. Applying Lemma~\ref{l: subseqeunce for log Dini} to the sequence $\{N_{n+1}\}_{n\in\N}$, we see that there exists a strictly increasing sequence $\{n_k\}_{k\in\N}$ of positive integers such that 
    \begin{equation}\label{eq: choice of nk}
        \lim_{k\to+\infty}\assocsum{\lambda}{N_{n_k}}(\max\{1,\,\Lip\pi\})\big/\log n_k=0,
    \end{equation}
    and that for each $k\in\N$, $I_{n_k}\neq\emptyset$, $l_{n_k}\leq N_{n_k}$ and $l_{n_k}\bigl(Q\bigl(\sigma,\widetilde{\phi}\bigr)-a\bigr)>2\abs{\phi}_{\omega}\assocsum{\lambda}{N_{n_k}}(\Lip\pi)+s\Normbig{\widetilde{\phi}-Q\bigl(\sigma,\widetilde{\phi}\bigr)}_{C^0}$.
    Define $\overline{\omega}(t)\=\omega(t\cdot\Lip\pi)$ for all $t\in [0,+\infty)$. Note that $\widetilde{\phi}$ is $\overline{\omega}$-continuous. Applying Lemma~\ref{l: maximizing measure not much weight on nonmaximizing block} to $\widetilde{\phi}$ and $\overline{\omega}$, we see that for each $k\in\N$, $\mu'([J_{n_k}])<l_{n_k}/N_{n_k}$.

    In the following, we construct $\mu_n$ for all $n\in\N$ satisfying $I_n\neq\emptyset$ and apply Lemma~\ref{l: a general non-freezing criterion} to $\{\mu_{n_k}\}_{k\in\N}$. Now fix $n\in\N$ for which $I_n\neq\emptyset$.

    For each $u=u_1u_2\ldots u_{2n}\in\fL_{2n}(X')$ and each $j\in I_n$, we fix a choice of $c_j^{-}(u)$ and $c_j^{+}(u)$ in $\fL_{s}(X')$ such that $u_jc_j^{-}(u)J_nc_{j}^+(u)u_{j+1}\in\fL(X')$ and thus define a new word $w(u,j)\in \fL_{2n+l_n+2s}(X')$ by
    \begin{equation} \label{eq: defi of w(u,j)}
    	w(u,j)\=u_1u_2\ldots u_jc_j^{-}(u)J_nc_j^{+}(u)u_{j+1}u_{j+2}\ldots u_{2n}.
    \end{equation} 
Denote $B_n\=\{w(u,j):u\in\fL_{2n}(X'),\,j\in I_n\}$. Consider the probability $p_n$ on $\fL_{2n}(X')\times I_n$ given by $p_n(u,j)\=\mu'([u])/\card I_n$ for all $u\in\fL_{2n}(X')$ and $j\in I_n$, and the pushforward $p_n'$ of $p_n$ on $B_n$ through $(u,j)\mapsto w(u,j)$. 

    We then construct an $\sigma$-invariant measure $\nu_n$ with $p_n'$. Write $B_n=\{b_1,\,b_2,\,\ldots,\,b_{\card B_n}\}$. Fix a choice of $C_{ij}\in\fL_s(X')$ such that $b_iC_{ij}b_j\in\fL(X')$. Then consider $\pi_n\:\{1,\,2,\,\ldots,\,\card  B_n\}^{\N}\to X'$ given by 
    \begin{equation*}
       \pi_n\bigl( \{x_k\}_{k=1}^{+\infty}\bigr)\=b_{x_1}C_{x_1x_2}b_{x_2}C_{x_2x_3}b_{x_3}\ldots
    \end{equation*}
    and denote the image of $\pi_n$ by $\im(\pi_n)$. It is straightforward to check that $\bigl(\im(\pi_n),\sigma^{2n+l_n+3s}\bigr)$ is topologically conjugate to the full shift on $\{1,\,2,\,\ldots,\,\card  B_n \}^{\N}$. Thus, we consider the Bernoulli measure $\nu_n^b$ on $\{1,\,2,\,\ldots,\,\card B_n \}^{\N}$ induced by $p_n'$ and define
    \begin{equation*}
        \nu_n\=\frac{\sum_{k=1}^{2n+l_n+3s}\nu_n^b\circ\pi_n^{-1}\circ\sigma^{-k}}{2n+l_n+3s}.
    \end{equation*}
    As $\nu_n^b$ is $\sigma^{2n+l_n+3s}$-invariant, we see that $\nu_n$ is $\sigma$-invariant. 
    
    Take $\mu_n\=\nu_n\circ\pi^{-1}$. We then estimate $h_{\mu_n}(T)-h_{\mu}(T)$ and $\int_X\!\phi\,\mathrm{d}\mu-\int_X\!\phi\,\mathrm{d}\mu_n$. 
    
   We first deal with the entropy difference. As is well known, $\sup_{x\in X}\card\bigl(\pi^{-1}(x)\bigr)<+\infty$ (cf.~\cite[Theorem~4.5.7]{PU10}), and $h_{\nu}(\sigma)=h_{\nu\circ \pi^{-1}}(T)$ for each $\nu\in\cM(X',\sigma)$ (cf.~\cite[Theorem~2.1]{LW77}). Thus, 
   \begin{equation*}
   h_{\mu_n}(T)
   =h_{\nu_n}(\sigma)=h_{\nu_n^b\circ\pi_n^{-1}}\bigl(\sigma^{2n+l_n+3s}\bigl)\big/(2n+l_n+3s)
   =H(p_n')/(2n+l_n+3s).
   \end{equation*}
   As $p_n'$ is the pushforward of $p_n$ through $\fL_{2n}(X')\times I_n\to B_n$, $(u,j)\mapsto w(u,j)$, we define for each $b\in B_n$, the number $m(b)$ of preimages of $b$. For each $w\in\fL_{2n}(X')$, let $m'(w)$ be the number of $J_n$'s in $w$, i.e., \begin{equation*}
   m'(w)\=\card\{j \in \N : j\leq 2n+1-l_n,\, w_jw_{j+1}\ldots w_{j+l_n-1}=J_n\}.
   \end{equation*}
   By (\ref{eq: defi of In}), we see that $m(w(u,j))\leq 1+m'(u)$ for $(u,j)\in\fL_{2n}(X')\times I_n$. It follows that 
   \begin{equation*}
   \begin{aligned}
   H(p_n')&\geq H(p_n)-\sum_{b\in B_n}p_n'(b)\log (m(b))\\
   &\geq H_{2n}(\mu')+\log(\card I_n)-\sum_{u\in \fL_{2n}(X')}\sum_{j\in I_n}p_n(u,j)\log (1+m'(u))\\
   &=H_{2n}(\mu')+\log(\card I_n)-\sum_{u\in\fL_{2n}(X')}\mu'([u])\log (1+m'(u))\\
   &\geq H_{2n}(\mu')+\log(\card I_n)-\log\Bigl(1+\sum_{u\in\fL_{2n}(X')}\mu'([u])m'(u)\Bigr),
   \end{aligned}
   \end{equation*}
   where the last inequality follows from Jensen's inequality. As $\mu'$ is $\sigma$-invariant, we see that 
   \begin{equation*}
   \sum_{u\in\fL_{2n}(X')}\mu'([u])m'(u)=\int_{X'}\!\sum_{j=1}^{2n+1-l_n}\chi_{\{x\in X':x_jx_{j+1}\ldots x_{j+l_n-1}=J_n\}}\,\mathrm{d}\mu'=(2n+1-l_n)\mu'([J_n]),
   \end{equation*}
   where $\chi_{S}\:X'\to\R$ denotes the characteristic function of some subset $S$ of $X'$. Recall that for each $k\in\N$, $\mu'([J_{n_k}])<l_{n_k}/N_{n_k}=l_{n_k}\big/n_k^3$. We conclude that for each $k\in\N$,
   \begin{equation}\label{ineq: entropy difference}
        \begin{aligned}
       h_{\mu_{n_k}}(T)-h_{\mu}(T)&\geq \frac{2n_kh_{\mu'}(\sigma)+\log (\card I_{n_k})-\log (1+3l_{n_k}n_k^{-2})}{2n_k+l_{n_k}+3s}-h_{\mu}(T)\\
       &\geq \frac{-(l_{n_k}+3s)h_{\mu}(T)+\log n_k-\log (l_{n_k}+2s+1)-\log(1+3l_{n_k}n_k^{-2})}{2n_k+l_{n_k}+3s}.
       \end{aligned}
   \end{equation}
    It then follows from (\ref{eq: defi of Nn ln Jn}) and (\ref{eq: choice of nk}) that for all sufficiently large $k\in\N$, $h_{\mu_{n_k}}(T)>h_{\mu}(T)$.
    
    For the energy, we have
    \begin{equation}\label{eq: mun energy in terms of integral with Bernoulli}
        \begin{aligned}
            \int_X\!\phi\,\mathrm{d}\mu_n=\int_{X'}\!\widetilde{\phi}\,\mathrm{d}\nu_n=\frac{1}{2n+l_n+3s}\int_{X'}\!S_{2n+l_n+3s}^{\sigma}\widetilde{\phi}\,\mathrm{d}\bigl(\nu_n^b\circ\pi_n^{-1}\bigr).
        \end{aligned}
    \end{equation}
     Recall that $\pi\:(X',d_\lambda)\to(X,d)$ is Lipschitz. It follows that for all $x,\,y\in X'$, $d(\pi(x),\pi(y))\leq \Lip\pi\cdot d_{\lambda}(x,y)$. Denote $z\=\Lip\pi$. It follows that for $x\in [w(u,j)]\subseteq X'$ and $y\in[u]\subseteq X'$, where $u\in\fL_{2n}(X')$ and $j\in I_n$,
    \begin{equation*}
        \begin{aligned}
        \Absbig{S_{2n+l_n+3s}^{\sigma}\widetilde{\phi}(x)-S_{2n}^{\sigma}\widetilde{\phi}(y)}
        &\leq (l_n+3s)\norm{\phi}_{C^0}+\abs{\phi}_{\omega}\cdot\sup_{j\in I_n}\biggl(\sum_{k=1}^j\omega\bigl(z\lambda^{-k}\bigr)+\sum_{k=1}^{2n-j}\omega\bigl(z\lambda^{-k}\bigr)\biggr)\\
        &\leq (l_n+3s)\norm{\phi}_{C^0}+2\abs{\phi}_{\omega}\assocsum{\lambda}{n}(z).
        \end{aligned}
    \end{equation*}
    Thus, for $x\in[w(u,j)]\subseteq X'$ where $u\in\fL_{2n}(X')$ and $j\in I_n$,
    \begin{equation*}
        \Absbigg{\mu'([u])S_{2n+l_n+3s}^{\sigma}\widetilde{\phi}(x)-\int_{[u]}\!S_{2n}^{\sigma}\widetilde{\phi}(y)\,\mathrm{d}\mu'(y)}\leq \mu'([u])((l_n+3s)\norm{\phi}_{C^0}+2\abs{\phi}_{\omega}\assocsum{\lambda}{n}(z)).
    \end{equation*}
     This, together with $\sum_{j\in I_n}p_n(u,j)=\mu'([u])$ for all $u\in\fL_{2n}(X')$, yields
     \begin{equation*}
         \begin{aligned}
             &\Absbigg{\int_{X'}\!  S_{2n+l_n+3s}^{\sigma}\widetilde{\phi}\,\mathrm{d}\bigl(\nu_n^b\circ\pi_n^{-1}\bigr)-\int_{X'}\!S_{2n}^{\sigma}\widetilde{\phi}\,\mathrm{d}\mu'}\\
             &\qquad=\Absbigg{\sum_{\substack{(u,j)\in\fL_{2n}(X')\times I_n\\ p_n(u,j)>0}}\biggl(\frac{p_n(u,j)}{p_n'(w(u,j))}\int_{[w(u,j)]}\!S_{2n+l_n+3s}^{\sigma}\widetilde{\phi}\,\mathrm{d}\bigl(\nu_n^b\circ\pi_n^{-1}\bigr)-\frac{p_n(u,j)}{\mu'([u])}\int_{[u]}\!S_{2n}^{\sigma}\widetilde{\phi}\,\mathrm{d}\mu'\biggr)}\\
             &\qquad\leq \sum_{u\in\fL_{2n}(X')}\sum_{j\in I_n}p_n(u,j)((l_n+3s)\norm{\phi}_{C^0}+2\abs{\phi}_{\omega}\assocsum{\lambda}{n}(z))=(l_n+3s)\norm{\phi}_{C^0}+2\abs{\phi}_{\omega}\assocsum{\lambda}{n}(z).
         \end{aligned}
     \end{equation*}
     This, together with (\ref{eq: mun energy in terms of integral with Bernoulli}) and $\mu'\circ\pi^{-1}=\mu$, yields that
     \begin{equation}\label{ineq: energy difference}
       \Absbigg{\int_X\!\phi\,\mathrm{d}\mu_n-\int_X\!\phi\,\mathrm{d}\mu}\leq \frac{2(l_n+3s)\norm{\phi}_{C^0}+2\abs{\phi}_{\omega}\assocsum{\lambda}{n}(z)}{2n}.
     \end{equation}
    Now combining (\ref{ineq: entropy difference}) and~(\ref{ineq: energy difference}), we see that 
    \begin{align}\label{ineq: entropy energy comparison in proof}
            &\limsup_{k\to+\infty}\frac{h_{\mu_{n_k}}(T)-h_{\mu}(T)}{\int_{X}\!\phi\,\mathrm{d}\mu-\int_X\!\phi\,\mathrm{d}\mu_{n_k}}\\
            &\geq\limsup_{k\to+\infty}\frac{2n_k}{2n_k+l_{n_k}+3s}\frac{-(l_{n_k}+3s)h_{\mu}(T)+\log n_k-\log (l_{n_k}+2s+1)-\log (1+3l_{n_k}n_k^{-2})}{2(l_{n_k}+3s)\norm{\phi}_{C^0}+2\abs{\phi}_{\omega}\assocsum{\lambda}{n_k}(z)}
            =+\infty, \notag
    \end{align}
    where the last equality follows from the fact that $\lim_{k\to+\infty}\assocsum{\lambda}{n_k}(\max\{1,\,z\})/\log n_k=0=\lim_{k\to+\infty}\frac{l_{n_k}}{\log n_k}$, which is a consequence of (\ref{eq: choice of nk}) and the monotonicity of $\assocsum{\lambda}{\,\cdot\,}(\max\{1,\,z\})$. Recall that $h_{\mu_{n_k}}>h_{\mu}$ for all sufficiently large $k\in\N$. It now follows from (\ref{ineq: entropy energy comparison in proof}) and Lemma~\ref{l: a general non-freezing criterion} that $\mu$ is not a frozen state for $\phi$. The proof is now complete.
\end{proof}

\section{Freezing: construction for a general modulus}\label{s: freezing for non log Dini}

In this section, we prove Theorem~\ref{t: freezing for not log Dini} by constructing a subshift of positive entropy. A general freezing criterion is established in Subsection~\ref{ss: freezing criterion for positive h}. The specific construction and the proof of Theorem~\ref{t: freezing for not log Dini} are given in Subsection~\ref{ss: construction of the critical subshift}.

\subsection{A freezing criterion for subshift of positive entropy}\label{ss: freezing criterion for positive h}

Let $b_K(m)$ denotes the number of minimal forbidden words of length $m$ for some subshift $K$ and some $m\in\N$. The following result relates the growth of $\{\card(\fL_m(K))\}_{m\in\N}$ and $\{b_K(m)\}_{m\in\N}$ to freezing phase transitions. 

\begin{lemma}\label{l: freezing criterion for positive entropy subshift}
    Let $A$ be a finite alphabet, $K$ be a proper subshift of $\bigl(A^{\N},\sigma\bigr)$, and $h\=h_{\topo}(\sigma|_K)$. Assume that $h>0$ and that there exists constants $N_1\geq 1$, $N_2,\,D>0$, and $ \gamma\in [0,h)$ such that for each $m\in\N$,
    \begin{equation}\label{ineq: complexity condition}
        \card(\fL_{m}(K))\leq N_1(m+1)^De^{hm}\quad\text{ and }\quad b_K(m)\leq N_2e^{\gamma m}.
    \end{equation}
    Then for each modulus of continuity $\omega$ not satisfying the weak log-log Dini condition in (\ref{eq: key threshold}), there exists a constant $B>0$ such that for each number $\beta>B$, $\fE(\sigma,-\beta\omega(d_2(\,\cdot\,,K)))=\MME(K)$, and consequently, the plateau potential $-\omega(d_2(\,\cdot\,,K))$ admits a freezing phase transition. 
\end{lemma}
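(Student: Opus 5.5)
The aim is to show that for large $\beta$ the pressure of $\phi_\beta\=-\beta\omega(d_2(\,\cdot\,,K))$ equals $h$, and that every invariant measure not supported on $K$ has pressure strictly below $h$. Since $\phi_\beta\le 0$ and vanishes on $K$, we have $P(\sigma,\phi_\beta)\ge h$. It then suffices to show $P(\sigma,\phi_\beta)\le h$ and to rule out equilibrium states charging $A^{\N}\smallsetminus K$. Once this is done, $\fE(\sigma,\phi_\beta)=\MME(K)$ for all $\beta>B$. On the other hand, $\fE(\sigma,\phi_\beta)$ cannot equal $\MME(K)$ for small $\beta$: at $\beta\to0$ the equilibrium states approach the measure of maximal entropy of the full shift, whose entropy $\log\card A>h$ since $K$ is proper. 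This yields a genuine freezing transition, with $t_c$ given by the infimum of such $\beta$ (equivalently, the supremum of temperatures for which freezing holds).

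Key step: bound the partition function $Z_n(\beta)=\sum_{w\in A^n}\exp(\sup_{[w]}S_n\phi_\beta)$. I would parse each word $w$ from left to right into bad phrases. Scan until the first occurrence ending of a minimal forbidden word $f$ of length $m$. The phrase is then a legal prefix of length $a$ followed by the remaining part of $f$, and we restart after it (or after the first letter of $f$). The final legal tail has at most $N_1(a+1)^De^{ha}$ choices. For a point $x$ in $[w]$, the positions $i$ inside the phrase, before the forbidden word completes, satisfy $d_2(\sigma^ix,K)\ge 2^{-(\text{distance to the end of }f)}$. Hence each phrase containing a minimal forbidden word of length $m$ contributes energy at least $\beta\sum_{j=1}^{m}\omega(2^{-j})\approx\beta\,\assocsum{2}{m}(1)$, and more generally at least $\beta\,\assocsum{2}{L}(1)$ when the phrase has length $L$ with the forbidden word at its end. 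The number of phrases of length $L$ ending in a minimal forbidden word of length $m\le L$ is at most $N_1 L^De^{h(L-m)}N_2e^{\gamma m}$. Summing, $Z_n(\beta)\le \sum(\text{compositions})\prod_{\text{phrases}} g(L)$ times a tail factor, where
\[g(L)=\sum_{m\le L}N_1N_2L^De^{hL}e^{-(h-\gamma)m}e^{-\beta\assocsum{2}{m}(1)}\cdot(\text{energy of the prefix part}).\]
Normalizing by $e^{hn}$, I would reduce to showing $\sum_L e^{-hL}g(L)<1$ (a renewal argument), which gives $Z_n(\beta)\le C\,n^{D'}e^{hn}$.

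The main obstacle is the polynomial factor $L^D$ together with the prefix length. A long legal prefix $a$ preceding the forbidden word is also far from $K$ near its end, but only the last $m$ or so positions are forced to be far. I would handle this as follows. The prefix concatenated with $f$ is not in $\fL(K)$, so roughly the last $\min(a,\,\cdot)$ positions see distance at least $2^{-(a+m-i)}$, giving energy $\beta\,\assocsum{2}{a+m}(1)$ over the whole phrase. Failure of weak log-log Dini means, by Lemma~\ref{l: associated modulus rate}, that $\assocsum{2}{n}(1)\ge c\log n$ for all large $n$. Thus $e^{-\beta\assocsum{2}{L}(1)}\le L^{-c\beta}$, which beats $L^D$ and makes $\sum_L L^{D}L^{-c\beta}$ small once $\beta$ is large. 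The factor $e^{-(h-\gamma)m}$ makes the sum over $m$ converge. Choosing $B$ so that the renewal sum is below $1$ gives $P(\sigma,\phi_\beta)=h$, uniformly with a margin.

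To exclude equilibrium states $\nu$ with $\nu(A^{\N}\smallsetminus K)>0$, I would take $\beta>B$ and pick $\beta'\in(B,\beta)$. Then $h_\nu+\int\phi_\beta\,d\nu=h_\nu+\int\phi_{\beta'}\,d\nu-(\beta-\beta')\int\omega(d_2(\cdot,K))\,d\nu\le h-(\beta-\beta')\int\omega(d_2(\cdot,K))\,d\nu<h$. So every equilibrium state is supported on $K$, hence lies in $\MME(K)$.
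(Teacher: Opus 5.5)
Your architecture matches the paper's: parse each word into phrases $zf$ ($z$ legal, $f$ minimal forbidden) plus a legal tail, bound $e^{-hn}Z_n(\beta)$ by a tail factor times $\sum_k Q_\beta^k$, and then combine $P\ge h$ with a comparison at a smaller inverse temperature to force $\int\omega(d_2(\cdot,K))\,d\nu=0$. The gap is in the energy you assign to a phrase, and your argument that the kernel is small depends on it.

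Positions strictly inside $f$, after its first letter, are \emph{not} forced away from $K$. From such a position, the word running to the end of $f$ is a proper suffix of $f$, hence legal, and the continuation may be legal too. For example, if $x_1\ldots x_m=f$ and $x_2x_3\ldots\in K$, the whole orbit segment costs only $\omega(2^{1-m})$. So a phrase $zf$ with $\abs{z}=a$ and $\abs{f}=m$ is only guaranteed the cost
\[
S(m,a)=\sum_{t=0}^{a}\omega\bigl(2^{1+t-a-m}\bigr)=\assocsum{2}{a+m-1}(1)-\assocsum{2}{m-1}(1)+\omega\bigl(2^{1-m}\bigr),
\]
not $\assocsum{2}{a+m}(1)$, and there is no factor $e^{-\beta\assocsum{2}{m}(1)}$ in $g(L)$. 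In particular the phrases with $a=0$ have normalized weight $N_1N_2e^{-(h-\gamma)m}e^{-\beta\omega(2^{1-m})}$. For every fixed $\beta$, $e^{-\beta\omega(2^{1-m})}\to1$ as $m\to+\infty$, so no bound of the form $\sum_L L^{D-c\beta}$ can make the kernel small. After inserting $\assocsum{2}{a+m-1}(1)\ge c\log(a+m)$ you are left with the factor $e^{\beta\assocsum{2}{m-1}(1)}$, which grows with $\beta$.

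The repair, which is the paper's argument, has two parts: the entropy deficit $e^{-(h-\gamma)m}$ pays for the forbidden word, and smallness comes from dominated convergence. Fix $\beta_*$ with $c\beta_*>D+1$. Then $\sum_{a\ge0}(a+1)^De^{-\beta_*S(m,a)}\le C_*e^{\beta_*\assocsum{2}{m-1}(1)}$. Since $\assocsum{2}{m}(1)=o(m)$ (because $\omega(t)\to0$), this factor is absorbed into $e^{-(h-\gamma)m/2}$ for large $m$, so
\[
Q_{\beta_*}=N_1N_2\sum_{m\ge1}e^{-(h-\gamma)m}\sum_{a\ge0}(a+1)^De^{-\beta_*S(m,a)}<+\infty .
\]
Each summand is nonincreasing in $\beta$ and tends to $0$, because $S(m,a)\ge\omega(2^{1-m})>0$. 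Hence $Q_\beta\to0$, and you can pick $B$ with $Q_B<1$.

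The positivity of $\omega$ on $(0,+\infty)$ follows from $\assocsum{2}{n}(1)\to+\infty$. You also need it in your last step, to get $\int\omega(d_2(\cdot,K))\,d\nu>0$ whenever $\nu(A^{\N}\smallsetminus K)>0$.

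The rest of your proposal is fine: the lower bound $P\ge h$, the exclusion of equilibrium states charging the complement of $K$ via $\beta'\in(B,\beta)$, and $\beta_c>0$ from $h<\log\card A$.
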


\begin{proof}

 Let $\omega$ be a modulus of continuity not satisfying the weak log-log Dini condition. It follows from Lemma~\ref{l: associated modulus rate} that there exists a constant $c>0$ such that for each $N\in\N$,
\begin{equation}\label{ineq: assoc sum growth lower}
 \assocsum{2}{N}(1)\geq c\log(N+1).
\end{equation}
Since $\lim_{t\to 0^+}\omega(t)=\omega(0)=0$, we see that $\lim_{N\to+\infty}\assocsum{2}{N}(1)/N=0$.

In the remainder of the proof, we decompose each word $w$ on $A$ into bad phrases recording successive failures of membership in $\fL(K)$ and ending in minimal forbidden words. This left-to-right decomposition has multiplicities controlled by (\ref{ineq: complexity condition}) and corresponding energy costs controlled by (\ref{ineq: assoc sum growth lower}). After normalization by $e^{hN}$, the resulting phrase weights form summable kernels $Q_\beta$ satisfying $\lim_{\beta\to+\infty}Q_{\beta}=0$, which yields freezing through a calculation of partition functions.

Now we decompose some word $w$ on $A$ recursively from left to right. Recall that $\epsilon$ denotes the empty word. For the remaining suffix $v$ of $w$, the decomposition stops if $v\in\fL(K)\cup\{\epsilon\}$ and otherwise we consider the shortest illegal prefix of $v$ and its shortest forbidden suffix $f$ so that the illegal prefix can be written as $zf$ and $v=zfv'$, where $v'$ is the remaining suffix. It is straightforward to see that $z\in\fL(K)\cup\{\epsilon\}$ and $f$ is minimal forbidden. Then we repeat this decomposition for $v'$. Since $\abs{w}<+\infty$, the decomposition process will stop and $w$ is decomposed as a unique finite sequence of bad phrases $z_if_i$ followed by at most one terminal phrase in $\fL(K)$.

For some bad phrase $zf$ of $w$, set $r\=|z|\in\N_0$ and $m\=|f|\in\N$. Suppose that the phrase $zf$ begins at the $s$-th coordinate of $w$. Then for each integer $t\in [0,r]$, the
word beginning at $(s+t)$-th coordinate of $w$ and ending with the phrase $zf$ is illegal and has length
$m+r-t$.  Thus, for each $x\in[w]$ and each integer $t\in[0,r]$, $\omega\bigl(d\bigl(\sigma^{s+t-1}(x),K\bigr)\bigr)\geq \omega\bigl(2^{1+t-m-r}\bigr)$. Consequently, the bad phrase $zf$ contributes the energy cost
\begin{equation}\label{eq: phrase energy cost}
 S(m,r)\=\sum_{t=0}^{r}\omega\bigl(2^{1+t-m-r}\bigr)=\assocsum{2}{m+r-1}(1)-\assocsum{2}{m-1}(1)+\omega\bigl(2^{1-m}\bigr).
\end{equation}

Note that for a fixed pair $(m,r)\in\N\times\N_0$, it follows from (\ref{ineq: complexity condition}) that\footnote{Here we adopt $\fL_0(K)\coloneqq\{\epsilon\}$, where $\epsilon$ is the empty word, to include the $r=0$ case.}
\begin{equation}\label{ineq: phrase-count}
\begin{aligned}
 &\card\{zf:z\in\cL_r(K),\, |f|=m,\,f\text{ is minimal forbidden}\}\\
 &\qquad=\card(\cL_r(K))\cdot b_K(m)
 \leq N_1N_2(r+1)^D e^{hr+\gamma m}.
\end{aligned}
\end{equation}
Set $\delta\=h-\gamma>0$. Consider for each $\beta>0$, the normalized phrase kernel
\begin{equation}\label{eq:kernel}
 Q_\beta\=N_1N_2\sum_{m=1}^{+\infty}e^{-\delta m}
 \sum_{r=0}^{+\infty}(r+1)^D e^{-\beta S(m,r)}.
\end{equation}
By~\eqref{ineq: assoc sum growth lower} and~\eqref{eq: phrase energy cost}, we see that $S(m,r)\geq c\log(m+r)-\assocsum{2}{m-1}(1)+\omega\bigl(2^{1-m}\bigr)$ for all $m\in\N$ and $r\in\N_0$.

Fix $\beta_*>0$ such that $\beta_*c>D+1$.  There is a constant
$C_*\in (0,+\infty)$ such that for each $m\in\N$,
\begin{align*}
 \sum_{r=0}^{+\infty}\frac{(r+1)^D}{ e^{\beta_*S(m,r)}}
 \leq e^{\beta_*(\assocsum{2}{m-1}(1)-\omega(2^{1-m}))}
 \sum_{r=0}^{+\infty}\frac{(r+1)^D}{(m+r)^{\beta_*c}}\leq C_*e^{\beta_*(\assocsum{2}{m-1}(1)-\omega(2^{1-m}))}.
\end{align*}
Recall that $\lim_{N\to+\infty}\assocsum{2}{N}(1)/N=0$. It follows that there exists $m_*\in\N$ such that for all intgers $m\geq m_*$,
\begin{equation*}
 e^{-\delta m+\beta_*(\assocsum{2}{m-1}(1)-\omega(2^{1-m}))}\leq e^{-\delta m/2}.
\end{equation*}
Hence $Q_{\beta_*}<+\infty$. For $m\in\N$, $r\in\N_0$, and $\beta\geq\beta_*$, consider
\begin{equation*}
 F_\beta(m,r)\=N_1N_2e^{-\delta m}(r+1)^D e^{-\beta S(m,r)}.
\end{equation*}
It is straightforward to see that for each pair of $(m,r)\in\N\times\N_0$, $F_{\beta}(m,r)$ is nonincreasing with respect to  $\beta$ and $\lim_{\beta\to+\infty}F_{\beta}(m,r)=0$. It then follows from the dominated convergence
theorem that
\begin{equation*}
 \lim_{\beta\to+\infty}Q_\beta=0.
\end{equation*}

Choose $B\geq\beta_*$ such that $Q_B<1$. Now we combine the decomposition of words discussed earlier with the kernel $Q_\beta$ to analyze the set $\fE(\sigma,-\beta\omega(d_2(\,\cdot\,,K)))$ of equilibrium states. Let $N\in\N$ and  $\beta\geq B$. Consider the $N$-th partition function
\begin{equation*}
 Z_N(\beta)\=\sum_{w\in A^N}
 \exp\biggl(\beta\sup_{x\in[w]}\sum_{j=0}^{N-1}
 \bigl(-\omega\bigl(d_2\bigl(\sigma^j(x),K\bigr)\bigr)\bigr)\biggr).
\end{equation*}
After multiplying $e^{-hN}$, we see that bad phrases $zf$ with $(\abs{z},\abs{f})=(r,m)$ contributes at most $N_1N_2e^{-\delta m}(r+1)^De^{-\beta S(m,r)}$ (cf.~\ref{eq:kernel}), while the terminal legal word $u$ with $\abs{u}=l$ contributes at most $e^{-hl}\card(\cL_l(K))\leq N_1(N+1)^D$ (cf.~(\ref{ineq: complexity condition})).  Summing over the number $k$ of bad phrases, we see from (\ref{ineq: phrase-count}) and~(\ref{eq:kernel}) that
\begin{equation*}
\begin{aligned}
 e^{-hN}Z_N(\beta)&\leq N_1(N+1)^D\sum_{k=0}^{+\infty}Q_\beta^k\notag\leq (1-Q_\beta)^{-1}N_1(N+1)^D.
\end{aligned}
\end{equation*}
It follows that $P(\sigma,-\beta\omega(d_2(\,\cdot\,,K)))\leq h$. Meanwhile, if $\nu\in\MME(K)$, then
$\int\!\omega(d_2(\,\cdot\,,K))\,d\nu=0$, and the variational principle implies
$P(\sigma,-\beta\omega(d_2(\,\cdot\,,K)))\geq h$. We conclude that $P(\sigma,-\beta\omega(d_2(\,\cdot\,,K)))=h$ and $\MME(K)\subseteq \fE(\sigma,-\beta\omega(d_2(\,\cdot\,,K)))$.

Finally, consider $\mu\in\fE(\sigma,-\beta\omega(d_2(\,\cdot\,,K)))$ for some $\beta>B$ and $I_\mu\=\int\!\omega(d_2(\,\cdot\,,K))\,d\mu$. Since 
\begin{equation*}
P(\sigma,-\beta\omega(d_2(\,\cdot\,,K)))=h=P(\sigma,-B\omega(d_2(\,\cdot\,,K))),
\end{equation*}
we see that $h_\mu(\sigma)-\beta I_\mu=h\geq h_\mu(\sigma)-B I_{\mu}$. Since $I_\mu\geq0$ and $\beta>B$, we conclude that $I_\mu=0$ and $\mu\in\MME(K)$. Hence, $\MME(K)=\fE(\sigma,-\beta\omega(d_2(\,\cdot\,,K)))$ for all $\beta>B$. Further consider 
\begin{equation*}
\beta_c\=\inf\{\beta\in(0,+\infty):\fE(\sigma,-\beta\omega(d_2(\,\cdot\,,K)))=\MME(K)\}.
\end{equation*}
As $K$ is a proper subshift of $A^{\N}$, we see that $\beta_c>0$. Note that if 
\begin{equation*}
\fE(\sigma,-\beta_1\omega(d_2(\,\cdot\,,K)))=\fE(\sigma,-\beta_2\omega(d_2(\,\cdot\,,K)))=\MME(K)
\end{equation*}
for some $0<\beta_1<\beta_2$, then $\fE(\sigma,-\beta\omega(d_2(\,\cdot\,,K)))=\MME(K)$ for all $\beta\in[\beta_1,\beta_2]$. Thus, $-\omega(d_2(\,\cdot\,,K))$ admits a freezing phase transition. 
\end{proof}

\subsection{Construction}\label{ss: construction of the critical subshift}

Here we prepare several technical results for the proof of Proposition~\ref{p: the freezing subshift}, where we inductively delete periodic orbits. Lemmas~\ref{l: finite-presentation} and~\ref{l: entropy-isolation} are used to verify the complexity conditions in Lemma~\ref{l: freezing criterion for positive entropy subshift}. 

The following lemma is the key ingredient in the inductive construction process. While statement~(ii) follows from \cite[Theorem~3]{Li89}, we need to verify statment~(i) so that the inductive construction process can continue. 

\begin{lemma}[Periodic block deletion]\label{l: deletion of periodic orbit}
Let $X$ be a mixing subshift of finite type on the alphabet $A$. Assume that $h_X\=h_{\topo}(\sigma|_X)>0$ and $z\in X$ is a periodic point with least period $p\in\N$. There exist a constant $C(X,z)>0$, $\rho\in\Z/p\Z$, a residual class modulo $p$, and $m_0\in\N$
such that for every integer $m\geq m_0$ in that residue class, the word $w_m\=z_\rho z_{\rho+1}\ldots z_{\rho+m-1}$
has the following properties:
\begin{enumerate}
    \smallskip
    \item The subshift $Y_m\=\{x\in X:w_m\text{ does not occur in }x\}$ is a mixing subshift of finite type on the alphabet $A$ and $\fL(Y_m)=\{u\in\fL(X):w_m\text{ is not a factor of }u\}$.
    \smallskip
    \item $0\leq h_X-h_{\topo}(\sigma|_{Y_m})\leq C(X,z)\exp(-h_{X}m)$.
\end{enumerate}
\end{lemma}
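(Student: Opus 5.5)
Statement~(ii) is taken from \cite[Theorem~3]{Li89}, which gives, for a mixing subshift of finite type $X$ and a word $w$ with small self-overlap, the bound $0\le h_X-h_{\topo}(\sigma|_{Y})\le C e^{-h_X\abs{w}}$ for the subshift $Y$ forbidding $w$. So the plan is to choose $\rho$ and the residue class so that $w_m$ fits Lind's hypotheses, and then to prove~(i) by hand. I expect~(i), and specifically mixing of $Y_m$, to be the main work.

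\emph{Choice of $w_m$ and finite type.} Fix $r$ so that $X$ is $r$-step. Take $m\equiv 0 \pmod p$, so $w_m=(z_\rho\ldots z_{\rho+p-1})^{m/p}$. Since $h_X>0$, $X$ is not a single orbit, so there is a legal word $a\,z_1\ldots z_{r}$ (or the analogous right-sided word) that leaves the orbit of $z$. Choose $\rho$ so that the cyclic word $z_\rho z_{\rho+1}\ldots$ starts right after such a branching point; this makes the endpoints of $w_m$ branching in $X$. The set $Y_m$ is defined by the finite list of forbidden words of $X$ together with $w_m$, so it is a subshift of finite type.

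\emph{Language identity.} The inclusion $\fL(Y_m)\subseteq\{u\in\fL(X):w_m\not\prec u\}$ is immediate. For the converse, take $u\in\fL(X)$ avoiding $w_m$. I will extend $u$ to the right inside $X$ by a bridge of bounded length (using mixing), chosen to leave the orbit of $z$ and land in a fixed ``hub'' periodic orbit $\cO'\neq\cO(z)$ of $X$. Then I concatenate $\cO'$ forever, which gives a point of $Y_m$ once $m$ exceeds the period of $\cO'$ and the bridge length. The danger is that appending the bridge creates an occurrence of $w_m$ straddling $u$ and the bridge. This cannot happen if $m$ is large: such an occurrence would force the bridge to follow $z$ for at least $m-\abs{\text{suffix of }u\text{ inside }w_m}$ steps, while $u$ itself contains only fewer than $m$ symbols of $z$-pattern at its end. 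To rule it out, I choose the bridge to branch off $z$ immediately, at the branching point used in defining $\rho$. The same construction extends $u$ to the left, which is needed for mixing.

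\emph{Mixing.} Given $u,v\in\fL(Y_m)$, connect $u\to\cO'\to v$ using the two bridges above, with the number of loops around $\cO'$ free. Since $X$ is mixing, I pick two loops in the Rauzy/graph presentation of $X$ that avoid long runs of $z$ and have coprime lengths. This yields all sufficiently large gap lengths, and no gap contains $w_m$ provided $m$ exceeds the loop lengths. Finally, I choose $m_0$ large enough for all of these constraints and for Lind's theorem, and intersect the residue class with these conditions.
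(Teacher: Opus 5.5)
\textbf{There is a genuine gap, and it lies in the choice of $w_m$.} Your extension arguments silently need two conditions on specific letters of $w_m$:

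(R) some letter $c\neq z_{\rho+m-1}$ satisfies $z_\rho\cdots z_{\rho+m-2}\,c\in\fL(X)$;

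(L) some letter $c\neq z_\rho$ satisfies $c\,z_{\rho+1}\cdots z_{\rho+m-1}\in\fL(X)$.

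Both conditions are necessary for (i).
\begin{itemize}
\item If (R) fails, the word $z_\rho\cdots z_{\rho+m-2}$ lies in $\fL(X)$ and avoids $w_m$. But its only legal one-letter right extension is $w_m$, so it is not in $\fL(Y_m)$, and the language identity fails.
\item If (L) fails, the word $z_{\rho+1}\cdots z_{\rho+m-1}$ is either not in $\fL(Y_m)$, so the language identity fails, or its cylinder in $Y_m$ is disjoint from $\sigma(Y_m)$, since its only legal predecessor creates $w_m$. In the latter case $Y_m$ is not mixing.
\end{itemize}

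Your step ``choose the bridge to branch off $z$ immediately'' presupposes (R). A bridge can only leave the orbit at positions where the orbit actually branches, and the position at which $u$ ends is dictated by $u$, not by you. Likewise, your left bridge into $v$ presupposes (L).

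With $m\equiv 0\pmod p$ only $\rho$ is free. Conditions (R) and (L) then say that position $\rho-1$ is an exit of the orbit and position $\rho+1$ is an entry, which is a rigid relation that need not hold. For a concrete counterexample, take the vertex shift on the graph with edges $0\to1\to2\to3\to4\to0$, $0\to x$, $x\to x$, $x\to 0$. It is mixing with positive entropy; take $z=(01234)^\infty$, so $p=5$.
\begin{itemize}
\item On the cycle, $0$ is the only vertex with two successors. So (R) forces $z_{\rho+m-2}=z_{\rho-2}=0$, that is, $z_\rho=2$.
\item On the cycle, $0$ is also the only vertex with two predecessors. So (L) forces $z_{\rho+1}=0$, that is, $z_\rho=4$.
\end{itemize}
Hence (i) fails for every $\rho$ and every $m\equiv 0\pmod 5$. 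Your description of $\rho$ as starting ``right after'' a one-sided branching point is also off by one relative to (L), and it addresses only one side.

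\textbf{The missing idea is to choose the phase $\rho$ and the residue class of $m$ independently}, as the paper does.
\begin{itemize}
\item In the $q$-block graph $G$ with $q\geq\max\{r,p\}$, the orbit of $z$ is a simple cycle $\{c_i\}$.
\item Since $h_X>0$, there is an off-cycle edge $e^*$, and hence an excursion $Q$ from some $c_a$ to some $c_b$ that uses no cycle edges.
\item Setting $\rho=b-1$ and $m\equiv q+2+a-b\pmod p$ makes the terminal $q$-block of $S_m=z_\rho\cdots z_{\rho+m-2}$ equal to $c_a$ and the initial $q$-block of $T_m=z_{\rho+1}\cdots z_{\rho+m-1}$ equal to $c_b$. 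This is exactly (R) and (L); in the example it gives $z_\rho=4$ and $m\equiv 3\pmod 5$.
\item The detour through $Q$ then gives a path from $S_m$ to $T_m$ in the $(m-1)$-block graph $H_m$ that avoids the edge $w_m$, so $H_m-w_m$ is strongly connected.
\item Two cycles of lengths $\kappa$ and $\kappa+1$ through $e^*$ give aperiodicity.
\item The language identity follows because every vertex of $H_m-w_m$ begins an infinite path.
\end{itemize}
This replaces your hub-orbit and bridge bookkeeping.

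For (ii), Lind's bound does not need a small self-overlap hypothesis. As the paper uses it, the constants depend only on the primitive graph. This matters, because your $w_m$, a power of the period block, overlaps itself at every multiple of $p$. You do need to pass to an edge-shift presentation such as $G$, in which $w_m$ becomes a path of length $m-q$.
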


\begin{proof}
Assume that $X$ is a $r$-step subshift of finite type for some $r\in\N$ and fix an integer $q\geq\max\{r,p\}$. Consider the $q$-block directed graph $G$ given by $V(G)\=\fL_q(X)$ and $E(G)=\fL_{q+1}(X)$. An edge $u_1u_2\ldots u_{q+1}\in E(G)$ has initial vertex $u_1 u_2\ldots u_{q}\in V(G)$ and
terminal vertex $u_2u_3\ldots u_{q+1}\in V(G)$. Since $q\geq r$ and $X$ is
mixing, $G$ is primitive. For each $i\in\mathbb Z/p\mathbb Z$, consider $c_i\=z_iz_{i+1}\ldots z_{i+q-1}\in V(G)$ and $e_i=z_iz_{i+1}\ldots z_{i+q}\in E(G)$. Since $q\geq p$ and $p$ is the least
period of $z$, the vertices $c_0,\,c_1,\,\ldots,\,c_{p-1}$ are distinct.  Thus, $\cC_z=(V(\cC_z),E(\cC_z))\=(\{c_i:i\in\Z/p\Z\},\{e_i:i\in\Z/p\Z\})$ is a simple directed $p$-cycle in $G$. 

In the following, we first find a path avoiding edges in $E(\cC_z)$ with endpoints in $V(\cC_z)$, which generates a path avoiding $w_m$ with the same endpoints as $w_m$ on the $m-1$-block representation
graph $H_m$ of $X$, showing that $Y_m$ is mixing. 

Since $h_X>0$, there exists an edge $e^*\in E(G)\smallsetminus E(\cC_z)$. By primitivity of $G$, we choose a path from $c_0$ to the initial vertex of
$e^*$ and a path from the terminal vertex of $e^*$ to $c_0$. Their concatenation with $e^*$ forms a closed cycle $W$ through $e^*$. Let $c_a$ be the last vertex of $V(\cC_z)$ visited by $W$ before this passing through $e^*$, and let $c_b$ be the first vertex of $V(\cC_z)$ visited by $W$ after this passing, where $a,\,b\in\Z/p\Z$. Denote the subpath of $W$ from $c_a$ to $c_b$ by $Q\=v_0v_1\ldots v_{l}$, where $l\in\N$, $v_i\in V(G)\smallsetminus V(\cC_z)$ for each integer $1\leq i\leq l-1$, $(v_i,v_{i+1})\in E(G)\smallsetminus E(\cC_z)$ for each integer $0\leq i\leq l-1$, $v_0=c_a$, and $v_l=c_b$. 

Let $u,\,v$ be the initial and terminal vertex of $e^*$, respectively. Since $G$ is
primitive, there exists $\kappa\in\N$ such that there exists a path $P^-$ (resp.~$P^+$) from $v$ to $u$ with length $\kappa-1$ (resp.~$\kappa$). Consequently, $C_\kappa\=e^*P^-$ and $C_{\kappa+1}\=e^*P^+$ are cycles in $G$, both containing the edge $e^*$, and have respective lengths $\kappa$ and $\kappa+1$.

Now set $\rho\= b-1\pmod p$ and consider the residue class
\begin{equation*}
 \mathcal R\=\{m\in\N :m\equiv q+2+a-b\pmod p\}.
\end{equation*}
Let $m\in\mathcal R$ with $m\geq q+2+\kappa$. Denote $S_m\=z_\rho z_{\rho+1}\ldots z_{\rho+m-2}\quad\text{ and }\quad 
 T_m\=z_{\rho+1}z_{\rho+2}\ldots z_{\rho+m-1}$. For a word $u$ of length at least $q$, write $\operatorname{ini}_q(u)$ and
$\operatorname{ter}_q(u)$ for its initial and terminal $q$-blocks, respectively. Then $\operatorname{ter}_q(S_m)
 =c_{\rho+m-1-q}=c_a$ and $\operatorname{ini}_q(T_m)=c_{\rho+1}=c_b$.

Let $H_m$ be the $(m-1)$-block directed graph given by $V(H_m)\=\fL_{m-1}(X)$ and $E(H_m)\=\fL_m(X)$. Note that the word $w_m=z_\rho z_{\rho+1}\ldots z_{\rho+m-1}$ is an edge on $H_m$ with initial vertex $S_m$ and terminal vertex $T_m$. Let $H_m-w_m$ denote the graph obtained by deleting the edge $w_m$. Note that $Y_m\=\{x\in X:w_m\text{ does not occur in }x\}$ is a $(m-1)$-step subshift of finite type and that $H_m-w_m$ is the $(m-1)$-block directed graph with $V(H_m-w_m)\=\fL_{m-1}(Y_m)$ and $E(H_m-w_m)\=\fL_m(Y_m)$.

To prove (i), we first construct a path in $H_m$ with the same endpoints as $w_m$ (initial vertex $S_m$ and terminal vertex $T_m$) while avoiding $w_m$.

For each integer $j\in[1,l]$, let $\xi_j\in A$ be the last symbol of $v_j$, and thus $e'_j=v_{j-1}\xi_j$. Consider $R_m\=z_{\rho+1+q}z_{\rho+2+q}\ldots z_{\rho+m-1}$ and $U_m\=S_m\xi_1\xi_2\ldots\xi_lR_m$. Since 
$\operatorname{ter}_q(S_m)
 =c_{\rho+m-1-q}=c_a$ and $\operatorname{ini}_q(T_m)=c_{\rho+1}=c_b$, reading $\xi_1\xi_2\ldots\xi_\ell$ after
$S_m$ moves the terminal $q$-block from $c_a$ to $c_b$. As $T_m=c_bR_m$, we see that $\operatorname{ini}_{m-1}(U_m)=S_m$ and $\operatorname{ter}_{m-1}(U_m)=T_m$. Indeed, $U_m$ labels a path in $G$, and thus $U_m\in\fL(X)$.

We show that the path in $H_m$ labelled by $U_m$ has the same endpoints as $w_m$ and avoids $w_m$. Set $N_m\=l+m-1-q$. Let $V_j$ be the length $(m-1)$ factor of $U_m$
starting at position $j$ for $0\leq j\leq N_m$, and let $F_i$ be the length $m$
factor starting at position $i$ for $0\leq i<N_m$. Now let $D_m\=V_0V_1\ldots V_{N_m}$ denote the path in $H_m$ labelled by $U_m$. To show that $D_m$ avoids $w_m$, we examine the path labelled by $U_m$ in $G$, which is the concatenation of
\begin{equation}\label{eq: U_m projected onto G}
 \underbrace{\text{$m-q-1$ edges of }\cC_z}_{\text{encoded by }S_m},
 \quad Q,\quad\text{ and }\quad
 \underbrace{\text{$m-q-1$ edges of }\cC_z}_{\text{appending }R_m}.
\end{equation}
Recall that every edge in $Q$ is not on the cycle $\cC_z$. Note that for each intger $j\in[0,N_m)$, $F_j$ labels a consecutive path of $m-q$ edges in $G$, which must contain an edge of $Q$ (cf.~(\ref{eq: U_m projected onto G})). As the path labelled by $w_m$ in $G$ consists entirely of edges of $\cC_z$, we see that for each integer $j\in[0, N_m)$, $F_j$ cannot equal $w_m$.

Since $X$ is mixing, $H_m$ is strongly connected. Given vertices $A,\,B\in V(H_m)$, choose a path from $A$ to $B$ in $H_m$ and replace
each occurrence of $w_m$ by $D_m$ so that the resulting path lies in
$H_m-w_m$. Thus, $H_m-w_m$ is strongly connected. To show that $Y_m$ is mixing, we further prove that $H_m-w_m$ is primitive. 

Denote the greatest common divisor of the
lengths of the cycles in a strongly connected (directed) graph $H$ by $\operatorname{per}(H)$. It suffices to show that $\operatorname{per}(H_m-w_m)=1$. Consider the two bi-infinite paths in $G$ obtained by periodically repeating
$C_\kappa$ and $C_{\kappa+1}$.  In either bi-infinite path, every block of
$\kappa+1$ consecutive edges contains an occurrence of $e^*$. Recall that $m-q>\kappa+1$. It follows that every block of $m-q$ consecutive edges in either bi-infinite path contains $e^*\in E(G)\smallsetminus E(\cC_z)$. Such a block cannot be the $G$-path of $w_m$, which consists of $m-q$ edges of $\cC_z$. Thus, the two bi-infinite paths give cycles in $H_m-w_m$ of lengths $\kappa$ and $\kappa+1$. It follows that 
\begin{equation*}
 \operatorname{per}(H_m-w_m)\mid\gcd(\kappa,\kappa+1)=1.
\end{equation*}
We conclude that
$H_m-w_m$ is primitive. As $Y_m$ is a $(m-1)$-step subshift of finite type and $H_m-w_m$ is the $(m-1)$-block directed graph induced by $Y_m$, we see that $Y_m$ is mixing and $\fL(Y_m)=\{u\in\fL(X):w_m\text{ is not a factor of }u\}$. Statement~(i) is now verified. 

Now we prove (ii). In the graph $G$, the word $w_m$ is an edge block $W_m$ of length $m-q$. Note that $Y_m$ is conjugate to the subshift obtained from the edge shift of $G$ by forbidding $W_m$. Applying \cite[Theorem~3]{Li89} to the primitive adjacency matrix of $G${\footnote{While Lind formulates his result for two-sided edge shifts and we work with one-sided shifts, both $X$ and $Y_m$ are mixing subshifts of finite type, and their natural extensions have the same finite languages and topological entropies so that (\ref{ineq: entropy loss in proof}) holds.}}, we see that there exist constants $d_G>0$ and $L_0\in\N$ such that if $m-q>L_0$, then
\begin{equation}\label{ineq: entropy loss in proof}
     0\leq h_X-h_{\topo}(\sigma|_{Y_m})\leq d_G\exp(-h_X(m-q)).
\end{equation}
We conclude that (i) \& (ii) holds with $C(X,z)\=d_Ge^{h_Xq}$ and $m_0\=q+2+\kappa+L_0$. The proof is now complete.
\end{proof}

\begin{lemma}[Word count from a deterministic labelled graph]
\label{l: finite-presentation}
Let $G$ be a strongly connected finite directed graph with $M$ vertices, whose edges are labelled by symbols from a finite alphabet $A$. Assume that the label sequences of infinite paths in $G$ form a subshift $Z$ of $A^{\N}$ and no two distinct edges leaving the same vertex have the same label. If $h_{\topo}(\sigma|_Z)>0$, then for all $n\in\N$,
\begin{equation}\label{ineq: finite-presentation-count}
    \card(\fL_n(Z)) \leq M^3e^{(n+2M)h_{\topo}(\sigma|_Z)}.
\end{equation}
\end{lemma}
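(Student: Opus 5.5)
We would bound $\card(\fL_n(Z))$ by counting paths: since the labelling is right-resolving (deterministic), each word of length $n$ in $\fL_n(Z)$ is the label of a path of length $n$ in $G$. For a fixed starting vertex, distinct paths of length $n$ carry distinct labels, and every word of $\fL_n(Z)$ is the label of at least one path. Hence
\begin{equation*}
\card(\fL_n(Z))\leq \sum_{u,\,v\in V(G)}\card\{\text{paths of length }n\text{ from }u\text{ to }v\}=\sum_{u,\,v}(\mathcal A^n)_{uv},
\end{equation*}
where $\mathcal A$ is the $M\times M$ adjacency matrix of $G$ (with multiplicity; determinism gives at most $\card A$ edges out of a vertex). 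The problem thus reduces to a uniform Perron--Frobenius bound on $\mathcal A^n$.

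The key steps are as follows. First we identify $h\coloneqq h_{\topo}(\sigma|_Z)$ with $\log\lambda_{\mathcal A}$, where $\lambda_{\mathcal A}$ is the spectral radius. This is standard for right-resolving presentations: the word count is at most the path count, and conversely, for a fixed start vertex the path count is at most the word count, so the exponential growth rates agree. Second, since $G$ is strongly connected and $\lambda_{\mathcal A}=e^{h}>1$, Perron--Frobenius provides a positive right eigenvector $r$ with $\mathcal A r=\lambda r$. Then $(\mathcal A^n)_{uv}\leq (\mathcal A^n r)_u/r_v=\lambda^n r_u/r_v$, and summing gives $\sum_{u,v}(\mathcal A^n)_{uv}\leq M^2\lambda^n\max r/\min r$.

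Third, we control the ratio $\max r/\min r$ by $M e^{2Mh}$, or anything weaker than $Me^{2Mh}$. By strong connectivity, for any $u,v$ there is a path from $u$ to $v$ of length $k\leq M-1$, so $\lambda^k r_u=(\mathcal A^k r)_u\geq r_v$. This gives $r_v/r_u\leq \lambda^{M-1}\leq e^{2Mh}$. Combining the three steps yields $\card(\fL_n(Z))\leq M^2e^{(n+2M)h}\leq M^3e^{(n+2M)h}$.

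The main obstacle is the first step, namely making rigorous that $\lambda_{\mathcal A}=e^{h}$ rather than only $\lambda_{\mathcal A}\geq e^{h}$; the upper bound requires the equality. For a fixed start vertex $u$, the map from paths to labels is injective, so the number of paths of length $n$ from $u$ is at most $\card(\fL_n(Z))$. Since $\sum_v(\mathcal A^n)_{uv}\geq c\lambda^n$ for some $c>0$ by positivity of the eigenvector, this gives $\lambda\leq e^{h}$. The reverse inequality follows from the path count bound above.

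A further point to check is that all infinite paths are counted correctly: every vertex of a strongly connected graph with positive entropy has an outgoing edge, so every finite path extends to an infinite one, and finite path labels are exactly $\fL(Z)$. The positivity hypothesis on $h$ is used only to ensure $\lambda>1$, so that $\lambda^{M-1}\leq e^{2Mh}$; strong connectivity alone already gives $r>0$.
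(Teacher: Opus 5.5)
Your proof is correct, and it takes a different route from the paper in the central counting step.

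The paper's proof starts with the same comparison $\card(\fL_n(Z))\leq N_G(n)\leq M\cdot\card(\fL_n(Z))$. From it, the paper simply asserts $\rho(B)=e^{h}$, implicitly using that the total entry sum of $B^n$ grows like $\rho(B)^n$. It then avoids Perron--Frobenius eigenvectors. For each pair of vertices $(i,j)$, it concatenates every length-$n$ path from $i$ to $j$ with fixed connecting paths to and from a base vertex $v$. This injects those paths into closed loops at $v$ of some length $l\leq n+2M-2$. It then bounds $(B^l)_{v,v}\leq\operatorname{tr}(B^l)\leq M\rho(B)^l$ and sums over the $M^2$ pairs. That trace argument needs only that every eigenvalue has modulus at most $\rho(B)$. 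The price is the extra factor $M$ and the $2M$ in the exponent.

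Your argument uses the positive Perron eigenvector $r$ together with the Harnack-type bound $\max r/\min r\leq\lambda^{M-1}$. This buys two things:
\begin{enumerate}
\item It gives a self-contained proof of $\lambda=e^{h}$, via $\sum_v(\mathcal A^n)_{uv}\geq(\min r/\max r)\lambda^n$ and injectivity of labels from a fixed start vertex. You correctly identify $\lambda\leq e^{h}$ as the direction actually needed.
\item It yields the sharper estimate $\card(\fL_n(Z))\leq M^2e^{(n+M-1)h}$, which implies the stated bound.
\end{enumerate}
Your check that every finite path extends to an infinite one is also needed in the paper's proof, for its second inequality, though the paper leaves it implicit.
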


\begin{proof}
    Fix $n\in\N$. Denote the number of length $n$ paths in $G$ by $N_G(n)$. Every word in $\fL_n(Z)$ labels at least one length $n$ path in $G$. Conversely, after the initial vertex is fixed, there exists at most one path carrying a given word in $\fL_n(Z)$ since no two distinct edges leaving the same vertex have the same label. Thus,
    \begin{equation}\label{ineq: in proof of count I}
        \card(\fL_n(Z))\leq N_G(n)\leq M\cdot\card(\fL_n(Z)).
    \end{equation}
    Let $B$ be the adjacency matrix of $G$. It follows from (\ref{ineq: in proof of count I}) that the maximal eigenvalue $\rho(B)$ of $B$ is equal to $e^{h_{\topo}(\sigma|_Z)}$.
    
    Fix a vertex $v$ of $G$. For each pair of vertices $(i,j)$, choose two paths from $v$ to $i$ and from $j$ to $v$ of lengths at most $M-1$, respectively. Concatenation of every length $n$ path from $i$ to $j$ with the two chosen paths yields a loop at $v$ of length $l\in\N$, which satisfies $l\leq n+2M-2$. It follows that
    \begin{equation*}
       \card\{\text{length }n\text{ paths from }i\text{ to }j\}\leq \bigl(B^l\bigr)_{v,v}\leq \operatorname{tr}\bigl(B^l\bigr)\leq M\rho(B)^l=Me^{lh_{\topo}(\sigma|_Z)}.
    \end{equation*}
    Summing over all pairs of vertices $(i,j)$, we see that $N_G(n)\leq M^3e^{(n+2M)h_{\topo}(\sigma|_Z)}$. This, together with (\ref{ineq: in proof of count I}), yields (\ref{ineq: finite-presentation-count}). 
\end{proof}

From now on, we fix a metric $d_*$ on $\cM\bigl(\{0,\,1,\,2\}^\N,\sigma\bigr)$ compatible with the weak$^*$ topology. The following lemma is a direct consequence of the upper semi-continuity of the entropy function (cf.~\cite[Theorem~3.5.6]{PU10}).

\begin{lemma}[Entropy isolation]\label{l: entropy-isolation}
Let $X\subseteq\{0,\,1,\,2\}^{\N}$ be a mixing subshift of finite type and $\mu_X\in\MME(X)$ be the unique measure of maximal entropy. Then for every $\epsilon>0$, there exists a constant $\eta(X,\epsilon)>0$ such that if some invariant Borel probability
$\nu$ supported on $X$ satisfies
$h_\nu(\sigma)>h_{\topo}(\sigma|_X)-\eta(X,\varepsilon)$, then $d_*(\nu,\mu_X)<\epsilon$.
\end{lemma}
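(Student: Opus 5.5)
The plan is a compactness argument by contradiction, using upper semi-continuity of the entropy map together with uniqueness of the measure of maximal entropy. Since $X$ is a mixing subshift of finite type, the restriction $\sigma|_X$ is expansive, so $\nu\mapsto h_\nu(\sigma)$ is upper semi-continuous on $\cM(X,\sigma)$ (cf.~\cite[Theorem~3.5.6]{PU10}). Moreover, $\cM(X,\sigma)$ is a weak$^*$ closed, hence compact, subset of $\cM\bigl(\{0,\,1,\,2\}^\N,\sigma\bigr)$, because $X$ is closed and $\sigma$-invariant.

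Fix $\epsilon>0$ and suppose no $\eta(X,\epsilon)>0$ works. Then for each $n\in\N$ there is $\nu_n\in\cM(X,\sigma)$ with
\begin{equation*}
h_{\nu_n}(\sigma)>h_{\topo}(\sigma|_X)-1/n\quad\text{ and }\quad d_*(\nu_n,\mu_X)\geq\epsilon.
\end{equation*}
By compactness, we pass to a subsequence converging weak$^*$ to some $\nu\in\cM(X,\sigma)$.

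Since $d_*$ is compatible with the weak$^*$ topology, we have $d_*(\nu,\mu_X)\geq\epsilon$. Upper semi-continuity gives
\begin{equation*}
h_\nu(\sigma)\geq\limsup_{n\to+\infty}h_{\nu_n}(\sigma)\geq h_{\topo}(\sigma|_X).
\end{equation*}
The variational principle gives the reverse inequality, so $\nu\in\MME(X)$. By the uniqueness of the measure of maximal entropy for mixing subshifts of finite type (Parry), $\nu=\mu_X$, which contradicts $d_*(\nu,\mu_X)\geq\epsilon$.

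The argument has no substantive obstacle. The only points to verify are the two standard inputs: upper semi-continuity of entropy for the expansive map $\sigma|_X$, and uniqueness of the measure of maximal entropy, which the statement already presupposes by writing ``the unique measure of maximal entropy''.
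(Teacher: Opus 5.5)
Your proof is correct and matches the paper's approach. The paper gives no separate argument: it calls the lemma a direct consequence of the upper semi-continuity of entropy \cite[Theorem~3.5.6]{PU10}, and your contradiction argument, using weak$^*$ compactness of $\cM(X,\sigma)$ and uniqueness of the measure of maximal entropy, is the standard way of filling in that remark.
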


Now we are ready to construct the freezing subshift on the alphabet $\{0,\,1,\,2\}$.

\begin{prop}[The freezing subshift]\label{p: the freezing subshift}
There exist sequences of integers $\{m_s\}_{s\in\N}$, words $\{f_s\}_{s\in\N}$,  and mixing subshifts of finite type $\{Y_s\}_{s\in\N_0}$ on $\{0,\,1,\,2\}$ satisfying the following properties:
\begin{enumerate}
    \smallskip
    \item For some $0<\theta<\alpha<\frac{\log 3+\log 2}{2}$, $2m_s\leq  e^{\theta m_s}\leq m_{s+1}\leq e^{\alpha m_s}$ for all $s\in\N$.

    \smallskip
    \item $Y_0\=\{0,1,2\}^{\N}$ and for each $s\in\N$, $f_s\in\{0,\,1,\,2\}^{m_s}$ and
    \begin{equation*}
  \fL(Y_s)=\biggl\{u\in\bigcup_{n=1}^{+\infty}\{0,1,2\}^{n}:f_1,\,f_2,\,\ldots,\,f_s
       \text{ are not factors of }u\biggr\}.
    \end{equation*}

    \smallskip
    \item Set $s_0\=0$ and $s_k\=\inf\{s\in\N: s>s_{k-1},\, Y_{s}\subsetneq Y_{s-1}\}$ for each $k\in\N$. For each $k\in\N_0$, let $X_k\=Y_{s_k}$, $\mu_k$ be the unique measure of maximal entropy of $X_k$, $h_k\=h_{\topo}(\sigma|_{X_k})$, and $\eta_k\=\eta\bigl(X_k,2^{-k}\bigr)$ be the constant from Lemma~\ref{l: entropy-isolation}. Consider for each $i\in\N$, $\Delta_i=h_{i-1}-h_i$. Then $\inf_{k\in\N_0}h_k>\log 2$, there exists $a\in \bigl(\alpha,\frac{\log 3+\log 2}{2}\bigr)$ such that for each $i\in\N$, $\Delta_i\leq e^{-am_{s_i}}$, and for all $j,\,k\in\N$ with $j>k$, $\Delta_j\leq2^{-j}\eta_k$.
\end{enumerate}
Consequently, $K\=\bigcap_{s=0}^{+\infty}Y_s=\bigcap_{k=0}^{+\infty}X_k$ contains no periodic orbits, is of positive entropy, and satisfies (\ref{ineq: complexity condition}). Moreover, $\MME(K)$ consists of a unique element $\mu_K$.
\end{prop}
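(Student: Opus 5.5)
We construct the sequences inductively over stages $s=0,1,2,\dots$, with $Y_0=\{0,1,2\}^{\N}$ (entropy $\log 3$). Fix an enumeration $\{\cO_i\}_{i\in\N}$ of all periodic orbits of the full shift, and fix constants $0<\theta<\alpha<a<\frac{\log 3+\log 2}{2}$.

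Suppose $Y_s$ (a mixing subshift of finite type), $f_1,\dots,f_s$ and $m_s$ have been chosen. Let $k$ be the number of genuine stages so far, so that $Y_s=X_k$. There are two cases.

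\emph{Case 1.} No periodic orbit remains in $Y_s$. This cannot happen, since $Y_s$ is a mixing SFT of positive entropy.

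\emph{Case 2.} Let $\cO$ be the first orbit in the enumeration that survives in $Y_s$, and let $z\in\cO$. Apply Lemma~\ref{l: deletion of periodic orbit} to $(Y_s,z)$. It gives $C(Y_s,z)$, a residue class modulo $p$, and $m_0$. We want $m\ge m_0$ in that class with $e^{\theta m_s}\le m\le e^{\alpha m_s}$, and with
\begin{equation*}
C(Y_s,z)e^{-h_km}\le \min\bigl\{e^{-am},\,2^{-(k+1)}\min_{k'\le k}\eta_{k'}\bigr\}.
\end{equation*}
The first bound holds for large $m$ once $h_k>a$, and $a<\log 2<h_k$. The second also holds for large $m$. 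The window $[e^{\theta m_s},e^{\alpha m_s}]$ contains a point of every residue class and is $\ge m_0$ provided $m_s$ is large. If it is not, we insert \emph{stationary stages}: we set $Y_{s+1}=Y_s$, choose $f_{s+1}$ to be any word of length $m_{s+1}=\lceil e^{\theta m_s}\rceil$ containing some $f_i$ (so the language is unchanged), and iterate until $m_s$ is large enough. The inequality $2m_s\le e^{\theta m_s}$ also requires $m_1$ large, so we choose $m_1$ large initially.

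In a genuine stage, set $f_{s+1}=w_m$ and $Y_{s+1}=Y_m$. Lemma~\ref{l: deletion of periodic orbit}(i) gives that $Y_{s+1}$ is mixing of finite type with the stated language. This yields (i) and (ii). Here $w_m$ lies in $\fL(Y_s)$, so it is a new forbidden word and $\cO$ is deleted. Lemma~\ref{l: deletion of periodic orbit}(ii) gives $\Delta_{k+1}\le e^{-am_{s+1}}$ and $\Delta_{k+1}\le 2^{-(k+1)}\eta_{k'}$ for all $k'\le k$.

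For $\inf h_k>\log 2$, we have $\sum_i\Delta_i\le\sum_i e^{-am_{s_i}}$, which is small because $m_s$ grows super-geometrically from a large $m_1$. Hence $h_k\ge\log 3-\sum_i\Delta_i>\log 2$.

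For the consequences, proceed as follows.
\begin{enumerate}
\item \textbf{No periodic points.} Each orbit is eventually first-surviving, since earlier orbits are deleted one per genuine stage. So every orbit is deleted, and $K$ has no periodic points.
\item \textbf{Positive entropy.} By upper semicontinuity, $h_{\topo}(\sigma|_K)=\lim_k h_k=:h>\log 2$.
\item \textbf{Uniqueness of the MME.} The measures $\mu_j$ form a Cauchy sequence. Indeed, $\mu_j$ is supported on $X_k$ for $j>k$, and $h_{\mu_j}=h_j\ge h_k-\sum_{i>k}\Delta_i>h_k-\eta_k$. So Lemma~\ref{l: entropy-isolation} gives $d_*(\mu_j,\mu_k)<2^{-k}$. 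Any $\nu\in\MME(K)$ likewise satisfies $d_*(\nu,\mu_k)<2^{-k}$, so $\nu=\lim\mu_k$ and $\MME(K)$ is a singleton.
\item \textbf{Minimal forbidden words.} The minimal forbidden words of $K$ are among the $f_{s_i}$. A word avoiding all $f_i$ extends in $Y_s$ for every $s$, and hence in $K$ by compactness. The lengths $m_s$ are distinct, so $b_K(m)\le 1$, and we may take $\gamma=0<h$.
\end{enumerate}

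\textbf{Main obstacle: the language bound.} It remains to prove $\card\fL_n(K)\le N_1(n+1)^De^{hn}$. For each $n$, choose $i$ with $m_{s_i}\le n<m_{s_{i+1}}$. Words of length $n$ in $\fL(K)$ lie in $\fL_n(X_i)$. Present $X_i$, an $(m_{s_i}-1)$-step SFT, by a deterministic graph with $M\le 3^{m_{s_i}}$ vertices and apply Lemma~\ref{l: finite-presentation}:
\begin{equation*}
\card\fL_n(K)\le 3^{3m_{s_i}}e^{(n+2\cdot3^{m_{s_i}})h_i}.
\end{equation*}
Since $h_i-h\le\sum_{j>i}\Delta_j\lesssim e^{-am_{s_{i+1}}}$, we get $e^{nh_i}\le e^{nh}e^{ne^{-am_{s_{i+1}}}}=O(e^{nh})$. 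However, the factor $e^{2\cdot 3^{m_{s_i}}h_i}$ is far too large for $n$ near $m_{s_i}$. We would try to fix this by using the finer scale: for $n<m_{s_{i+1}}$, use the graph of $X_{i-1}$, or a graph of size $e^{O(m_{s_i})}$. Then $e^{2Mh}$ must be controlled by $(n+1)^De^{hn}$, which forces $M$ to be polynomial in $n$ relative to $e^{h(n-\cdot)}$. This is where the constraint $\alpha<\frac{\log 6}{2}$ and the window $m_{s+1}\le e^{\alpha m_s}$ presumably enter. The idea is to compare with $e^{\alpha m_{s_i}}\ge m_{s_{i+1}}>n$ using $M\approx e^{c\,m_{s_i}}$ with $c$ the entropy of the previous level, obtaining a polynomial loss $n^{O(1)}$. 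We expect balancing these exponents to be the crux.
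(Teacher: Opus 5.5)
\textbf{Where the proposal stands.} Your construction and your proofs of four consequences follow the paper's proof. The construction uses genuine and stationary stages and applies Lemma~\ref{l: deletion of periodic orbit} to the first surviving orbit, with the two smallness requirements on $\Delta_{k+1}$. The four consequences are: no periodic points, positive entropy, uniqueness of the measure of maximal entropy, and $b_K(m)\le 1$.

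One small slip: with $a$ only assumed to lie in $(\alpha,\frac{\log 3+\log 2}{2})$, you do not get $a<\log 2$. Either choose $a<\log 2$, which the statement allows, or, as the paper does, impose an entropy budget on $m_1$ that forces $h_k>\frac{\log 3+\log 2}{2}>a$.

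\textbf{The gap.} The language bound $\card(\fL_n(K))\le N_1(n+1)^De^{hn}$ is the heart of the proposition, and you leave it open. The fixes you float would not work. In Lemma~\ref{l: finite-presentation} the loss factor is $M^3e^{2Mh}$, which is polynomial in $n$ only if $M=O(\log n)$. For $n$ just above $m_s\approx e^{\theta m_{s-1}}$ this forces a deterministic presentation with $O(m_{s-1})$ states. A presentation with about $e^{cm_{s-1}}$ states gives a factor of roughly $\exp(2h\,n^{c/\theta})$, which is not polynomial. For the higher-block graph, with about $3^{m_{s-1}}$ states, it is even superexponential, since $\theta<\log 3$. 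At the current level the state count is comparable to $n$ itself.

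There is also an indexing problem. You index by genuine stages, $m_{s_i}\le n<m_{s_{i+1}}$. But such an interval may span many stationary stages, so $m_{s_{i+1}}$ need not be at most $e^{\alpha m_{s_i}}$. Then no single level $X_i$ or $X_{i-1}$ covers the whole interval.

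\textbf{The missing idea.} Use the prefix (Aho--Corasick type) automaton.
\begin{itemize}
\item Its vertices are the empty word and the proper prefixes of the genuine forbidden words $f_{s_j}$ with $s_j\le s-1$.
\item From $v$ there is an edge labelled $c$ unless $vc$ has a suffix in that set. The edge ends at the longest suffix of $vc$ that is such a prefix.
\item This graph is deterministic, reads exactly $\fL(Y_{s-1})$ from the empty word, and has at most $1+\sum_{i<s}m_i\le 3m_{s-1}$ vertices, using $2m_i\le m_{i+1}$.
\end{itemize}
For every stage $s$, stationary ones included, and every $n\in[m_s,m_{s+1})$, apply Lemma~\ref{l: finite-presentation} to a strongly connected component of this graph presenting $Y_{s-1}\supseteq K$.
\begin{itemize}
\item The lower window bound $m_s\ge e^{\theta m_{s-1}}$ gives $M\le(3/\theta)\log n$, so $M^3e^{2M\log 3}=n^{O(1)}$.
\item The upper window bound gives $n<m_{s+1}\le e^{\alpha m_s}$. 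Together with $h_{\topo}(\sigma|_{Y_{s-1}})-h\le\sum_{t\ge s}e^{-am_t}\le Ce^{-am_s}$ and $\alpha<a$, this keeps $n\bigl(h_{\topo}(\sigma|_{Y_{s-1}})-h\bigr)$ bounded.
\end{itemize}
This is exactly why condition (i) is imposed between every pair of consecutive stages, not just genuine ones.
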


\begin{proof}
Denote $h_*\=\frac{\log3+\log2}{2}$ and choose constants $0<\theta<\alpha<a<h_*$. Enumerate the periodic orbits of $\{0,\,1,\,2\}^{\N}$ as $\{\cO_l\}_{l\in\N}$. Beginning with $Y_0\=\{0,\,1,\,2\}^{\N}$ and $h_0\=h_{\topo}(\sigma|_{Y_0})$, we first describe the inductive construction process producing $\{Y_s\}_{s\in\N}$. For each $s\in\N$, we try to delete some periodic orbit from $Y_{s-1}$ by applying Lemma~\ref{l: deletion of periodic orbit}. However, to ensure properties (i)--(iii), and (i) in particular, we need to set $Y_s\=Y_{s-1}$ for some $s\in\N$, which are called stationary stages. We show that after finitely many stationary stages, a genuine stage where $Y_{s}\subsetneq Y_{s-1}$ will take place. The strictly decreasing subsequence of $\{Y_s\}_{s\in\N_0}$ is recorded as $\{X_k\}_{k\in\N_0}$ and $K$ is obtained as $\bigcap_{s\in\N_0}Y_s=\bigcap_{k\in\N_0}X_k$. We then estimate further properties of $K$ with (i)--(iii).

Now we start the inductive construction. Fix a periodic point $z_1\in \cO_1$ and apply
Lemma~\ref{l: deletion of periodic orbit} to $(Y_0,z_1)$. Let $C^{\mathrm{del}}_0\=C(Y_0,z_1)$ be the constant from Lemma~\ref{l: deletion of periodic orbit}. Choose a large enough admissible $m_1${\footnote{As the admissible arithmetic progression is unbounded, one admissible $m_1$ can be chosen to satisfy the following requirements simultaneously.}} so that $C^{\mathrm{del}}_0e^{-h_0m_1}\leq e^{-am_1}$, $2m\leq e^{\theta m}\leq\bigl\lceil e^{\theta m}\bigr\rceil\leq e^{\alpha m}$ for all integers $m\geq m_1$, and
    \begin{equation}\label{ineq: entropy-budget}
    \sum_{s=1}^{+\infty}e^{-a2^{s-1}m_1}<\log3-h_*.
    \end{equation}
Then by Lemma~\ref{l: deletion of periodic orbit}, for the corresponding blocker word $f_1\=w_{m_1}(\cO_1)$ and the resulting mixing subshift of finite type $Y_1\=\{x\in Y_0: f_1\text{ does not occur in }x\}$, we have 
    \begin{equation*}
    h_{\topo}(\sigma|_{Y_0})-h_{\topo}(\sigma|_{Y_1})\leq C^{\mathrm{del}}_0e^{-h_0m_1}\leq e^{-am_1},
    \end{equation*}

The orbit $\cO_1$ is not in $Y_1$, so $Y_1\subsetneq Y_0$.  We then record $s_1=1$, $X_0=Y_0$, $X_1=Y_1$, $h_i\=h_{\topo}(\sigma|_{X_i})$ for $i=0,\,1$, and $\Delta_1=h_0-h_1$. In the following, whenever a new subshift $X_k$ is recorded, let $\mu_k$ be the unique measure of maximal entropy of $X_k$, write $\Delta_k\=h_{\topo}(\sigma|_{X_{k-1}})-h_{\topo}(\sigma|_{X_{k}})$, and take $\eta_k\=\eta\bigl(X_k,2^{-k}\bigr)$ from Lemma~\ref{l: entropy-isolation}. In particular, we fix $\eta_1$ at this point.

Now suppose that for some $s'\in\N$, words $f_1,\,f_2,\,\ldots,\,f_{s'}$ with lengths $m_1<m_2<\cdots<m_{s'}$ and the mixing subshifts of finite type $Y_0,\,Y_1,\,\ldots,\,Y_{s'}$ have been constructed so that for $1\leq s<s'$, $2m_s\leq e^{\theta m_s}\leq m_{s+1}\leq e^{\alpha m_s}$; for $1\leq s\leq s'$, $f_s\in\{0,\,1,\,2\}^{m_s}$ and
\begin{equation*}
     \fL(Y_s)=\biggl\{u\in\bigcup_{n=1}^{+\infty}\{0,\,1,\,2\}^{n}:f_1,\,f_2,\,\ldots,\,f_s\text{ are not factors of }u\biggr\}.
\end{equation*}
Assume that we have recorded $s_1<s_2<\cdots<s_k\leq s'$, where $k\in\N$, and for $1\leq j\leq k$, $\Delta_j\leq e^{-am_{s_j}}$; for $2\leq j\leq k$,
\begin{equation}\label{ineq: entropy gap for j from 2 to k}
 \Delta_j\leq2^{-j}\min_{1\leq i<j}\eta_i.
\end{equation}
By (\ref{ineq: entropy-budget}), we see that 
\begin{equation}\label{ineq: entropy estimate for Xk}
 h_k=h_0-\sum_{j=1}^k\Delta_j>\log3-\sum_{t=1}^{+\infty}e^{-am_t}>\log 3-\sum_{t=1}^{+\infty}e^{-a2^{t-1}m_1}>h_*>a.
\end{equation}

It follows that the mixing subshift of finite type $Y_{s'}=X_k$ is of positive entropy. Let $\cO$ be the first orbit in the fixed enumeration $\{\cO_l\}_{l\in\N}$ which is contained in
$Y_{s'}$.  Fix some periodic point $z_\cO\in \cO$. Applying Lemma~\ref{l: deletion of periodic orbit} to $(Y_{s'},z_\cO)$, we get a constant $C_k^{\mathrm{del}}\=C(Y_{s'},z_\cO)>0$, an arithmetic progression of admissible lengths, and a threshold $M_k$ such
that, for every admissible integer $m\geq M_k$, the entropy loss
\begin{equation*}
 \Delta_{k+1}(m)\= h_k-h_{\topo}\bigl(\sigma|_{Y_{s'}\cap\{w_m(\cO)\text{ does not occur}\}}\bigr)
\end{equation*}
satisfies
\begin{equation}\label{ineq: stage-deletion-cost}
 0\leq\Delta_{k+1}(m)
 \leq C_k^{\mathrm{del}}e^{-h_km}.
\end{equation}
Since $h_k-a>0$, we increase $M_k$ if necessary, so that for every admissible integer $m\geq M_k$,
\begin{equation*}
 C_k^{\mathrm{del}}e^{-(h_k-a)m}\leq1\quad\text{ and }\quad
 C_k^{\mathrm{del}}e^{-h_km}\leq2^{-(k+1)}\min_{1\leq i\leq k}\eta_i.
\end{equation*}
Together with~\eqref{ineq: stage-deletion-cost}, we see that for every admissible integer $m\geq M_k$,
\begin{equation}\label{ineq: stage-deletion-cost-for induction}
 \Delta_{k+1}(m)\leq e^{-am}\quad\text{ and }\quad\Delta_{k+1}(m)\leq2^{-(k+1)}\min_{1\leq i\leq k}\eta_i.
\end{equation}

The remaining induction hypothesis to be satisfied is that $e^{\theta m_{s'}}\leq m\leq e^{\alpha m_{s'}}$. To this end, let $I_{s'}\=[e^{\theta m_{s'}},e^{\alpha m_{s'}}]$. If $I_{s'}$ contains an admissible integer $m\geq M_k$, then we call $s'+1$ a genuine stage and set 
\begin{equation*}
 m_{s'+1}\=m,\quad
 f_{s'+1}\=w_m(\cO),\quad\text{ and }\quad
 Y_{s'+1}\=Y_{s'}\cap\{w_m(\cO)\text{ does not occur}\}.
\end{equation*}
The orbit $\cO$ is not in $Y_{s'+1}$, so
$Y_{s'+1}\subsetneq Y_{s'}$. We then record
\begin{equation*}
 s_{k+1}\=s'+1,\quad X_{k+1}\=Y_{s'+1},\quad
 h_{k+1}\=h_{\topo}(\sigma|_{X_{k+1}}),\quad\text{ and }\quad\Delta_{k+1}\=\Delta_{k+1}(m).
\end{equation*}
Otherwise, we call $s'+1$ a stationary stage and set
\begin{equation*}
    m_{s'+1}\=\bigl\lceil e^{\theta m_{s'}}\bigr\rceil,\quad f_{s'+1}\=f_1 0^{m_{s'+1}-m_1},\quad\text{ and }\quad Y_{s'+1}\=Y_{s'}.
\end{equation*}
Note that in both cases, the requirements (i)--(iii) are satisfied for $1\leq s\leq s'+1$ (cf.~(\ref{ineq: stage-deletion-cost-for induction})). 

Now we show that genuine stages will always take place after finite steps of stationary stages. Note that during the stationary stages, the subshift $Y_{s'}=X_k$, the periodic orbit $\cO$, the constant $C_k^{\mathrm{del}}$, the admissible arithmetic progression of integers $m$, the threshold $M_k$, and $\eta_1,\,\eta_2\,\ldots,\,\eta_k$ remain fixed,
whereas both the left endpoint of the interval $I_{s'}$ and the length of $I_{s'}$ are tending to infinity. Thus, $I_{s'}$ will contains an admissible integer beyond the threshold $M_k$ when $s'$ is large enough, leading to a genuine stage. 

We conclude that the induction process produces infinitely many genuine stages with requirements (i)--(iii) satisfied, and at the $k$-th genuine stage the first periodic orbit in $\{\cO_l\}_{l\in\N}$ which survives in $X_{k-1}$ is deleted so that $\cO_i$ is not in $X_k$ for $1\leq i\leq k$. This proves that $K=\bigcap_{s=0}^{+\infty}Y_s=\bigcap_{k=0}^{+\infty}X_k$ contains no periodic orbits. It remains to establish further properties of $K$.

Recall from (\ref{ineq: entropy estimate for Xk}) that $h_k>h_*>\log 2$ for each $k\in\N$. Let $\mu$ be a weak$^*$ accumulation point of $\mu_k\in\MME(X_k)$ as $k\to+\infty$. Since $K=\bigcap_{k=0}^{+\infty}X_k$ and the entropy is upper semicontinuous (cf.~[Theorem~3.5.6]\cite{PU10}), we see that $\mu$ is supported on $K$ and 
\begin{equation*}
 \limsup_{k\to +\infty}h_k=\limsup_{k\to+\infty}h_{\mu_k}(\sigma)
 \leq h_\mu(\sigma)\leq h_{\topo}(\sigma|_K)\leq\inf_{k\in\N_0}h_k.
\end{equation*}
Thus, $K$ is of positive entropy with $h_{\topo}(\sigma|_K)\ge h_*>\log 2$.

Recall from (iii) (cf.~(\ref{ineq: entropy gap for j from 2 to k}) and~(\ref{ineq: stage-deletion-cost-for induction})) that for all $j,\,k\in\N$ with $j>k$, $\Delta_j\leq2^{-j}\eta_k$. Consequently, for each $k\in\N$,
\begin{equation*}
 h_k-h_{\topo}(\sigma|_K)=\sum_{j=k+1}^{+\infty}\Delta_j
 \leq\eta_k\sum_{j=k+1}^{+\infty}2^{-j}
 =2^{-k}\eta_k<\eta_k.
\end{equation*}
Thus for each $\nu\in\MME(K)$, we have $\nu\in\cM(X_k,\sigma)$ and
$h_\nu(\sigma)=h_{\topo}(\sigma|_K)>h_k-\eta_k$ for all $k\in\N$. Since $\eta_k=\eta\bigl(X_k,2^{-k}\bigr)$, we see from Lemma~\ref{l: entropy-isolation} that $d_*(\nu,\mu_k)<2^{-k}$. It follows that for $\nu,\,\nu'\in\MME(K)$, $d_*(\nu,\nu')<2^{1-k}$ for all $k\in\N$, and thus $\nu=\nu'$. We conclude that $K$ has a unique measure of maximal entropy, denoted by $\mu_K$. 

Now it suffices to verfiy (\ref{ineq: complexity condition}) for $K$.

Recall that $\fL(Y_s)=\bigl\{u\in\bigcup_{n=1}^{+\infty}\{0,\,1,\,2\}^n:f_1,\,f_2,\,\ldots,\,f_s\text{ are not factors of }u\bigr\}$. Let $u\in\bigcup_{n=1}^{+\infty}\{0,\,1,\,2\}^{n}$. If $f_s$ is not a factor of $u$ for every $s\in\N$, then $[u]\cap Y_s$ is nonempty for every $s\in\N$. Since $[u]\cap Y_s$ is compact and nonincreasing with respect to $s$, we see that the limit $[u]\cap K$ is nonempty. It follows that 
\begin{equation}\label{eq: language of K}
 \fL(K)=\biggl\{u\in\bigcup_{n=1}^{+\infty}\{0,\,1,\,2\}^n:f_s\text{ is not a factor of } u\text{ for every }s\in\N\biggr\}.
\end{equation}

Let $k\in\N$. Note that the word $f_{s_k}\in\fL(Y_{s_k-1})$ and $\abs{f_t}>\abs{f_{s_k}}$ for all $t>s_k$. Thus, every proper factor of $f_{s_k}$ avoids $f_t$ for every $t\in\N$, and $f_{s_k}$ is minimal forbidden for $K$.  Conversely, by \eqref{eq: language of K}, every forbidden word contains $f_{s}$ for some $s\in\N$, and thus contains $f_{s_k}$ for some $k\in\N$. We conclude that $\{f_{s_k}\}_{k\in\N}$ are the minimal forbidden words for $K$. Since the length of these minimal forbidden words are different, we see that for all $m\in\N$,
\begin{equation}\label{ineq: count of minimal forbidden}
b_K(m)\leq 1. 
\end{equation}

To analyze the language complexity of $K$, we construct a deterministic labelled graph presenting $Y_{s-1}$ for each integer $s\geq 2$ to apply Lemma~\ref{l: finite-presentation}. Let $s\geq2$ be an integer. Consider $\cD_{s-1}\=
 \{f_{s_j}:j\in\N,\,s_j\leq s-1\}$ and
\begin{equation*}
 \cP_{s-1}\=\{\epsilon\}\cup\{v:v\text{ is a proper prefix of some } d\in\cD_{s-1}\},
\end{equation*}
where $\epsilon$ denotes the empty word. For $v\in\cP_{s-1}$ and $c\in\{0,\,1,\,2\}$, draw an edge labelled $c$ from $v$ unless $vc$ has a suffix belonging to
$\cD_{s-1}$.  When this edge is present, its terminal vertex is
defined to be the longest suffix of $vc$ which belongs to $\cP_{s-1}$. {\footnote{Such a suffix always exists because
$\epsilon\in\mathcal P_{s-1}$.}} In this way, we construct a directed graph $G_{s-1}$ with edges labelled by $\{0,\,1,\,2\}$. Walking along paths of $G_{s-1}$ corresponds to reading a word. More precisely, we say that a finite word $u=u_1u_2\ldots u_n$ is \emph{readable from a vertex $v$} if there exists a path starting at $v$ whose successive
edge labels are $u_1,\,u_2,\,\ldots,\,u_n$. 

\smallskip
\emph{Claim.} A finite word $u$ on $\{0,\,1,\,2\}$ is readable from $\epsilon$ if and only if $u\in\fL(Y_{s-1})$. Whenever the word $u$ is readable from $\epsilon$, the path representing $u$ ends at the longest suffix of $u$ belonging to $\cP_{s-1}$.\footnote{The claim follows from an application of \cite[Lemma~2 and Theorem~3]{CMR98}. Here we keep the proof of the claim for readers' convenience.}
\smallskip

For a finite word $u$ on $\{0,\,1,\,2\}$, let $q(u)$ be the longest suffix of $u$ belonging to $\cP_{s-1}$. We prove the claim by induction on $\lvert u\rvert$. Both assertions of the claim are immediate for $u=\epsilon$.  Suppose that they hold for a word $u$, and consider $uc$, where $c\in\{0,\,1,\,2\}$.

If $u$ contains a word from $\mathcal D_{s-1}$, then $u$ is not
readable by the induction hypothesis.  Hence $uc$ is not readable, as any path representing $uc$ would have an initial subpath representing $u$.  Meanwhile, $uc$ still contains the same forbidden word. If $u$ avoids $\mathcal D_{s-1}$, by the induction hypothesis, $u$ is readable from $\epsilon$ and
the corresponding path ends at $q(u)$.
It follows that the word $uc$ fails to avoid $\mathcal D_{s-1}$ precisely when some $d\in\mathcal D_{s-1}$ is a suffix of $uc$.  Note that for $d\in\cD_{s-1}$,
\begin{equation}\label{eq:forbidden-suffix-equivalence}
 d\text{ is a suffix of }uc
 \quad\Longleftrightarrow\quad
 d\text{ is a suffix of }q(u)c.
\end{equation}
The reverse implication follows immediately from the fact that $q(u)$ is
a suffix of $u$. For the forward implication, suppose that $d$ is a
suffix of $uc$.  Write $d\=d^{-}c$.  Then $d^{-}\in\mathcal P_{s-1}$ and $d^{-}$ is a suffix of $u$. As $q(u)$ is the longest suffix of $u$ belonging to
$\mathcal P_{s-1}$, we see that $d^{-}$ is a suffix of $q(u)$.  Thus, $d=d^{-}c$ is a suffix of $q(u)c$, proving \eqref{eq:forbidden-suffix-equivalence}. By the definition of the graph $G_{s-1}$, an edge labelled $c$ leaves $q(u)$
precisely when no member of $\mathcal D_{s-1}$ is a suffix of $q(u)c$, which is equivalent to
$uc$ avoiding $\mathcal D_{s-1}$ (cf.~(\ref{eq:forbidden-suffix-equivalence})).  Hence,
\begin{equation*}
 uc\text{ is readable from }\epsilon
 \quad\Longleftrightarrow\quad
 uc\text{ avoids }\mathcal D_{s-1}.
\end{equation*}
Suppose now that $uc$ is readable from $\epsilon$. It is straightforward to check that $q(uc)=q(q(u)c)$. By induction hypothesis, the corresponding path will arrive at $q(u)$ when finishing $u$, and the last edge is from $q(u)$ to $q(q(u)c)=q(uc)$, which completes the induction. The claim is now established.

\smallskip
Since $Y_{s-1}$ is a
mixing subshift of finite type, we can  choose $x\in Y_{s-1}$ in which every word of $\fL(Y_{s-1})$ occurs infinitely often. The path obtained by
reading $x$ eventually remains in a strongly connected component of
$G_{s-1}$, denoted by $G_{s-1}'$. It is straightforward to see that the label sequences of infinite paths in $G'_{s-1}$ forms the subshift $Y_{s-1}$. Let $n_{s-1}$ denote the number of vertices of $G_{s-1}'$. Then
\begin{equation}\label{ineq:automaton-state-count}
 n_{s-1}
 \leq \lvert\mathcal P_{s-1}\rvert
 \leq 1+\sum_{d\in\mathcal D_{s-1}}(\lvert d\rvert-1)
 \leq 1+\sum_{i=1}^{s-1}m_i
 \leq 3m_{s-1},
\end{equation}
where the third inequality follows from the fact that $\abs{f_i}=m_i$ for $i\in\N$ and the last inequality follows from the fact that $2m_i\leq m_{i+1}$ for $i\in\N$.

Now let $n\geq m_2$ and consider the unique integer $s\geq2$ such that $m_s\leq n<m_{s+1}$. Since $K\subseteq Y_{s-1}$, we apply Lemma~\ref{l: finite-presentation} to $Y_{s-1}$ and $G_{s-1}'$ and  obtain
\begin{equation}\label{ineq:normalized-finite-stage-count}
 \log (\card(\fL_n(K)))-nh_{\topo}(\sigma|_K)
 \leq
 3\log n_{s-1}+2n_{s-1}h_{\topo}(\sigma|_{Y_{s-1}})
 +n(h_{\topo}(\sigma|_{Y_{s-1}})-h_{\topo}(\sigma|_K)).
\end{equation}
Recall that $e^{\theta m_{s-1}}\leq m_s$. It follows from \eqref{ineq:automaton-state-count} and our choice of $m_s$ (cf.~(i)) that
\begin{equation}\label{ineq: count of n_s-1}
 n_{s-1}\leq3m_{s-1}\leq3/\theta\log m_s\leq3/\theta\log n.
\end{equation}
Since $h_{\topo}(\sigma|_{Y_{s-1}})\leq\log3$, the first two terms on the right-hand side of \eqref{ineq:normalized-finite-stage-count} are bounded by
$A+B\log(n+1)$ for some constants $A,\,B>0$ (cf.~(\ref{ineq: count of n_s-1})). For the last term, we see from $\Delta_i\leq e^{-am_{s_i}}$ (cf.~(iii)) and $2m_i\leq m_{i+1}$ (cf.~(i)) for all $i\in\N$ that
\begin{align}
 h_{\topo}(\sigma|_{Y_{s-1}})-h_{\topo}(\sigma|_K)
 &=\sum_{t=s}^{+\infty}(h_{\topo}(\sigma|_{Y_{t-1}})-h_{\topo}(\sigma|_{Y_t}))\\
 &\leq\sum_{t=s}^{+\infty}e^{-am_t}
 \leq\sum_{r=0}^{+\infty}e^{-a2^rm_s}\leq\sum_{r=0}^{+\infty}e^{-a(r+1)m_s}
 \leq C_{\mathrm{tail}}e^{-am_s},
 \label{ineq:finite-stage-entropy-tail}
\end{align}
where $C_{\mathrm{tail}}\=\frac{1}{1-e^{-am_1}}\in(0,+\infty)$. Using $n<m_{s+1}\leq e^{\alpha m_s}$ and $\alpha<a$, we see that $n(h_{\topo}(\sigma|_{Y_{s-1}})-h_{\topo}(\sigma|_K))
 \leq C_{\mathrm{tail}}e^{-(a-\alpha)m_s}
 \leq C_{\mathrm{tail}}$.

By (\ref{ineq: count of n_s-1}) and~(\ref{ineq:finite-stage-entropy-tail}), we conclude that there exists constants
$C,\,D>0$ such that for all $n\geq m_2$,
\begin{equation*}
    \log (\card(\fL_n(K)))-nh\leq C+D\log(n+1).
\end{equation*}
After increasing $C$ to include the finitely many scales $n<m_2$ and setting $C_0\=e^C\geq 1$, we conclude that for all $n\in\N_0$, $\card(\fL_n(K))\leq C_0(n+1)^D e^{hn}$. This, together with (\ref{ineq: count of minimal forbidden}), yields (\ref{ineq: complexity condition}). The proof is now complete.
\end{proof}

\begin{proof}[\bf Proof of Theorem~\ref{t: freezing for not log Dini}]
    Let $K\subseteq\{0,\,1,\,2\}^{\N}$ be the subshift from Proposition~\ref{p: the freezing subshift} and $\mu_K$ be the unique element of $\MME(K)$. Let $\omega$ be a modulus of continuity not satisfying the weak log-log Dini condition. Note that $-\omega(d_2(\,\cdot\,,K))$ is $\omega$-continuous as $d_2$ is an ultrametric. Applying Lemma~\ref{l: freezing criterion for positive entropy subshift} to $K$ and $\omega$, we see that  $-\omega(d_2(\,\cdot\,,K))$ admits a freezing phase transition and $\mu_K$ is the unique frozen state for $-\omega(d_2(\,\cdot\,,K))$.
\end{proof}

\section{Super-exponential suppression beyond Dini regularity}\label{s: super-exponential suppression beyond Dini}

Here we establish Theorem~\ref{t: super-exponential suppression for non-Dini} by inducing on an open set $U$. With Kac's formula (see e.g.\ \cite[Lemma~4.1]{BQ16}) and Abramov's formula (see e.g.\ \cite[Theorem~4.3.3]{Do11}), we estimate both $\int\!\omega(d_2(\,\cdot\,,\cO))\,\mathrm{d}\mu$ and $h_\mu$ in terms of the probabilities $p_n$ that the first return to $U$ occurs at time $n$ (cf.~(\ref{ineq: energy estimate from first return}) and (\ref{ineq: hmu in terms of pn})), where $\mu$ is an equilibrium state for $-\beta\omega(d_2(\,\cdot\,,\cO))$. Finally, we use a partition function technique (cf.~(\ref{eq: defi of the partition trick})) to show the super-exponential suppression.  

\begin{proof}[\bf Proof of Theorem~\ref{t: super-exponential suppression for non-Dini}]
Choose $r\in\N$ such that $r\geq 2$ and the prefixes of length $r-1$ distinguish
the points of $\cO$, i.e., the map $z\mapsto z_{1}z_2\ldots z_{r-1}$ from $\cO$ to $A^{r-1}$ is injective. Consider the open subset $U\=X\smallsetminus\bigcup_{w\in\fL_r(\cO)}[w]$ of $X$. Since the prefixes of length $r-1$ already distinguish the points of $\cO$, we see that $\bigcap_{j=0}^{+\infty}\sigma^{-j}(X\smallsetminus U)=\cO$. It then follows from $h_{\topo}(\sigma)>0$ that $U$ is nonempty. We claim that 
\begin{equation}\label{eq: super-exponential suppression at U}
    \lim_{\beta\to+\infty}\beta^{-1}\log\sup_{\mu\in\fE(\sigma,-\beta\omega(d_2(\,\cdot\,,\cO)))}\mu(U)=-\infty.
\end{equation}

Now we induce on $U$. Let $\tau(x)$ denote the first return time of $x\in U$ to $U$. For each $n\in\N$, consider the set $B_n\=\bigl\{w\in A^{n+r}: \text{there exists }x\in U\cap\{y\in U:\tau(y)=n\}\text{ such that }x|_{n+r}=w\bigr\}$ of return branches of length $n$. We next show that the sequence $\{\card B_n\}_{n\in\N}$ has an upper bound. 

Suppose $x\in U$ satisfies $\tau(x)=n$ for some intger $n\geq 2$. Then $\sigma^j(x)\notin U$ for $1\leq j<n$. Since prefixes of length $r-1$ distinguish points of $\cO$, we see that the length $r$ block beginning at $x_2$ determines a unique point $z$ in $\cO$, and $x_{k+1}\ldots x_{k+r-1}=\sigma^{k-1}(z)|_{r-1}$ for $1\leq k\leq n$. Thus, the length $r$ block beginning at $x_2$, together with $x_1$ and $x_{n+r}$, uniquely determines the return branch $x_1x_2\ldots x_{n+r}$. Since the length $r$ block beginning at $x_2$ is in $\fL_r(\cO)$, we see that $\card B_n\leq\card(\fL_r(\cO))\cdot(\card A)^2\leq \card\cO\cdot(\card A)^2$. We conclude that the sequence $\{\card B_n\}_{n\in\N}$ has an upper bound $C_B\=\max\bigl\{\card\cO\cdot(\card A)^2,\,(\card A)^{r+1}\bigr\}$.

After the above preparations, we further investigate equilibrium states $\mu\in\fE(\sigma,-\beta\omega(d_2(\,\cdot\,,\cO)))$ for $\beta\in (0,+\infty)$. If $\mu(U)=0$ for each $\beta\in (0,+\infty)$ and each equilibrium state $\mu$ for $-\beta\omega(d_2(\,\cdot\,,\cO))$, then (\ref{eq: super-exponential suppression at U}) immediately holds. Now suppose $\mu(U)>0$, where $\beta>0$ and $\mu\in\fE(\sigma,-\beta\omega(d_2(\,\cdot\,,\cO)))$. Denote a periodic measure supported on $\cO$ by $\delta_{\cO}$. It is straightforward to see that the measure $\mu$ satisfies 
\begin{equation}\label{ineq: pressure of mu is nonnegative}
h_{\mu}(\sigma)-\beta\int\!\omega(d_2(\,\cdot\,,\cO))\,\mathrm{d}\mu\geq h_{\delta_\cO}(\sigma)-\beta\int\!\omega(d_2(\,\cdot\,,\cO))\,\mathrm{d}\delta_{\cO}=0.
\end{equation}
Denote $W\=\bigcup_{j=0}^{+\infty}\sigma^{-j}(U)$. Note that $X\smallsetminus W=\bigcap_{j=0}^{+\infty}\sigma^{-j}(X\smallsetminus U)=\cO$. Since $\mu(U)>0$, by the Poincar\'e
recurrence theorem, we see that $\mu\bigl(\sigma^{-1}(W)\bigr)=\mu(W)$. Consider $\nu\=\frac{1}{\mu(W)}\mu|_W$, $\mu_U\=\frac{1}{\mu(U)}\mu|_U$, and the first return map $F\=\sigma^\tau\: U\to U$. It is straightforward to see that $\nu$ is $\sigma$-invariant and $\mu_U$ is $F$-invariant. Denote $p_n\=\mu_U(\{x\in U:\tau(x)=n\})$ for each $n\in\N$. Kac's formula \cite[Lemma~4.1]{BQ16} implies
\begin{equation}\label{eq: kac formulas}
 \mu(U)\sum_{n=1}^{+\infty}np_n=\mu(W)
 \quad\text{ and }\quad
 \int_U\!\sum_{k=0}^{\tau(x)-1}\omega\bigl(d_2\bigl(\sigma^k(x),\cO\bigr)\bigr)\,\mathrm{d}\nu(x)=\int_W\!\omega(d_2(\,\cdot\,,\cO))\,\mathrm{d}\nu.
\end{equation}
 Set $\delta\=d_2(U,O)>0$. Suppose $x\in U$ satisfies $\tau(x)=n$ for some $n\in\N$. Then for $0\leq k<n$, $d_2\bigl(\sigma^k(x),\cO\bigr)\geq 2^{k-n}d_2(\sigma^n(x),\cO)\geq 2^{k-n}\delta$. It follows that 
\begin{equation*}\label{ineq: energy cost at tau=n}
\sum_{k=0}^{n-1}\omega\bigl(d_2\bigl(\sigma^k(x),\cO\bigr)\bigr)\geq \assocsum{2}{n}(\delta).
\end{equation*}
This, together with the second equality in (\ref{eq: kac formulas}), yields the following energy estimate
\begin{equation}\label{ineq: energy estimate from first return}
 \int\!\omega(d_2(\,\cdot\,,\cO))\,\mathrm{d}\mu\geq \int_W\!\omega(d_2(\,\cdot\,,\cO))\,\mathrm{d}\mu
 \geq \mu(U)\sum_{n=1}^{+\infty}p_n\assocsum{2}{n}(\delta).
\end{equation}

We then estimate $h_{\mu}(\sigma)$. Let $\sR\=\{[w]:w\in B_n\text{ for some }n\in\N\}$ be the partition of $U$ into return branches. It is straightforward to see that the partition $\sR$ is generating for the induced system $(U,F,\mu_U)$. As the sequence $\{\card B_n\}_{n\in\N}$ has an upper bound $C_B\in (0,+\infty)$, we see that the partition $\sR$ has at most $C_B$ atoms conditional on the return time $\tau=n$, and thus,
\begin{equation*}
 h_{\mu_U}(F)
 \leq -\sum_{[w]\in\sR}\mu_U([w])\log(\mu_U([w]))
 \leq\log C_B-\sum_{n=1}^{+\infty}p_n\log p_n.
\end{equation*}
Note that the sum $\sum_{n=1}^{+\infty}np_n<+\infty$ by (\ref{eq: kac formulas}), and
therefore $-\sum_{n=1}^{+\infty}p_n\log p_n<\infty$. It then follows from the affinity of the entropy and Abramov's formula (cf.~\cite[Theorem~4.3.3]{Do11}) that
\begin{equation}\label{ineq: hmu in terms of pn}
 h_\mu(\sigma)
 =\mu(W) h_\nu(\sigma)+(1-\mu(W))h_{\delta_O}(\sigma)
 =\mu(U)h_{\mu_U}(F)+0
 \leq \mu(U)\biggl(\log C_B-\sum_{n=1}^{+\infty}p_n\log p_n\biggr).
\end{equation}
This, together with (\ref{ineq: energy estimate from first return}) and~(\ref{ineq: pressure of mu is nonnegative}), yields
\begin{equation}\label{ineq: entropy-energy written out}
 \beta\sum_{n=1}^{+\infty}p_n\assocsum{2}{n}(\delta)\leq \log C_B-\sum_{n=1}^{+\infty}p_n\log p_n.
\end{equation}

We then estimate $\mu(U)$ from this inequality. Fix $R>0$ and define for each $\beta'\in (0,+\infty)$, 
\begin{equation}\label{eq: defi of the partition trick}
Z_{\beta'}\=\sum_{n=1}^{+\infty}\exp\bigl(-\beta'\assocsum{2}{n}(\delta)-ne^{-2R\beta'}\bigr).    \end{equation}
By Jensen's inequality, we see that for each $\beta'\in (0,+\infty)$,
\begin{equation}\label{ineq: use of Zbeta}
 -\sum_{n=1}^{+\infty}p_n\log p_n-\beta'\sum_{n=1}^{+\infty}p_n\assocsum{2}{n}(\delta)-e^{-2R\beta'}\sum_{n=1}^{+\infty}np_n
 \leq\log Z_{\beta'}.
\end{equation}
Since the modulus of continuity $\omega$ does not satisfy the Dini condition, we see that the sequnce $\{\assocsum{2}{n}(\delta)\}_{n\in\N}$ is strictly increasing and $\lim_{n\to+\infty}\assocsum{2}{n}(\delta)=+\infty$. Thus, there exists $N\in\N$ such that $\assocsum{2}{n}(\delta)\geq 3R$ for all integers $n\geq N$. It follows that 
\begin{equation*}
 \limsup_{\beta'\to+\infty}Z_{\beta'}
 \leq \lim_{\beta'\to+\infty}Ne^{-\beta'\assocsum{2}{1}(\delta)}
   +\limsup_{\beta'\to+\infty}e^{-3R\beta'}\sum_{n=N}^{+\infty}\exp\bigl(-ne^{-2R\beta'}\bigr)
   =0.
\end{equation*}
Then there exists $\beta_R>0$ such that $\log Z_{\beta'}+e^{-R\beta'}<-\log C_B$ for all $\beta'\in (\beta_R,+\infty)$. If $\beta>\beta_R$, then it follows from (\ref{ineq: entropy-energy written out}) and~(\ref{ineq: use of Zbeta}) that 
\begin{equation*}
    -\log C_B
    \leq \log Z_{\beta}+e^{-2R\beta}\sum_{n=1}^{+\infty}np_n
    <-\log C_B-e^{-R\beta}+e^{-2R\beta}\sum_{n=1}^{+\infty}np_n,
\end{equation*}
i.e., $\sum_{n=1}^{+\infty}np_n>e^{R\beta}$, and by (\ref{eq: kac formulas}), $\mu(U)\leq e^{-R\beta}\mu(W)\leq e^{-R\beta}$.

We conclude that $\sup_{\mu\in \fE(\sigma,-\beta\omega(d_2(\,\cdot\,,\cO)))}\mu(U)\leq e^{-R\beta}$ for all $\beta\in (\beta_R,+\infty)$. As $R>0$ is arbitrary, (\ref{eq: super-exponential suppression at U}) now follows. As $d_2$ is an ultrametric, we see that $-\omega(d_2(\cdot,\cO))$ is $\omega$-continuous. The proof is now complete.
\end{proof}

\appendix
\section{}\label{ss: Appendix A}
This appendix includes the proofs of several results in Section~\ref{s: TTF for Dini} for the reader's convenience. The proofs of Propositions~\ref{p: main construction} and~\ref{p: uniqueness of tropical eigenvalue} are given after the technical preparations from Proposition~\ref{p:L_continuous} to Lemma~\ref{l: uniform upper bound without transi and surje}. We conclude the appendix with the proofs of Proposition~\ref{p: Mane_potential_properties} to Proposition~\ref{p: condition_for_uniqueness} and Proposition~\ref{p: representation}~(i)(ii).

\begin{prop}  \label{p:L_continuous}
	Let $T\: X\to X$ be an open continuous distance-expanding map with expanding constants $(\lambda,\gamma,\xi)$, $\omega\in\sD$, and $\phi\in H^{\omega}(X,\R)$. Then $\cL_\phi(C(X,\Rminus)) \subseteq C(X,\Rminus)$. Assume, in addition, that $T$ is surjective. Then $\cL_\phi(C(X,\R)) \subseteq C(X,\R)$ and $\cL_\phi\bigl(H^{\assocmodu}(X,\R)\bigr)\subseteq H^{\assocmodu}(X,\R)$.
\end{prop}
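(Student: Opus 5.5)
The plan is to work directly from the definition $\cL_\phi(u)(x)=\oplusu{y\in T^{-1}(x)}(u(y)\otimes\phi(y))$ and to exploit the local inverse branches $T_y^{-1}$ from Lemma~\ref{l:expanding systems}. The key structural fact I will establish first is a local description of preimages: for $x$ and $x'$ with $d(x,x')<\xi$, every $y\in T^{-1}(x)$ yields a point $y'\=T_y^{-1}(x')\in T^{-1}(x')$ with $d(y,y')\leq\lambda^{-1}d(x,x')$. By the symmetric argument, every preimage of $x'$ arises this way from a preimage of $x$. Here injectivity of $T$ on $B(y,\gamma/2)$ and the separation of distinct preimages (distinct $y_1,y_2\in T^{-1}(x)$ satisfy $d(y_1,y_2)\geq\gamma$ by expansion) ensure that the correspondence is a bijection once $\xi$ is shrunk if necessary. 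I will not need the cardinality $N$ beyond its finiteness.

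With this correspondence in hand, the $\omega$-type estimate is immediate. Given $u$ and $d(x,x')<\xi$, for each $y\in T^{-1}(x)$ we have
\begin{equation*}
u(y')\otimes\phi(y')\geq u(y)\otimes\phi(y)-\var_u\bigl(\lambda^{-1}d(x,x')\bigr)-\abs{\phi}_\omega\,\omega\bigl(\lambda^{-1}d(x,x')\bigr).
\end{equation*}
Taking the supremum over $y$ and then symmetrizing gives
\begin{equation*}
\abs{\cL_\phi(u)(x)-\cL_\phi(u)(x')}\leq\var_u\bigl(\lambda^{-1}d(x,x')\bigr)+\abs{\phi}_\omega\,\omega\bigl(\lambda^{-1}d(x,x')\bigr).
\end{equation*}
This holds whenever both values are finite. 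For $u\in C(X,\Rminus)$, the $-\infty$ case is handled separately. I will show that the set $\{x:\cL_\phi(u)(x)=-\infty\}$ is open and closed via the bijection, and that continuity into $\Rminus$ near points with finite value follows from the estimate, with uniform continuity of $u$ on the compact set $\{u\geq c\}$ truncated appropriately. An alternative route is to apply the estimate to $\max\{u,-M\}$ and let $M\to+\infty$, which yields a uniform limit in the topology of $\Rminus$. Note that when $T$ is not surjective, some $x$ has no preimage, giving the value $-\infty$, which is still in $\Rminus$; openness of $T$ makes $T(X)$ clopen, which is consistent with this.

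If $T$ is surjective, every $x$ has a preimage, so for $u\in C(X,\R)$ the value is finite and bounded by $\norm{u}_{C^0}+\norm{\phi}_{C^0}$, giving $C(X,\R)$. For $u\in H^{\assocmodu}$ I use $\var_u(t)\leq\abs{u}_{\assocmodu}\assocmodu(t)$ together with the identity $\omega(\lambda^{-1}t)+\assocmodu(\lambda^{-1}t)=\assocmodu(t)$. This gives a local constant of at most $\max\{\abs{u}_{\assocmodu},\abs{\phi}_\omega\}$ on the scale $d<\xi$. For $d(x,x')\geq\xi$, I bound the difference by $2(\norm{u}_{C^0}+\norm{\phi}_{C^0})/\assocmodu(\xi)$ using monotonicity and positivity of $\assocmodu$ (Lemma~\ref{l: associated modulus of continuity}). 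The main obstacle is the careful verification of the preimage bijection within radius $\xi$, which is also the step most sensitive to constants; everything else is routine.
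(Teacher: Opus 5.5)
Your treatment of the $H^{\assocmodu}$ inclusion is the paper's argument. You pull back along the inverse branches, use $d(x_i,y_i)\leq\lambda^{-1}d(x,y)$, and apply the identity $\assocmodu(t)+\omega(t)=\assocmodu(\lambda t)$ to get the local constant $\max\{\abs{u}_{\assocmodu},\abs{\phi}_\omega\}$ at scale $\xi$. Your explicit treatment of $d(x,x')\geq\xi$ by boundedness fills a step the paper leaves implicit. No shrinking of $\xi$ is needed: if $y'\in T^{-1}(x')$ and $y\=T_{y'}^{-1}(x)$, then $T_y^{-1}(x')$ and $y'$ are two preimages of $x'$ at distance $<\gamma$, hence equal by the separation you noted. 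The paper does not prove the first two inclusions at all; it cites \cite[Proposition~3.9]{LS26}.

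One claim in your $C(X,\Rminus)$ step is false and should be dropped. The set $\{x:\cL_\phi(u)(x)=-\infty\}$ is closed but need not be open. Take the doubling map on $\R/\Z$, $\phi\equiv 0$, and $u(y)\=\log d(y,\{0,1/2\})$: the set is exactly $\{0\}$, since both preimages of $0$ are zeros of $d(\,\cdot\,,\{0,1/2\})$ and no preimage of any $x\neq 0$ is. Only $X\smallsetminus T(X)$ is guaranteed to be clopen.

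Your alternative route does work. Set $u_M\=\max\{u,-M\}\in C(X,\R)$. Then $\cL_\phi(u_M)$ is continuous into $\Rminus$: it is finite and continuous on the clopen set $T(X)$ by your estimate, and identically $-\infty$ off it. Moreover, pointwise, $\cL_\phi(u)\leq\cL_\phi(u_M)\leq\max\{\cL_\phi(u),\,-M+\norm{\phi}_{C^0}\}$, so $0\leq e^{\cL_\phi(u_M)}-e^{\cL_\phi(u)}\leq e^{-M+\norm{\phi}_{C^0}}$. Hence $e^{\cL_\phi(u)}$ is a uniform limit of continuous functions, and $\exp$ is a homeomorphism of $\Rminus$ onto $[0,+\infty)$.

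Even more directly, you can skip truncation. At any $x_0$, the two-sided preimage correspondence and the continuity of $u$ (into $\Rminus$) and $\phi$ at the finitely many points of $T^{-1}(x_0)$ give $\lim_{x\to x_0}\cL_\phi(u)(x)=\cL_\phi(u)(x_0)$, including when this value is $-\infty$.
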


\begin{proof}
    Since $\phi\in H^\omega(X,\R)$, $\phi\in C(X,\R)$. It then follows from \cite[Proposition~3.9]{LS26} that $\cL_\phi(C(X,\Rminus))\subseteq C(X,\Rminus)$ and if $T$ is surjective, then $\cL_{\phi}(C(X,\R))\subseteq C(X,\R)$. It suffices to verify that if $T$ is sujective, then $\cL_\phi\bigl(H^{\assocmodu}(X,\R)\bigr)\subseteq H^{\assocmodu}(X,\R)$.

    Assume that $T$ is surjective. Fix $u\in H^{\assocmodu}(X,\R)$ and $x\in X$. Enumerate $T^{-1}(x)=\{x_1,\,x_2,\,\ldots,\,x_n\}$, where $n\in\N$ since $T$ is surjective. By Lemma~\ref{l:expanding systems}, for all $y\in B(x,\xi)$,  denote $y_i\=T_{x_i}^{-1}(y)$ for $1\leq i\leq n$ and consequently $T^{-1}(y)=\{y_i:1\leq i\leq n\}$. Thus, 
    $\abs{\cL_\phi(u)(x)-\cL_\phi(u)(y)}
         \leq\oplusu{1\leq i\leq n}\abs{u(x_i)+\phi(x_i)-u(y_i)-\phi(y_i)}
        \leq \oplusu{1\leq i\leq n}(\assocmodu(d(x_i,y_i))\abs{u}_{\assocmodu}+\omega(d(x_i,y_i))\abs{\phi}_\omega)$.
    Note that $d(x_i,y_i)\leq \lambda^{-1}d(x,y)$ for $1\leq i\leq n$ and $\assocmodu(t)+\omega(t)=\assocmodu(\lambda t)$ for all $t\geq 0$. It follows that for all $y\in B(x,\xi)$,
    $        \abs{\cL_\phi(u)(x)-\cL_\phi(u)(y)}\leq \max\{\abs{u}_{\assocmodu},\,\abs{\phi}_{\omega}\}\assocmodu(d(x,y))$.
    We conclude that $\cL_\phi(u)\in H^{\assocmodu}(X,\R)$ with 
    $        \abs{\cL_\phi(u)}_{\widetilde{\omega}_{\lambda,\xi}}\leq \max\{\abs{u}_{\assocmodu},\,\abs{\phi}_{\omega}\}$.
\end{proof}

\begin{lemma}[Distortion Lemma] \label{l: distortion}
Let $T\: X\to X$ be an open continuous distance-expanding map with expanding constants $(\lambda,\gamma,\xi)$, $\omega\in\sD$, and $\phi\in H^{\omega}(X,\R)$. Then for all $x,\, y\in X$ with $d(x,y)<\xi$, $n\in\N$, and $u\in C(X,\R)$, we have
        \begin{equation}\label{ineq: equi estimate for L_A_n(u)}
            \cL_\phi^n(u)(x)\leq \cL_\phi^n(u)(y)+\abs{\phi}_{\omega}\assocmodu(d(x,y))+\var_u(\lambda^{-n}d(x,y))
        \end{equation}
        and if $x_1\in T^{-n}(x)$, then
        \begin{equation}\label{ineq: difference of the nearby trajectory sum}
        \Absbig{S_n\phi(x_1)-S_n\phi\bigl(T_{x_1}^{-n}(y)\bigr)}\leq\abs{\phi}_{\omega}\assocmodu(d(x,y)).
	   \end{equation} 
Moreover, if $T$ is transitive, then the following hold:
\begin{enumerate}[label=\rm{(\roman*)}]	
\smallskip
\item There exists a constant $C_1>0$ such that for all $n\in\N$ and $x,\, y\in X$,
	\begin{equation*}
		\AbsBig{\oplusu{\ox\in T^{-n}(x)}S_n\phi(\ox)-\oplusu{\oy\in T^{-n}(y)}S_n\phi(\oy)}\leq C_1.
	\end{equation*} 

\item There exists a constant $C_2>0$ depending only on $T$ and $\omega$ such that for all $\phi\in H^\omega(X,\R)$, $u\in H^{\assocmodu}(X,\R)$, and $n\in\N$,
	\begin{equation*}
	\Absbig{\cL_{\phi}^n(u)}_{\assocmodu}\leq C_2(\abs{\phi}_{\omega}+\abs{u}_{\assocmodu}).
	\end{equation*}
\end{enumerate}
\end{lemma}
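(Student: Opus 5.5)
The plan is to prove the two displayed estimates first, since they are pure pull-back computations, and then derive (i) and (ii) from transitivity via a finite covering argument.

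For (\ref{ineq: difference of the nearby trajectory sum}), fix $x_1\in T^{-n}(x)$ and $y$ with $d(x,y)<\xi$, and set $y_1\=T_{x_1}^{-n}(y)$. By Lemma~\ref{l:expanding systems} and Remark~\ref{r: expanding parameter and long pull backs}, $d\bigl(T^i(x_1),T^i(y_1)\bigr)\leq\lambda^{-(n-i)}d(x,y)$ for $0\leq i<n$. Summing $\abs{\phi}_\omega\,\omega\bigl(\lambda^{-(n-i)}d(x,y)\bigr)$ over $i$ gives at most $\abs{\phi}_\omega\assocsum{\lambda}{n}(d(x,y))\leq\abs{\phi}_\omega\assocmodu(d(x,y))$.

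For (\ref{ineq: equi estimate for L_A_n(u)}), use (\ref{eq: iteration of Bousch}). For every $x_1\in T^{-n}(x)$, the point $T_{x_1}^{-n}(y)$ lies in $T^{-n}(y)$ and satisfies $d\bigl(x_1,T_{x_1}^{-n}(y)\bigr)\leq\lambda^{-n}d(x,y)$. Hence
\begin{equation*}
u(x_1)\otimes S_n\phi(x_1)\leq u\bigl(T_{x_1}^{-n}(y)\bigr)\otimes S_n\phi\bigl(T_{x_1}^{-n}(y)\bigr)+\abs{\phi}_\omega\assocmodu(d(x,y))+\var_u(\lambda^{-n}d(x,y)),
\end{equation*}
and taking the supremum over $x_1$ gives (\ref{ineq: equi estimate for L_A_n(u)}).

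For (i), the estimate for $d(x,y)<\xi$ follows from (\ref{ineq: equi estimate for L_A_n(u)}) with $u=\tmif$: one gets a bound of $\abs{\phi}_\omega\assocmodu(\xi)$, which is finite by Lemma~\ref{l: associated modulus of continuity}. For far-apart points, use transitivity as in the proof of Lemma~\ref{l: normality and zero limit of eigenfunction}. There is $N_\xi$ such that for all $x,y$ some $0\leq m\leq N_\xi$ and some $w\in B(y,\xi)$ satisfy $T^m(w)\in B(x,\xi)$. Taking the maximizing preimage $\oy\in T^{-n}(y)$, pull back along $y\to w$ to get a point of $T^{-n}(w)$, then append an $m$-step preimage branch. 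This gives a point of $T^{-n-m}(T^m w)$ whose sum is at least $S_n\phi(\oy)-\abs{\phi}_\omega\assocmodu(\xi)-m\norm{\phi}_{C^0}$.

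The main obstacle is the mismatch in orbit length: this produces a length $n+m$ sum, whereas (i) compares length $n$ sums at $x$ and $y$. To fix it, drop the last $m$ steps instead of adding them. Write $\oplusu{\ox\in T^{-n}(x)}S_n\phi(\ox)$ through $T^{-(n-m)}$ of points in $T^{-m}(x)$. Choose the target point via transitivity so that one compares length $n-m$ sums near $T^{-m}(x)$ with those near $y$; the truncation costs at most $2N_\xi\norm{\phi}_{C^0}$. For $n\leq N_\xi$ the bound is trivial, namely $2n\norm{\phi}_{C^0}$. This yields $C_1=2\abs{\phi}_\omega\assocmodu(\xi)+4N_\xi\norm{\phi}_{C^0}$.

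For (ii), apply (\ref{ineq: equi estimate for L_A_n(u)}) with $\var_u(t)\leq\abs{u}_{\assocmodu}\assocmodu(t)$. This gives $\abs{\cL_\phi^n(u)}_{\assocmodu,\xi}\leq\abs{\phi}_\omega+\abs{u}_{\assocmodu}$. To pass from the local seminorm to the global one, chain $\xi/2$-balls. By compactness, points at distance at least $\xi$ are joined by a chain of at most $K$ steps of length below $\xi$, with $K$ depending only on $T$. Each step is controlled by $\assocmodu(\xi)$, and $\assocmodu(d(x,y))\geq\assocmodu(\xi)>0$ when $d(x,y)\geq\xi$. So $C_2=K$ suffices; the argument needs $\omega$ not identically zero near $0$, otherwise the statement is trivial.
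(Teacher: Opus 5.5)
Your two local estimates are correct, and so is (i). In fact the ``mismatch'' you flag is not a real obstacle: for $z\in T^{-(n+m)}(x)$ one has $S_{n+m}\phi(z)=S_m\phi(z)+S_n\phi(T^m(z))$ with $T^m(z)\in T^{-n}(x)$. So your first construction already closes after discarding the first $m$ terms, and your truncation variant works as well.

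The genuine gap is in (ii). You assert that two points at distance at least $\xi$ can be joined by a chain of at most $K$ steps of length less than $\xi$. This is a chain-connectedness property of $X$ that is not among the hypotheses. It fails in the central examples of this paper: $(\Sigma,d_2)$ is ultrametric, so every chain $x_0,x_1,\ldots,x_K$ with $d(x_i,x_{i+1})<\xi$ satisfies $d(x_0,x_K)\le\max_i d(x_i,x_{i+1})<\xi$. Two sequences with different first symbols can therefore never be linked.

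A telltale sign is that your argument for (ii) never uses transitivity, yet (ii) is false without it. Let $X$ be the disjoint union of two clopen $T$-invariant pieces on which $T$ is surjective, and let $\phi$ equal constants $c_1\neq c_2$ on them. Then $\cL_\phi^n(\tmif)$ equals $nc_1$ on one piece and $nc_2$ on the other. Hence $\abs{\cL_\phi^n(\tmif)}_{\assocmodu}\to+\infty$ while $\abs{\phi}_\omega$ stays fixed.

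The missing step is to handle $d(x,y)\geq\xi$ with the global estimate (i) rather than with local ones. By (\ref{eq: iteration of Bousch}), $\cL_\phi^n(u)(x)$ lies between $\inf u+\oplusu{\ox\in T^{-n}(x)}S_n\phi(\ox)$ and $\sup u+\oplusu{\ox\in T^{-n}(x)}S_n\phi(\ox)$. So (i) gives $\abs{\cL_\phi^n(u)(x)-\cL_\phi^n(u)(y)}\leq C_1+\sup u-\inf u$, and it remains to divide by $\assocmodu(\xi)\leq\assocmodu(d(x,y))$.

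For the resulting constant to depend only on $T$ and $\omega$, $C_1$ must be linear in $\abs{\phi}_\omega$. Your $C_1=2\abs{\phi}_\omega\assocmodu(\xi)+4N_\xi\norm{\phi}_{C^0}$ is not (add a large constant to $\phi$). However, the $m$ extra terms in (i) occur on both sides of the comparison, so they cost only $m(\sup\phi-\inf\phi)\leq N_\xi\abs{\phi}_\omega\,\omega(\diam X)$. Combined with $\sup u-\inf u\leq\abs{u}_{\assocmodu}\assocmodu(\diam X)$, this gives, as in the paper, $C_2=2+\bigl(N_\xi\,\omega(\diam X)+\assocmodu(\diam X)\bigr)/\assocmodu(\xi)$.
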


\begin{proof}
Fix $x,\, y\in X$ with $d(x,y)<\xi$ and $n\in\N$. Note that if $T^{-n}(x)=\emptyset$, then it follows from Lemma~\ref{l:expanding systems} that $T^{-n}(y)=\emptyset$.

Assume that $T^{-n}(x)\neq\emptyset$. Enumerate $T^{-n}(x) =\{x_1,\,x_2,\,\ldots,\,x_k\}$ and $y_i\=T_{x_i}^{-n}(y)$ for all $1\leq i\leq k$. Then Lemma~\ref{l:expanding systems} implies that $T^{-n}(y)=\{y_1,\,y_2,\,\ldots,\,y_k\}$ with $d\bigl(T^l(x_i),T^l(y_i)\bigr)\leq\lambda^{l-n}d(x,y)$ for all $0\leq l\leq n$ and $1\leq i\leq k$. It follows that 
\begin{equation*}
\AbsBig{\oplusu{\ox\in T^{-n}(x)}S_n\phi(\ox)-\oplusu{\oy\in T^{-n}(y)}S_n\phi(\oy)}\leq\oplusu{1\leq i\leq k}\abs{S_n\phi(x_i)-S_n\phi(y_i)}.
\end{equation*} 
Since $\phi\in H^\omega(X,\R)$, by the triangle inequality, it follows that for all $1\leq i\leq k$,
\begin{equation*}
\abs{S_n\phi(x_i)-S_n\phi(y_i)}
\leq\abs{\phi}_{\omega}\bigl(\omega(\lambda^{-n}d(x,y))+\cdots+\omega\bigl(\lambda^{-1}d(x,y)\bigr)\bigr)
\leq\abs{\phi}_{\omega}\assocmodu(d(x,y)).
\end{equation*}
Now (\ref{ineq: difference of the nearby trajectory sum}) is verified. Moreover, it follows from (\ref{eq: iteration of Bousch}) and $d(x_i,y_i)\leq\lambda^{-n}d(x,y)$ that for all $u\in C(X,\R)$,
\begin{equation*}
    \begin{aligned}
    \cL_\phi^n(u)(x)
    &=\oplusu{1\leq i\leq k}(u(x_i)\otimes S_n\phi(x_i))\leq\oplusu{1\leq i\leq k}(u(y_i)\otimes \var_u(\lambda^{-n}d(x,y))\otimes S_n\phi(y_i)\otimes(\abs{\phi}_{\omega}\assocmodu(d(x,y))))\\
    &=\cL_\phi^n(u)(y)+\var_{u}(\lambda^{-n}d(x,y))+\abs{\phi}_{\omega}\assocmodu(d(x,y)).
    \end{aligned}
\end{equation*}
Note that (\ref{ineq: equi estimate for L_A_n(u)}) naturally holds if $T^{-n}(x)=\emptyset$. We conclude that (\ref{ineq: equi estimate for L_A_n(u)}) is verified.

Now assume that $T$ is transitive. It directly follows that $T$ is surjective. Thus, we take $C_0\=\abs{\phi}_{\omega}\assocmodu(\xi)$ and conclude that for all $n\in\N$ and $x,\, y\in X$ with $d(x,y)<\xi$,
\begin{equation}\label{ineq:distortion for nearby points}
		\AbsBig{\oplusu{\ox\in T^{-n}(x)}S_n\phi(\ox)-\oplusu{\oy\in T^{-n}(y)}S_n\phi(\oy)}\leq C_0.
\end{equation}

The transitivity of $T$ implies that there exists $N_{\xi}\in\N$ such that for all $x,\, y\in X$, there exists an integer $m$ satisfying $0\leq m\leq N_{\xi}$ and $T^m(B(x,\xi))\cap B(y,\xi)\neq\emptyset$. Now adopting an argument that is verbatim the same as that in the proof of \cite[Lemma~3.10]{LS26} with $\phi$ in place of $A$, we get that for all $x,\,y\in X$ and $n\in\N$,
\begin{equation}\label{ineq:distortion for all points}
	\AbsBig{\oplusu{\ox\in T^{-n}(x)}S_n\phi(\ox)-\oplusu{\oy\in T^{-n}(y)}S_n\phi(\oy)}\leq 2C_0+N_{\xi}\bigl(\sup_{z\in X}\phi(z)-\inf_{z\in X}\phi(z)\bigr)\leq C_1,
\end{equation}
with $C_1\=2C_0+N_{\xi}\abs{\phi}_\omega\omega(\diam X)$. 

Fix $\phi\in H^{\omega}(X,\R)$ and $u\in H^{\assocmodu}(X,\R)$. Proposition~\ref{p:L_continuous} implies that $\cL_\phi^n(u)\in H^{\assocmodu}(X,\R)$ for all $n\in\N$. Now it suffices to give the estimate for $\Absbig{\cL_{\phi}^n(u)}_{\assocmodu}$.  

It follows from (\ref{ineq: equi estimate for L_A_n(u)}) that for all $n\in\N$,
$\Absbig{\cL_{\phi}^n(u)}_{\assocmodu,\xi}\leq\abs{\phi}_{\omega}+\abs{u}_{\assocmodu}$.
For $x,\, y\in X$ with $d(x,y)\geq\xi$, it follows from (\ref{eq: iteration of Bousch}) and~(\ref{ineq:distortion for all points}) that for all $n\in\N$,
\begin{equation*}
		 \Absbig{\cL_{\phi}^n(u)(x)-\cL_{\phi}^n(u)(y)}\leq C_1+\sup_{x\in X} u(x)-\inf_{x\in X} u(x) \leq \Bigl(C_1+\sup_{x\in X} u(x)-\inf_{x\in X} u(x)\Bigr) \frac{\assocmodu(d(x,y))}{\assocmodu(\xi)}.
\end{equation*}
We conclude that for all $n\in\N$,
\begin{equation}\label{ineq:Lipschitz constant estimate for L_A^n(u)}
\Absbig{\cL_\phi^n(u)}_{\assocmodu}\leq\max\Bigl\{\abs{\phi}_{\omega}+\abs{u}_{\assocmodu},\, \Bigl(C_1+\sup_{x\in X} u(x)-\inf_{x\in X} u(x)\Bigr) \Big/ \assocmodu(\xi)\Bigr\}.
\end{equation}
Now we want to choose $C_2(\phi,u)$ to be certain constant satisfying $\Absbig{\cL_{\phi}^n(u)}_{\assocmodu}\leq C_2(\phi,u)(\abs{\phi}_{\omega}+\abs{u}_{\assocmodu})$ for a specific pair $(\phi,u)\in H^{\omega}(X,\R)\times H^{\assocmodu}(X,\R)$ and all $n\in\N$. 
More precisely, if $\phi\in H^\omega(X,\R)$ and $u\in H^{\assocmodu}(X,\R)$ are two constant functions, then for each $n\in\N$, $\cL_{\phi}^n(u)$ is a constant function, and $C_2(\phi,u)$ can be an arbitrary positive number. 
Now suppose that there is a nonconstant function among $\phi$ and $u$, i.e., $\abs{\phi}_{\omega}+\abs{u}_{\assocmodu}>0$. By (\ref{ineq:Lipschitz constant estimate for L_A^n(u)}), we can take $C_2(\phi,u)\=\max\bigl\{1,\,\frac{C_1+\sup_{x\in X} u(x)-\inf_{x\in X} u(x)}{\assocmodu(\xi)(\abs{\phi}_{\omega}+\abs{u}_{\assocmodu})}\bigr\}$.

Moreover, $u\in H^{\assocmodu}(X,\R)$ implies that 
	$		\sup_{x\in X} u(x)-\inf_{x\in X} u(x)\leq\abs{u}_{\assocmodu}\assocmodu(\diam X)$.
	Recall that $C_1=2C_0+N_{\xi}\abs{\phi}_{\omega}\omega(\diam X)$ and $C_0=\abs{\phi}_{\omega}\assocmodu(\xi)$. We conclude that
\begin{equation*}
C_2(\phi,u)
\leq \max\biggl\{1\,,\frac{2C_0+N_{\xi}\abs{\phi}_{\omega}\omega(\diam X)+\abs{u}_{\assocmodu}\assocmodu(\diam X)}{\assocmodu(\xi)(\abs{\phi}_{\omega}+\abs{u}_{\assocmodu})}\biggr\}
\leq C_2,
\end{equation*} 
where we define $C_2\=2+\frac{N_{\xi} \omega(\diam X)+\assocmodu(\diam X)}{\assocmodu(\xi)}$. Therefore, 
$\Absbig{\cL_\phi^n(u)}_{\assocmodu}\leq C_2(\abs{\phi}_{\omega}+\abs{u}_{\assocmodu})$ 
for all $\phi\in H^\omega(X,\R)$, $u\in H^{\assocmodu}(X,\R)$, and $n\in\N$. The proof is now complete.
\end{proof}

\begin{lemma}\label{l: projective estimate}
    Let $T\: X\to X$ be an open continuous distance-expanding map with expanding constants $(\lambda,\gamma,\xi)$, $\omega\in\sD$, and $\phi\in H^{\omega}(X,\R)$. Then
    \begin{equation*}
		\cL_\phi^k(u_2)\leq\norm{u_1-u_2}_{C^0}\otimes\cL_\phi^k(u_1)
	\end{equation*}
    for all $k\in\N$ and $u_1,u_2\in C(X,\R)$. Moreover, if $T$ is surjective, then for all $u_1,u_2\in C(X,\R)$,
    \begin{equation*}
        \norm{\cL_\phi(u_1)-\cL_\phi(u_2)}_{C^0}\leq\norm{u_1-u_2}_{C^0}.
    \end{equation*}
\end{lemma}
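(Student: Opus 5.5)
The first inequality is a pointwise comparison, and I will prove it by induction on $k$ from the iteration formula (\ref{eq: iteration of Bousch}). Put $c\=\norm{u_1-u_2}_{C^0}$, so that $u_2\leq c\otimes u_1$ pointwise. For each $x\in X$ and $k\in\N$, (\ref{eq: iteration of Bousch}) gives
\begin{equation*}
\cL_\phi^k(u_2)(x)=\oplusu{y\in T^{-k}(x)}(u_2(y)\otimes S_k\phi(y))\leq\oplusu{y\in T^{-k}(x)}(c\otimes u_1(y)\otimes S_k\phi(y)).
\end{equation*}
By (\ref{eq: exchangability of numbers}) the constant $c$ can be pulled out of the supremum, and the right-hand side equals $c\otimes\cL_\phi^k(u_1)(x)$. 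The same identity settles the degenerate case $T^{-k}(x)=\emptyset$, because then both sides equal $-\infty$ under the convention that an empty tropical sum is $-\infty$ and $c\otimes(-\infty)=-\infty$. If one prefers an induction over the one-step operator, the single step uses only that $\cL_\phi$ is monotone and commutes with tropical scalar multiplication, and both properties follow directly from (\ref{eq: defi of the Bousch operator}).

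For the second inequality, assume that $T$ is surjective. Then every fiber $T^{-1}(x)$ is nonempty, and by Lemma~\ref{l:expanding systems} it is finite, so $\cL_\phi(u_1)$ and $\cL_\phi(u_2)$ take real values. Applying the first part with $k=1$, once as stated and once with $u_1$ and $u_2$ exchanged, yields
\begin{equation*}
\cL_\phi(u_2)\leq c+\cL_\phi(u_1)\quad\text{ and }\quad\cL_\phi(u_1)\leq c+\cL_\phi(u_2).
\end{equation*}
Since both functions are finite, subtracting is legitimate, and $\norm{\cL_\phi(u_1)-\cL_\phi(u_2)}_{C^0}\leq c$ follows.

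I do not expect a real obstacle in this argument. The only point that needs care is the surjectivity hypothesis. Without it, $\cL_\phi(u)$ can equal $-\infty$ at points with empty fibers, and then the supremum norm of the difference is undefined. This is exactly why the second statement is restricted to surjective $T$.
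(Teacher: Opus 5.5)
Your proof is correct and follows essentially the paper's route. The paper cites the tropical linearity of $\cL_\phi$ (monotonicity and commutation with tropical scalar multiplication, which is your alternative one-step induction) for the first inequality, and Proposition~\ref{p:L_continuous} for the surjective case. The only difference is that the paper obtains $\cL_\phi(u_i)\in C(X,\R)$, so the $C^0$-norm, defined as a maximum over continuous functions, literally applies. You obtain only real-valuedness from nonempty finite fibers, which suffices for your pointwise bound and hence for the supremum estimate.
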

\begin{proof}
    This lemma is a direct consequence of the tropical linearity of $\cL_\phi$ (cf.~\cite[Lemma~3.1]{LS26}) and Proposition~\ref{p:L_continuous}.
\end{proof}

\begin{lemma}\label{l: uniform upper bound without transi and surje}
Let $T\: X\to X$ be an open continuous distance-expanding map with expanding constants $(\lambda,\gamma,\xi)$ and $\omega\in\sD$. Then there exists $L_1>0$ depending only on $T$ and $\omega$ such that for all $x\in X$, $n\in\N$, and $\phi\in H^\omega(X,\R)$, we have
$S_n\toverline{\phi}(x)\leq L_1\abs{\phi}_{\omega}$.
\end{lemma}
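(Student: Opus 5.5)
The plan is to reduce the uniform bound on $S_n\toverline{\phi}(x)$ to a comparison with a periodic orbit, obtained via the closing lemma for open distance-expanding maps. A periodic orbit carries an invariant probability measure, so its Birkhoff average is at most $\energy$. Concretely, I would use the standard closing/shadowing property (cf.~\cite[Section~4.2]{PU10}), which holds for open distance-expanding maps without transitivity or surjectivity. It gives a constant $\alpha\in(0,\xi)$ depending only on $T$ with the following property: if $d(T^n(x),x)<\alpha$, then there is a periodic point $p$ with $T^n(p)=p$ and $d(T^i(p),T^i(x))\leq \lambda^{-(n-i)}\xi$ for $0\leq i\leq n$.

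Fix $x$ and $n$. The orbit segment $x,T(x),\ldots,T^n(x)$ need not return near $x$, so I would first use compactness. Cover $X$ by finitely many balls of radius $\alpha/2$; let $K$ be their number, which depends only on $T$. By pigeonhole, among the points $T^{n},T^{n+1},\ldots,T^{n+K}$ applied to $x$ there is no guaranteed return to $x$ itself. So I would instead split the segment.

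Let $x_i\=T^i(x)$. Choose indices $0\leq a<b\leq a+K$ with $d(x_a,x_b)<\alpha$, and repeat greedily to cut $[0,n)$ into at most $K$ leftover times plus pieces $[a_j,b_j)$ on which the orbit nearly returns. Cleaner still is to take the first time $t_0\leq K$ such that $x_{n-t}$ is $\alpha/2$-close to one of $x_0,\ldots$; I would settle the exact combinatorics when writing it out.

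On each nearly closed piece of length $\ell$, the closing lemma yields a periodic point $p$ of period $\ell$ shadowing the piece. The tail-type estimate (\ref{ineq: difference of the nearby trajectory sum}) then gives $\bigl|S_\ell\phi(x_{a})-S_\ell\phi(p)\bigr|\leq\abs{\phi}_\omega\assocmodu(\xi)$. Since $S_\ell\phi(p)\leq \ell\energy$ (the orbit measure of $p$ is invariant), we get $S_\ell\toverline{\phi}(x_a)\leq\abs{\phi}_\omega\assocmodu(\xi)$.

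The main obstacle is ensuring that the number of pieces is bounded independently of $n$; otherwise the errors $\assocmodu(\xi)$ accumulate. I would resolve this by a single return. Among $x_0,\ldots,x_K$ there are two points $x_a,x_b$ with $a<b$ in a common ball. Among $x_{n-K},\ldots,x_n$, however, I need a point close to an early point, which is not guaranteed.

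So I would switch strategy and avoid exact returns altogether. Using the openness of $T$, i.e.\ the inclusion $B(T(y),\xi)\subseteq T(B(y,\gamma/2))$ from Lemma~\ref{l:expanding systems}, I would pull back an $\alpha$-ball around $x$ along a chain. By the pigeonhole principle, the orbit $x_{n},x_{n+1},\ldots$ enters, within $K$ further steps, a ball already visited at some time $c\leq n+K$. Hence I would close the segment $[c',c]$ containing most of $[0,n)$, using at most $2K$ leftover steps. Each leftover step contributes at most $\norm{\toverline{\phi}}_{C^0}$.

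That term, however, is not controlled by $\abs{\phi}_\omega$ alone. To fix this, I would note that $\toverline{\phi}$ takes both a value $\geq0$ and a value $\leq 0$, the former at a point of the support of a maximizing measure. Hence $\sup\toverline{\phi}\leq\abs{\phi}_\omega\omega(\diam X)$, and positive contributions from the leftover steps are bounded by $2K\abs{\phi}_\omega\omega(\diam X)$. Altogether this gives $S_n\toverline{\phi}(x)\leq L_1\abs{\phi}_\omega$ with $L_1\=\assocmodu(\xi)+2K\omega(\diam X)$, which depends only on $T$ and $\omega$.
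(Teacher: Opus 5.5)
Your ingredients are the right ones. You close nearly returning orbit pieces with the closing lemma and compare them with periodic orbits via $S_\ell\phi(p)\leq\ell\energy$. You use (\ref{ineq: difference of the nearby trajectory sum}) to get an error $\abs{\phi}_\omega\assocmodu(\xi)$ per piece, and you bound isolated steps by $\sup\toverline{\phi}\leq\abs{\phi}_\omega\,\omega(\diam X)$. These are exactly the two constants the paper substitutes into the H\"older argument of \cite[Lemma~3.12]{LS26}.

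The gap is at the step you yourself single out, bounding the number of closed pieces, and your proposed resolution does not work. Pigeonhole applied to $x_n,\ldots,x_{n+K}$ only yields two \emph{late} times in a common ball. The inclusion $B(T(y),\xi)\subseteq T(B(y,\gamma/2))$ produces inverse branches, not a return of the forward orbit to the vicinity of $x$. So nothing forces a single almost closed segment $[c',c]$ to cover all but $2K$ times of $[0,n)$.

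In fact this fails already for the full shift on $\{0,1\}$. Take $x=0^m1^{\infty}$ and $n=2m$. Every $x_c$ with $c\geq m$ equals $1^{\infty}$, so a segment closed at a late time must start no earlier than $m-r$, where $r$ depends only on $\alpha$. Such a segment misses roughly the first half of $[0,n)$. Hence the constant $\assocmodu(\xi)+2K\omega(\diam X)$, which presupposes a single loop, is not established.

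The missing idea is a decomposition into at most $K$ loops, obtained by always jumping to the \emph{last} visit.
\begin{itemize}
\item Cover $X$ by sets $B_1,\ldots,B_K$ of diameter less than the closing threshold $\alpha$, and for each time $t$ fix $b(t)$ with $x_t\in B_{b(t)}$.
\item Put $t_0\=0$. Given $t_j\leq n-1$, let $s_j$ be the largest $s\in[t_j,n-1]$ with $x_s\in B_{b(t_j)}$, and set $t_{j+1}\=s_j+1$, stopping when $t_{j+1}=n$.
\item For $j'>j$ we have $t_{j'}>s_j$, so $x_{t_{j'}}\notin B_{b(t_j)}$ and hence $b(t_{j'})\neq b(t_j)$. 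Thus there are at most $K$ blocks.
\end{itemize}
Now write $S_n\toverline{\phi}(x)=\sum_j\bigl(S_{s_j-t_j}\toverline{\phi}(x_{t_j})+\toverline{\phi}(x_{s_j})\bigr)$. Since $d(x_{t_j},x_{s_j})<\alpha$, each loop term is at most $\abs{\phi}_\omega\assocmodu(\xi)$ by your closing argument (it is $0$ if $s_j=t_j$). Each connecting term is at most $\abs{\phi}_\omega\,\omega(\diam X)$. This gives $L_1\=K(\assocmodu(\xi)+\omega(\diam X))$.

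Your first instinct of cutting $[0,n)$ greedily was on track. The ``last visit'' rule is what bounds the number of pieces, not merely the number of leftover times, independently of $n$.
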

\begin{proof}
    The proof of this lemma is verbatim the same as that of \cite[Lemma~3.12]{LS26}, up to replacing $(\lambda,\eta,\xi,A)$ with $(\lambda,2\gamma,\xi,\phi)$, $\abs{A}_{d^\alpha}$ with $\abs{\phi}_\omega$, $\xi^\alpha(\lambda^\alpha-1)^{-1}$ with $\assocmodu(\xi)$, and $(\diam X)^\alpha$ with $\omega(\diam X)$.
\end{proof}

Now we are ready to prove Propositions~\ref{p: main construction} and~\ref{p: uniqueness of tropical eigenvalue}. 

\begin{proof}[\bf Proof of Proposition~\ref{p: main construction}]

\smallskip
    (i) Let $L_1$ be the constant in Lemma~\ref{l: uniform upper bound without transi and surje} depending only on $T$ and $\omega$. It immediately follows from \cite[Lemma~3.1~(ii)]{LS26} and Lemma~\ref{l: uniform upper bound without transi and surje} that $v_{\tmif}\fless L_1\abs{\phi}_{\omega}$ and we can take $L_2\=L_1$. By Lemma~\ref{l: projective estimate}, we have for all $n\in\N$, $\cL_{\toverline{\phi}}^n(\tmif)-\norm{u-\tmif}_{C^0}\fless \cL_{\toverline{\phi}}^n(u)\fless\norm{u-\tmif}_{C^0}+\cL_{\toverline{\phi}}^n(\tmif)$. Consequently, 
    \begin{equation}\label{ineq: comparison of v0 and vu}
    v_{\tmif}-\norm{u-\tmif}_{C^0}\fless v_u\fless \norm{u-\tmif}_{C^0}+v_{\tmif}.
    \end{equation}
    It follows that $v_u\leq L_2\abs{\phi}_{\omega}+\norm{u}_{C^0}$. By (\ref{ineq: equi estimate for L_A_n(u)}) and~(\ref{eq: defi of main construction}), we see that for all $x,\,y\in X$ with $d(x,y)<\xi$,
    \begin{equation*}
        v_u(x)\leq v_u(y)+\abs{\phi}_{\omega}\assocmodu(d(x,y))+\lim_{n\to+\infty}\var_{u}(\lambda^{-n}d(x,y))=v_u(y)+\abs{\phi}_{\omega}\assocmodu(d(x,y)),
    \end{equation*}
    where the equality follows from the uniform continuity of $u\in C(X,\R)$.
    
    \smallskip
    
    (ii) It follows from (i) that $v_{u}\in C(X,\Rminus)$. Thus, by \cite[Lemma~3.1~(iii)(iv)]{LS26}, 
    \begin{equation*}
        \cL_{\toverline{\phi}}(v_{u})
        =\lim_{n\to+\infty}\cL_{\toverline{\phi}}\Bigl(\sup_{k\geq n}\cL_{\toverline{\phi}}^k(u)\Bigr)
        =\lim_{n\to+\infty}\sup_{k\geq n}\cL_{\toverline{\phi}}^{k+1}(u)
        =v_{u}.
    \end{equation*}
    
    Now it suffices to show that $v_{u}\neq\taif$, i.e., $v_{u}(y)\in\R$ for some $y\in X$. Denote $a_n\=\sup_{x\in X}\cL_{\phi}^n(\tmif)(x)$ for all $n\in\N$. By Proposition~\ref{p:L_continuous}, $\cL_\phi^n(\tmif)\in C(X,\Rminus)$ and thus there exists $x_n\in X$ such that $\cL_\phi^n(\tmif)(x_n)=a_n$ since $X$ is compact. By \cite[Lemma~3.1~(ii)]{LS26},
    $a_n=\sup_{x\in X}S_n\phi(x)\in\R$. Thus, by \cite[Proposition~2.1]{Je06}, 
    $\energy=\limsup_{n\to+\infty}\frac{a_n}{n}$.
    Observe that $\{a_n\}_{n\in\N}$ is subadditive. It follows that $\lim_{n\to+\infty}\frac{a_n}{n}=\inf_{n\in\N}\frac{a_n}{n}=\energy\in\R$ and consequently $\cL_{\toverline{\phi}}^n(\tmif)(x_n)=a_n-n\energy\geq 0$ for all $n\in\N$. Since $X$ is compact, there exists a subsequence $\{x_{n_k}\}_{k\in\N}$ converging to $y\in X$. Thus, there exists $N\in\N$ such that $d(x_{n_k},y)<\xi$ for all $k\geq N$. It then follows from (\ref{ineq: equi estimate for L_A_n(u)}) and $\cL_{\toverline{\phi}}^n(\tmif)(x_n)\geq 0$ that 
    \begin{equation*}
        \cL_{\toverline{\phi}}^{n_k}(\tmif)(y)\geq\cL_{\toverline{\phi}}^{n_k}(\tmif)(x_{n_k})-\abs{\phi}_{\omega}\assocmodu(d(y,x_{n_k}))\geq -\abs{\phi}_{\omega}\assocmodu(d(y,x_{n_k})).
    \end{equation*}
    Hence, 
    $        v_{\tmif}(y)=\limsup\limits_{n\to+\infty}\cL_{\toverline{\phi}}^n(\tmif)(y)
        \geq \limsup\limits_{k\to+\infty}\cL_{\toverline{\phi}}^{n_k}(\tmif)(y)
        \geq \limsup\limits_{k\to+\infty} (-\abs{\phi}_{\omega}\assocmodu(d(y,x_{n_k})))=0$.
    Since $v_{\tmif}\in C(X,\Rminus)$, we see that $v_{\tmif}(y)\in\R$. It follows from (\ref{ineq: comparison of v0 and vu}) that $v_u(y)\in\R$.
    
    \smallskip
    
    (iii) Let $C_2>0$ be the constant from Lemma~\ref{l: distortion}. Denote $D_1\=C_2\abs{\phi}_{\omega}\assocmodu(\diam X)$.  
    
    Assume that $T$ is transitive. It follows from Lemma~\ref{l: distortion}~(ii) that $\Absbig{\cL_{\phi}^n(\tmif)}_{\assocmodu}\leq C_2\abs{\phi}_{\omega}$ for all $n\in\N$. Recall that $a_n=\sup_{x\in X}\cL_\phi^n(\tmif)(x)\in\R$. Thus, for all $n\in\N$,
    \begin{equation}\label{ineq: variation implies lower bound for L_A^n}
    	(a_n-D_1)\otimes\tmif\fless\cL_{\phi}^n(\tmif).
    \end{equation}
    Note that a tropical linear map between $\Rm$-subsemimodules of $\Rm^X$ preserves the order (cf.~\cite[Lemma~2.6]{LS26}).It then follows from the tropical linearity of $\cL_\phi$ (\cite[Lemma~3.1~(i)(iii)]{LS26}) that for all $n,\, m\in\N$,
    $(a_n-D_1)\otimes\cL_{\phi}^m(\tmif)
    =\cL_{\phi}^m((a_n-D_1)\otimes\tmif)\fless\cL_{\phi}^{n+m}(\tmif)$.
    It follows that $(a_n-D_1)\otimes a_m\leq a_{n+m}$ for all $n,\, m\in\N$, i.e., $\{D_1-a_n\}_{n\in\N}$ is subadditive. Thus, 
    $\inf_{n\in\N}(D_1-a_n)/n=\lim_{n\to+\infty}(D_1-a_n)/n=-\energy$.
    We conclude that $0\leq a_n-n\energy\leq D_1$ for all $n\in\N$. Thus by (\ref{ineq: variation implies lower bound for L_A^n}), for all $n\in\N$, $\Normbig{\cL_{\toverline{\phi}}^n(\tmif)}_{C^0}\leq D_1$.
   
    Recall that $\Absbig{\cL_{\toverline{\phi}}^n(\tmif)}_{\assocmodu}=\Absbig{\cL_{\phi}^n(\tmif)}_{\assocmodu}\leq C_2\abs{\phi}_{\omega}$ for all $n\in\N$. Thus, $\bigl\{\cL_{\toverline{\phi}}^n(\tmif)\bigr\}_{n\in\N}$ is an equicontinuous and uniformly bounded family, and consequently, so is 
    $\bigl\{\sup_{k\geq n}\cL_{\toverline{\phi}}^k(\tmif)\bigr\}_{n\in\N}$. Hence, $v_{\tmif}$, as the pointwise decreasing limit of $\sup_{k\geq n}\cL_{\toverline{\phi}}^k(\tmif)$ as $n\to +\infty$, is the uniform limit of $\sup_{k\geq n}\cL_{\toverline{\phi}}^k(\tmif)$ by the Arzel\`{a}--Ascoli theorem. Now it follows clearly that $\norm{v_{\tmif}}_{C^0}\leq D_1$ and $\abs{v_{\tmif}}_{\assocmodu}\leq C_2\abs{\phi}_{\omega}$. Take $C_3\=C_2$. It follows from (\ref{ineq: comparison of v0 and vu}) and~(i) that $v_u\in C(X,\R)$ with $\norm{v_u}_{C^0}\leq D_1+\norm{u}_{C^0}$. Furthermore, it follows from (i) that $\abs{v_u}_{\assocmodu,\xi}\leq\abs{\phi}_{\omega}$ and consequently $v_u\in H^{\assocmodu}(X,\R)$.
\end{proof}

\begin{proof}[\bf Proof of Proposition~\ref{p: uniqueness of tropical eigenvalue}]

The proof of this proposition is verbatim the same as that of \cite[Proposition~3.8]{LS26} up to replacing $A$ with $\phi$.
\end{proof}

For the proofs of Proposition~\ref{p: Mane_potential_properties} to Proposition~\ref{p: condition_for_uniqueness} and Proposition~\ref{p: representation}~(i)(ii), we first establish the two following lemmas.

\begin{lemma}\label{l: maximizing measures supported on Aubry set}
    Let $T\:X \rightarrow  X$ be a transitive expanding covering map, $\omega\in\sD$, and $\phi\in H^\omega(X,\R)$. Then an invariant Borel probability measure is a $\phi$-maximizing measure if and only if it is supported on the Aubry set $\Omega_\phi$.
\end{lemma}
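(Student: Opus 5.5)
The plan is to reduce everything to a continuous sub-action and the fact that a maximizing measure must sit on the zero set of the associated normalized potential. Since $T$ is transitive, Proposition~\ref{p: main construction}~(ii)(iii) gives $v\=v_{\tmif}\in H^{\assocmodu}(X,\R)\subseteq C(X,\R)$ with $\cL_{\toverline{\phi}}(v)=v$. From (\ref{eq: defi of the Bousch operator}) this yields $v+\toverline{\phi}\fless v\circ T$, exactly as in (\ref{ineq: v_0 is a sub-action}). Put
\begin{equation*}
\psi\=\toverline{\phi}+v-v\circ T\in C(X,\R).
\end{equation*}
Then $\psi\fless\tmif$, and $\int\!\psi\,\mathrm{d}\mu=\int\!\phi\,\mathrm{d}\mu-\energy$ for every $\mu\in\cM(X,T)$. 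So $\mu$ is $\phi$-maximizing if and only if $\int\!\psi\,\mathrm{d}\mu=0$. Since $\psi$ is continuous and nonpositive, this holds if and only if $\supp\mu\subseteq\psi^{-1}(0)$. It therefore suffices to compare $\Omega_\phi$ with $\psi^{-1}(0)$ in two steps.

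\emph{Step 1 (supported on $\Omega_\phi$ implies maximizing).} We show $\Omega_\phi\subseteq\psi^{-1}(0)$. Fix $x\in\Omega_\phi$ and $\epsilon>0$, and take $y,n$ as in Definition~\ref{d: Aubry set}. The sum telescopes:
\begin{equation*}
S_n\psi(y)=S_n\toverline{\phi}(y)+v(y)-v(T^n(y))\geq-\epsilon-\var_v(2\epsilon).
\end{equation*}
Every term $\psi(T^i(y))$ is $\leq 0$, so in particular $\psi(y)\geq-\epsilon-\var_v(2\epsilon)$. Letting $\epsilon\to0^+$ gives $y\to x$, and by continuity of $\psi$ and uniform continuity of $v$ we get $\psi(x)\geq0$, hence $\psi(x)=0$. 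Thus any invariant $\mu$ with $\supp\mu\subseteq\Omega_\phi$ satisfies $\int\!\psi\,\mathrm{d}\mu=0$, so $\mu$ is maximizing.

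\emph{Step 2 (maximizing implies supported on $\Omega_\phi$).} First, $\Omega_\phi$ is closed. If $x_k\to x$ with $x_k\in\Omega_\phi$, pick $k$ with $d(x_k,x)\leq\epsilon/2$ and apply the definition at $x_k$ with $\epsilon/2$; the triangle inequality then gives the condition at $x$ with $\epsilon$.

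Now let $\mu$ be maximizing. By ergodic decomposition, almost every ergodic component $\nu$ is also maximizing, because $\int\!\phi\,\mathrm{d}\nu\leq\energy$ for each component and the average equals $\energy$. The support of $\mu$ is contained in the closure of the union of the supports of a full-measure family of components. Since $\Omega_\phi$ is closed, it is therefore enough to show $\nu$-a.e.\ $x$ lies in $\Omega_\phi$ for each ergodic maximizing $\nu$.

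For $\nu$-a.e.\ $x$, the point $x$ lies in $\supp\nu\subseteq\psi^{-1}(0)$ and is recurrent by Poincar\'e recurrence. Since $T(\supp\nu)\subseteq\supp\nu$, the whole forward orbit of $x$ lies in $\psi^{-1}(0)$, so $S_n\psi(x)=0$ for all $n$. Given $\epsilon>0$, choose $n$ with $d(T^n(x),x)\leq\min\{\epsilon,\delta\}$, where $\delta$ is chosen so that $\var_v(\delta)\leq\epsilon$. Then
\begin{equation*}
S_n\toverline{\phi}(x)=S_n\psi(x)+v(T^n(x))-v(x)=v(T^n(x))-v(x),
\end{equation*}
so $\abs{S_n\toverline{\phi}(x)}\leq\epsilon$. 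Taking $y\=x$ in Definition~\ref{d: Aubry set} shows $x\in\Omega_\phi$.

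The step I expect to need the most care is Step 2: passing from ergodic components to $\supp\mu$, which is where the closedness of $\Omega_\phi$ is used, and justifying that recurrent generic points of $\nu$ can be taken inside $\supp\nu$. The key input throughout is the continuity of the sub-action $v$, which is exactly what the transitivity hypothesis buys through Proposition~\ref{p: main construction}~(iii).
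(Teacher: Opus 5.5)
Your proof is correct and follows essentially the same sub-action argument the paper invokes via \cite[Lemma~3.15]{LS26}. The only difference is the source of the continuous sub-action: you use the tropical eigenfunction $v_{\tmif}$ from Proposition~\ref{p: main construction}~(iii), where the paper cites \cite[Theorem~1]{Bou01}. Your choice is legitimate because transitivity is assumed and that proposition does not depend on this lemma; the detour through the ergodic decomposition is harmless but unnecessary, since Poincar\'e recurrence and $T(\supp\mu)\subseteq\supp\mu$ hold for every invariant $\mu$ directly.
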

\begin{proof}
    This lemma follows from the existence of a sub-action in the same way as in the proof of \cite[Lemma~3.15]{LS26}, where the relevant sub-action is provided by \cite[Theorem~1]{Bou01}.
\end{proof}

\begin{lemma}\label{l: mane potential is upper semi-continuous}
    Let $T\:X \rightarrow  X$ be an open continuous distance-expanding map, $\omega\in\sD$, and $\phi\in H^\omega(X,\R)$. Then the Ma\~{n}\'{e} potential $S_\phi$ is upper semi-continuous. 
\end{lemma}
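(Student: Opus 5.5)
To show that $S_\phi$ is upper semi-continuous on $X\times X$, I will exhibit it as a decreasing limit of functions that are each upper semi-continuous, in fact closed-set suprema. For each $\epsilon>0$ set
\begin{equation*}
S_\phi^\epsilon(x,y)\=\oplusu{n\in\N}\oplusu{\substack{d(z,x)\leq\epsilon\\ d(T^n(z),y)\leq\epsilon}}S_n\toverline{\phi}(z),
\end{equation*}
so that $S_\phi^\epsilon$ is nonincreasing as $\epsilon\downarrow0$ and $S_\phi=\lim_{\epsilon\to0^+}S_\phi^\epsilon=\inf_{\epsilon>0}S_\phi^\epsilon$. An infimum of upper semi-continuous functions is upper semi-continuous, so it would suffice to show that each $S_\phi^\epsilon$ is upper semi-continuous. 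That is not quite true at a fixed $\epsilon$, so I will instead interleave two scales.

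Fix $(x_0,y_0)$ and $\epsilon>0$. If $d(x,x_0)<\epsilon$ and $d(y,y_0)<\epsilon$, then every $z$ with $d(z,x)\leq\epsilon$ and $d(T^n(z),y)\leq\epsilon$ satisfies $d(z,x_0)<2\epsilon$ and $d(T^n(z),y_0)<2\epsilon$ by the triangle inequality. Hence
\begin{equation*}
S_\phi(x,y)\leq S_\phi^\epsilon(x,y)\leq S_\phi^{2\epsilon}(x_0,y_0)
\end{equation*}
for all $(x,y)$ in an $\epsilon$-neighbourhood of $(x_0,y_0)$. Taking the limsup as $(x,y)\to(x_0,y_0)$ gives $\limsup S_\phi(x,y)\leq S_\phi^{2\epsilon}(x_0,y_0)$ for every $\epsilon>0$. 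Letting $\epsilon\to0^+$ and using monotonicity of $S_\phi^{\epsilon}$ in $\epsilon$ then yields $\limsup_{(x,y)\to(x_0,y_0)}S_\phi(x,y)\leq S_\phi(x_0,y_0)$, which is upper semi-continuity.

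This argument is carried out in $\Rm$, so I also need the values to be controlled. By Lemma~\ref{l: uniform upper bound without transi and surje}, $S_n\toverline{\phi}\leq L_1\abs{\phi}_\omega$, so $S_\phi<+\infty$ everywhere. The value $-\infty$ (empty sets, where the convention $\oplusu{\emptyset}=-\infty$ applies) causes no trouble, because upper semi-continuity allows it. The only subtle point is checking that the limit over $\epsilon$ in Definition~\ref{d: Mane potential} exists as a monotone limit, and this is immediate from the nesting of the constraint sets. The real-valued-ness asserted elsewhere is not needed here.
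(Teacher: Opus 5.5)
Your proof is correct and is essentially the paper's argument: both use the triangle inequality to bound $S_\phi$ on a small product neighbourhood of $(x_0,y_0)$ by the $\epsilon$-level supremum at $(x_0,y_0)$ with doubled radius, then let the radius go to zero (the paper uses radii $\delta/2$ and $\delta$ in place of your $\epsilon$ and $2\epsilon$). Your remarks on the finiteness of the values and on whether $S_\phi^\epsilon$ itself is upper semi-continuous are harmless but not needed.
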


\begin{proof}
    It suffices to show that for all $x,\,y\in X$,
    \begin{equation}\label{ineq: upper semi-continuity of mane potential}
        \lim_{\epsilon\to 0^+}\oplusu{\substack{z\in B(x,\epsilon)\\ w\in B(y,\epsilon)}}S_\phi(z,w)\leq S_\phi(x,y).
    \end{equation}
    Fix $\delta>0$, $z\in B(x,\delta/2)$, and $w\in B(y,\delta/2)$. It immediately follows from Definition~\ref{d: Mane potential} that 
    \begin{equation*}
      S_\phi(z,w)\leq\oplusu{n\in\N}  \oplusu{\substack{d(z_1,z)\leq\delta/2\\d(T^n(z_1),w)\leq\delta/2}}S_n\toverline{\phi}(z_1)\leq \oplusu{n\in\N}\oplusu{\substack{d(z_1,x)\leq\delta\\ d(T^n(z_1),y)\leq\delta}}S_n\toverline{\phi}(z_1).
    \end{equation*}
    Thus, 
    $        \oplusu{\substack{z\in B(x,\delta/2)\\ w\in B(y,\delta/2)}}S_\phi(z,w)\leq \oplusu{n\in\N}\oplusu{\substack{d(z_1,x)\leq\delta\\ d(T^n(z_1),y)\leq\delta}}S_n\toverline{\phi}(z_1)$.
As $\delta\to 0^+$ in the last inequality, we see that (\ref{ineq: upper semi-continuity of mane potential}) follows from Definition~\ref{d: Mane potential}.
\end{proof}

\begin{proof}[\bf Proof of Proposition~\ref{p: Mane_potential_properties}]
    The proof of this proposition is verbatim the same as that of \cite[Proposition~3.18]{LS26}, up to replacing $A$ with $\phi$ and applying (\ref{ineq: difference of the nearby trajectory sum}) for the local distortion. 
\end{proof}

\begin{proof}[\bf Proof of Lemma~\ref{l: mane potential gives eigenmeasures}]
    Let $(\lambda,\gamma,\xi)$ be expanding constants of $T\:X\to X$. The condition $T(x)\neq y$ implies that for all $\epsilon\in \bigl(0,\frac{1}{2}\min_{w\in T^{-1}(y)}d(x,w)\bigr)\cap (0,\xi)$, if $z\in B(x,\epsilon)$ and $n\in\N$ satisfy $d(T^n(z),y)<\epsilon$, then $n>1$. Thus, (\ref{eq: Mane potential (_,y) almost eigendensity}) follows immediately from the definition of $S_\phi$ (Definition~\ref{d: Mane potential}) and the continuity of $T$. This proves the first part of the lemma.

For the second part of the lemma, note that $S_\phi(\,\cdot\,,\,\cdot\,) \: X\times X \rightarrow \Rminus$ is upper semi-continuous by Lemmas~\ref{l: uniform upper bound without transi and surje} and~\ref{l: mane potential is upper semi-continuous}. Thus, $S_\phi(\,\cdot\,,y)\in\density(X)$ for all $y\in \Omega_\phi$ and it suffices to show $S_\phi(T(x),y)=S_\phi(x,y)-\toverline{\phi}(x)$ for all $y\in\Omega_\phi$ and $x\in T^{-1}(y)$.
 Fix $y\in\Omega_\phi$ and $x\in T^{-1}(y)$. By Proposition~\ref{p: Mane_potential_properties}~(iv), we have $S_\phi(T(x),y)=S_\phi(y,y)=0$. Thus, it suffices to prove $S_\phi(x,T(x))=\toverline{\phi}(x)$. 
 
 \smallskip
 
 \emph{Claim.} $S_\phi(x,T(x))=\toverline{\phi}(x)$ for all $x\in X$.
 
 \smallskip
 
 Fix $x\in X$. On the one hand, $S_n\toverline{\phi}(z)=\toverline{\phi}(x)$ for $n\=1, z\=x$ and it follows from Definition~\ref{d: Mane potential} that $S_\phi(x,T(x))\geq \toverline{\phi}(x)$. 

On the other hand, since $T$ is continuous, for every $\epsilon>0$, there exists $\eta(\epsilon)\in(0,\epsilon)$ such that $d(y,x)\leq\eta(\epsilon)$ implies $d(T(y),T(x))\leq\epsilon$ for all $y\in X$. Thus,
\begin{align*}
\oplusu{n\in\N}\oplusu{\substack{d(z,x)\leq\eta(\epsilon)\\ d(T^n(z),T(x))
	\leq\eta(\epsilon)}}S_n\toverline{\phi}(z)
	&\leq\Bigl(\oplusu{d(z,x)\leq\eta(\epsilon)}\toverline{\phi}(z)\Bigr)\otimes\Bigl(\oplusu{n\in\N}\oplusu{\substack{d(z,x)\leq\eta(\epsilon)\\ d(T^{n}(z),T(x))\leq\eta(\epsilon)}}S_{n-1}\toverline{\phi}(T(z))\Bigr)\\
&\leq\Bigl(\oplusu{d(z,x)\leq\eta(\epsilon)}\toverline{\phi}(z)\Bigr)\otimes\Bigl(\oplusu{m\in\N_0}\oplusu{\substack{d(\tz,T(x))\leq\epsilon\\ d(T^m(\tz),T(x))\leq\epsilon}}S_m\toverline{\phi}(\tz)\Bigr).
\end{align*}

Recall that $S_0\toverline{\phi}(\tz)=0$ for all $\tz\in X$. As $\epsilon\to 0^+$, we get
\begin{equation*}
S_\phi(x,T(x))\leq\toverline{\phi}(x)\otimes(0\oplus S_\phi(T(x),T(x))).
\end{equation*}
By Proposition~\ref{p: Mane_potential_properties}~(iv), $S_\phi(T(x),T(x))\leq 0$. We conclude that $S_\phi(x,T(x))\leq\toverline{\phi}(x)$ and the claim is now verified. 
\end{proof}

\begin{proof}[\bf Proof of Proposition~\ref{p: representation}~(i)(ii)]

\smallskip
(i) Fix $u\in \es{\cL_\phi}{C(X,\Rminus)}{\energy}$. By (\ref{eq: defi of the Bousch operator}), $u(x)=\oplusu{z\in T^{-1}(x)}\bigl(u(z)+\toverline{\phi}(z)\bigr)$ for all $x\in X$.
	It follows from Proposition~\ref{p: Mane_potential_properties}~(i) that  $u(\,\cdot\,)\geq\oplusu{x\in\Omega_\phi}(u(x)\otimes S_\phi(x,\,\cdot\,))$. It suffices to find an Aubry point $x_y$ for each $y\in X$ such that $u(x_y)\otimes S_\phi(x_y,y)\geq u(y)$. Fix $y\in X$. If $u(y)=-\infty$, then the above desired inequality trivially holds for all $x_y\in\Omega_\phi$. Now assume that $u(y)\in\R$.
 
	 By Lemma~\ref{l:expanding systems}, $T^{-1}(x)$ is finite for all $x\in X$. Since $u(y)=\oplusu{z\in T^{-1}(y)}\bigl(u(z)+\toverline{\phi}(z)\bigr)$, there exists $y_1\in T^{-1}(y)$ such that $u(y)=u(y_1)+\toverline{\phi}(y_1)$. Then since $u(y_1)=\oplusu{z\in T^{-1}(y_1)}\bigl(u(z)+\toverline{\phi}(z)\bigr)$, there exists $y_2\in T^{-1}(y_1)$ such that $u(y_1)=u(y_2)+\toverline{\phi}(y_2)$. Repeating this process recursively, we get a sequence $\{y_k\}_{k\in\N}$ in $X$ satisfying $T(y_{k+1})=y_k$ and $u(y_k)=u(y_{k+1})+\toverline{\phi}(y_{k+1})$ for all $k\in\N$. 
	
	\smallskip
    
	\emph{Claim.} Every accumulation point $x_y$ of the sequence $\{y_k\}_{k\in\N}$ is an Aubry point and satisfies $u(x_y)+S_\phi(x_y,y)\geq u(y)$.
    
	\smallskip
	
	Suppose the subsequence $\{y_{n_k}\}_{k\in\N}$ converges to $x_y$ with $n_k\to+\infty$ as $k\to+\infty$. Without loss of generality, we assume that $n_{k+1}>n_k+k$ for all $k\in\N$. It follows from Lemma~\ref{l: uniform upper bound without transi and surje} and the choice of $y_k$ that there exists $D>0$ such that $u(y_k)=u(y)-S_k\toverline{\phi}(y_k)\geq u(y)-D$ for all $k\in\N$. Since $u\in C(X,\Rminus)$ and $y_{n_k}$ converges to $x_y$ as $k\to+\infty$, we see that $u(x_y)\in\R$.
	
	Fix $\epsilon>0$. The continuity of $u$ together with $u(x_y)\in\R$ implies that there exists $\eta\in(0,\epsilon/2)$ such that $\abs{u(x_y)-u(z)}\leq\epsilon/2$ for all $z\in B(x_y,\eta)$. Thus, there exists $N\in\N$ such that for all $k\geq N$, $d(y_{n_k},x_y)<\eta<\epsilon/2$ and $d(y_{n_{k+1}},x_y)<\eta<\epsilon/2$. Denote $m_k\coloneqq n_{k+1}-n_k\in\N$ for all $k\in\N$. Then for all $k\geq N$,
	 \begin{equation*}
		\Absbig{S_{m_k}\toverline{\phi}(y_{n_{k+1}})}=\abs{u(y_{n_{k}})-u(y_{n_{k+1}})}\leq2\cdot(\epsilon/2)=\epsilon.
	\end{equation*}	
	 Since $d(y_{n_k},y_{n_{k+1}})\leq d(y_{n_k},x_y)+d(x_y,y_{n_{k+1}})\leq\epsilon$, by Definition~\ref{d: Aubry set}, we conclude that $x_y$ is an Aubry point. Moreover, $d(y_{n_k},x_y)<\epsilon/2$ implies that for all $k\geq N$,
	 \begin{equation*}
	 	u(y)
        =u(y_{n_k})\otimes S_{n_k}\toverline{\phi}(y_{n_k})
        \leq (u(x_y)+\epsilon/2)\otimes\Bigl(\oplusu{n\in\N}\oplusu{\substack{d(z,x_y)
        \leq\epsilon/2\\ d(T^n(z),y)\leq\epsilon/2}}S_n\toverline{\phi}(z)\Bigr).
	 	\end{equation*}
 	 As $\epsilon\to 0^+$ in the above inequality, we get $u(y)\leq u(x_y)\otimes S_\phi(x_y,y)$. Now the claim is verified and it follows that $u(\,\cdot\,)=\oplusu{x\in\Omega_\phi}(u(x)\otimes S_\phi(x,\,\cdot\,))$.

     \smallskip
     (ii)  Assume that $b\in\esbig{\cL_\phi^{\ostar}}{\density(X)}{\energy}$. We first show that
\begin{equation}\label{eq: representation of eigendensity as functionals}
\oplusu{x\in X}(u(x)\otimes b(x))=\oplusu{x\in X,y\in\Omega_\phi}(u(x)\otimes S_\phi(x,y)\otimes b(y))
\end{equation} for all $u\in C(X,\R)$. Now fix an arbitrary $u\in C(X,\R)$. 

We reduce the proof of $(\ref{eq: representation of eigendensity as functionals})$ to functions in $\es{\cL_\phi}{C(X,\Rminus)}{\energy}$ using Proposition~\ref{p: main construction} and then apply (i).

For the left-hand side of (\ref{eq: representation of eigendensity as functionals}), since $b\in\esbig{\cL_\phi^{\ostar}}{\density(X)}{\energy}$, it follows from (\ref{eq: defi of the Bousch operator}) and~(\ref{eq: defi of the adjoint Bousch operator}) that 
\begin{equation}\label{eq: functional induced by eigendensity}
\begin{aligned}
	\oplusu{x\in X}(\cL_\phi(u)(x)\otimes b(x))=\oplusu{y\in X}\bigl(u(y)\otimes \cL_{\phi}^{\ostar}(b)(y)\bigr)=\oplusu{y\in X}(u(y)\otimes\energy\otimes b(y)).
\end{aligned}
\end{equation} 

By repeated use of (\ref{eq: functional induced by eigendensity}), we get 
\begin{equation}\label{eq: pre for limit for iterations of L_A_n(u)}
\oplusu{x\in X}(u(x)\otimes b(x))
=\oplusu{x\in X}\bigl(\cL_{\toverline{\phi}}^n(u)(x)\otimes b(x)\bigr)
=\oplusu{x\in X}\Bigl(\Bigl(\oplusu{m\geq n}\cL_{\toverline{\phi}}^m(u)(x)\Bigr)\otimes b(x)\Bigr)
\end{equation}
for all $n\in\N$. It follows from (\ref{eq: iteration of Bousch}) and Lemma~\ref{l: uniform upper bound without transi and surje} that $\bigl\{\oplusu{m\geq n}\cL_{\toverline{\phi}}^m(u)\bigr\}_{n\in\N}$ has a uniform upper bound. It follows from (\ref{ineq: equi estimate for L_A_n(u)}) that $\bigl\{\oplusu{m\geq n}\cL_{\toverline{\phi}}^m(u)\bigr\}_{n\in\N}$ is equicontinuous. Recall that $v_u$ is the pointwise decreasing limit of $\oplusu{m\geq n}\cL_{\toverline{\phi}}^m(u)$ as $n\to+\infty$ (cf.~(\ref{eq: defi of main construction})). Thus, $\bigl\{\exp \bigl(\oplusu{m\geq n}\cL_{\toverline{\phi}}^m(u)\bigr)\bigr\}_{n\in\N}$ is a normal family and uniformly converges to $e^{v_u}$ as $n\to+\infty$. It then follows from (\ref{eq: pre for limit for iterations of L_A_n(u)}) that 
\begin{equation*}
    \begin{aligned}
    &\exp\Bigl(\oplusu{x\in X}(u(x)\otimes b(x))\Bigr)
    =\lim_{n\to+\infty}\exp\Bigl(\oplusu{x\in X}\Bigl(\Bigl(\oplusu{m\geq n}\cL_{\toverline{\phi}}^m(u)(x)\Bigr)\otimes b(x)\Bigr)\Bigr)\\
    &\qquad =\lim_{n\to+\infty}\oplusu{x\in X}\Bigl(\exp\Bigl(\oplusu{m\geq n}\cL_{\toverline{\phi}}^m(u)(x)\Bigr)\cdot e^{b(x)}\Bigr)
    =\oplusu{x\in X}\bigl(e^{v_u(x)}\cdot e^{b(x)}\bigr)
    =\exp\Bigl(\oplusu{x\in X}(v_u(x)\otimes b(x))\Bigr),
    \end{aligned}
\end{equation*}
where the third equality holds since $\exp b$ is either $\tinftyf$ (when $b=\tinftyf$) or has an upper bound (when $b\in\density(X)\smallsetminus\{\tinftyf\}$).

We conclude that 
\begin{equation}\label{eq: reduction to eigenfunctions}
	\oplusu{x\in X}(u(x)\otimes b(x))=\oplusu{x\in X}(v_u(x)\otimes b(x)).
\end{equation}

For the right-hand side of (\ref{eq: representation of eigendensity as functionals}), by Lemma~\ref{l: mane potential gives eigenmeasures}, $S_\phi(\,\cdot\,,y)\in\esbig{\cL_\phi^{\ostar}}{\density(X)}{\energy}$ for all $y\in\Omega_\phi$. It follows from Proposition~\ref{p: Mane_potential_properties}~(iv) that $S_\phi(y,y)=0$ and $S_\phi(\,\cdot\,,y)\in\density(X)\smallsetminus\{\tinftyf\}$ for all $y\in\Omega_\phi$. Thus, we can substitute $b(\,\cdot\,)$ in (\ref{eq: reduction to eigenfunctions}) with  $S_\phi(\,\cdot\,,y)$ and it follows that for all $y\in\Omega_\phi$,
\begin{equation*}
\oplusu{x\in X}(u(x)\otimes S_\phi(x,y))=\oplusu{x\in X}(v_u(x)\otimes S_\phi(x,y)).
\end{equation*} 
Hence,
\begin{align*}
&\oplusu{x\in X,y\in\Omega_\phi}(u(x)\otimes S_\phi(x,y)\otimes b(y))
=\oplusu{y\in\Omega_\phi}\Bigl(\oplusu{x\in X}(S_\phi(x,y)\otimes u(x))\otimes b(y)\Bigr)\\
&\qquad =\oplusu{y\in\Omega_\phi}\Bigl(\oplusu{x\in X}(S_\phi(x,y)\otimes v_u(x))\otimes b(y)\Bigr)
=\oplusu{x\in X,y\in\Omega_\phi}(v_u(x)\otimes S_\phi(x,y)\otimes b(y)).
\end{align*}

We have finished the first step of reduction. Since $v_u\in\es{\cL_\phi}{C(X,\Rminus)}{\energy}$ (cf.~Proposition~\ref{p: main construction})~(ii)), it suffices to prove (\ref{eq: representation of eigendensity as functionals}) for all $v\in\es{\cL_\phi}{C(X,\Rminus)}{\energy}$.

It follows from the discussions below that for all $v\in\es{\cL_\phi}{C(X,\Rminus)}{\energy}$,
\begin{align*}
&\oplusu{x\in X,y\in\Omega_\phi}(v(x)\otimes S_\phi(x,y)\otimes b(y))
=\oplusu{x\in X,y,z\in\Omega_\phi}(v(z)\otimes S_\phi(z,x)\otimes S_\phi(x,y)\otimes b(y))\\
&\qquad =\oplusu{y,z\in\Omega_\phi}(v(z)\otimes  S_\phi(z,y)\otimes b(y))
=\oplusu{y\in X,z\in\Omega_\phi}(v(z)\otimes S_\phi(z,y) \otimes b(y))
=\oplusu{y\in X}(v(y)\otimes b(y)).
\end{align*}
Here the first and the fourth identities follow from $v(x)=\oplusu{z\in\Omega_\phi}(v(z)\otimes S_\phi(z,x))$ for all $v\in\es{\cL_\phi}{C(X,\Rminus)}{\energy}$ and $x\in X$ (cf.~ (i)). The second identity follows from $S_\phi(z,x)\otimes S_\phi(x,y)\leq S_\phi(z,y)$ for $x,\,y,\,z\in X$ (cf.~Proposition~\ref {p: Mane_potential_properties}~(iii)) and $S_\phi(z,z)=0$ for $z\in\Omega_\phi$ (cf.~Proposition~\ref{p: Mane_potential_properties}~(iv)).
For the third identity, we remark that
$S_\phi(z,y)\otimes b(y)\leq b(z)$ for all $y,\,z\in X$ (cf.~Proposition~\ref{p: Mane_potential_properties}~(ii)) and if $z\in \Omega_\phi$, then the equality is achieved at $y=z\in\Omega_\phi$ as $S_\phi(z,z)=0$ (cf.~Proposition~\ref{p: Mane_potential_properties}~(iv)). Thus, for all $z\in\Omega_\phi$,
\begin{equation*}
\oplusu{y\in\Omega_\phi}(S_\phi(z,y)\otimes b(y))=\oplusu{y\in X}(S_\phi(z,y)\otimes b(y))=b(z).
\end{equation*}
We conclude that (\ref{eq: representation of eigendensity as functionals}) holds for all $v\in\es{\cL_\phi}{C(X,\Rminus)}{\energy}$ and thus for all $u\in C(X,\R)$.

Now assume in addition that $b\neq\tinftyf$. We need to show $b(x)=\oplusu{y\in\Omega_\phi}(S_\phi(x,y)\otimes b(y))$ for all $x\in X$. Recall that for all $b_1\:X\to\Rm$, there exists a unique $b_2\in\density(X)$ equivalent to $b_1$ (cf.~Remark~\ref{r: equivalence of densities}). Since we have shown $b(\,\cdot\,)$ is equivalent to $\oplusu{y\in\Omega_\phi}(S_\phi(\,\cdot\,,y)\otimes b(y))$, it suffices to show that $\oplusu{y\in\Omega_\phi}(S_\phi(\,\cdot\,,y)\otimes b(y))\in\density(X)$. Since $b\in\density(X)\smallsetminus\{\tinftyf\}$, $b$ has an upper bound. Note that $S_\phi$ also has an upper bound (by Definition~\ref{d: Mane potential} and Lemma~\ref{l: uniform upper bound without transi and surje}). We see that $\oplusu{y\in\Omega_\phi}(S_\phi(\,\cdot\,,y)\otimes b(y))\: X\to\Rminus$ has an upper bound. Thus it suffices to show that $\oplusu{y\in\Omega_\phi}(S_\phi(\,\cdot\,,y)\otimes b(y))$ is upper semi-continuous, i.e., for all $x\in X$,
\begin{equation}\label{ineq: upper semi-continuity of representation}
    \lim_{\epsilon\to 0^+}\oplusu{z\in B(x,\epsilon)}\oplusu{y\in\Omega_\phi}(S_\phi(z,y)\otimes b(y))\leq\oplusu{y\in\Omega_\phi}(S_\phi(x,y)\otimes b(y)).
\end{equation}

Fix $x\in X$. Note that the limit in the left-hand side of (\ref{ineq: upper semi-continuity of representation}) is a nonincreasing limit. It suffices to show that for all $\delta>0$, there exists $\epsilon>0$ such that 
\begin{equation*}
    \oplusu{z\in B(x,\epsilon)}\oplusu{y\in\Omega_\phi}(S_\phi(z,y)\otimes b(y))\leq\log\Bigl(\exp\Bigl(\oplusu{y\in\Omega_\phi}(S_\phi(x,y)\otimes b(y))\Bigr)+\delta\Bigr)
\end{equation*}
We argue by contradiction. Suppose that for some $\delta>0$ and all $n\in\N$, there exists $z_n\in B(x,1/n)$ such that 
\begin{equation*}
    \oplusu{y\in\Omega_\phi}(S_\phi(z_n,y)\otimes b(y))>\log\Bigl(\exp\Bigl(\oplusu{y\in\Omega_\phi}(S_\phi(x,y)\otimes b(y))\Bigr)+\delta\Bigr).
\end{equation*} 
Note that $S_\phi$ is upper semi-continuous (see Lemma~\ref{l: mane potential is upper semi-continuous}) and $b\in\density(X)$ is also upper semi-continuous. Recall that $\Omega_\phi\subseteq X$ is closed and $X$ is compact. It follows that for each $n\in\N$, there exists $y_n\in\Omega_\phi$ such that $S_\phi(z_n,y_n)\otimes b(y_n)=\oplusu{y\in\Omega_\phi}(S_\phi(z_n,y)\otimes b(y))$. Furthermore, there exists a subsequence $\{y_{n_k}\}_{k\in\N}$ (with $\lim_{k\to+\infty}n_k=+\infty$) converging to $y'\in\Omega_\phi$ as $k\to+\infty$. It then follows from the upper semi-continuity of $S_\phi$ and $b$ that
\begin{equation}\label{ineq: contra for upper semi of representation}
\begin{aligned}
    &\log\Bigl(\exp\Bigl(\oplusu{y\in\Omega_\phi}(S_\phi(x,y)\otimes b(y))\Bigr)+\delta\Bigr)
   \leq\limsup_{k\to+\infty}\oplusu{y\in\Omega_\phi}(S_\phi(z_{n_k},y)\otimes b(y))\\
   & \qquad =\limsup_{k\to+\infty}S_\phi(z_{n_k},y_{n_k})\otimes b(y_{n_k})
    \leq S_\phi(x,y')\otimes b(y')
    \leq\oplusu{y\in\Omega_\phi}(S_\phi(x,y)\otimes b(y)).
\end{aligned}
\end{equation}
 Recall that $\oplusu{y\in\Omega_\phi}(S_\phi(\,\cdot\,,y)\otimes b(y))\: X\to\Rminus$. It follows that (\ref{ineq: contra for upper semi of representation}) forms a contradiction and~(\ref{ineq: upper semi-continuity of representation}) is verified. We conclude that $\oplusu{y\in\Omega_\phi}(S_\phi(\,\cdot\,,y)\otimes b(y))\in\density(X)$ and $b(x)=\oplusu{y\in\Omega_\phi}(S_\phi(x,y)\otimes b(y))$ for all $x\in X$.
\end{proof}

\begin{proof}[\bf Proof of Proposition~\ref{p: condition_for_uniqueness}]
    Assume that $\phi$ has a unique maximizing measure. We claim that $S_\phi(x,y)\otimes S_\phi(y,x)=0$ for all $x,\, y\in \Omega_\phi$. 

    If the claim holds, then it follows from  Proposition~\ref{p: Mane_potential_properties}~(iii)(iv) that $S_\phi(y,x)\otimes S_\phi(x,\,\cdot\,)=S_\phi(y,\,\cdot\,)$ and $S_\phi(\,\cdot\,,x)\otimes S_\phi(x,y)=S_\phi(\,\cdot\,,y)$ for all $x,\,y\in\Omega_\phi$ in the same way as in the proof of \cite[Lemma~3.21]{LS26}. Moreover, it follows from Proposition~\ref{p: representation}~(i)(ii) that 
    $\tdim\es{\cL_\phi}{C(X,\Rminus)}{\energy}
   =1
   = \tdim\esbig{\cL_\phi^{\ostar}}{\density(X)}{\energy}$ in the same way as in the proof of \cite[Proposition~3.23]{LS26}.

    Now we prove the claim. It follows from Proposition~\ref{p: Mane_potential_properties}~(iii)(iv) that for all $x,\,y\in X$,
    \begin{equation}\label{ineq: mane potential x-y-x smaller than zero}
    S_\phi(x,y)\otimes S_\phi(y,x)\leq S_\phi(x,x)\leq 0.
    \end{equation}
    It follows from (\ref{eq: Mane potential (_,y) almost eigendensity}) in Lemma~\ref{l: mane potential gives eigenmeasures} that $S_\phi(T^n(x),x)=S_\phi\bigl(T^{n-1}(x),x\bigr)-\toverline{\phi}\bigl(T^{n-1}(x)\bigr)$ for all $n\in\N$ and $x\in \Omega_\phi$. Since $S_\phi(x,x)=0$ for $x\in\Omega_\phi$ (cf.~Proposition~\ref{p: Mane_potential_properties}~(iv)), $S_\phi(T^n(x),x)=S_\phi(x,x)-S_n\toverline{\phi}(x)=-S_n\toverline{\phi}(x)$ for all $n\in\N$ and $x\in\Omega_\phi$. By Definition~\ref{d: Mane potential}, we see that $S_\phi(x,T^n(x))\geq S_n\toverline{\phi}(x)$ for all $n\in\N$ and $x\in X$. Thus, $S_\phi(x,T^n(x))\otimes S_\phi(T^n(x),x)\geq S_n\toverline{\phi}(x)\otimes\bigl(-S_n\toverline{\phi}(x)\bigr)=0$ for all $n\in\N$ and $x\in\Omega_\phi$. It follows from this inequality and~(\ref{ineq: mane potential x-y-x smaller than zero}) that $S_\phi(x,T^n(x))\otimes S_\phi(T^n(x),x)=0$ for all $n\in\N$ and $x\in\Omega_\phi$. 

Denote $L_x\=\overline{\{T^n(x):n\in\N\}}$ for all $x\in\Omega_\phi$. It then follows from the upper semi-continuity of $S_\phi$ (see Lemma~\ref{l: mane potential is upper semi-continuous}) and~(\ref{ineq: mane potential x-y-x smaller than zero}) that $S_\phi(x,z)\otimes S_\phi(z,x)=0$ for all $z\in L_x$. Note that $S_\phi(x,y)\otimes S_\phi(y,x)=0$ defines an equivalence relation between points in $\Omega_\phi$ by Proposition~\ref{p: Mane_potential_properties}~(iii)(iv). 

We conclude that if $S_\phi(x,y)\otimes S_\phi(y,x)\neq 0$ for some $x,\,y\in\Omega_\phi$, then $L_x\cap L_y=\emptyset$. Recall from Lemma~\ref{l: maximizing measures supported on Aubry set} that an invariant Borel probability measure is a $\phi$-maximizing measure if and only if it is supported on $\Omega_\phi$. Since $L_x$ and $L_y$ are both compact invariant subsets of $\Omega_\phi$, we see that there are at least two maximizing measures for $\phi$ supported respectively on $L_x$ and $L_y$, which contradicts the assumption that $\phi$ has a unique maximizing measure. 
\end{proof}

\end{document}